\documentclass[12pt,oneside]{amsart}
\usepackage[margin=1in]{geometry}

\usepackage{graphicx}
\usepackage{amssymb}
\usepackage{tikz}
\usepackage{tikz-cd}
\usepackage{forest}
\usetikzlibrary{trees, positioning}
\usepackage{mathtools}

\usepackage{setspace}
\usepackage{listings}
\usepackage{xcolor}

\lstdefinelanguage{Macaulay2}{
  keywords={ring, ideal, module, matrix, res, betti, hilbert, radical, saturate, quotient, kernel, image, coker},
  sensitive=true,
  comment=[l]{--},
  morestring=[b]",
}

\usepackage{amsmath,amsthm,amscd,amssymb}
\usepackage{latexsym}
\usepackage[colorlinks,citecolor=red,pagebackref,hypertexnames=false]{hyperref}
\usepackage{bookmark}
\hypersetup{%
  colorlinks = true,
  citecolor=red,
  linkcolor  = black
}

\numberwithin{equation}{section}
\theoremstyle{plain}
\newtheorem{theorem}{Theorem}[section]
\newtheorem*{theorem*}{Theorem}
\newtheorem{lemma}[theorem]{Lemma}

\newtheorem{proposition}[theorem]{Proposition}

\theoremstyle{definition}
\newtheorem{definition}[theorem]{Definition}

\newtheorem{example}[theorem]{Example}

\theoremstyle{remark}
\newtheorem{remark}[theorem]{Remark}

\newtheorem{notation}[theorem]{Notation}

\newtheorem{case[theorem]}{Case}

\def\R{\mathbb R}

\title[Strong Approximation on $\mathcal{S}_{A, B, C, D}$]{Strong Approximation for the Relative Character Variety of the Four-Times Punctured Sphere}
\author{Nathaniel Kingsbury-Neuschotz}

\date{\today}
\begin{document}
\begin{abstract}
We study the orbits of solutions to the Markoff-type equation
$$X^2 + Y^2 + Z^2 = XYZ + AX + BY + CZ + D$$
in $\mathbb{F}_p,$ for fixed integers $A, B, C, D,$ under the symmetry group $\Gamma$ generated by

\[\begin{split}&V_1: (x, y, z)\mapsto (A + yz - x, y, z),\\
&V_2: (x, y, z)\mapsto (x, B + xz - y, z),\text{ and}\\
&V_3: (x, y, z)\mapsto (x, y, C + xy - z).\end{split}\]

This equation defines the Relative Character Variety of the Four-Times Punctured Sphere, with $\Gamma$ arising from the Pure Mapping Class Group. Outside an explicit degeneracy locus, $\Gamma$ acts transitively on the bulk of solutions mod $p$ for density-one of primes, the remainder splitting into several small orbits reflecting finite orbits over $\mathbb{C}$. For the ``degenerate'' parameters, we show there are either two large orbits (most degenerate parameters) or four (the rest, excluding $(0, 0, 0, 4)$) for a density-one set of primes.

These results are especially interesting for two subfamilies. The first,

$$X^2 + Y^2 + Z^2 = XYZ + k,\,\,\,k\neq 4,$$

arises in the combinatorial group theory of $\text{SL}_2(\mathbb{F}_p)$; we very nearly prove the $Q$-classification conjecture of McCullough and Wanderley for density-one of primes. By work of Martin, this conjecture implies their Classification and $T$-Classification Conjectures. The second,

$$x_1^2 + x_2^2 + x_3^2 + a_1x_2x_3 + a_2x_1x_3 + a_3x_1x_2 = (3+a_1+a_2+a_3)x_1x_2x_3,$$

arises from generalized cluster algebras. Our degeneracy notion specializes to that of de Courcy-Ireland, Litman, and Mizuno. For all nondegenerate and some degenerate surfaces in this subfamily, their results imply our orbit count (1, 2, or 4) holds for all sufficiently large primes.
\end{abstract}

\maketitle

\tableofcontents

\section{Introduction}
The Markoff equation is the classical Diophantine equation
\begin{equation}\label{Markoff}
\mathcal{M}:\, X^2+Y^2 + Z^2 = 3XYZ.
\end{equation}
By the work of Markoff and Hurwitz, positive integer solutions to this equation govern successive minima of indefinite binary quadratic forms and poorly approximable irrational numbers (see \cite{Markoff1} and \cite{Markoff2} for Markoff's work on quadratic forms, and \cite{BombieriContinuedFracs} and \cite{CasselsMarkoff} for connections to Diophantine approximation). This equation has an extremely rich group of symmetries, prominently including the Vieta involutions, which are given by
\[\begin{split}&V_1: (x, y, z)\mapsto (3yz - x, y, z),\\
&V_2: (x, y, z)\mapsto (x, 3xz - y, z),\text{ and}\\
&V_3: (x, y, z)\mapsto (x, y, 3xy - z).\end{split}\]
These involutions arise by fixing the values of two variables, treating the equation as quadratic in the third variable, and swapping the two roots thereof.\footnote{In addition to the Vieta involutions, one can of course also permute the coordinates of solutions to the equation, or negate two of the three coordinates of a solution. However, the Vieta involutions are the most significant symmetries---they generate a finite index normal subgroup isomorphic to $(\mathbb{Z}/2\mathbb{Z}) *(\mathbb{Z}/2\mathbb{Z}) *(\mathbb{Z}/2\mathbb{Z})$ (a free product of three copies of $\mathbb{Z}/2\mathbb{Z}$, cf. \cite{elHuti}, Theorem 1) of the full group of symmetries.} Starting from the root solution $(1, 1, 1)$, all solutions to (\ref{Markoff}) in positive integers may be generated by applying a sequence of Vieta involutions, giving them a tree structure, depicted up to permutations in Figure \ref{MarkoffTree}. In modern language, if $\Gamma=\langle V_1, V_2, V_3\rangle,$ then $\Gamma$ acts freely and transitively on $\mathcal{M}(\mathbb{Z}_{>0})$. There are a number of classical questions about the solutions to the Markoff equation, prominently including Frobenius' Uniqueness Conjecture (see \cite{Frobenius}, as well as \cite{Aigner} for a survey of topics related to this conjecture): does the largest coordinate of a solution to (\ref{Markoff}) in positive integers determine the values of the remaining two coordinates? 

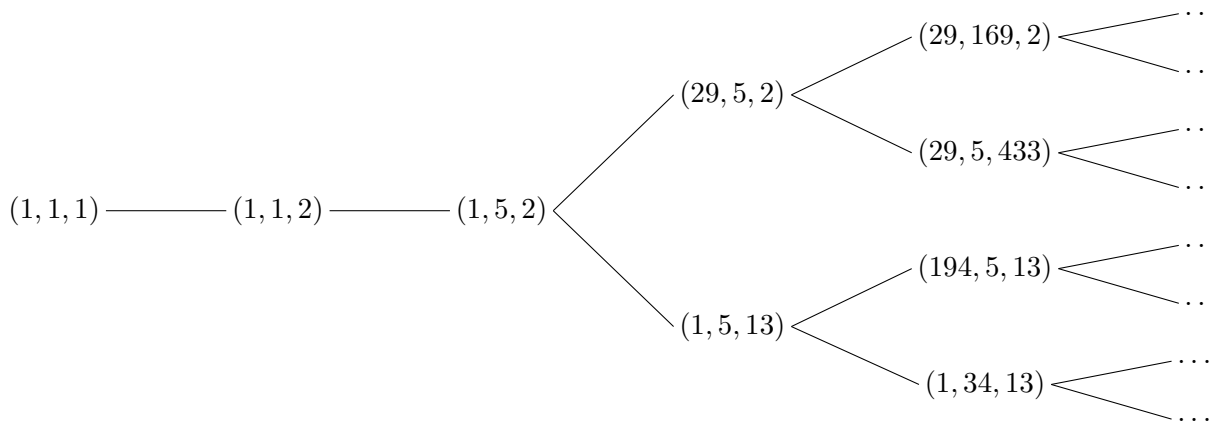
\begin{figure}
\centering
\begin{forest}
for tree={
  grow=east,
  parent anchor=east,
  child anchor=west,
  edge={draw},
  l sep=16mm,
  s sep+=3mm,
  font=\small,
  inner sep=2pt,
}
[{$(1,1,1)$}
  [{$(1,1,2)$}
    [{$(1,5,2)$}
      [{$(1,5,13)$}
        [{$(1,34,13)$}
          [$\dots$]
          [$\dots$]
        ]
        [{$(194,5,13)$}
          [$\dots$]
          [$\dots$]
        ]
      ]
      [{$(29,5,2)$}
        [{$(29,5,433)$}
          [$\dots$]
          [$\dots$]
        ]
        [{$(29,169,2)$}
          [$\dots$]
          [$\dots$]
        ]
      ]
    ]
  ]
]
\end{forest}
    \label{MarkoffTree}
    \caption{The Markoff Tree}
\end{figure}

Baragar (\cite{Baragar91}) conjectured that just as $\Gamma$ acts transitively on $\mathcal{M}(\mathbb{Z}_{>0})$, so too it acts transitively on $\mathcal{M}^*(\mathbb{F}_p)\vcentcolon = \mathcal{M}(\mathbb{F}_p) \setminus\{(0, 0, 0)\}$. If this is true, the reduction mod $p$ map $\mathcal{M}(\mathbb{Z})\to\mathcal{M}(\mathbb{F}_p)$ would be surjective, a form of \textit{strong approximation}. Indeed, given any nontrivial solution $(x, y, z)$ to (\ref{Markoff}) over $\mathbb{F}_p$, there would be a sequence of Vieta involutions taking $(1, 1, 1)$ to $(x, y, z)$; applying this same sequence of involutions in $\mathbb{Z}$ to the root solution $(1, 1, 1)$ then gives an integral solution congruent to $(x, y, z)$ mod $p$. Results of Bourgain, Gamburd, and Sarnak \cite{BGS} imply that this is true for density one of all primes, and a further result of Chen \cite{Chen} improved this to hold for all sufficiently large primes; in fact, it suffices to have $p\geq 10^{393}$ (\cite{MarkoffConnectivityBound}). Bourgain, Gamburd, and Sarnak (\cite{BGSPreprint}) extended the corresponding transitivity result to $\mathbb{Z}/N\mathbb{Z}$ for squarefree $N$ whose prime factors are all $\equiv 1$ (mod 4), and Meiri and Puder (\cite{MeiriPuder}) extended it to squarefree $N$ whose prime factors lie in a density-one set containing all primes $\equiv 1$ (mod 4). These results enabled Bourgain, Gamburd, and Sarnak to establish that almost all Markoff numbers are composite (\cite{BGS}, Theorem 3). Bourgain, Gamburd, and Sarnak phrase their transitivity results in terms of a family of graphs they call Markoff graphs, which they conjecture to be expander graphs; were expansion proven one could use sieve methods to establish that infinitely many Markoff numbers are almost primes, as in the affine sieve (see for instance \cite{BGSExpansion}, \cite{SarnakLinearSieveNotes}, and \cite{GolsefidySarnakAffLinSieve}). For broad, up-to-date surveys of material related to arithmetic and dynamics on the Markoff surface and some of its relatives, we refer the reader to \cite{SilvermanNotices} and \cite{GamburdICM}.

Since the work of Bourgain, Gamburd, and Sarnak, there has been interest in extending these types of results to broader classes of varieties. The simplest extension would be to the family given by
\begin{equation}\label{MarkoffPlusk}
    X^2 + Y^2 + Z^2 = XYZ + k,
\end{equation}
for $k \neq 4$, which arises in geometric group theory as the character variety of the once punctured torus, and which includes the classical Markoff equation (\ref{Markoff}) as the case $k=0$ after the usual rescaling of coordinates. The excluded case $k = 4$ is the Cayley cubic, which is known to be degenerate and to have a very large number of orbits; see \cite{LT} for its many finite orbits over $\mathbb{C}$, and \cite{deCourcyIrelandLee} for its orbit structure over $\mathbb{F}_p$.

We define $V_1, V_2,$ and $V_3$ by
\[\begin{split}&V_1: (x, y, z)\mapsto (yz - x, y, z),\\
&V_2: (x, y, z)\mapsto (x, xz - y, z),\text{ and}\\
&V_3: (x, y, z)\mapsto (x, y, xy - z),\end{split}\]
reflecting the rescaling from the Markoff case, and may study the structure of orbits on this variety under the action of the group $\Gamma$. Transitivity results for this action have important implications for the combinatorial group theory of $\text{SL}_2(\mathbb{F}_p)$; see Section \ref{GroupTheory} below for details. 

Another generalization of the Markoff equation is the family
\begin{equation}\label{GyodaMatsushitaEq}x_1^2 + x_2^2 + x_3^2 + a_1x_2x_3 + a_2x_1x_3 + a_3x_1x_2 = (3 + a_1 + a_2 + a_3)x_1x_2x_3\end{equation}
where $a_1,$ $a_2$, and $a_3$ are fixed parameters. Once again, one may define Vieta involutions in each variable by interchanging the two roots of the quadratic equation in that variable, though the resulting formulas are more complicated than in the Markoff case. This family was first introduced by Gyoda and Matsushita \cite{GyodaMatsushita} in connection with Generalized Cluster Algebras.\footnote{The Markoff tree has an important interpretation in terms of Cluster Algebras, which has been used for instance in the proof of Aigner's Monotonicity Conjecture, see \cite{AignerMono2} and \cite{AignerMono1}.} Much as with the Markoff equation, $\Gamma$ acts freely and transitively on the positive integral solutions to (\ref{GyodaMatsushitaEq}), giving them a tree structure, and the associated tree may be interpreted in terms of mutations of Generalized Cluster Algebras---we refer the reader to \cite{GyodaMatsushita} for a definition of this term. Orbits in the $\mathbb{F}_p$ points of (\ref{GyodaMatsushitaEq}) were studied by de Courcy-Ireland, Litman, and Mizuno in \cite{pDivisibilityClusterAlg}, where they extended Chen's theorem to most varieties in this family following the proof of Martin \cite{Martinpdivisibility}; see Section \ref{GeneralizedClusterAlgsSection} of this paper for details.

Another important family of Markoff-type varieties is the family of K3 surfaces of Markoff type, first identified by Fuchs, Litman, Silverman, and Tran in \cite{FuchsLitmanSilvermanTran}. While these surfaces have several important analogies with the Markoff surface, there are several significant differences which make their dynamics over $\mathbb{F}_p$ much harder to study---most significantly, that slicing them by a plane such as $X = x_0$ gives a homogeneous space of an elliptic curve, rather than of the algebraic torus $\mathbb{G}_m$, and the geometry and group theory of elliptic curves is much more complicated than that of $\mathbb{G}_m$. 

In this paper, we extend the results of Bourgain, Gamburd, and Sarnak to the family of surfaces $\mathcal{S}_{A, B, C, D}$ given by the equation 
\begin{equation}\label{MainEq}
    X^2 + Y^2 + Z^2 = XYZ + AX + BY + CZ + D,
\end{equation}
following a suggestion in the announcement of their results on the Markoff equation (\cite{BGSAnnouncement}). This surface arises geometrically as the \textit{relative character variety of the four-holed sphere}, see for instance \cite{BenedettoGoldman}, and the dynamics on $\mathcal{S}_{A, B, C, D}(\mathbb{C})$ and $\mathcal{S}_{A, B, C, D}(\mathbb{R})$ of the group generated by its Vieta involutions has been studied extensively by geometers and dynamicists; see for instance \cite{ErgodicTheoryModuliSpaces}, \cite{Cantat}, \cite{CantatLorary}, \cite{CantatDupontMartinBaillon}, and the references contained therein. This equation in some sense interpolates between equations (\ref{MarkoffPlusk}) and (\ref{GyodaMatsushitaEq}): equation (\ref{MarkoffPlusk}) arises from parameters $A = B = C = 0$, and equation (\ref{GyodaMatsushitaEq}) may be transformed into an equation of the form (\ref{MainEq}) by way of the change of variables described in (\ref{clusterAlgChangeVars}) below. As before, there is a rich group of symmetries generated by Vieta involutions which swap the two roots of the quadratic equations in each variable; for ease of reference we record them below:
\[\begin{split}&V_1: (x, y, z)\mapsto (A + yz - x, y, z),\\
&V_2: (x, y, z)\mapsto (x, B + xz - y, z),\text{ and}\\
&V_3: (x, y, z)\mapsto (x, y, C + xy - z).\end{split}\]
There are two important complications to a Bourgain-Gamburd-Sarnak type result, generalizing complications already present in equations (\ref{MarkoffPlusk}) and (\ref{GyodaMatsushitaEq}). The first is the presence of small orbits: in the case of the equation
$$X^2 + Y^2 + Z^2 = XYZ + k,$$
if $k$ is a quadratic residue mod $p$, then $\{(\sqrt{k}, 0, 0), (-\sqrt{k}, 0, 0)\}$ is a complete orbit under the action of $\Gamma$. For certain special values of the parameter $k$, there are additional small orbits, such as the orbit of ${(1, 1, 1)}$ in the case where $k = 2$. These orbits are quite well understood. In connection with $\text{SL}_2,$ they arise from the so-called ``nonessential triples'' studied in \cite{McCulloughWanderley} (see also \cite{McCulloughExceptionalSubgroups}). All of these small orbits arise from identities of algebraic numbers, and hence also form finite orbits on $\mathcal{S}_{0, 0, 0, k}(\mathbb{C});$ these finite orbits give rise to finite-branching solutions to certain Painlev\'e VI equations, and have been classified by Dubrovin and Mazzocco \cite{DubrovinMazzocco} as a means of studying algebraic solutions to this subfamily of Painlev\'e VI equations. In the literature on arithmetic and dynamics on Markoff type varieties, these orbits are discussed in \cite{deCourcyIrelandLee} and \cite{MartinBigPaper}. In order to precisely state our main theorem, it is necessary to classify and remove all of the small orbits on our more general $\mathcal{S}_{A, B, C, D}$. This is done for us by Lisovyy and Tykhyy, who extended the classification results of Dubrovin and Mazzocco to all Painlev\'e VI equations in \cite{LT}; we include an overview of their results in Section \ref{FiniteOrbitsSection} below. 

The second complication is the presence, for certain ``degenerate'' parameters, of multiple large orbits. This type of phenomenon was first noticed for a certain family of K3 surfaces of Markoff type in \cite{FuchsLitmanSilvermanTran}, and it was established in that setting in terms of a double cover in \cite{ODorneyObstruction}. It was later noted in the family (\ref{GyodaMatsushitaEq}), which maps into our family via (\ref{clusterAlgChangeVars}), by de Courcy-Ireland, Litman, and Mizuno in \cite{pDivisibilityClusterAlg}: if some $a_i^2 = 4$, then there are at least two large orbits, and if a second $a_j^2 = 4$, then there are at least four orbits. In their paper, they described $\Gamma$-invariant sets explaining this breakdown using explicit polynomial identities, leaving implicit the double-cover interpretation of this obstruction, though they did not show that $\Gamma$ acts transitively on these $\Gamma$-invariant sets. In our setting, this phenomenon also holds for some quadruples of parameters $(A, B, C, D)$ which do not arise via (\ref{clusterAlgChangeVars}) from the equation (\ref{GyodaMatsushitaEq}) with rational integer parameters; we classify these degenerate parameters below in Definition \ref{nondegeneracyCondition} and prove in Theorem \ref{QuadraticObstructions} that for degenerate parameters, there must be at least two large $\Gamma$-invariant sets, and sometimes even at least four large $\Gamma$-invariant sets, under the action of $\Gamma$. Our proof closely resembles that of de Courcy-Ireland, Litman, and Mizuno in \cite{pDivisibilityClusterAlg}. With these two complications resolved, we prove the following theorem. At the end of Section \ref{FiniteOrbitsSection}, in which we discuss the small orbits mentioned in Theorem \ref{th:mainFirstForm} and introduce notation for their complement, we shall restate this more compactly as Theorem \ref{th:mainRestated}.

\begin{theorem}\label{th:mainFirstForm}
    Suppose that $(A, B, C, D)$ is a quadruple of integers which is nondegenerate in the sense of Definition \ref{nondegeneracyCondition}. Then there is a density-one set of primes $p$ for which $\mathcal{S}_{A, B, C, D}(\mathbb{F}_p)$ consists of a single giant orbit under $\Gamma$, possibly together with some orbits of bounded size. These small orbits are images under a suitable reduction map of finite orbits on $\mathcal{S}_{A, B, C, D}(\mathbb{C})$, in the sense made precise in Section \ref{FiniteOrbitsSection}.
\end{theorem}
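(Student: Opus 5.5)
The plan is to follow the Bourgain--Gamburd--Sarnak strategy: an ``end-game'' based on rotations in conic fibers, a ``middle-game'' connecting points of intermediate order, and an ``opening'' handled for density one of primes by a divisibility/counting argument.

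\textbf{Fibration and rotations.} Fixing $X = x_0$ turns equation (\ref{MainEq}) into a conic $Y^2 + Z^2 - x_0YZ - BY - CZ = D + Ax_0 - x_0^2$. The composition $R_1 = V_3\circ V_2$ preserves this conic. For $x_0 \neq \pm 2$, after completing the square the conic becomes a torsor for the torus $\{u : N(u) = 1\}$ over $\mathbb{F}_p[\lambda]$, where $\lambda + \lambda^{-1} = x_0$. On this torsor $R_1$ acts as multiplication by $\lambda^{2}$, or by $\lambda$ up to an affine shift; I would compute the exact form. So $R_1$ moves points around an orbit of size $\operatorname{ord}(\lambda)$ or $2\operatorname{ord}(\lambda)$ inside a fiber of size $p\pm1$. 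The rotations $R_2, R_3$ work the same way in the other coordinates.

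\textbf{End-game.} A point with some coordinate $x_0$ whose $\lambda$ has maximal order $p\pm 1$ has its whole fiber inside its orbit. I would then show that any two such ``maximal'' fibers are connected. Following BGS, count pairs of fibers $X = x_0$ and $Y = y_0$ that meet; they meet when the corresponding conic in $Z$ has a solution. This count uses Weil-type character sum bounds on the curves obtained by intersecting the surface with suitable loci. Nondegeneracy (Definition \ref{nondegeneracyCondition}) should enter exactly here. It guarantees that these curves are absolutely irreducible, so the needed character sums have square-root cancellation. In the degenerate case they split, reflecting the double cover of Theorem \ref{QuadraticObstructions}. The output is that the points lying on some fiber of large order form a single orbit.

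\textbf{Middle game and opening.} Next I would adapt BGS's middle-game argument, an iterated incidence bound in which a point of order $\geq p^{\epsilon}$ can be walked to ever larger order. The torsor structure gives a sum--product-type lower bound on how orbits spread, and this step should carry over formally once the rotation formulas are known. For the opening, points all of whose coordinates have very small order, I cannot expect a uniform argument. Following BGS, I would use that for density one of primes $p$, $p^2-1$ has no divisors in a sparse intermediate range, which forces small-order points to reach large order. Any point that stays at bounded order under the whole group comes from algebraic relations among roots of unity of bounded order. Such points lie on finitely many orbits. By the Lisovyy--Tykhyy classification reviewed in Section \ref{FiniteOrbitsSection}, these are reductions of finite orbits on $\mathcal{S}_{A,B,C,D}(\mathbb{C})$, which gives the exceptional small orbits of bounded size.

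\textbf{Main obstacle.} I expect the hardest step to be the end-game irreducibility verification: showing that for nondegenerate $(A,B,C,D)$ the relevant curves, possibly Chebyshev-type, are geometrically irreducible for all but finitely many $p$. The first thing I would try is an explicit factorization analysis. Any factorization should force a square-root relation among the quantities $A^2$, $B^2$, $C^2$, $D$ and the discriminants of the conics, and the task is then to match these relations with the degeneracy conditions.
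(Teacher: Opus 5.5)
Your end-game rests on a premise that is false here: that a point whose coordinate $x_0$ has $\lambda$ of maximal order $p\pm1$ has the whole fiber $C_1(x_0)$ in its orbit. In the coordinates of Lemmas \ref{hyperbolics} and \ref{elliptics} (the paper's $\chi$ is your $\lambda$), $D_1=V_3\circ V_2$ acts by $t\mapsto\lambda^2t$. Its orbits therefore have size $\operatorname{ord}(\lambda^2)$, which at maximal order is $\frac{p\pm1}{2}$. That is half the fiber, not $\operatorname{ord}(\lambda)$ or $2\operatorname{ord}(\lambda)$. Without the coordinate symmetry of the Markoff surface, $\Gamma$ contains no square root of $D_1$. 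One exists only when $B=\pm C$, namely $\tau_{yz}\circ V_2$ or $\operatorname{neg}_{yz}\circ\tau_{yz}\circ V_2$, and using it forces the comparison of $\Gamma$ with $\Gamma'$ in Theorem \ref{comparisonOfGroupsThm}. Lemma \ref{comparisonOfTwoColoredOrbits}, together with a count of the fixed points of $V_2$ and $V_3$ on $C_1(x_0)$, shows that $\langle V_2,V_3\rangle$ is transitive on $C_1(x_0)$ exactly when $\kappa_1(x_0)$ is a quadratic nonresidue; equivalently, $f_1(x_0)$ is (this is the proposition following Lemma \ref{elliptics}). Otherwise the fiber splits in two, and nothing local rejoins the halves. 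The degenerate case $A=B\neq\pm C$ shows this is not a technicality. Maximal-order fibers exist there, but their two halves lie in the distinct $\Gamma$-invariant sets $\mathcal S_1,\mathcal S_2$ (Lemma \ref{lemma:TwoPieceConicSignComponents}). This holds even though the Chebyshev-type transition curves for the $x$- and $y$-coordinates remain irreducible (cf.\ Lemma \ref{endgameDegenerate}), so nondegeneracy is not entering where you expect. The missing idea is to build the condition $\chi_p(f_i(x_i))=-1$ into the definition of the cage. You must then carry the extra double cover $\omega\eta^2=f_2(y)$ through every Weil-bound count, both in the end-game (Lemma \ref{endgameIrreducibility}) and in cage connectivity (Proposition \ref{cageConnectivity}). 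That is where nondegeneracy is actually used: Lemma \ref{NonresidueCondition} shows $f_i$ is a perfect square only for degenerate parameters. The remaining checks, that $\pm2$ is neither a root of $f_2$ nor a branch point of the conic-intersection covers, come down to $A\neq\pm C$ or to nondegeneracy.

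Your opening also does not work as stated. Your instinct that the small orbits come from characteristic-zero relations among roots of unity is right. But a divisor-gap property of $p^2-1$ does not by itself force orders to grow, since it says nothing about whether a given short Dehn-twist orbit contains a point of large order. The paper makes the characteristic-zero idea quantitative. It lifts coordinates via a fixed primitive $(p^2-1)$-th root of unity. If every point and every edge relation lifts, including the self-loops at fixed and double fixed points, the orbit is a finite complex orbit and lies in $E(p)$. Otherwise one gets a nonzero algebraic integer $\eta$ all of whose conjugates are bounded by $M=20+2|A|+2|B|+2|C|+|D|$. A norm argument then yields a point that is not a double fixed point and has order at least $(\log_M p)^{1/3}$ (Proposition \ref{AlgNTBound}). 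The density-one set comes from Theorem \ref{LowerBoundOnOrder}. It says that for all but a sparse set of primes, all but $11d^3+d$ torus points on $\alpha_1t+\alpha_2t^{-1}+\alpha_3=s+s^{-1}$ have $\operatorname{ord}(s)+\operatorname{ord}(t)\ge p^{\epsilon}$. Once $p$ exceeds a parameter-dependent threshold, the chosen Dehn-twist orbit has more than $324$ points, so some point reaches order $p^{\epsilon}$. Similarly, the middle game is driven by the Corvaja--Zannier bound for torsion points on curves in $\mathbb G_m^2$, after excluding subtorus-translate components, rather than by a sum--product estimate.
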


In Section \ref{ModificationsDegenerate}, we shall modify this result, ultimately showing that (for a density-one set of primes depending on the parameters $A, B, C,$ and $D$) the group $\Gamma$ acts transitively on each of the $\Gamma$-invariant sets identified in Theorem \ref{QuadraticObstructions}, so that, depending on the exact relations between the parameters, there are either exactly two large orbits or four large orbits. To the author's knowledge, this appears to be the first time a Markoff-type variety has been shown to have exactly two or exactly four orbits after excluding those that arise from finite orbits over $\mathbb{C}$---previous work has either shown the presence of exactly one orbit, or the presence of at least two or at least four.

\begin{remark}[Dependence of the exceptional set of primes on $A$, $B$, $C$, and $D$]\label{rmk:Dependence}
    The exceptional set of primes to which our results do not apply may be expressed as the union of two sets. The first is the possibly infinite set of primes that are excluded from Theorem \ref{LowerBoundOnOrder}. While this set is potentially infinite, it has density zero in the primes, and is independent of the values of the parameters $A, B, C,$ and $D$. The second set consists of all primes $p$ such that
    $$p \leq (20 + 2|A| + 2|B| + 2|C| + |D|)^{272,097,792}.$$
    This set is finite and explicit, but depends on the values of the parameters. It arises from using the weak, parameter-dependent lower bound on the size of a component in Proposition \ref{AlgNTBound} to ensure that the chosen Dehn-twist orbit has more than $11d^3 + d + 24$ points, allowing us to avoid the exceptional points in Theorem \ref{LowerBoundOnOrder}. In our application $d = 3$, giving $11d^3 + d + 24 = 324.$ While it is likely that we could weaken this condition by following the methods of \cite{KMSV}, it does not seem that their methods suffice to completely remove the dependence on the parameters $A, B, C,$ and $D$.

    For nondegenerate surfaces in the special subfamily (\ref{GyodaMatsushitaEq}), as well as for some degenerate surfaces, an analogue of Chen's theorem has been established (see \cite{pDivisibilityClusterAlg}, Theorem 1.1, reproduced here as Theorem \ref{pDivisibilityResult}). This result gives a strong and uniform lower bound on the size of each $\Gamma$-orbit, implying that our results hold for all sufficiently large primes uniformly in the parameters of this subfamily; see Theorem \ref{th:MainSufficientlyLarge}. Unfortunately, such uniformity is most interesting for the family of surfaces (\ref{MarkoffPlusk}), where it would have important applications to group theory; see Section \ref{GroupTheory} for details. The only non-Cayley surface in both families is the classical Markoff surface; see Remark \ref{rmk:DifferentNondegConds}. Thus, this strengthening does not have such applications.
\end{remark}

Since excluding finitely many primes does not affect any density-one statement, we shall freely discard small primes throughout the proof. In particular, all arguments over $\mathbb{F}_p$ may be understood with $p$ odd.

\subsection{Equivalence of Parameters}\label{EquivalenceParamsSubect}

For the purposes of defining degenerate parameters and describing the small orbits in $\mathcal{S}_{A, B, C, D},$ it will be convenient to introduce an equivalence between some of the surfaces $\mathcal{S}_{A, B, C, D}.$ In particular, the following three maps commute with the action of the Vieta involutions on each surface:

$$\operatorname{neg}_{xy}: \mathcal{S}_{A, B, C, D}\rightarrow \mathcal{S}_{-A, -B, C, D}\text{ by } (x, y, z)\mapsto (-x, -y, z),$$
$$\operatorname{neg}_{xz}: \mathcal{S}_{A, B, C, D}\rightarrow \mathcal{S}_{-A, B, -C, D}\text{ by } (x, y, z)\mapsto (-x, y, -z),\text{ and }$$
$$\operatorname{neg}_{yz}: \mathcal{S}_{A, B, C, D}\rightarrow \mathcal{S}_{A, -B, -C, D}\text{ by } (x, y, z)\mapsto (x, -y, -z).$$

Additionally, the following three maps induce automorphisms of $\Gamma$ by permuting $V_1,$ $V_2,$ and $V_3$:

$$\tau_{xy}: \mathcal{S}_{A, B, C, D}\rightarrow\mathcal{S}_{B, A, C, D}\text{ by }(x,y,z)\mapsto(y,x,z),$$
$$\tau_{xz}: \mathcal{S}_{A, B, C, D}\rightarrow\mathcal{S}_{C, B, A, D}\text{ by }(x,y,z)\mapsto(z,y,x),\text{ and}$$
$$\tau_{yz}: \mathcal{S}_{A, B, C, D}\rightarrow\mathcal{S}_{A, C, B, D}\text{ by }(x,y,z)\mapsto(x,z,y).$$

It follows that if parameters $(A', B', C', D')$ are obtained from parameters $(A, B, C, D)$ by a sequence of moves of this sort, then $\mathcal{S}_{A', B', C', D'}$ and $\mathcal{S}_{A, B, C, D}$ have the same orbit structure. In the language of permutation groups, $(\Gamma, \mathcal{S}_{A, B, C, D})$ and $(\Gamma, \mathcal{S}_{A', B', C', D'})$ are permutation isomorphic, though they are not equivalent actions of $\Gamma$; see Chapter 1 of \cite{DixonMortimer} for the terminology used here. We are led to the following definition:
\begin{definition}\label{def:EquivalentParams}
    Two quadruples of parameters $(A, B, C, D)$ and $(A', B', C', D')$ are said to be \textit{equivalent} if $(A', B', C', D')$ may be obtained from $(A, B, C, D)$ by permuting $A, B,$ and $C$ and negating either 0 or 2 of $A$, $B,$ and $C$. $\Gamma$-invariant subsets $\mathcal{O}$ and $\mathcal{O}'$ of $\mathcal{S}_{A, B, C, D}$ and $\mathcal{S}_{A', B', C', D'}$ are said to be \textit{equivalent} if $(A', B', C', D')$ is equivalent to $(A, B, C, D)$ and an isomorphism $\mathcal{S}_{A, B, C, D}\rightarrow \mathcal{S}_{A', B', C', D'}$ arising from this equivalence maps $\mathcal{O}$ to $\mathcal{O}'$.
\end{definition}

\subsection{Extra Automorphisms}\label{ExtraAutsSubSect}
For some exceptional values of the parameters, these equivalences actually induce extra automorphisms of $\mathcal{S}_{A, B, C, D}$. If $A = B$, $A = C$, or $B = C$, then one of $\tau_{xy},$ $\tau_{xz},$ or $\tau_{yz}$ is an automorphism of $\mathcal{S}_{A, B, C, D},$ and if $A = B = C$, then they all are. If at least two of $A, B,$ and $C$ are zero, then the corresponding maps among $\operatorname{neg}_{xy},$ $\operatorname{neg}_{xz},$ and $\operatorname{neg}_{yz}$ are automorphisms of $\mathcal{S}_{A, B, C, D}.$ Slightly less obviously, if $A = -B,$ $A = -C$, or $B = -C$, then the corresponding maps among $\operatorname{neg}_{xy}\circ \tau_{xy}$, $\operatorname{neg}_{xz}\circ \tau_{xz}$, and $\operatorname{neg}_{yz}\circ \tau_{yz}$ are automorphisms of $\mathcal{S}_{A, B, C, D}$. We denote by $H$ the group generated by all automorphisms of the form $\operatorname{neg}_{xy}$, $\operatorname{neg}_{xz}$, and $\operatorname{neg}_{yz},$ and $\Gamma'$ the group generated by the Vieta involutions $V_1,$ $V_2,$ and $V_3$ together with all applicable automorphisms of the form $\operatorname{neg}$, $\tau,$ or $\operatorname{neg}\circ\tau$. 

We shall see in Section \ref{Comparison} below that for nondegenerate parameters, the orbit structure is essentially the same whether or not we study $\Gamma'$ or $\Gamma$: adding these extra automorphisms cannot connect the small orbits to the large orbit, although it may connect some small orbits to one another, as any $\Gamma$-orbits connected by such an automorphism would have to be equivalent $\Gamma$-invariant sets and thus have the same cardinality, and we shall prove in Theorem \ref{comparisonOfGroupsThm} that if $\Gamma'$ acts transitively on the complement of the small orbits, then so does $\Gamma.$ However, we shall see in Section \ref{ObstructionsSection} that for those degenerate parameters for which there are four large orbits, including the extra automorphisms in $\Gamma'$ connects three out of the four large orbits together, and that these three orbits have isomorphic graphs.

With equivalence defined, we can compactly state our degeneracy condition. We will motivate this condition below in Section \ref{GeneralizedClusterAlgsSection} by discussing how it relates to the special families (\ref{MarkoffPlusk}) and (\ref{GyodaMatsushitaEq}).
\begin{definition}\label{nondegeneracyCondition}
    We say a quadruple of integer parameters is \textit{degenerate} if it is equivalent to a quadruple $(A, B, C, D)$ with $A = B$ and $4D + A^2 = 8C + 16$; otherwise, we say that the quadruple of integer parameters is nondegenerate.
    
    Similarly, for a fixed prime number $p$, we say that a quadruple is degenerate mod $p$ if it is equivalent to a quadruple $(A, B, C, D)$ with $A \equiv B\pmod{p}$ and $4D + A^2 \equiv 8C + 16\pmod{p}$.
\end{definition}
\subsection{Outline of the Paper}\label{subsect:Outline}
For the remainder of this introduction, we outline the rest of the paper, its primary technical accomplishments, and directions for future research. In Section \ref{FiniteOrbitsSection}, we outline the results of Lisovyy and Tykhyy \cite{LT}, which allow us to classify and remove small orbits from $\mathcal{S}_{A, B, C, D}.$ In Section \ref{GroupTheory}, we discuss the applications of our results to the combinatorial group theory of $\text{SL}_2(\mathbb{F}_p)$ via (\ref{MarkoffPlusk}). In Section \ref{GeneralizedClusterAlgsSection}, we discuss how our nondegeneracy condition specializes to the equations (\ref{MarkoffPlusk}) and (\ref{GyodaMatsushitaEq}). 

We then move on to the proof of our main result following the techniques of Bourgain, Gamburd, and Sarnak. Sections 5--9 establish the nondegenerate case, and Sections 10 and 11 establish the degenerate case. In Section \ref{ConicsSection}, we carefully analyze the action of certain subgroups of $\Gamma$ on slices of $\mathcal{S}_{A, B, C, D}$ by planes parallel to the coordinate planes. In this section, an important technical difficulty arises, which we describe here in the case of a plane $X = x_0$. Unlike in the setting of the Markoff equation, the group generated by the map $V_3\circ V_2$, the analogue of the rotation map in \cite{BGS}, does not act transitively on $C_1(x_0) \coloneqq \mathcal{S}_{A, B, C, D}\cap V(X - x_0)$ for $x_0 \neq \pm2$, even when this map has maximal order. In fact, for $x_0 \neq \pm 2$, when $V_3\circ V_2$ has maximal order, the subgroup $\langle V_2, V_3\rangle$ acts transitively on $C_1(x_0)$ only about half of the time, and breaks $C_1(x_0)$ up into two orbits about half of the time. One of the key steps in Section \ref{ConicsSection} is the use of some combinatorics to find polynomial conditions guaranteeing transitivity. In Section \ref{EndgameSection}, we construct a large connected component, containing essentially all points whose orbits under either $V_3\circ V_2$, $V_3\circ V_1,$ or $V_2\circ V_1$ have order at least $p^{\frac{1}{2} + \delta}$, following the strategy of the so-called endgame of \cite{BGS}, i.e., applying Weil's bound together with sieving. This section is more complicated than the endgame in \cite{BGS} because of the more complicated polynomial conditions coming out of Section \ref{ConicsSection}, and it is in proving irreducibility of the curves arising from these conditions that we must demand nondegeneracy. In the course of our endgame, it is sometimes quite helpful to include the extra automorphisms in $\Gamma'$; in Section \ref{Comparison} we show that we can safely do this without collapsing multiple large orbits into one.

The rest of the strategy of Bourgain, Gamburd, and Sarnak goes through with few changes: in Sections \ref{middlegameSection} and \ref{openingSection}, we quickly handle the so-called middlegame and opening, respectively, first connecting all points of order at least $p^\epsilon$ to the large connected component of Section \ref{EndgameSection}, and then an arbitrary starting point to this component. This concludes the proof of our main theorem.

In Section \ref{ObstructionsSection}, we establish the existence of multiple large orbits under the action of $\Gamma$ for degenerate parameters $A, B, C,$ and $D$, and investigate how some of them may sometimes combine under the action of the larger $\Gamma'.$ In Section \ref{ModificationsDegenerate} we modify the strategy used to prove our main theorem to apply to the candidate orbits we identify in Section \ref{ObstructionsSection}. At a high level, the opening and middlegame go through only very minor changes (see Lemma \ref{lemma:DegenerateInheritedReductions}), but the action on the conic sections is somewhat different, and as a result several modifications to the strategy of Section \ref{EndgameSection} are needed. 

Appendix A provides the formula for a complicated polynomial $\Delta$ which arises at various points in our analysis, and Appendix B contains the Macaulay2 code verifying certain algebraic identities used in the proof.

The primary technical achievement of this paper is contained in Sections \ref{ConicsSection} and \ref{EndgameSection}, in which we overcome the obstacle mentioned above: there are almost no conic sections upon which the iterates of any single automorphism act transitively. An important secondary achievement is our discovery of sharp degeneracy conditions, and the modification of the proof of our main theorem to count large orbits for degenerate surfaces in the family. In particular, we ultimately need an 11-case breakdown to handle all of the various modifications of the endgame arguments needed in the degenerate case. Due to our detailed study of conic sections in Section \ref{ConicsSection}, we also obtain a slight simplification, relative to the corresponding argument in \cite{pDivisibilityClusterAlg}, of the argument establishing the existence of multiple large orbits. Two other important accomplishments of somewhat lesser stature appear in Sections \ref{Comparison} and \ref{openingSection}: In Section \ref{Comparison} we show that action by $\Gamma'$ and action by $\Gamma$ are equivalent (at least in the nondegenerate case, and after excluding the small orbits), which has been unwritten folklore for some time; in Section \ref{openingSection} we show how the unconditional lower bound on orbits obtained by Bourgain, Gamburd, and Sarnak in the classical Markoff setting applies in our more general context after removing the finite $\Gamma$-orbits. The author hopes that the discussions of other related varieties and their relations to various areas of mathematics in this introduction and Sections \ref{FiniteOrbitsSection}, \ref{GroupTheory}, and \ref{GeneralizedClusterAlgsSection} can serve as a useful guide to the literature on the many manifestations of Markoff-type varieties.

It would be interesting to study the action of $\Gamma$ on $\mathcal{S}_{A, B, C, D}(\mathbb{Z}/N\mathbb{Z})$ for $N$ squarefree, as in \cite{MeiriPuder}, $N = p^n$ as in \cite{BGSInPrep}, or even for general composite $N$, as well as the action on the $p$-adic points $\mathcal{S}_{A, B, C, D}(\mathbb{Z}_p)$ as in \cite{JangMinimality}, which is essentially equivalent to the study of points over $\mathbb{Z}/p^n\mathbb{Z}.$ See also \cite{CantatJang} for related work surrounding stationary measures in the $p$-adic context, and \cite{GolsefidyTamam} for results of a similar spirit in the much more general context of mapping class group actions on character varieties, with special focus on the situation over a complete archimedean or nonarchimedean field. In a much more difficult direction, Bourgain, Gamburd, and Sarnak conjecture that the Markoff graphs they define in \cite{BGS} form a family of expander graphs; it would certainly be interesting to investigate whether the same is true of the graphs which arise from other surfaces in our family.

\section{Finite Orbits in Characteristic Zero}\label{FiniteOrbitsSection}
Depending on the choice of parameters $A,$ $B,$ $C$, and $D$, there may be small orbits which arise in any field, or at least any field containing the roots of certain polynomials. These come in four infinite families, together with 45 exceptional orbits, classified by Lisovyy and Tykhyy in \cite{LT}. The simplest such family is a point fixed by each of $V_1, V_2,$ and $V_3$. Unlike the other infinite families, we record the precise conditions in a proposition, both because they are slightly more complicated and because it will be necessary to reference them later in Section \ref{GeneralizedClusterAlgsSection}:
\begin{proposition}[\cite{LT}, Lemma 39 part 1]\label{singletonOrbits}
The point $(x, y, z)$ lies on $\mathcal{S}_{A, B, C, D}$ and is fixed by each of $V_1,$ $V_2,$ and $V_3$ if and only if the parameters are given by:
$$A = 2x - yz$$
$$B = 2y - xz$$
$$C = 2z - xy$$
$$D = 2xyz - x^2 - y^2 - z^2$$
\end{proposition}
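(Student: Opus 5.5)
The statement reduces to direct computation, treating each Vieta involution separately. Since $V_1$ sends $(x,y,z)$ to $(A+yz-x,y,z)$, the point $(x,y,z)$ is fixed by $V_1$ if and only if $A+yz-x=x$, that is, $A=2x-yz$. In the same way, $V_2$ fixes the point exactly when $B=2y-xz$, and $V_3$ fixes it exactly when $C=2z-xy$. These identities do not use membership in $\mathcal{S}_{A,B,C,D}$ at all; they depend only on the formulas defining the involutions. So being fixed by all three involutions is equivalent to the first three displayed equations.

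It remains to show that, once those three equations hold, membership of $(x,y,z)$ in $\mathcal{S}_{A,B,C,D}$ is equivalent to the fourth equation. I would substitute $A$, $B$, $C$ into the defining equation (\ref{MainEq}), evaluated at the point. The linear terms become
\[
Ax+By+Cz=2x^2+2y^2+2z^2-3xyz.
\]
The equation $x^2+y^2+z^2=xyz+Ax+By+Cz+D$ is linear in $D$, so it holds if and only if
\[
D=x^2+y^2+z^2-xyz-(2x^2+2y^2+2z^2-3xyz)=2xyz-x^2-y^2-z^2,
\]
which is the stated value.

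Both directions then follow at once. If the point lies on the surface and is fixed by $V_1,V_2,V_3$, the parameters take the four stated forms. Conversely, if the parameters have those forms, the first three equations give invariance under each $V_i$, and the fourth, read backwards, says the point satisfies (\ref{MainEq}). There is no real obstacle here; the only care needed is in the sign bookkeeping in the substitution step.
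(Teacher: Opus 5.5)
Your proof is correct. Each fixed-point condition is just a rearrangement of the formula defining $V_i$, which makes sense on all of affine space independently of the surface. Substituting the three resulting expressions for $A,B,C$ into (\ref{MainEq}) gives $Ax+By+Cz=2x^2+2y^2+2z^2-3xyz$, so membership in the surface is equivalent to $D=2xyz-x^2-y^2-z^2$. Both directions then follow exactly as you say.

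The paper gives no argument of its own here; it simply cites Lemma 39 of Lisovyy--Tykhyy. Your direct verification is the natural self-contained proof. Because it uses only ring operations, it holds verbatim over $\mathbb{C}$ and over $\mathbb{F}_p$, which is how the paper later uses the statement. For example, in the proof of Theorem \ref{QuadraticObstructions} it is applied over $\mathbb{F}_p$ to conclude that a point with $z=-2$ satisfying the relevant relations is a triple fixed point.
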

The orbits arising from the above will be known as \textit{Type I orbits}, following the terminology of \cite{LT}.
Similarly, there may also be orbits of size $2, 3,$ or $4$, described in the other parts of \cite{LT}, Lemma 39: 

\begin{itemize}
    \item If $B = C = 0$ and the polynomial $X^2 - AX - D$ has distinct roots $r_1$ and $r_2$ in $\mathbb{F}_p$, then $(r_1, 0, 0)$ and $(r_2, 0, 0)$ are solutions to (\ref{MainEq}) which are mapped to each other by $V_1$ and fixed by $V_2$ and $V_3$, so they form a $\Gamma$-orbit of size 2. In the case $A^2 + 4D = 0$, we have a triple fixed point, already classified above. If $A^2 + 4D$ is a nonzero perfect square, then there will be an orbit of size 2 in $\mathcal{S}_{A, 0, 0, D}(\mathbb{F}_p)$ for all primes $p$ not dividing $A^2 + 4D$. If $A^2 + 4D$ is not a perfect square, then nonetheless we shall have
    $$\left(\frac{A^2 + 4D}{p}\right) = 1$$
    for essentially half of the primes $p$, and for each such prime, there will be an orbit of size~2 in
$\mathcal{S}_{A,0,0,D}(\mathbb{F}_p)$, while for other primes there will not be such an orbit.
If the parameters $(A^{\prime},B^{\prime},C^{\prime},D^{\prime})$ are equivalent to $(A,0,0,D)$,
then we obtain an orbit of size 2 equivalent to the one above for at least a density-$\frac{1}{2}$ set of primes.
These orbits will be known henceforth as \textit{Type~II orbits}.
    \item If $D = -1,$ $A = -2$, and $B = C = k$, then rewriting (\ref{MainEq}) as
\begin{equation}(X^2 + 2X + 1) + Y(Y-k) + Z(Z-k)= XYZ\end{equation}
and keeping in mind that the Vieta involutions exchange roots of the quadratic polynomial in each variable, it is clear that $(-1, 0, 0)$ is fixed by $V_1$, mapped to $(-1, k, 0)$ by $V_2$, and mapped to $(-1, 0, k)$ by $V_3$, and further that $(-1, k, 0)$ and $(-1, 0, k)$ remain fixed by $V_1$. Additionally, one can check that $(-1, k, 0)$ is fixed by $V_3$ and that $(-1, 0, k)$ is fixed by $V_2$, so that, for $k\neq 0$, $\{(-1, 0, 0), (-1, k, 0), (-1, 0, k)\}$ is a $\Gamma$-orbit of size $3$. Of course, if $(A, B, C, D)$ is equivalent to $(-2, k, k, -1),$ we obtain an equivalent $\Gamma$-orbit of size 3. These orbits will be known henceforth as \textit{Type III orbits}.

\item Finally, if $A = B = C = k$ and $D = 4 + 3k$, then rewriting (\ref{MainEq}) as
\begin{equation}\label{Type4Orbits} (X+1)(X-k-2) + (Y+1)(Y-k-2) + (Z+1)(Z-k-2) = XYZ - X  - Y - Z - 2\end{equation}
one can see as in the previous case that one has, for $k\neq -3$, an orbit of size 4 consisting of 
$$\{(-1, -1, -1), (k+2, -1, -1), (-1, k+2, -1), (-1, -1, k+2)\}.$$
Indeed, the right-hand side of (\ref{Type4Orbits}) vanishes at all of these points, telling us that $V_1$, $V_2$, and $V_3$ map $(-1, -1, -1)$ to $(k+2, -1, -1),$ $(-1, k+2, -1),$ and $(-1, -1, k+2)$ respectively. One can then manually check that $(k+2, -1, -1)$ is fixed by $V_2$ and $V_3$, or use the conditions for a point to be fixed by two Vieta involutions in Proposition \ref{DoubleFixedPoints}; by symmetry $(-1, k+2, -1)$ and $(-1, -1, k+2)$ are also double fixed points. Again, for equivalent parameters, we get equivalent orbits of size 4. These orbits will be known henceforth as \textit{Type IV orbits}.
\end{itemize}

Lisovyy and Tykhyy prove that these are the only infinite families of finite orbits, but that for certain special values of the parameters $A,$ $B,$ $C,$ and $D$ there are some additional finite orbits. Depending on the particular parameters for which these orbits occur, and on the coordinates of the solutions in these orbits, we can get different arithmetic behavior. We illustrate with three examples. First, if $(A, B, C, D) = (0, -1, -1, 0),$ then $(1, -1, -1)$ is a solution to (\ref{MainEq}), and its orbit has size $5$, as depicted in Figure \ref{ExceptionalFiniteOrbit1}. As the root solution $(1, -1, -1)$ has integral coordinates, so do all of the solutions in its orbit, and thus this orbit will appear in every $\mathcal{S}_{0, -1, -1, 0}(\mathbb{F}_p).$

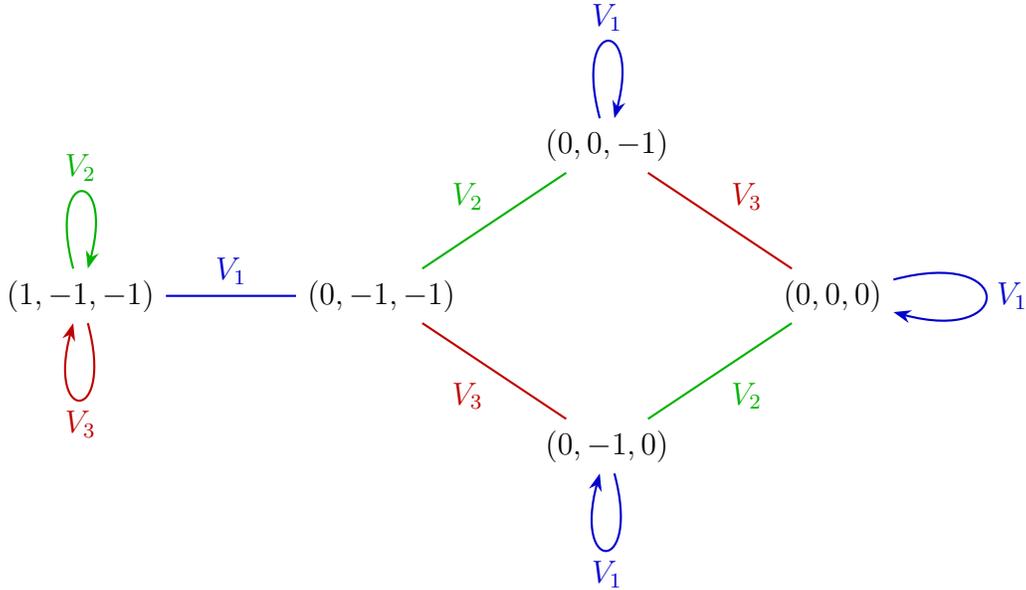
\begin{figure}
    \centering
\begin{tikzpicture}[
  >=Stealth,
  node distance=3cm,
  line width=0.8pt,
  v1/.style={blue!80!black},
  v2/.style={green!70!black},
  v3/.style={red!75!black},
  bigloop/.style={looseness=10, min distance=14mm}
]

\node (A) at (-6,0)  {$\left(1,-1,-1\right)$};
\node (B) at (-2,0)  {$\left(0,-1,-1\right)$};

\node (C) at (1,2)   {$\left(0,0,-1\right)$};
\node (D) at (4,0)   {$\left(0,0,0\right)$};
\node (E) at (1,-2)  {$\left(0,-1,0\right)$};

\draw[v2, ->] (A) to[loop above, bigloop] node {$V_2$} (A);
\draw[v3, ->] (A) to[loop below, bigloop] node {$V_3$} (A);

\draw[v1, ->] (C) to[loop above, bigloop] node {$V_1$} (C);
\draw[v1, ->] (E) to[loop below, bigloop] node {$V_1$} (E);

\draw[v1, ->] (D) to[loop right, bigloop] node {$V_1$} (D);

\draw[v1, -] (A) -- node[above] {$V_1$} (B);

\draw[v2, -] (B) -- node[above left] {$V_2$} (C);
\draw[v3, -] (C) -- node[above right] {$V_3$} (D);

\draw[v3, -] (B) -- node[below left] {$V_3$} (E);
\draw[v2, -] (E) -- node[below right] {$V_2$} (D);

\end{tikzpicture}

\caption{Exceptional finite orbit for the parameters $(A, B, C, D) = (0, -1, -1, 0)$ starting from $(1, -1, -1)$}
\label{ExceptionalFiniteOrbit1}    
\end{figure}

Second, for $(A, B, C, D) = (0, 0, 0, 3),$ we have that $(1, \sqrt{2}, \sqrt{2})$ is a solution to (\ref{MainEq}), and its orbit has size $12$, as depicted in Figure \ref{ExceptionalFiniteOrbit2}; note that as the coordinates of the root solution are not rational integers, no point in the orbit has coordinates that are all rational integers. It follows that for primes $p$ such that 
$$\left(\frac{2}{p}\right) = 1,$$
equivalently, for $p\equiv \pm 1\pmod{8}$, there is a $\Gamma$-orbit on $\mathcal{S}_{(0, 0, 0, 3)}(\mathbb{F}_p)$ of size $12$. However, modulo other primes, this orbit will not occur.

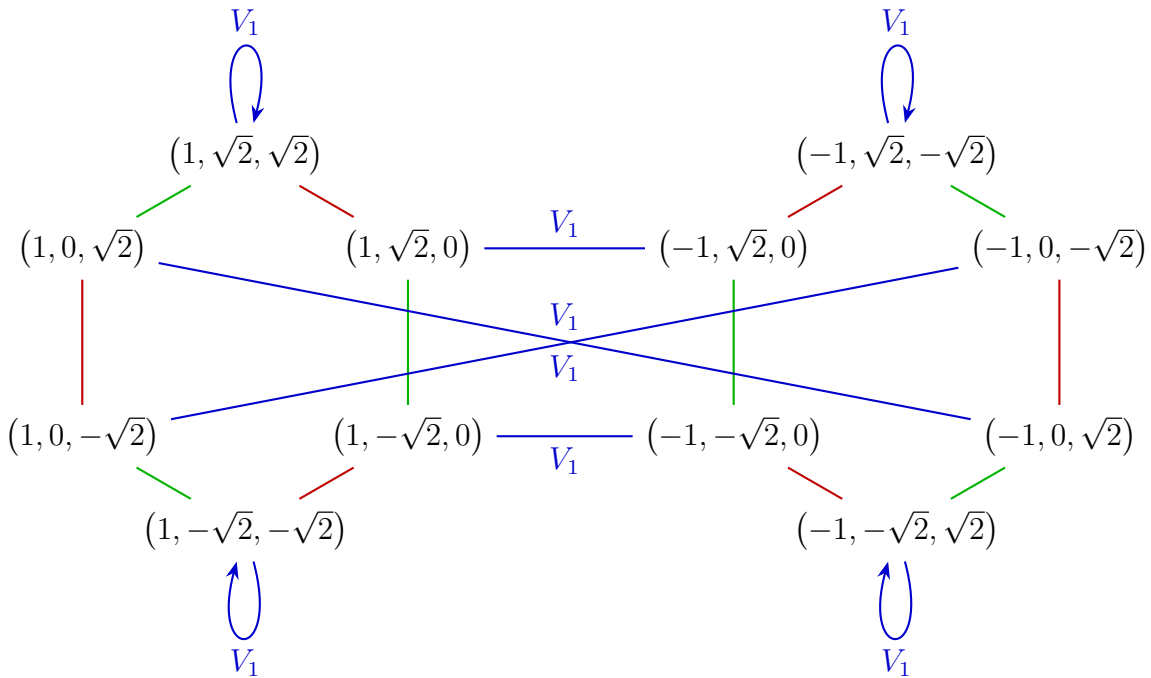
\begin{figure}
    \centering
\begin{tikzpicture}[
  >=Stealth,
  line width=0.8pt,
  v1/.style={blue!80!black},
  v2/.style={green!70!black},
  v3/.style={red!75!black},
  bigloop/.style={looseness=10, min distance=14mm}
]

\def\R{2.5}   
\def\H{1.732*\R}    

\node (L1) at ({-\H},{ \R}) {$\left(1,\sqrt2,\sqrt2\right)$};
\node (L2) at ({-\H-\R*0.866},{ \R*0.5}) {$\left(1,0,\sqrt2\right)$};
\node (L3) at ({-\H-\R*0.866},{-\R*0.5}) {$\left(1,0,-\sqrt2\right)$};
\node (L4) at ({-\H},{-\R}) {$\left(1,-\sqrt2,-\sqrt2\right)$};
\node (L5) at ({-\H+\R*0.866},{-\R*0.5}) {$\left(1,-\sqrt2,0\right)$};
\node (L6) at ({-\H+\R*0.866},{ \R*0.5}) {$\left(1,\sqrt2,0\right)$};

\node (R1) at ({\H},{ \R}) {$\left(-1,\sqrt2,-\sqrt2\right)$};
\node (R2) at ({\H+\R*0.866},{ \R*0.5}) {$\left(-1,0,-\sqrt2\right)$};
\node (R3) at ({\H+\R*0.866},{-\R*0.5}) {$\left(-1,0,\sqrt2\right)$};
\node (R4) at ({\H},{-\R}) {$\left(-1,-\sqrt2,\sqrt2\right)$};
\node (R5) at ({\H-\R*0.866},{-\R*0.5}) {$\left(-1,-\sqrt2,0\right)$};
\node (R6) at ({\H-\R*0.866},{ \R*0.5}) {$\left(-1,\sqrt2,0\right)$};

\draw[v1,->] (L1) to[loop above, bigloop] node {$V_1$} (L1);
\draw[v1,->] (L4) to[loop below, bigloop] node {$V_1$} (L4);

\draw[v1,->] (R1) to[loop above, bigloop] node {$V_1$} (R1);
\draw[v1,->] (R4) to[loop below, bigloop] node {$V_1$} (R4);

\draw[v2,-] (L1)-- node[below right] {$V_2$} (L2);
\draw[v3,-] (L2)-- node[right] {$V_3$} (L3);
\draw[v2,-] (L3)-- node[above right] {$V_2$} (L4);
\draw[v3,-] (L4)-- node[above left] {$V_3$} (L5);
\draw[v2,-] (L5)-- node[left] {$V_2$} (L6);
\draw[v3,-] (L6)-- node[below left] {$V_3$} (L1);

\draw[v2,-] (R1)-- node[below left] {$V_2$} (R2);
\draw[v3,-] (R2)-- node[left] {$V_3$} (R3);
\draw[v2,-] (R3)-- node[above left] {$V_2$} (R4);
\draw[v3,-] (R4)-- node[above right] {$V_3$} (R5);
\draw[v2,-] (R5)-- node[right] {$V_2$} (R6);
\draw[v3,-] (R6)-- node[below right] {$V_3$} (R1);

\draw[v1,-] (L6) -- node[above] {$V_1$} (R6);
\draw[v1,-] (L5) -- node[below] {$V_1$} (R5);

\draw[v1,-] (L2) -- node[above] {$V_1$} (R3);
\draw[v1,-] (L3) -- node[below] {$V_1$} (R2);

\end{tikzpicture}
    \caption{Exceptional finite orbit for the parameters $(A, B, C, D) = (0, 0, 0, 3)$ starting from $(1, \sqrt{2}, \sqrt{2})$}
    \label{ExceptionalFiniteOrbit2}
\end{figure}

Finally, for $(A, B, C, D) = (\sqrt{2}, 0, 0, 1),$ there is an orbit of size 6 containing the point $(\sqrt{2}, -1, -\sqrt{2})$, depicted in Figure \ref{ExceptionalFiniteOrbit3}. A fixed choice of integers $(A, B, C, D)$ can be equivalent, after reduction modulo $p$, to parameters of the form $(\sqrt{2}, 0, 0, 1)$ only for finitely many primes, namely those dividing $A^2 - 2$, $B$, $C$, and $D-1$, up to the equivalences of Definition \ref{def:EquivalentParams}. Thus, for the purposes of the theorems of this paper, where the density-one set of primes for which we describe the $\Gamma$-orbits on $\mathcal{S}_{A, B, C, D}(\mathbb{F}_p)$ is allowed to vary with the choice of integer parameters $A, B, C,$ and $D$, these orbits can essentially be ignored. However, as mentioned in Remark \ref{rmk:Dependence}, it would be desirable to strengthen our results to hold for a density-one set of primes not dependent on the parameters, or even for all sufficiently large primes irrespective of the parameters. In such a result one must remove this third type of orbit as well, since for any prime modulo which $2$ is a quadratic residue there will be a choice of $(A, B, C, D)$ over which this orbit of size 6 appears.

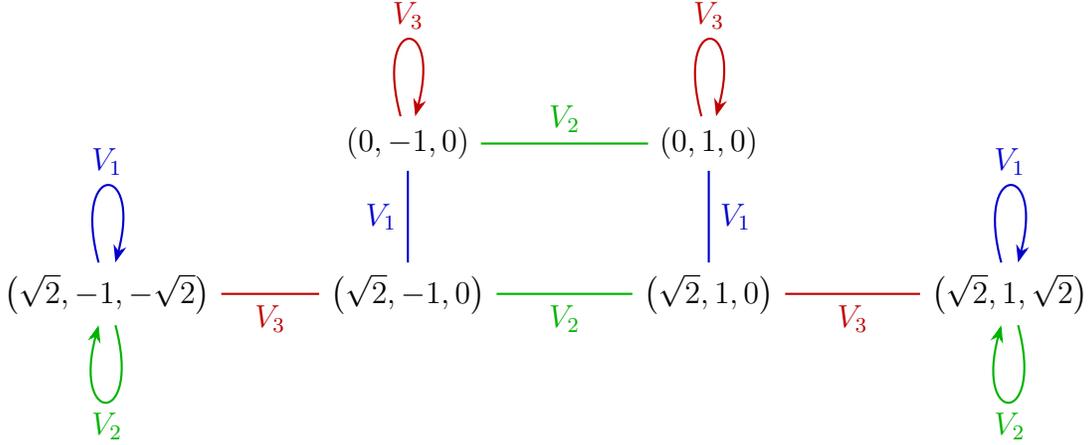
\begin{figure}
    \centering
    \begin{tikzpicture}[
  >=Stealth,
  line width=0.8pt,
  v1/.style={blue!80!black},
  v2/.style={green!70!black},
  v3/.style={red!75!black},
  bigloop/.style={looseness=10, min distance=14mm}
]

\node (A) at (-6,0) {$\left(\sqrt2,-1,-\sqrt2\right)$};
\node (B) at (-2,0) {$\left(\sqrt2,-1,0\right)$};
\node (C) at ( 2,0) {$\left(\sqrt2,1,0\right)$};
\node (D) at ( 6,0) {$\left(\sqrt2,1,\sqrt2\right)$};

\node (E) at (-2,2) {$\left(0,-1,0\right)$};
\node (F) at ( 2,2) {$\left(0,1,0\right)$};

\draw[v1,->] (A) to[loop above, bigloop] node {$V_1$} (A);
\draw[v2,->] (A) to[loop below, bigloop] node {$V_2$} (A);

\draw[v3,->] (E) to[loop above, bigloop] node {$V_3$} (E);
\draw[v3,->] (F) to[loop above, bigloop] node {$V_3$} (F);

\draw[v1,->] (D) to[loop above, bigloop] node {$V_1$} (D);
\draw[v2,->] (D) to[loop below, bigloop] node {$V_2$} (D);

\draw[v3,-] (A) -- node[below] {$V_3$} (B);
\draw[v2,-] (B) -- node[below] {$V_2$} (C);
\draw[v3,-] (C) -- node[below] {$V_3$} (D);

\draw[v1,-] (B) -- node[left] {$V_1$} (E);
\draw[v1,-] (C) -- node[right] {$V_1$} (F);

\draw[v2,-] (E) -- node[above] {$V_2$} (F);

\end{tikzpicture}
    \caption{Exceptional finite orbit for the parameters $(A, B, C, D) = (\sqrt{2}, 0, 0, 1)$ starting from $(\sqrt{2}, -1, -\sqrt{2})$}
    \label{ExceptionalFiniteOrbit3}
\end{figure}

All exceptional parameters and corresponding exceptional finite orbits have been classified in \cite{LT}, Table 4, up to equivalence. There are exactly 45 such orbits, and the examples given above are orbits number $1,$ $20,$ and $4$, respectively. For ease of reference we record this as a theorem.

\begin{theorem}[\cite{LT}, Theorem 1]\label{FiniteOrbitClassification}
For $(A, B, C, D) \neq (0, 0, 0, 4)$, every finite $\Gamma$-orbit on $\mathcal{S}_{A, B, C, D}(\mathbb{C})$ is either a Type I, Type II, Type III, or Type IV orbit, or is one of the 45 exceptional orbits listed in Table 4 of \cite{LT}. In our notation, the parameters of surfaces with exceptional orbits are given by
$$(A, B, C, D) = (-\omega_X, -\omega_Y, -\omega_Z, 4-\omega_4),$$
and each exceptional orbit has a point given by
$$(x, y, z) = (-2\cos(\pi r_X), -2\cos(\pi r_Y), -2\cos(\pi r_Z))$$

where $\omega_X, \omega_Y, \omega_Z, 4-\omega_4, r_X, r_Y,$ and $r_Z$ are the numbers given in the columns of \cite{LT}, Table 4.
\end{theorem}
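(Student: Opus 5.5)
The plan is to deduce the statement directly from \cite{LT}, Theorem 1, by translating their normalization into ours; no new mathematics should be needed beyond matching conventions. Lisovyy and Tykhyy work with the Jimbo--Fricke cubic
\[
x_1x_2x_3 + x_1^2 + x_2^2 + x_3^2 - \omega_X x_1 - \omega_Y x_2 - \omega_Z x_3 + \omega_4 = 0 .
\]
Note that this is written with $+x_1x_2x_3$, the opposite sign convention to (\ref{MainEq}). I will apply the substitution $(x_1,x_2,x_3) = (-X,-Y,-Z)$. Under it the cubic term changes sign and the linear terms become $+\omega_X X$, and so on. Moving everything to one side then gives exactly
\[
X^2+Y^2+Z^2 = XYZ + AX + BY + CZ + D
\]
with $A=-\omega_X$, $B=-\omega_Y$, $C=-\omega_Z$ and $D = -\omega_4$. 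I will need to check carefully how \cite{LT} tabulate the constant term, since the column headed $4-\omega_4$ suggests their table lists a shifted quantity. The point of this check is to confirm that the constant term in our normalization is what the statement records as $4-\omega_4$. Their points $x_i = 2\cos(\pi r_i)$ then become $X = -2\cos(\pi r_X)$, and similarly for $Y$ and $Z$, as stated.

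Second, I will match the group actions. \cite{LT} classify finite orbits of the action of the (pure) braid group, or equivalently the mapping class group, on the cubic. This action is generated by the ``Okamoto/braid'' birational maps, which are compositions of pairs of Vieta-type involutions. Under the substitution above, each of their elementary involutions becomes one of our $V_1, V_2, V_3$. This is the standard check that swapping the two roots of the quadratic in one variable is intrinsic to the surface, so it is preserved by the linear change of variables.

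The key point is that their group is a finite-index subgroup of $\Gamma$. For example, $\langle V_iV_j\rangle$ has index $2$ in $\Gamma$, and $\Gamma$ is a finite extension of it. Consequently a $\Gamma$-orbit is finite if and only if it is a finite union of finite orbits of the subgroup, and any finite orbit of the subgroup generates a finite $\Gamma$-orbit. Finiteness is therefore the same notion for both groups. The classification of finite $\Gamma$-orbits then follows by taking $\Gamma$-saturations of their orbits. This is also how the Type I--IV families of \cite{LT}, Lemma 39, reappear as the orbits described above.

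Third, I will account for their reduction ``up to equivalence''. The 45 entries in \cite{LT}, Table 4, are listed modulo Okamoto symmetries, sign changes and permutations. I need to verify that the sign changes and permutations correspond to the equivalences of Definition \ref{def:EquivalentParams}. Any remaining Okamoto-type identifications are absorbed by the phrase ``listed in Table 4''.

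The exclusion of $(0,0,0,4)$ corresponds, under the substitution, to their exclusion of the Cayley cubic, which has infinitely many finite orbits. I expect the main obstacle to be purely bookkeeping: pinning down the exact sign and shift conventions for $\omega_4$, and for the $r$-columns of their table, so that the displayed formulas are literally correct. I would confirm this by testing one exceptional orbit, for instance orbit $1$ with parameters $(0,-1,-1,0)$, against Figure \ref{ExceptionalFiniteOrbit1}.
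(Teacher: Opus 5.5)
Your plan is essentially the paper's: the paper gives no independent proof, since the statement is just \cite{LT}, Theorem 1, transported through the sign change $(x,y,z)\mapsto(-x,-y,-z)$, which turns Lisovyy--Tykhyy's root-swapping involutions into $V_1,V_2,V_3$. (Your finite-index remark is harmless, but a single $\langle V_iV_j\rangle$ is infinite cyclic of infinite index in $\Gamma\cong(\mathbb{Z}/2)^{*3}$; the index-$2$ subgroup is the even-word subgroup $\langle V_1V_2,V_2V_3\rangle$.) Your orbit-$1$ test is the right way to settle the constant term: with the cubic as you wrote it the substitution gives $D=-\omega_4$, so the displayed $D=4-\omega_4$ is literally correct only if the Lisovyy--Tykhyy constant term is $\omega_4-4$, and since $(1,-1,-1)$ lies on $\mathcal{S}_{0,-1,-1,D}$ only for $D=0$, comparing with the tabulated entry for orbit $1$ decides which normalization is in force.
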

\begin{remark}[Interpreting the cosines over $\mathbb{F}_p$]
As $r_X, r_Y,$ and $r_Z$ are rational numbers, the points in each exceptional orbit are algebraic integers. To interpret them over finite fields, let $K$ be the number field generated by the coordinates of the points of the orbit. For any prime $P$ of $K$ lying above a rational prime $p$ with residual degree one, reduction modulo $P$ gives a $\Gamma$-orbit over $\mathbb{F}_p$. Away from the finitely many primes $P$ for which two distinct points in the complex orbit become congruent modulo $P$, this orbit is isomorphic to the original orbit; in the remaining cases, the graph of the reduced orbit is a quotient of the graph of the complex orbit. We regard these quotient orbits as also arising from finite orbits over $\mathbb{C}$.
\end{remark}
\begin{remark}[The Cayley parameters]
    Recall that the case of $(A, B, C, D) = (0, 0, 0, 4)$ is the famously degenerate Cayley cubic. For this particular cubic surface, the Vieta involutions linearize under a change of variables, with the result that there are many orbits, all of them fairly straightforward to describe. Over $\mathbb{C}$, finite orbits are also classified in Theorem 1 of \cite{LT}; over $\mathbb{F}_p$, they are classified in Theorem 1.1 of \cite{deCourcyIrelandLee}.
\end{remark}
\begin{notation}\label{notation:FiniteOrbits}
    We let $\mathcal{E}(p) = \mathcal{E}_{A, B, C, D}(p)$ denote the orbits in $\mathcal{S}_{A, B, C, D}(\mathbb{F}_p)$ which arise from finite $\Gamma$-orbits over $\mathbb{C},$ that is, the collection of Type I, II, III, and IV orbits, together with any exceptional orbits that arise over $\mathbb{F}_p$ as in the discussion preceding Theorem \ref{FiniteOrbitClassification}, and let $E(p) = E_{A, B, C, D}(p) = \bigcup\limits_{\mathcal{O}\in\mathcal{E}(p)}\mathcal{O}$. Thus $\mathcal{E}(p)$ is a collection of orbits, while $E(p)$ is their union, a collection of excluded points not covered by our transitivity results. Finally, we let $\mathcal{S}^*_{A, B, C, D}(p) = \mathcal{S}_{A, B, C, D}(\mathbb{F}_p) \setminus E(p).$
\end{notation}
With this notation fixed, we can restate our main theorem more compactly as
\begin{theorem}\label{th:mainRestated}
    For any choice of nondegenerate integer parameters $(A, B, C, D),$ $\Gamma$ acts transitively on $\mathcal{S}^*_{A, B, C, D}(p)$ for all primes $p$ in a density-one set depending on the parameters $A, B, C,$ and $D$.
\end{theorem}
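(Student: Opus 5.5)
The plan follows the Bourgain--Gamburd--Sarnak scheme of opening, middlegame and endgame, adapted to the slices $C_i(t)=\mathcal{S}_{A,B,C,D}\cap V(X_i-t)$. On such a slice, say $X=x_0$ with $x_0\neq\pm2$, the equation becomes a conic in $(y,z)$. The rotation $R_1=V_3\circ V_2$ acts on it as multiplication by $\lambda$ in a torus $\mathbb{G}_m$ or a norm-one torus, where $\lambda+\lambda^{-1}=x_0^2-2$ up to the affine shift coming from $B,C$. The order of $R_1$ therefore divides $p\pm1$.

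\textbf{Step 1: the slices.} First I would analyze the action of $\langle V_2,V_3\rangle$ on each slice. Because of the linear terms, $V_2$ and $V_3$ are reflections about points that are not the origin of the torus. So $\langle V_2,V_3\rangle$ is a dihedral group acting on the torus coset. It is transitive exactly when a certain element of the torus (built from $B$, $C$ and $x_0$) lies in $\langle\lambda\rangle$ times an index-two correction; otherwise it splits the slice into two orbits. I would write this membership condition as a polynomial condition of quadratic-residue type in $x_0$, for instance that a discriminant-type polynomial $\Delta(x_0)$ be a square.

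\textbf{Step 2: the endgame.} Let $\mathcal{C}$ be the set of points whose rotation order under some $R_i$ is at least $p^{1/2+\delta}$. I would show that $\mathcal{C}$ lies in one $\Gamma$-component. Take two slices $C_1(x_0)$ and $C_1(x_1)$ of large order. Count the points $(x_0,y,z)$ with $y$ a "good" coordinate, meaning the slice $C_2(y)$ has large order and the transitivity condition of Step 1 holds. I would do this by sieving over divisors of $p\pm1$ and applying Weil's bound to the curves cut out by these conditions together with the square conditions. Irreducibility of those curves is exactly where nondegeneracy (Definition \ref{nondegeneracyCondition}) enters: when $A=B$ and $4D+A^2=8C+16$ up to equivalence, the relevant polynomial becomes a square and the curve splits. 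I expect this irreducibility analysis, done via the explicit polynomial $\Delta$, to be the main obstacle. The extra automorphisms in $\Gamma'$ help here, and the comparison theorem \ref{comparisonOfGroupsThm} then lets me pass back from $\Gamma'$ to $\Gamma$.

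\textbf{Step 3: the middlegame.} Points with order between $p^{\epsilon}$ and $p^{1/2+\delta}$ are joined to $\mathcal{C}$ as in BGS. Their slice contains at least $p^{\epsilon}$ points, and a multiplicative-subgroup argument shows that one of them lies on a slice of order at least $p^{1/2+\delta}$ in another coordinate. This step needs the bounds on how many points of a slice lie in small multiplicative subgroups, which hold uniformly in the parameters.

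\textbf{Step 4: the opening.} An arbitrary point outside $E(p)$ must reach a point whose rotation order is at least $p^{\epsilon}$. I would take the Dehn-twist orbit of a point and argue that its size exceeds the threshold $11d^3+d+24$. This uses the parameter-dependent lower bound of Proposition \ref{AlgNTBound}, and the points where the bound fails are exactly the reductions of finite complex orbits (Theorem \ref{FiniteOrbitClassification}). Combining that size bound with Theorem \ref{LowerBoundOnOrder}, which holds for a density-one set of primes, shows that the order grows to at least $p^{\epsilon}$. This gives transitivity on $\mathcal{S}^*_{A,B,C,D}(p)$ for the resulting density-one set of primes.
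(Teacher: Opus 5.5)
Your proposal follows essentially the same route as the paper: the dihedral analysis of $\langle V_2,V_3\rangle$ on slices (at maximal order it is transitive exactly when $f_1(x_0)$ is a quadratic nonresidue, while the Dehn twist alone is never transitive on a non-parabolic slice), a Weil-plus-sieve endgame in which nondegeneracy is precisely what keeps $f_i$ from being a square, the passage from $\Gamma'$ back to $\Gamma$, a Corvaja--Zannier middlegame, and an opening by lifting to characteristic zero combined with Theorem~\ref{LowerBoundOnOrder}. Two points need tightening. First, the middlegame cannot jump from order $p^{\epsilon}$ to order $p^{1/2+\delta}$ in one step: Proposition~\ref{CorvajaZannierBound} only yields a point of strictly larger order, so you must iterate, avoiding double fixed points and the subtorus-translate case $B=C=0$, $Ax=4-D$. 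Second, the endgame also needs the separate cage-connectivity step of Proposition~\ref{cageConnectivity}, which joins two good maximal-order slices through a third good slice meeting both.
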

\section{Applications to Group Theory}\label{GroupTheory}
An important special case of the equations considered in this paper is the equation (\ref{MarkoffPlusk}), namely

$$X^2 + Y^2 + Z^2 = XYZ + k,$$

which arises naturally in the combinatorial group theory of $\text{SL}_2(\mathbb{F}_p)$ as follows. Let $(M, N)$ and $(M', N')$ be two generating pairs of a group $G$. Each pair determines a surjective map $F_2\rightarrow G$, denoted by $\phi_1$ and $\phi_2$, respectively, defined by
$$\phi_1(a) = M,\; \phi_1(b) = N;$$
$$\phi_2(a) = M',\; \phi_2(b) = N'.$$
We say that $(M, N)$ and $(M', N')$ are \textit{Nielsen equivalent} if there is a $\rho\in\text{Aut}(F_2)$ such that 
$$\phi_2 = \phi_1\circ\rho.$$

A problem of interest in computational and combinatorial group theory is to determine the Nielsen equivalence classes of generating pairs of elements in important groups $G$. An important invariant of Nielsen equivalence, known, after its discoverer, as the \textit{Higman invariant}, is the so-called \textit{extended conjugacy class} of the commutator $[M, N]$, given by the union of the conjugacy classes of $[M, N]$ and $[M, N]^{-1} = [N, M].$

In the case $G = \text{SL}_2(\mathbb{F}_p)$, McCullough and Wanderley conjectured in \cite{McCulloughWanderley} that the Higman invariant fully classifies generating pairs---if two generating pairs of $\text{SL}_2(\mathbb{F}_p)$ have the same Higman invariant, then they are Nielsen equivalent. They call this the \textit{Classification Conjecture}. They relate this conjecture to the study of orbits on (\ref{MarkoffPlusk}) via the Fricke invariant; see Sections 5, 7, and 8 of \cite{McCulloughWanderley} for details. Building on their work and that of Campos-Vargas \cite{CamposVargas}, Martin showed the following:
\begin{theorem}[\cite{MartinBigPaper}, Theorem 1.5]
Suppose that the solutions to (\ref{MarkoffPlusk}) over $\mathbb{F}_p$ for $k \neq 4$ that are not reductions of finite orbits over $\mathbb{C}$ form a single orbit under the action of $\Gamma.$ Then the Higman invariant fully classifies Nielsen equivalence classes of generating pairs of $\text{SL}_2(\mathbb{F}_p)$.
\end{theorem}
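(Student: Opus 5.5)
The statement is Martin's theorem (\cite{MartinBigPaper}, Theorem 1.5), which the paper quotes rather than proves. Still, I would prove it by reducing Nielsen equivalence of generating pairs of $\text{SL}_2(\mathbb{F}_p)$ to the $\Gamma$-action on $\mathcal{S}_{0,0,0,k}(\mathbb{F}_p)$ through the Fricke trace map. That map is
$$(M,N)\mapsto (\operatorname{tr}M,\operatorname{tr}N,\operatorname{tr}MN),$$
and its image satisfies $x^2+y^2+z^2-xyz-2=\operatorname{tr}[M,N]$. So pairs with commutator trace $t$ land on the surface (\ref{MarkoffPlusk}) with $k=t+2$.

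The first step is to record the equivariance. The Nielsen moves that generate $\text{Out}(F_2)$ act on trace triples through the Vieta involutions, up to permutations and sign changes. Concretely, $(M,N)\mapsto (M,N^{-1}M^{-1})$ and its relatives induce $V_1,V_2,V_3$, and the extra automorphisms of Section \ref{ExtraAutsSubSect} (permutations, and negations coming from multiplying by $-I$) are also realized. Hence Nielsen-equivalent pairs have $\Gamma'$-equivalent trace triples.

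The second step runs this backwards. I would use the classical fact, due to Fricke and Macbeath, that over $\mathbb{F}_p$ a generating pair is determined up to simultaneous conjugation by its trace triple, possibly up to $\text{GL}_2$-conjugation and a sign ambiguity. Combined with the first step, this reduces the question to two issues:
\begin{enumerate}
\item whether conjugation, including the outer $\text{GL}_2$ conjugation, can be realized by Nielsen moves;
\item whether the trace triples of generating pairs are exactly the points of $\mathcal{S}_{0,0,0,k}(\mathbb{F}_p)$ off the finite orbits.
\end{enumerate}

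For the second issue I would use Dickson's classification of subgroups of $\text{SL}_2(\mathbb{F}_p)$. A pair fails to generate exactly when its image lies in a Borel subgroup (commutator trace $2$, i.e.\ $k=4$, which is excluded), a dihedral subgroup, or an exceptional subgroup of type $A_4$, $S_4$ or $A_5$. These nonessential triples should match the Type II orbits and the exceptional small orbits of Theorem \ref{FiniteOrbitClassification} specialized to $A=B=C=0$. This is the content attributed to McCullough–Wanderley and Campos-Vargas. Given the hypothesis that the remaining points form a single $\Gamma$-orbit, any two generating pairs with the same commutator trace then have $\Gamma$-equivalent trace triples. Lifting the chain of Vieta moves to Nielsen moves gives pairs that are conjugate, up to the sign and outer ambiguities.

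The main obstacle is passing from ``same commutator trace'' to ``same Higman invariant''. The extended conjugacy class can differ from the trace class: when $t=\pm2$ the commutator can be non-semisimple, and the $\text{GL}_2$ versus $\text{SL}_2$ conjugacy splitting matters. The same issue appears in the outer-conjugation and $-I$ sign ambiguities. I would handle it by showing that conjugation by an element of $\text{GL}_2(\mathbb{F}_p)\setminus \text{SL}_2(\mathbb{F}_p)\mathbb{F}_p^\times$ changes the commutator's conjugacy class exactly when the Higman invariant distinguishes the two pairs, and otherwise is realized by an explicit Nielsen move fixing the trace triple. This bookkeeping is where Martin's argument does its real work.
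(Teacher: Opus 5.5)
The paper does not prove this statement. It quotes it from Martin, remarking only that Martin builds on McCullough--Wanderley's Fricke-invariant reduction and on Campos-Vargas. Your skeleton is the right one: the Fricke trace map, lifting each $V_i$ to a Nielsen move, Macbeath's uniqueness of a pair with given trace triple up to $\mathrm{GL}_2(\mathbb{F}_p)$-conjugacy when $\operatorname{tr}[M,N]\neq 2$, and Dickson's list to match non-generating pairs with the finite orbits. Inner conjugation is also harmless, since $\langle M,N\rangle=\text{SL}_2(\mathbb{F}_p)$ makes it an inner automorphism of $F_2$.

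However, the step that carries the real content of Martin's theorem is only announced. After lifting a $\Gamma$-chain you get $(M',N')=g\,\rho(M,N)\,g^{-1}$, with $g\in\mathrm{GL}_2(\mathbb{F}_p)$ unique up to scalars. Everything then hinges on the class of $\det g$ in $\mathbb{F}_p^\times/(\mathbb{F}_p^\times)^2$.

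Your proposed criterion is also wrong as stated. Take $t=\operatorname{tr}[M,N]=-2$ and $p\equiv 3\pmod 4$, with $[M,N]=-u$ for a unipotent $u\neq I$. A nonsquare-determinant conjugation does move $-u$ into the other $\text{SL}_2$-class. Yet the extended class is unchanged, because $-u^{-1}$ already lies in that other class. The outer conjugate has a different Higman invariant only when $t=-2$ and $p\equiv 1\pmod 4$. In every other case, including $t=-2$ with $p\equiv 3\pmod 4$, you must realize the outer conjugation by Nielsen moves.

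What is missing is a supply of Nielsen moves fixing a trace triple, together with the determinant of the conjugation each one induces. Two examples:
\begin{itemize}
\item The move $(M,N)\mapsto(M^{-1},N^{-1})$ fixes every triple and is conjugation by $W=MN-NM$; Cayley--Hamilton gives $WM=M^{-1}W$ and $WN=N^{-1}W$. Here $\det W=2-\operatorname{tr}[M,N]=4-k$, so if $4-k$ is a nonsquare the ambiguity vanishes outright.
\item If $4-k$ is a nonzero square, take a triple fixed by a Vieta involution, say $2z=xy$. There $(M,N)\mapsto(M^{-1},N)$ is conjugation by $2N-yI$, of determinant $4-y^2$, and the slice satisfies $(x^2-4)(y^2-4)=4(4-k)$.
\end{itemize}
Nielsen moves commute with simultaneous conjugation, and by hypothesis all essential triples form one $\Gamma$-orbit. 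So a single essential triple at which some such move has nonsquare determinant suffices. You must show that one exists, except exactly when $k=0$ and $p\equiv 1\pmod 4$. This is consistent with the $k=0$ case: there the slice gives $y^2(x^2-4)=4x^2$, so $x^2-4$ and $y^2-4$ are squares and $4-y^2$ is a nonsquare iff $p\equiv 3\pmod 4$. Without an argument of this kind your proof stops at $\mathrm{GL}_2$-conjugacy.

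There are also some smaller slips:
\begin{itemize}
\item $(M,N)\mapsto(M,N^{-1}M^{-1})$ induces the transposition $(x,y,z)\mapsto(x,z,y)$, not a Vieta involution. $V_3$ comes from $(M,N)\mapsto(M^{-1},N)$.
\item Multiplying by $-I$ is not a Nielsen move. So only $\Gamma$ extended by coordinate permutations lifts, not all of $\Gamma'$, which is why the hypothesis is stated for $\Gamma$.
\item Macbeath's uniqueness carries no sign ambiguity, since $(-M,N)$ has a different trace triple.
\end{itemize}
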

In his paper, Martin also introduced an entirely new set of tools into the study of Markoff-type varieties, which enabled him to prove transitivity of the action of $\Gamma$ on $\mathcal{S}_{0, 0, 0, k}^*(p)$ uniformly in $k$ for all $p$ such that $p^2-1 \not\equiv 0 \pmod{N}$ by running a particular finite computation. Running this computation for various choices of $N$, he proved in particular that the Classification Conjecture holds over $\mathbb{F}_p$ for $> 99.99\%$ of all primes $p$.

Our main theorem very nearly implies that the Classification Conjecture holds for a density-one set of primes, albeit for a set that is not so simply described as the set in \cite{MartinBigPaper}. We do not quite get all of the way there, however, because the particular density-one set of primes for which our result holds depends on the value of $k$---as discussed in Remark \ref{rmk:Dependence}, our result excludes both a density-zero set of primes independent of the parameters, and the finite set of primes 
\begin{equation}\label{pThresholdMarkoffPlusk}p\leq (20 + |k|)^{272,097,792}.\end{equation}
In particular, even for an arbitrarily large prime $p$ outside of the zero density set excluded for all choices of the parameter $k$, there will be congruence classes of $\tilde{k}\pmod{p}$ for which no representative $k$ satisfies (\ref{pThresholdMarkoffPlusk}). The essential missing ingredient is a lower bound, independent of $k$, on the order of a point in the sense of Section \ref{EndgameSection}. However, combined with the work of Martin in \cite{MartinBigPaper}, our results do show the weaker fact that for each integer $\kappa$ there is a density-one set of primes $p$ such that if $(M, N)$ and $(M', N')$ are two pairs of matrices that generate $\text{SL}_2({\mathbb{F}_p})$ with
$$\operatorname{Tr}([M, N]) = \kappa$$
and such that $[M, N]$ and $[M', N']$ have the same extended conjugacy class, then $(M, N)$ and $(M', N')$ are Nielsen equivalent. Notice that demanding that $(M, N)$ generates $\text{SL}_2(\mathbb{F}_p)$ forces $\kappa \neq 2.$ If $\kappa\neq 0$, the condition that $[M, N]$ and $[M', N']$ have the same extended conjugacy class is automatic; see \cite{McCulloughWanderley} for details. As in \cite{McCulloughWanderley}, the Classification Conjecture allows one to classify $T$-systems.

\section{Generalized Cluster Algebras and the Nondegeneracy Condition}\label{GeneralizedClusterAlgsSection}
Another important special case of our (\ref{MainEq}) is the equation 
$$x_1^2 + x_2^2 + x_3^2 + a_1x_2x_3 + a_2x_1x_3 + a_3x_1x_2 = (3 + a_1 + a_2 + a_3)x_1x_2x_3$$
also given above as (\ref{GyodaMatsushitaEq}), which was introduced in \cite{GyodaMatsushita} by Gyoda and Matsushita. They prove that the Vieta involutions act transitively on solutions to (\ref{GyodaMatsushitaEq}) in positive integers, and in fact that one can organize these solutions in a tree with root solution $(1, 1, 1)$, as with the Markoff equation. Gyoda and Matsushita show that this tree can be interpreted in terms of certain Generalized Cluster Algebras, much like the Markoff tree can be interpreted in terms of certain Cluster Algebras; see \cite{GyodaMatsushita} for the definitions of these terms. The uniform presence in this family of integer solutions stands in marked contrast to the family (\ref{MarkoffPlusk}), for which existence of integer solutions is a remarkably subtle problem---for instance, Ghosh and Sarnak prove in \cite{HassePrinciple} that the equation (\ref{MarkoffPlusk}) obeys the Hasse principle for almost all values of $k$, but fails it for infinitely many values of $k$ as well.

In \cite{pDivisibilityClusterAlg}, de Courcy-Ireland, Litman, and Mizuno study the dynamics of equation (\ref{GyodaMatsushitaEq}) over $\mathbb{F}_p$, proving several important results under the conditions that $(3+ a_1 + a_2 + a_3) \not\equiv 0\pmod{p}$ and either $a_i^2 \not \equiv 4 \pmod{p}$ for $i = 1, 2, 3$, or for some $i$ we have $a_i^2 \equiv 4 \pmod{p}$ and $2a_{i-1} \equiv a_ia_{i+1} \pmod{p}$. The main result of their paper can be stated as follows:

\begin{theorem}[\cite{pDivisibilityClusterAlg}, Theorem 1.1]\label{pDivisibilityResult}
    Suppose that $3 + a_1 + a_2 + a_3 \not\equiv 0\pmod{p}$, and that either $a_i^2 \not \equiv 4 \pmod{p}$ for $i = 1, 2, 3$ or, for some $i$, $a_i^2 \equiv 4 \pmod{p}$ and $2a_{i-1} \equiv a_ia_{i+1} \pmod{p}$. Then each orbit in the $\mathbb{F}_p$-points of the algebraic surface cut out by (\ref{GyodaMatsushitaEq}) other than $\{(0, 0, 0)\}$ has size divisible by $p$.
\end{theorem}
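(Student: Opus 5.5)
The plan is to follow the strategy of Martin's elementary reproof of Chen's theorem (\cite{Martinpdivisibility}), as de Courcy-Ireland, Litman, and Mizuno do. I would first transport (\ref{GyodaMatsushitaEq}) to a surface $\mathcal{S}_{A,B,C,D}$ via the change of variables (\ref{clusterAlgChangeVars}), so the Vieta involutions take the form $V_1,V_2,V_3$ above. The hypothesis $3+a_1+a_2+a_3\not\equiv 0$ should be exactly what makes this change of variables invertible mod $p$, and $(0,0,0)$ should correspond to the excluded fixed point.

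Fix a $\Gamma$-orbit $\mathcal{O}\neq\{(0,0,0)\}$ and fibre it over one coordinate. For each $x_0$, the set $\mathcal{O}\cap C_1(x_0)$ is a union of orbits of the rotation $R_1=V_3\circ V_2$. After completing the square, $C_1(x_0)$ is a conic on which $R_1$ acts as an element of a one-dimensional torus. This torus is split, nonsplit, or unipotent according to whether $x_0^2-4$ is a nonzero square, a nonsquare, or zero.

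\begin{itemize}
\item If $x_0^2\equiv 4$, the action is unipotent. Every $R_1$-orbit has size exactly $p$, except possibly on a degenerate fibre, where the conic breaks into lines.
\item If $x_0^2\not\equiv4$, the $R_1$-orbits have size dividing $p\pm1$, which is prime to $p$.
\end{itemize}

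So the $p$-divisibility cannot come from one fibration alone. Following Martin, I would derive it from a global counting identity. The natural route is to count pairs (point, fibre) over all three fibrations at once, or equivalently to compute $|\mathcal{O}|$ mod $p$ via a Riemann--Hurwitz/Euler-characteristic style count on the Schreier graph of $\langle V_1,V_2,V_3\rangle$ acting on $\mathcal{O}$. In that identity, the fixed points of the $V_i$ (the branch locus $\partial_i F=0$) and the contributions of the non-parabolic rotation orbits should cancel mod $p$. What remains are the parabolic fibres, each contributing a multiple of $p$.

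Concretely, the steps in order are as follows.
\begin{enumerate}
\item Verify the change of variables and the correspondence of the hypotheses.
\item Classify the fibres $C_i(x_0)$ and their rotation orbit sizes.
\item Establish the counting identity expressing $|\mathcal{O}|$ mod $p$ through fixed-point data of the $V_i$ and of products $V_iV_j$.
\item Show that every nontrivial orbit meets a fibre where the relevant identity gives $0$ mod $p$.
\end{enumerate}

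The main obstacle is the degenerate situation $a_i^2\equiv4$. There the fibre over the parabolic value can split into two lines, and the rotation can act on them with orbits not of size $p$; this is exactly the source of the extra large orbits in degenerate cases. I would therefore handle separately the case $a_i^2\equiv 4$. There I would show that the supplementary condition $2a_{i-1}\equiv a_ia_{i+1}$ forces the reducible parabolic fibre to pass through $(0,0,0)$ only. Any other orbit then still meets some genuinely unipotent conic, and the cancellation in the counting identity survives.
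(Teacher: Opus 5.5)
The paper does not prove this statement. It quotes it from de Courcy-Ireland, Litman, and Mizuno, whose argument follows Martin's elementary proof. Measured against that argument, your proposal has a genuine gap.

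\textbf{Step~3 is missing.} That step is the whole theorem, you never produce the counting identity, and the route you indicate cannot produce it. An Euler-characteristic or Riemann--Hurwitz count on the Schreier graph is an identity among integers: vertices, fixed points, two-coloured cycles, and a genus that is a free unknown. It holds for every finite $\Gamma$-set, so it imposes no congruence modulo $p$. The fibration over $x$ yields only $|\mathcal{O}|\equiv\sum_{x_0\neq\pm2}|\mathcal{O}\cap C_1(x_0)|\pmod p$. That is a sum of multiples of divisors of $p\pm1$, with no visible cancellation.

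\textbf{Step~4 is beside the point.} Meeting a fibre whose rotation orbits have size $p$ says nothing about the other fibres.

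\textbf{Your degenerate-case claim is false.} If $a_3\equiv2$, the fibre $x_3=0$ (i.e.\ $u_3=-2$) is the double line $(x_1+x_2)^2=0$. It has $p$ points, all fixed by $V_1$ and $V_2$, whatever $a_1,a_2$ are. When $a_1\equiv a_2$, $V_3$ sends $(t,-t,0)$ to $(t,-t,-st^2)$, where $s=3+a_1+a_2+a_3$. So the $p-1$ points with $t\neq0$ lie in nontrivial orbits, and the supplementary condition does not shrink this fibre to the origin.

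\textbf{The missing idea.} It is an identity between $\mathbb{F}_p$-valued sums over $\mathcal{O}$, most natural in the original coordinates, where the fixed point is the origin. Let $\mathcal{O}^\circ=\{P\in\mathcal{O}:x_1x_2x_3\neq0\}$. Dividing (\ref{GyodaMatsushitaEq}) by $x_1x_2x_3$ gives
\[
\frac{x_1}{x_2x_3}+\frac{x_2}{x_1x_3}+\frac{x_3}{x_1x_2}=s-\sum_{j=1}^{3}\frac{a_j}{x_j}\qquad\text{on }\mathcal{O}^\circ ,
\]
while Vieta gives $x_1+x_1'=sx_2x_3-a_2x_3-a_3x_2$.

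Sum $x_1/(x_2x_3)$ over the $V_1$-stable set $\{P\in\mathcal{O}:x_2x_3\neq0\}$, pairing $P$ with $V_1P$. This gives $2\sum x_1/(x_2x_3)=\sum(s-a_2/x_2-a_3/x_3)$, and likewise for the other two coordinates. Add the three relations and compare with the summed equation. The terms $\sum a_j/x_j$ cancel, leaving (indices mod $3$)
\[
s\,|\mathcal{O}|=\sum_{i=1}^{3}\ \sum_{(x_1,x_2,x_3)\in\mathcal{O},\ x_i=0}\left(\frac{a_{i-1}}{x_{i-1}}+\frac{a_{i+1}}{x_{i+1}}\right)\qquad\text{in }\mathbb{F}_p .
\]
So $s\not\equiv0$ is what lets you divide; it does more than make (\ref{clusterAlgChangeVars}) invertible.

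\textbf{The boundary terms.} On the plane $x_1=0$, the points of $\mathcal{O}$ lie on lines $x_3=\lambda x_2$ with $\lambda^2+a_1\lambda+1=0$. On such a line $V_3\circ V_2$ is multiplication by $\lambda^2$, and the summand is homogeneous of degree $-1$. Hence every $V_3\circ V_2$-orbit there contributes $0$ unless $\lambda=\pm1$, i.e.\ $a_1\equiv-2\lambda$. In that case every point of the line is a double fixed point, and the summand is $(a_2+\lambda a_3)/x_2$. The supplementary condition $2a_3\equiv a_1a_2$ makes this numerator vanish. That, not any shrinking of the degenerate fibre, is the role of the hypotheses on the $a_i$, and it is essentially the entire proof.
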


This is an analogue of Chen's theorem for the Markoff equation, first proven in \cite{Chen}; the proof in \cite{pDivisibilityClusterAlg} follows the elementary proof of Martin \cite{Martinpdivisibility}.

\begin{remark}[Geometric interpretation of the generalized cluster algebra structure]
    Theorem \ref{pDivisibilityResult} shows that in some sense the arithmetic dynamics of the varieties arising from the generalized cluster algebras are more closely analogous to those of the Markoff equation than the dynamics of the varieties arising from (\ref{MarkoffPlusk}) and (\ref{MainEq}). We can give this analogy a geometric interpretation by interpreting these varieties as character varieties via the change of variables (\ref{clusterAlgChangeVars}). Points of the character variety of the once punctured torus (\ref{MarkoffPlusk}) correspond to representations of the fundamental group of the once punctured torus, within which points of the Markoff equation (\ref{Markoff}) and the Cayley cubic ($k = 4$ within (\ref{MarkoffPlusk})) correspond respectively to representations where the trace of the matrix corresponding to the loop around the puncture is $-2$ or $2$.
    
    Similarly, points of the character variety of the four-times punctured sphere (\ref{MainEq}) correspond to representations of the fundamental group of the four-times punctured sphere, and when the parameters arise from the Generalized Cluster Algebra as in (\ref{clusterAlgChangeVars}), points correspond to representations where the trace of the matrix corresponding to a loop around one of the punctures is $\pm 2$. Thus, the Markoff-type equation arising from generalized cluster algebras (\ref{GyodaMatsushitaEq}) is to our equation (\ref{MainEq}) as the classical Markoff equation (\ref{Markoff}) is to the more general (\ref{MarkoffPlusk}). We remark that if the traces of the loops around each of the four punctures (taken in an arbitrary order) are $(a, b, c, d)$, then the parameters $(A, B, C, D)$ in (\ref{MainEq}) remain unchanged under $(a, b, c, d)\mapsto (-a, -b, -c, -d),$ so that the parameters $(A, B, C, D)$ do not determine the sign of the trace of the loop around any given puncture, which is why we state our condition in terms of a trace equal to $\pm 2$, not specifically $2$ or $-2$.
\end{remark}

Theorem \ref{pDivisibilityResult} is significant for this paper because, following de Courcy-Ireland, Litman, and Mizuno, when $s = 3+a_1 + a_2 + a_3 \not\equiv 0\pmod{p}$ the change of variables
\begin{equation}\label{clusterAlgChangeVars}u_i = sx_i - a_i\end{equation}
transforms (\ref{GyodaMatsushitaEq}) into 
$$u_1^2 + u_2^2 + u_3^2 = u_1u_2u_3 -(2a_1 + a_2a_3)u_1 -(2a_2 + a_1a_3)u_2 - (2a_3 + a_1a_2)u_3 - (2a_1a_2a_3 + a_1^2 + a_2^2 + a_3^2),$$
which is an equation of the form we consider with 
\begin{equation}\label{ChangeOfParams}
\begin{split}
A &= -2a_1 - a_2a_3\\
B &= -2a_2 - a_1a_3\\
C &= -2a_3 - a_1a_2\\
D &= -2a_1a_2a_3 -a_1^2 - a_2^2 - a_3^2.
\end{split}
\end{equation}

This change of variables is equivariant with respect to the Vieta involutions, so that the study of orbits on (\ref{GyodaMatsushitaEq}) and that of orbits on (\ref{MainEq}) with parameters arising via (\ref{ChangeOfParams}) are equivalent. For varieties arising this way, Theorem \ref{pDivisibilityResult} gives a strong lower bound on the sizes of orbits. Combining the proof of our main theorem with this lower bound and the interpretation of our nondegeneracy condition described in Proposition \ref{RelationBetweenDegeneracyConditions} below, we obtain:
\begin{theorem}\label{th:MainSufficientlyLarge}
    For all sufficiently large primes $p$ and any integers $a_1, a_2,$ and $a_3,$ if $3 + a_1 + a_2 + a_3 \not\equiv 0\pmod{p}$ and $a_i^2\not\equiv  4\pmod{p}$ for $i = 1, 2, 3$, then the solutions to
    $$x_1^2 + x_2^2 + x_3^2 + a_1x_2x_3 + a_2x_1x_3 + a_3x_1x_2 = (3+a_1 +a_2 + a_3)x_1x_2x_3$$
    other than the trivial solution $(x_1,x_2,x_3) = (0,0,0)$ form a single orbit under the action of the group generated by the three Vieta involutions.

    If instead $3 + a_1 +a_2 + a_3 \not\equiv 0\pmod{p}$, $a_i^2 \equiv 4\pmod{p}$ for some $i$, and $2a_{i-1} \equiv a_ia_{i+1}\pmod{p}$, then the solutions to 
    $$x_1^2 + x_2^2 + x_3^2 + a_1x_2x_3 + a_2x_1x_3 + a_3x_1x_2 = (3+a_1 +a_2 + a_3)x_1x_2x_3$$
    other than the trivial solution $(x_1,x_2,x_3) = (0,0,0)$ form either two (if $a_{i-1}^2 \not\equiv 4\pmod{p}$) or four (if $a_{i-1}^2 \equiv 4\pmod{p}$) orbits under the action of the group generated by the three Vieta involutions.
\end{theorem}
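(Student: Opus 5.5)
The plan is to transfer everything to the surface $\mathcal{S}_{A,B,C,D}$ with parameters given by (\ref{ChangeOfParams}). We then combine three ingredients: the orbit-size lower bound of Theorem \ref{pDivisibilityResult}, the proof of the main theorem (and its degenerate-case analogue from Sections 10--11), and Proposition \ref{RelationBetweenDegeneracyConditions} translating degeneracy conditions.

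\emph{Step 1 (transfer).} Since $s=3+a_1+a_2+a_3\not\equiv 0\pmod p$, the change of variables (\ref{clusterAlgChangeVars}) is an affine bijection over $\mathbb{F}_p$. It is equivariant for the Vieta involutions, so orbits correspond. The trivial solution $(0,0,0)$ maps to $(-a_1,-a_2,-a_3)$. One checks via Proposition \ref{singletonOrbits} that this point is a Type I fixed point: $2(-a_1)-a_2a_3=A$, and similarly for the other coordinates. We must then show that $E(p)$ contains no other points. Here Theorem \ref{pDivisibilityResult} helps: every orbit other than $\{(0,0,0)\}$ has size divisible by $p$, hence size at least $p$. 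Meanwhile every orbit in $\mathcal{E}(p)$ has bounded size (at most the largest of the 45 exceptional orbits or 4). So for $p$ large, $E(p)$ is exactly the image of $\{(0,0,0)\}$, and the claim becomes transitivity (or the two/four orbit count) on $\mathcal{S}^*_{A,B,C,D}(p)$.

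\emph{Step 2 (degeneracy translation).} By Proposition \ref{RelationBetweenDegeneracyConditions}, the parameters (\ref{ChangeOfParams}) are degenerate mod $p$ exactly when some $a_i^2\equiv 4$, with the number of large invariant sets (two or four) governed by whether a second $a_j^2\equiv4$. Nondegeneracy mod $p$, as opposed to over $\mathbb{Z}$, is what matters, since the endgame's irreducibility arguments are carried out mod $p$.

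\emph{Step 3 (uniformity in $p$).} The main theorem holds only for a density-one set depending on $(A,B,C,D)$. This is the key point to address and I expect it to be the main obstacle. From Remark \ref{rmk:Dependence}, the parameter dependence enters only through two inputs, Theorem \ref{LowerBoundOnOrder} (the density-zero exceptional set) and Proposition \ref{AlgNTBound}. Both are used to guarantee that starting points have large orbits/orders before running the endgame, middlegame and opening. I would replace these inputs by Theorem \ref{pDivisibilityResult}, following Chen's strategy for Markoff. Every non-trivial orbit has size $\ge p$. I would rerun the BGS endgame to show that all points with rotation order $\ge p^{1/2+\delta}$ lie in a single giant component; this step is uniform in the parameters, since Weil bounds depend only on degrees. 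I would also show that the giant component has size $\gg p^2$. Every other orbit is contained among points whose rotation orders are all $<p^{1/2+\delta}$. Counting such points gives $O(p^{1/2+\delta+o(1)}\cdot p^{1/2+\delta})$, which is $o(p^2)$ but not obviously $<p$. I would therefore combine this count with the middlegame/opening arguments, which are uniform in the parameters, to show that any orbit avoiding the giant component is small. Since the opening uses only an unconditional lower bound of BGS type, a non-giant orbit must have size $\ll p^{1-\epsilon}$ or so, while Theorem \ref{pDivisibilityResult} forces size $\ge p$. This contradiction shows there is one orbit in the nondegenerate case.

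In the degenerate case, I would apply the same argument to each of the $\Gamma$-invariant sets of Theorem \ref{QuadraticObstructions}. Each of these is shown to be an orbit by the modified endgame of Section \ref{ModificationsDegenerate}, which yields two or four orbits according to Step 2. The delicate points are verifying that every constant in the endgame, middlegame and opening depends only on degrees and not on $(A,B,C,D)$, and checking that the divisibility hypothesis $2a_{i-1}\equiv a_ia_{i+1}$ is exactly what Theorem \ref{pDivisibilityResult} requires in the degenerate case.
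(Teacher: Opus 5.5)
Your overall plan is the one the paper intends. You transfer through (\ref{clusterAlgChangeVars}), use Theorem \ref{pDivisibilityResult} to see that $E(p)$ is just the image of $(0,0,0)$ for large $p$, translate degeneracy mod $p$ via Proposition \ref{RelationBetweenDegeneracyConditions}, and let the uniform bound $|\mathcal{O}|\ge p$ on nontrivial orbits replace the parameter-dependent part of the argument. One small correction to Step 2: the two-versus-four split is not contained in that proposition. It needs a direct check: for $a_3=2$, where $A=B$, one has $C-A=-(a_1-2)(a_2-2)$ and $C+A=-(a_1+2)(a_2+2)$, so $C\equiv\pm A$ exactly when a second $a_j^2\equiv 4$.

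The decisive inference in your Step 3, however, is wrong as written. The opening is not uniform in the parameters. Proposition \ref{AlgNTBound} uses logarithms to base $20+2|A|+2|B|+2|C|+|D|$, which is unbounded relative to $p$ for arbitrary integers $a_i$. Theorem \ref{LowerBoundOnOrder} discards a density-zero set of primes. These are precisely the two sources of non-uniformity in Remark \ref{rmk:Dependence}. Moreover, the opening only gives \emph{lower} bounds on orbit sizes, so it cannot show that a non-giant orbit has size $\ll p^{1-\epsilon}$. The opening must be dropped altogether; the upper bound comes from the middlegame alone. The middlegame is uniform: Corvaja--Zannier and Weil depend only on degrees, and only $O(1)$ values of $x$ are excluded. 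It shows that every point of order $\ge p^{\epsilon}$ that is not a double fixed point reaches the cage. Hence, in the nondegenerate case, a non-giant orbit consists, apart from at most $12$ double fixed points, of points whose coordinates all have order $<p^{\epsilon}$. There are at most $\sum_{d\mid p^2-1,\,d<p^{\epsilon}}d\le p^{\epsilon+o(1)}$ admissible values for each of $x$ and $y$, and at most two $z$ per pair. So such an orbit has size at most $2p^{2\epsilon+o(1)}+12<p$, contradicting Theorem \ref{pDivisibilityResult}. Your count was made at the endgame threshold $p^{1/2+\delta}$; lowering it to $p^{\epsilon}$ is exactly the middlegame's job.

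In the degenerate case one more line is needed, because there are $\asymp p$ double fixed points (e.g.\ all of $C_3(-2)$ when $A=B$), which the middlegame excludes. Pass from each such $P$ to its unique nontrivial neighbour (e.g.\ $V_3P$ for $P\in C_3(-2)$). This neighbour shares $x$ and $y$ with $P$ and is itself doubly fixed for only $O(1)$ choices. Hence only $p^{\epsilon+o(1)}$ such points can lie outside the giant component of their $\mathcal{S}_i$, and the same count goes through inside each $\mathcal{S}_i$.
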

\begin{remark}[Uniformity in Parameters]
    As Theorem \ref{pDivisibilityResult} provides a lower bound on the size of each orbit independent of the parameters save for the restriction when some $a_i^2\equiv 4\pmod{p},$ the set of primes for which this holds is also uniform in the parameters.
\end{remark}
While the $p$-divisibility result holds for some varieties in the family (\ref{GyodaMatsushitaEq}) with an $a_i = \pm 2$, transitivity of the group action fails in these cases. Indeed, de Courcy-Ireland, Litman, and Mizuno also show:

\begin{theorem}[\cite{pDivisibilityClusterAlg}, Theorem 1.2]\label{degeneracyClusterAlg}
    Suppose that $3 + a_1 + a_2 + a_3 \not\equiv 0\pmod{p}$, and that for some $i$ we have that $a_i^2 \equiv 4\pmod{p}$ and $2a_{i-1} \equiv a_ia_{i+1}\pmod{p}$, where indices are taken cyclically mod $3$. Then there are at least two $\Gamma$-orbits in the $\mathbb{F}_p$-points of the algebraic surface cut out by (\ref{GyodaMatsushitaEq}) aside from the trivial orbit $\{(0, 0, 0)\}$. If in addition we have $a_i^2 \equiv 4\pmod{p}$ for all $i$ then there are at least four large $\Gamma$-orbits.
\end{theorem}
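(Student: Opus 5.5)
The plan is to exhibit an explicit $\Gamma$-invariant function on the nontrivial $\mathbb{F}_p$-points that takes at least two values. In the $u$-coordinates of (\ref{clusterAlgChangeVars}) this function will be a quadratic character $\chi_p(L(u))$ of a polynomial $L$. The theorem itself is stated for the surface (\ref{GyodaMatsushitaEq}); the change of variables (\ref{clusterAlgChangeVars}) is $\Gamma$-equivariant and bijective on $\mathbb{F}_p$-points when $s\not\equiv 0$, so it suffices to work on $\mathcal{S}_{A,B,C,D}$ with parameters (\ref{ChangeOfParams}).

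\textbf{Reduction.} Using cyclic symmetry and the sign equivalences of Definition \ref{def:EquivalentParams}, I would normalize to $i=1$, $a_1\equiv 2$ and $a_3\equiv a_2$. Substituting into (\ref{ChangeOfParams}) gives
\[(A,B,C,D)=(-4-a_2^2,\,-4a_2,\,-4a_2,\,-6a_2^2-4),\]
and one checks $B=C$ and $4D+B^2=8A+16$. Up to equivalence, this is exactly the degenerate locus of Definition \ref{nondegeneracyCondition}, with the roles of the variables permuted.

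\textbf{Key identity.} I would look for a polynomial $L$ in the two coordinates $u_2,u_3$ with equal parameters, presumably of the shape $L=(u_2+u_3+c)$ or $L=(u_2-u_3)$ times a fixed factor, with $c$ depending on $a_2$. Such an $L$ is automatically invariant under $V_1$, since $V_1$ fixes $u_2,u_3$. For $V_2$ and $V_3$ I want the product $L(P)\,L(V_jP)$ to be a perfect square modulo the surface equation, for instance $(u_1-2)^{-1}$ times a square of a polynomial in $P$, possibly after multiplying $L$ by a fixed factor. The mechanism is that when $A$, $B=C$, $D$ satisfy the degeneracy relation, the quadratic in $u_2$ (with $u_1,u_3$ fixed) has discriminant of the form $(u_1^2-4)\cdot(\text{square})$, or more precisely factors so that the two roots $u_2,u_2'$ satisfy $L(u_2,u_3)L(u_2',u_3)=\text{square}$. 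I expect to find the exact $L$ by computing this discriminant factorization. I would then verify the identity by a direct symbolic check, of the kind done in Appendix B. It follows that $\chi_p(L)$ is $\Gamma$-invariant on the set where $L\neq 0$.

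\textbf{Both values occur, and four orbits.} Next, count points with $\chi_p(L)=\pm1$. Fibering over the value $t=L$ gives conics, each with $p+O(1)$ points, so both character values are attained by $\gg p^2$ points once $p$ is large. The locus $L=0$ and the multiplier-degenerate points must be handled separately. I would show either that they are $\Gamma$-stable and small, or that they can be assigned consistently by continuity of the identity. If all $a_i^2\equiv4$, the same construction for a second index gives a second invariant $\chi_p(L')$. A Weil-bound character sum, $\sum\chi(L)\chi(L')=o(p^2)$ provided $LL'$ is not a square times a constant on the surface, then shows that all four sign combinations occur, giving at least four orbits. The main obstacle is finding the correct $L$ and proving the square identity, together with controlling the points where $L$ vanishes. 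I would attack both by explicitly factoring the slice discriminants under the degeneracy relation.
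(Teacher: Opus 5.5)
Your reduction is correct. With $a_1\equiv 2$ and $a_3\equiv a_2$ you get $B=C$ and $4D+B^2=8A+16$, which is the paper's degenerate locus with the coordinates permuted. Your overall plan also matches the paper's route via Theorem~\ref{QuadraticObstructions}: a quadratic-character invariant, a fibration count showing both classes have $\gg p^2$ points, and a second character for four orbits. There is, however, a genuine gap. The invariant itself, which is the whole content of the proof, is never produced, and the shapes you propose do not work. For $L=u_2+u_3+c$, Vieta in $u_2$ (with $B=C$) gives
\[
L(P)\,L(V_2P)=(u_1+2)u_3^2+c(u_1+2)u_3+u_1^2-Au_1+c^2+Bc-D .
\]
Using $D=2A+4-B^2/4$, its $u_3$-discriminant is $(u_1+2)\bigl[(u_1+2)(c^2-4u_1+4A+8)-(2c+B)^2\bigr]$, which is not identically zero. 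So the product is not a square, and $\chi_p(L)$ is not $V_2$-invariant. The candidate $L=u_2-u_3$ fails the same way. A relation $L(P)L(V_2P)\equiv(u_1-2)^{-1}\cdot(\text{square})$ would make $\chi_p(L)$ flip by $\chi_p(u_1-2)$ rather than be preserved. Your heuristic is also off: the $u_2$-discriminant is $(u_1+2)\bigl[(u_1-2)u_3^2+2Bu_3-4(u_1-A-2)\bigr]$, not $(u_1^2-4)$ times a square. Finally, ``continuity of the identity'' has no meaning over $\mathbb{F}_p$, so the zero locus of $L$ is also left unhandled.

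The missing idea is to put the invariant on the distinguished coordinate $u_1$, not on the pair with equal parameters. Take $L=u_1+2$. It is trivially fixed by $V_2$ and $V_3$. For $u_1'=A+u_2u_3-u_1$, Vieta ($u_1+u_1'=A+u_2u_3$, $u_1u_1'=u_2^2+u_3^2-Bu_2-Bu_3-D$) together with $D=2A+4-B^2/4$ gives
\[
(u_1+2)(u_1'+2)=\Bigl(u_2+u_3-\frac B2\Bigr)^2 .
\]
So the linear form you guessed is the square root in the identity, not the invariant. In the original coordinates $u_1+2=sx_1$, and the identity is just $x_1x_1'=(x_2+x_3)^2$, the constant term of the quadratic in $x_1$ when $a_1=2$, $a_2=a_3$; this is the paper's argument with coordinates permuted. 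If you want a function of $u_2,u_3$ as you planned, the right one is bilinear, $(u_2+2)(u_3+2)+A-B$. It is tied to $u_1+2$ on the surface by $(u_1+2)\bigl((u_2+2)(u_3+2)+A-B\bigr)=(u_1+u_2+u_3+2-B/2)^2$, which is the paper's second route.

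For the zero locus: the slice $u_1=-2$ is the line $u_2+u_3=B/2$, and every point on it is fixed by $V_2$ and $V_3$ (Lemma~\ref{Parabolics}). So such a point $P$ has $V_1P$ as its only neighbor, and $V_1P$ has $u_1\neq-2$ unless $P$ is fixed by all three involutions (the image of $(0,0,0)$). Give $P$ its neighbor's sign.

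For four orbits: $a_i^2\equiv4$ for all $i$ together with the hypothesis forces $a_1a_2a_3\equiv 8$, so the hypothesis holds at every index. Up to equivalence $A=B=C$, and $(u_1+2)(u_2+2)(u_3+2)=(u_1+u_2+u_3+2-A/2)^2$ allows only four sign patterns of the $\chi_p(u_i+2)$. Your character-sum count then shows each of the four is realized by $\gg p^2$ points. For that you need $\sum\chi_p(L)$, $\sum\chi_p(L')$ and $\sum\chi_p(LL')$ over surface points all to be $o(p^2)$, not just the product sum.
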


This theorem is the starting point for our nondegeneracy condition. In fact, our proof of Theorem \ref{QuadraticObstructions} is based on their proof of this theorem, and, when specialized to the setting of (\ref{GyodaMatsushitaEq}), extends their obstruction to the case where some $a_i^2 \equiv 4\pmod{p}$ but $2a_{i-1} \not\equiv a_{i}a_{i+1}\pmod{p}$. We can explain our condition in these terms as follows:

Suppose that $a_3 = \pm 2$. Changing variables as in (\ref{clusterAlgChangeVars}), we find that
$$A = -2a_1 - a_2a_3 = -2a_1 \mp 2a_2 = \pm(-2a_2 \mp 2a_1) = \pm(-2a_2 - a_1a_3) = \pm B.$$

Similarly, if $a_2 = \pm2$ we get an equation with $A = \pm C$, and if $a_1 = \pm 2$ we get an equation with $B = \pm C$. This is not, however, the only way to get a variety of the form (\ref{MainEq}) from the equation (\ref{GyodaMatsushitaEq}) with such coincidences of parameters. For example, the equation $A = B$ becomes in the $(a_1, a_2, a_3)$-coordinates

$$-2a_1 - a_2a_3 = -2a_2 - a_1a_3,$$
or
$$-2(a_1 - a_2) = -a_3(a_1-a_2).$$

Thus, we can get $A = B$ from $a_3 = 2$ or from $a_1 = a_2$; similarly, we can get $A = -B$ from $a_3 = -2$ or from $a_1 = -a_2,$ $A = \pm C$ from $a_2 = \pm2$ or from $a_1 = \pm a_3,$ and $B = \pm C$ from $a_1 = \pm2$ or from $a_2 = \pm a_3$. This is where our second condition comes in. We present the case $A = B$ and $4D + A^2 = 8C + 16$; equivalent quadruples of parameters go through similarly. If $A = B,$ and the quadruple $(A, B, C, D)$ arises from a triple $(a_1, a_2, a_3)$ under (\ref{clusterAlgChangeVars}), then we have that
\[\begin{split}
    4D + A^2 - 8C - 16 &= (-8a_1a_2a_3 - 4a_1^2 - 4a_2^2 - 4a_3^2) + (4a_1^2 + 4a_1a_2a_3 + a_2^2a_3^2) + (16a_3 + 8a_1a_2) - 16\\
    &= -4a_1a_2a_3 + a_2^2a_3^2 - 4a_2^2 -4a_3^2 + 16a_3  +8a_1a_2  -16.
\end{split}\]
If $a_3 = 2$, then this simplifies as
\[\begin{split}
    4D + A^2 - 8C - 16 &= - 8a_1a_2 + 4a_2^2 - 4a_2^2 - 16 + 32 +8a_1a_2 - 16 = 0,
\end{split}\]
so that we have $4D + A^2 = 8C + 16.$
If on the other hand $a_3\neq 2$, then $A = B$ implies that $a_1 = a_2,$ so this simplifies as
\[\begin{split}
    4D + A^2 - 8C - 16 &= -4a_1^2a_3 + a_1^2a_3^2 - 4a_1^2 - 4a_3^2 + 16a_3 + 8a_1^2 - 16\\
    &= 4a_1^2 - 4a_1^2 a_3 + a_1^2a_3^2 -4a_3^2 + 16a_3 - 16\\
    &=a_1^2(a_3 - 2)^2 - 4(a_3 - 2)^2\\
    &=(a_1^2 - 4)(a_3- 2)^2.
\end{split}\]
Thus, if we know that $(A, B, C, D)$ arises from $(a_1, a_2, a_3)$ via (\ref{clusterAlgChangeVars}), the condition $A = B$ and $4D + A^2 = 8C + 16$ is equivalent to either $a_3 = 2$ or $a_1 = a_2$ together with $a_1 = \pm2$. Either way, we have that some $a_i = \pm 2$, as desired. The same computation, interpreted modulo $p$, proves the corresponding statement for degeneracy modulo $p$. We thus obtain the following result:

\begin{proposition}\label{RelationBetweenDegeneracyConditions}
    Suppose that the quadruple of parameters $(A, B, C, D)$ arises from the triple of parameters $(a_1, a_2, a_3)$ via (\ref{clusterAlgChangeVars}). Then $(A, B, C, D)$ is degenerate (respectively, degenerate mod $p$) if and only if $a_i^2 = 4$ for some $i$ (respectively, $a_i^2 \equiv 4\pmod{p}$ for some $i$). 
\end{proposition}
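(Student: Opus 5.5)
Both directions come down to the algebraic identity computed just above, applied to each of the twelve parameter quadruples equivalent to $(A,B,C,D)$; the argument is identical over $\mathbb{Z}$ and over $\mathbb{F}_p$ with $p$ odd. Degeneracy asks whether some equivalent quadruple has its first two entries equal and satisfies $4D+A^2=8C+16$. Equivalences permute $A,B,C$ and negate an even number of them. So, up to relabelling, degeneracy means that for some pair of coordinates and some sign $\epsilon=\pm1$, the two coordinates satisfy (first) $=\epsilon\cdot$(second), together with $4D+(\text{first})^2=8\eta\cdot(\text{third})+16$ for a sign $\eta$ determined by which negations were used.

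The first step is to pin down the relation between $\epsilon$ and $\eta$. To make the first two entries equal when $A=-B$, we must negate exactly one of $A,B$, and hence must also negate $C$. So $\eta=\epsilon$. The degenerate conditions are therefore exactly:
\[A=\epsilon B,\ 4D+A^2=8\epsilon C+16,\]
and the analogous conditions for the pairs $(A,C)$ and $(B,C)$.

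The second step is to transport the symmetries to the $a_i$ side. Permuting $(a_1,a_2,a_3)$ permutes $(A,B,C)$ correspondingly under (\ref{ChangeOfParams}), and $D$ is symmetric in the $a_i$. Negating two of the $a_i$, say $(a_1,a_2,a_3)\mapsto(a_1,-a_2,-a_3)$, sends $(A,B,C,D)\mapsto(A,-B,-C,D)$. Both the hypothesis and the conclusion are thus invariant under these operations. By a further negation if needed, it suffices to treat the single case $A=B$, $4D+A^2=8C+16$ that the text computed.

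The third step handles that case. From $A=B$ we get $(a_1-a_2)(a_3-2)=0$. If $a_3=2$, then $a_3^2=4$. Otherwise $a_1=a_2$, and the identity $4D+A^2-8C-16=(a_1^2-4)(a_3-2)^2$ forces $a_1^2=4$. Over $\mathbb{F}_p$ this uses that $\mathbb{F}_p$ is a field, so $a_3\not\equiv2$ lets us cancel $(a_3-2)^2$.

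For the converse, suppose $a_3^2=4$ (the other indices follow by symmetry). Then $A=\pm B$, as computed in the text. After negating $(a_1,a_3)$ if $a_3=-2$, we may assume $a_3=2$. The displayed computation then gives $4D+A^2=8C+16$ with $A=B$, so the quadruple is degenerate.

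The main obstacle is bookkeeping rather than algebra. We must confirm that every equivalence of $(A,B,C,D)$ is induced by a symmetry of $(a_1,a_2,a_3)$ of the type above, so that no equivalent quadruple escapes the reduction. We also must get the sign coupling $\eta=\epsilon$ right, since otherwise the reduction to the case $A=B$ would be invalid.
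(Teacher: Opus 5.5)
Your proposal is correct and follows the paper's argument. You reduce to the normalized case $A=B$, $4D+A^2=8C+16$, factor $A-B=(a_1-a_2)(a_3-2)$, use the identity $4D+A^2-8C-16=(a_1^2-4)(a_3-2)^2$ when $a_1=a_2$, and use its vanishing at $a_3=2$ for the converse. What you add is the explicit check that every equivalence of $(A,B,C,D)$, including the coupling of the sign of $C$ with the sign in $A=\pm B$, is induced by a signed permutation of $(a_1,a_2,a_3)$ that preserves $\{a_i^2\}$; the paper only says that ``equivalent quadruples of parameters go through similarly.''
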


As a coda to this section, we discuss how the varieties arising from (\ref{GyodaMatsushitaEq}) lie within our family (\ref{MainEq}), and some alternative degeneracy conditions the author considered before finding the sharp conditions used in this paper. The astute reader may have noticed that the polynomials in (\ref{clusterAlgChangeVars}) are nearly the same as those in Lemma \ref{singletonOrbits}. This is not a coincidence: the equation (\ref{GyodaMatsushitaEq}) has a singleton orbit consisting only of the point $(0, 0, 0)$, which is mapped under (\ref{clusterAlgChangeVars}) to the point $(-a_1, -a_2, -a_3)$. In the same way, if $\mathcal{S}_{A, B, C, D}$ has a singleton orbit $(x, y, z)$, then Lemma \ref{singletonOrbits} together with the change of variables (\ref{clusterAlgChangeVars}) applied in reverse implies that $\mathcal{S}_{A, B, C, D}$ arises from (\ref{GyodaMatsushitaEq}) with $a_1 = -x$, $a_2 = -y$, and $a_3 = -z$, giving a dynamical characterization of those quadruples $(A, B, C, D)$ arising from the equation of the generalized cluster algebra---they are exactly the varieties $\mathcal{S}_{A, B, C, D}$ with a triple fixed point.

We now describe a necessary algebraic condition for parameters $A, B, C,$ and $D$ to arise from the equation (\ref{GyodaMatsushitaEq}), computed with Macaulay2 \cite{Macaulay2}:
\begin{lemma}\label{ZariskiClose}
The Zariski closure of the image of the map $\mathbb{A}^3\rightarrow\mathbb{A}^4$ given by:
$$(a_1, a_2, a_3)\mapsto ( -2a_1 - a_2a_3,  -2a_2 - a_1a_3,  -2a_3 - a_1a_2, -2a_1a_2a_3 -a_1^2 - a_2^2 - a_3^2)$$
is the affine hypersurface cut out by the polynomial $\Delta(A, B, C, D)$ defined in Appendix A.
\end{lemma}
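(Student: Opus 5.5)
The plan is an implicitization argument, organized so that the only machine input is the verification of one explicit polynomial. Write $\phi:\mathbb{A}^3\to\mathbb{A}^4$ for the map in the statement. The Zariski closure $Z=\overline{\phi(\mathbb{A}^3)}$ is irreducible, being the closure of the image of an irreducible variety. It therefore suffices to show three things: $\dim Z=3$, the polynomial $\Delta$ of Appendix A vanishes on $\phi(\mathbb{A}^3)$, and $\Delta$ is irreducible. Granting these, $Z\subseteq V(\Delta)$, and both sides are irreducible of dimension $3$. An inclusion of irreducible varieties of equal dimension is an equality, so $Z=V(\Delta)$.

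\textbf{Step 1: dimension.} I would compute the Jacobian of $\phi$. Its $3\times3$ minor from the first three coordinates is
\[
\det\begin{pmatrix} -2 & -a_3 & -a_2\\ -a_3 & -2 & -a_1\\ -a_2 & -a_1 & -2\end{pmatrix}
= -\bigl(8-2a_1^2-2a_2^2-2a_3^2+2a_1a_2a_3\bigr).
\]
This is not identically zero; at the origin it equals $-8$. Hence $\phi$ is generically finite and $\dim Z=3$. The same minor shows something more: the first three coordinates $(A,B,C)$ already form a dominant map $\mathbb{A}^3\to\mathbb{A}^3$. So $D$ is algebraic over $\mathbb{Q}(A,B,C)$ on $Z$, and $\Delta$ should be (a scalar multiple of) the minimal polynomial of $D$ over $\mathbb{Q}(A,B,C)$.

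\textbf{Step 2: vanishing.} The conceptual route to $\Delta$ is elimination. Take the ideal
\[
I=(A+2a_1+a_2a_3,\ B+2a_2+a_1a_3,\ C+2a_3+a_1a_2,\ D+2a_1a_2a_3+a_1^2+a_2^2+a_3^2)
\]
in $\mathbb{Q}[a_1,a_2,a_3,A,B,C,D]$ and eliminate the $a_i$. By the closure theorem of elimination theory, $Z=V(I\cap\mathbb{Q}[A,B,C,D])$. This elimination is exactly what the Macaulay2 code in Appendix B performs. By Step 1 the elimination ideal is a height-one prime, hence principal, and its generator is $\Delta$. Independently of Gr\"obner bases, the vanishing $\Delta\circ\phi=0$ can be checked directly by substituting $\phi$ into $\Delta$ and expanding.

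\textbf{Step 3: irreducibility.} I would verify that $\Delta$ is irreducible over $\mathbb{Q}$, either by a factorization command or by reading primality off the elimination. Irreducibility over $\mathbb{C}$ also follows: the base change of the prime elimination ideal stays prime, because $Z_{\mathbb{C}}$ is still the closure of the image of the irreducible variety $\mathbb{A}^3_{\mathbb{C}}$.

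\textbf{Main obstacle.} The hard part is the size of $\Delta$. I expect its $D$-degree to equal the generic degree of $\phi$ onto its image, i.e.\ the number of preimages of a generic point. To confirm that the degrees match, I would count the preimages of a generic $(A,B,C)$ under the first three coordinates, possibly reusing the structure of Proposition~\ref{singletonOrbits} and its triple-fixed-point interpretation. Given the size, a hand derivation of $\Delta$ is impractical. I would therefore rely on the elimination computation in Appendix B and reduce the human-checkable content to the dimension count and the irreducibility check above.
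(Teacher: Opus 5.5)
Your argument is correct, and it is organized differently from the paper's. The paper gives no structural argument. The lemma is certified by a single Macaulay2 computation in Appendix B: it computes $\ker\phi\subset\mathbb{Q}[A,B,C,D]$ by elimination, computes $\Delta$ as $\operatorname{disc}_x f_1$, and checks \texttt{ideal(Delta) == I}. That one Gr\"obner-basis computation delivers everything at once, including that the ideal of $Z$ is principal, prime and generated by $\Delta$, and it even produces $\Delta$. But it is a black box.

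Your sandwich argument needs $\Delta$ given in advance. It rests on $Z$ being irreducible, $\dim Z=3$ from the hand-computed minor, $\Delta\circ\phi=0$, and $\Delta$ being irreducible. What it buys is that the elimination is replaced by checks that are much easier to certify independently: an expansion and an irreducibility test. It also explains the shape of the answer: $(A,B,C)$ is already dominant, so $Z$ is a hypersurface over which $D$ is algebraic. Your passage to $\mathbb{C}$ via primality of the kernel is fine, since kernels commute with the flat base change $\mathbb{Q}\to\mathbb{C}$.

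One claim in your last paragraph is wrong as stated, though nothing in Steps 1--3 rests on it.
\begin{itemize}
\item $\deg_D\Delta=5$ (the leading term is $64D^5$). It should be compared with $[\mathbb{Q}(Z):\mathbb{Q}(A,B,C)]$, not with the generic degree of $\phi$ onto its image.
\item The number you propose to count next, the preimages of a generic $(A,B,C)$, is the product of these two degrees.
\item Eliminating gives $x=\frac{2A+Bz}{4-z^2}$, $y=\frac{2B+Az}{4-z^2}$ and the quintic $(2z-C)(4-z^2)^2=(2A+Bz)(2B+Az)$. So that number is $5$.
\end{itemize}

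Stated correctly, this count can replace the factorization in Step 3.
\begin{itemize}
\item Since $5$ is prime, $[\mathbb{Q}(Z):\mathbb{Q}(A,B,C)]\in\{1,5\}$.
\item It is not $1$. Over $(A,B,C)=(0,0,0)$ the five preimages are $(0,0,0)$, $(-2,-2,-2)$, $(-2,2,2)$, $(2,-2,2)$, $(2,2,-2)$.
\item These are all \'etale points of $(a_1,a_2,a_3)\mapsto(A,B,C)$: your minor equals $-8$, resp.\ $32$, there.
\item Their $D$-values are $0$ and $4$, so $D$ is not a rational function of $(A,B,C)$ on $Z$.
\item Hence $\phi$ is birational onto $Z$, so its generic degree onto the image is $1$, not $5$. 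Equivalently, a generic surface with a triple fixed point has exactly one, namely $(-a_1,-a_2,-a_3)$.
\item Finally, $\Delta$ vanishes on $Z$, has $D$-degree $5$ and constant leading coefficient. So it is $64$ times the minimal polynomial of $D$ over $\mathbb{Q}(A,B,C)$, and it is irreducible by Gauss's lemma.
\end{itemize}
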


Remarkably, the polynomial $\Delta$ will arise in a completely separate way later on in the endgame; see Lemma \ref{DeltaUniversality}. It turns out that our degenerate parameters all satisfy $\Delta = 0$.

\begin{proposition}\label{DegeneratesAndDelta}
    If the quadruple of parameters $(A, B, C, D)$ is degenerate, then $\Delta(A, B, C, D) = 0.$
\end{proposition}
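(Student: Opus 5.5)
The plan is to show that every degenerate quadruple lies in the Zariski closure of the image of the map of Lemma \ref{ZariskiClose}; that lemma then gives $\Delta(A,B,C,D)=0$.

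The first step is a reduction to a normal form. I would check that the image of the parametrization is stable under the equivalences of Definition \ref{def:EquivalentParams}. Permuting $(a_1,a_2,a_3)$ permutes $(A,B,C)$ and fixes $D$. Negating two of the $a_i$, say $(a_1,a_2,a_3)\mapsto(-a_1,-a_2,a_3)$, gives $(A,B,C,D)\mapsto(-A,-B,C,D)$. Indeed, $A=-2a_1-a_2a_3$ changes sign, and so does $B$. The expression $C=-2a_3-a_1a_2$ is unchanged, and $D$ is unchanged because $a_1a_2a_3$ is fixed. Hence the Zariski closure, the hypersurface $\{\Delta=0\}$, is stable under equivalence. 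Since $\Delta$ is irreducible or at least its zero set is stable, it suffices to treat quadruples with $A=B$ and $4D+A^2=8C+16$.

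The second step is to show that this normal-form locus is contained in the image. The locus is $\{(A,A,C,(8C+16-A^2)/4)\}$, a plane. Following the computation preceding Proposition \ref{RelationBetweenDegeneracyConditions}, I would set $a_3=2$. This gives
\[
A=-2a_1-2a_2,\qquad B=-2a_2-2a_1,\qquad C=-4-a_1a_2,
\]
\[
D=-4a_1a_2-a_1^2-a_2^2-4=-(a_1+a_2)^2-2a_1a_2-4.
\]
Writing $s=a_1+a_2$ and $q=a_1a_2$, we get $A=B=-2s$, $C=-4-q$ and $D=-s^2-2q-4$. I would then verify that
\[
4D+A^2=-4s^2-8q-16+4s^2=-8q-16=8C+16,
\]
so the image of the slice $a_3=2$ is contained in the plane. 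Conversely, given $(A,C)$, I would take $s=-A/2$ and $q=-4-C$; over $\mathbb{C}$ one can always find $a_1,a_2$ with that sum and product. So every point of the plane lies in the image, and therefore $\Delta$ vanishes on it.

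Finally, integrality is not an issue: $\Delta$ is a polynomial, so vanishing on the complex plane gives vanishing at integer points. The only real obstacle is the first step. If one doubts the stability argument, an alternative is to verify directly in Macaulay2 that $\Delta$ is invariant, up to sign, under permutations of $(A,B,C)$ and under the sign changes $(A,B,C)\mapsto(-A,-B,C)$ etc. The same conclusion also follows because the image set itself is stable and the closure of a stable set is stable. Either way, substituting $B=A$ and $D=(8C+16-A^2)/4$ into $\Delta$ and checking that the result is identically zero is a quick Macaulay2 sanity check, in the spirit of Appendix B.
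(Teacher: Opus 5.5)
Your proof is correct, but it takes a different route from the paper's. The paper works directly from the explicit formula in Appendix A. By inspection, $\Delta$ is symmetric in $A,B,C$, and each monomial has $A$-, $B$- and $C$-degrees of a common parity, so $\Delta$ is literally invariant under every equivalence. It then checks in Macaulay2 that $A^2-8C+4D-16$ divides $\Delta(A,A,C,D)$.

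You instead rely on Lemma \ref{ZariskiClose}, and really only on the identity $\Delta\circ\phi\equiv 0$ for the parameter map $\phi$, which is the kernel computation of Appendix B. You get stability under equivalence from the equivariance of $\phi$ under permutations and double sign changes of the $a_i$. You replace the divisibility check by the elementary fact that the slice $a_3=2$ maps onto the normal-form plane over $\mathbb{C}$. You can drop the closures and the hedge about irreducibility altogether: $\phi(\mathbb{C}^3)$ is itself stable under equivalence and contains the plane, hence contains every degenerate quadruple.

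The paper's version needs only the printed formula for $\Delta$. It also gives the sharper statement $\Delta(A,B,C,D)=\Delta(A',B',C',D')$ for equivalent quadruples. Yours trades one machine computation for another, but everything after it can be checked by hand. It also proves more: every degenerate quadruple lies in the image of (\ref{ChangeOfParams}) over $\mathbb{C}$ with some $a_i=\pm 2$. This complements Proposition \ref{RelationBetweenDegeneracyConditions} and explains the example $(4,4,-2,-4)$.
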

\begin{proof}
    By noting that every term of $\Delta(A, B, C, D)$ has either even degree in each of $A, B,$ and $C$ or odd degree in each of $A, B,$ and $C$, and that $\Delta(A, B, C, D)$ is symmetric in $A, B,$ and $C$, we find that if $(A, B, C, D)$ and $(A', B', C', D')$ are equivalent, then $\Delta(A, B, C, D) = \Delta(A', B', C', D').$ We remark that symmetry in $A, B,$ and $C$ also follows from the alternative descriptions of $\Delta$ in Lemma \ref{DeltaUniversality}.
    
    Now, it is straightforward to check with Macaulay2 \cite{Macaulay2} that $(A^2 - 8C + 4D - 16)\mid\Delta(A, A, C, D),$ so that for degenerate parameters $(A, B, C, D)$ with $A = B$ and $A^2 + 4D = 8C + 16,$ $\Delta(A, B, C, D) =0.$ It follows then that the same is true of any equivalent parameters, proving the proposition. 
\end{proof}

While it is reasonable to conjecture that any parameters $A, B, C,$ and $D$ such that 
$$\Delta(A, B, C, D) = 0$$
arise from the equation (\ref{GyodaMatsushitaEq}) via the change of variables (\ref{clusterAlgChangeVars}) if we work over an algebraically closed field, this certainly does not happen if we wish to restrict the parameters $a_1, a_2,$ and $a_3$ to elements of $\mathbb{F}_p$. Thus, despite Proposition \ref{DegeneratesAndDelta}, our degeneracy condition and corresponding obstruction genuinely extend those of de Courcy-Ireland, Litman, and Mizuno beyond just removing their condition $2a_{i-1} = a_ia_{i+1}$. We illustrate this with the following example:
\begin{example}
    We study the case $(A, B, C, D) = (4, 4, -2, -4)$. This is a degenerate quadruple with $A = B$ and $4D + A^2 = 8C + 16$, and hence $\Delta(4, 4,-2, -4) = 0.$ If we assume that it arises from a triple $(a_1, a_2, a_3)$ via (\ref{clusterAlgChangeVars}), we find as above that either $a_3 = 2$ or $a_1 = \pm 2$; this second case may quickly be ruled out, as it would imply $B = \pm C$.
We then have that
$$-2a_1 -2a_2 = 4,$$
so that 
$$a_1 + a_2 = -2;$$
and 
$$-4-a_1a_2 = -2,$$
so that
$$a_1a_2 = -2.$$
It follows that
$$a_1^2 + 2a_1 - 2 = 0,$$
so that $a_1 = -1\pm\sqrt{3}$ and $a_2 = -1\mp\sqrt{3}$. Over fields containing $\sqrt{3}$, this variety arises from (\ref{GyodaMatsushitaEq}) in two different ways, while over fields not containing $\sqrt{3}$ this variety does not arise from (\ref{GyodaMatsushitaEq}). However, the obstruction to transitivity we establish in Theorem \ref{QuadraticObstructions} coming from the degeneracy of these parameters occurs even over finite fields not containing $\sqrt{3}$.
\end{example}

\begin{remark}[Some less precise nondegeneracy conditions]\label{rmk:DifferentNondegConds}
When the author first began this project, he originally used the much stronger nondegeneracy condition $A \neq \pm B$, $A \neq \pm C$, and $B \neq \pm C$, which had been suggested by numerical experiments. However, there were many varieties $\mathcal{S}_{A, B, C, D}$ which seemed to obey strong approximation but which failed this condition, including the important special case (\ref{MarkoffPlusk}). He later found the polynomial $\Delta$ after reading \cite{pDivisibilityClusterAlg}. Demanding merely $\Delta \neq 0$ includes all of the equations of the form (\ref{MarkoffPlusk}) except for the cases $k = 0$ and $k = 4$, because $\Delta(0, 0, 0, k) = 64k(k-4)^4$, but excludes all of the varieties (\ref{GyodaMatsushitaEq}) connected to generalized cluster algebras. Allowing either $A \neq \pm B$, $A \neq \pm C$, and $B \neq \pm C$ or $\Delta \neq 0$ includes most of the varieties (\ref{GyodaMatsushitaEq}) but excludes those with $a_i = \pm a_j$ for $i \neq j$ in addition to those with $a_i = \pm 2$, and continues to exclude the classical Markoff equation in addition to the Cayley cubic from the family (\ref{MarkoffPlusk}). (In fact, these two varieties are the only two in the family (\ref{MarkoffPlusk}) that have the structure of a Generalized Cluster Algebra: the Markoff equation of course corresponds to the parameters $a_1 = a_2 = a_3 = 0,$ while the Cayley cubic corresponds to the parameters $a_1 = a_2 = a_3 = -2.$)
 
 The refined condition that $(A, B, C, D)$ must be equivalent to a quadruple where $4D + A^2 = 8C + 16$ forces $k = 4$ in the family (\ref{MarkoffPlusk}), so that the only degenerate surface in this family is the Cayley cubic, and forces $a_i = \pm 2$ for some $i$ in the family (\ref{GyodaMatsushitaEq}), as expected from \cite{pDivisibilityClusterAlg}.
\end{remark}

\section{Analysis of the Conic Sections}\label{ConicsSection}
We now move into the proof of our main theorem. As in \cite{BGS}, we begin by restricting the dynamics to the curves (which are conic sections that may be degenerate) formed by intersecting the surface
$$X^2 + Y^2 + Z^2 = XYZ + AX + BY + CZ + D$$
with planes of the form $X = x_0$, $Y = y_0$, or $Z = z_0$ and studying the resulting orbits; over the course of Sections \ref{EndgameSection}, \ref{middlegameSection}, and \ref{openingSection} we shall study how orbits within these curves intersect within the surface to achieve transitivity, a strategy described as ``fiber jumping'' in \cite{FuchsLitmanSilvermanTran}.

We will need to study these dynamics both combinatorially and algebraically; in this section we will start with the combinatorial analysis, proceed to the algebraic, and then indicate how the two support one another. As in \cite{BGS} and \cite{LT}, it will be convenient to introduce a graph $G$ whose vertices are the $\mathbb{F}_p$-points of $\mathcal{S}_{A, B, C, D}$, with an edge connecting the two vertices $(x_1, y_1, z_1)$ and $(x_2, y_2, z_2)$ if one solution is transformed into the other by a Vieta involution; as in \cite{LT} it will further be useful to give $G$ the natural 3-edge-coloring given by coloring the edge from $(x_1, y_1, z_1)$ to $(x_2, y_2, z_2)$ with color one, two, or three depending on whether $(x_1, y_1, z_1)$ is taken to $(x_2, y_2, z_2)$ by $V_1$, $V_2$, or $V_3$; this is in fact a proper coloring as each $V_i$ is an involution.\footnote{\label{NoBigons}We remark that for $i\neq j$, if both $V_i$ and $V_j$ take $(x_1, y_1, z_1)$ to $(x_2, y_2, z_2)$, then in fact $(x_1, y_1, z_1) = (x_2, y_2, z_2)$ must be a fixed point of both; in this case, we will attach to the corresponding vertex a self-loop of color $i$ and a self-loop of color $j$. This has also been observed elsewhere, for instance as Proposition 1.3 in \cite{pDivisibilityClusterAlg}, where it is observed that this fact means these graphs have no bigons.}

As in \cite{BGS} and \cite{LT}, we begin with a study of the orbit of a point $(x, y, z)$ of $\mathcal{S}_{A, B, C, D}$ under the group generated by two of the Vieta involutions, say $V_i$ and $V_j$. We start with the following basic observation (cf. the discussion in \cite{LT} following lemma 13).

\begin{lemma}\label{twoColoredOrbits}
    Let $G_{ij}$ be the subgraph of $G$ whose vertices are those in the orbit of a single point $(x, y, z)$ under the group generated by $V_i$ and $V_j$, and whose edges are those between the given vertices and colored with color $i$ or color $j$. Then $G_{ij}$ is either a cycle of even order or a line segment with a self-loop on each end.
\end{lemma}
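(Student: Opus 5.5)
Each vertex of $G_{ij}$ has exactly two half-edges of colors $i$ and $j$: one of color $i$ (to $V_i(v)$, or a self-loop if $V_i(v)=v$) and one of color $j$. So $G_{ij}$ is a connected graph of maximum degree two, with self-loops counted as contributing one half-edge. It is finite, since $\mathcal{S}_{A,B,C,D}(\mathbb{F}_p)$ is finite. The plan is to walk along the alternating sequence
\[
v_0=(x,y,z),\quad v_1=V_i(v_0),\quad v_2=V_jV_i(v_0),\quad v_3=V_iV_jV_i(v_0),\ \dots
\]
and classify the resulting connected graph. By the observation in the footnote above, two distinct colors cannot both join $v$ to the same $w\neq v$, so there are no bigons.

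The key steps, in order, are as follows.

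First, the orbit of $v_0$ under $\langle V_i,V_j\rangle$ is the set of points $(V_jV_i)^k v_0$ and $V_i(V_jV_i)^k v_0$ for $k\in\mathbb{Z}$. These are exactly the vertices visited by the alternating walk in both directions, so $G_{ij}$ is the union of this walk. Every vertex has degree two in the multigraph sense, with each self-loop counting once.

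Second, split into cases according to whether any vertex of the orbit is fixed by $V_i$ or $V_j$.

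If no vertex is fixed by either involution, then every vertex has two distinct neighbors, one via each color. A finite connected $2$-regular simple graph is a cycle; it has no multi-edges because there are no bigons. Its edges alternate in color $i,j,i,j,\dots$ around the cycle, so the cycle has even length.

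If some vertex $w$ is fixed by, say, $V_i$, start the alternating walk at $w$ using color $j$. Each new vertex has its other color edge leading onward. By finiteness the walk must terminate, and it can do so only by reaching a vertex with a self-loop, since revisiting an earlier vertex would give that vertex degree three. The walk therefore ends at a vertex $w'$ with a self-loop of the appropriate color, and $G_{ij}$ is a segment from $w$ to $w'$ with a self-loop at each end.

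Third, handle the degenerate cases. The segment may have length zero, in which case $w=w'$ is fixed by both $V_i$ and $V_j$ and carries two self-loops, one of each color. The footnote's convention covers this case. The segment may also have length one, so we check that such configurations are consistent with the description.

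The main obstacle is the termination argument in the fixed-point case. We must rule out the walk from a self-looped vertex returning to some interior vertex rather than ending at another loop. The plan is to handle this by a simple degree count. A vertex that already has both of its colored half-edges used cannot be entered again, because the proper edge coloring forces exactly one edge of each color at every vertex, as each $V_i$ is a well-defined involution. Finiteness of $\mathcal{S}_{A,B,C,D}(\mathbb{F}_p)$ then forces the walk to end.
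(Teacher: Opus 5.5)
Your proposal is correct and follows essentially the same route as the paper. Finiteness, connectivity, and the fact that each vertex carries exactly one edge of each of the two colors force $G_{ij}$ to be a cycle or a segment capped by self-loops, and the proper $2$-edge-coloring forces any cycle to have even length; your alternating-walk termination argument and your handling of the degenerate cases (a vertex fixed by both involutions, and the absence of bigons) just make explicit details the paper's proof leaves implicit.
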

\begin{proof}
    As $G_{ij}$ is of finite order, connected, and the degree of any vertex without a loop attached is two, $G_{ij}$ can only be a cycle or a line segment with a self-loop on each end. If $G_{ij}$ is a cycle, then it has an equal number of vertices and edges; as it admits a two-edge-coloring (by the colors $i$ and $j$) it must have an even number of edges, and thus an even number of vertices. 
\end{proof}

Our argument will proceed by studying the action of the iterates of maps of the form $V_i\circ V_j$ for $i\neq j$, which may be interpreted as the maps on the surface $\mathcal{S}_{A, B, C, D}$ induced by \textit{Dehn twists} of the four-times punctured sphere. These maps play a similar role in our analysis to the \textit{rotations} in \cite{BGS}, which also arise from Dehn twists, and which are, in the symmetric Markoff setting, square roots of the maps considered in this paper. In our setting, as $\mathcal{S}_{A, B, C, D}$ is typically not symmetric under the interchange of the variables, we generally do not have access to such a square root, which will be a source of some complications in what follows. The following lemma relates the orbit of $(x, y, z)$ under the group generated by $V_i$ and $V_j$ to that under the iterates of the map $V_i\circ V_j$:

\begin{lemma}\label{comparisonOfTwoColoredOrbits}
    Let $G_{ij}$ be the subgraph of $G$ whose vertices are those in the orbit of a single point $(x, y, z)$ under the group generated by $V_i$ and $V_j$. If $G_{ij}$ is a line with a self-loop on each end, then $V_i \circ V_j$ acts transitively on the vertices of $G_{ij}$. If $G_{ij}$ is a cycle, then the vertices of $G_{ij}$ form two orbits under $V_i\circ V_j$. 
\end{lemma}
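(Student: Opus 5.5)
Write $T = V_i\circ V_j$ and use Lemma \ref{twoColoredOrbits}, which says $G_{ij}$ is either a path $v_0, v_1, \dots, v_{n-1}$ with self-loops at both ends, or an even cycle whose edges alternate in color. In both cases we label the vertices along the graph and compute how $V_i$ and $V_j$ act as reflections of the index set. Then we read off the orbits of $T$ directly.

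\emph{Path case.} The edges along the path alternate colors, because the coloring is proper and every vertex has exactly one edge of each color (self-loops count as edges). Number the vertices $v_0,\dots,v_{n-1}$. Suppose the edge $v_0v_1$ has color $j$, so $v_0$ carries a self-loop of color $i$. Then $V_j$ swaps $v_{2k}\leftrightarrow v_{2k+1}$, and $V_i$ swaps $v_{2k-1}\leftrightarrow v_{2k}$ and fixes $v_0$. At the other end, whichever involution has no path edge at $v_{n-1}$ fixes it. On indices these maps are the reflections $V_j: m\mapsto 1-m$ and $V_i: m\mapsto -m$, with the endpoints folded back, i.e.\ acting on $\{0,\dots,n-1\}$ as the image of $\mathbb{Z}/2n$ under $m\sim -1-m$. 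More concretely, $T = V_i V_j$ sends $v_{2k}\mapsto V_i(v_{2k+1}) = v_{2k+2}$ and $v_{2k+1}\mapsto V_i(v_{2k}) = v_{2k-1}$. The reflections at the ends turn the even-indexed chain into the odd-indexed chain. For instance, $v_0\mapsto v_2\mapsto\cdots$ runs up the even indices, the self-loop at the far end sends the walk back down the odd indices, and the walk reaches $v_1$, whose image is $V_i(v_0)=v_0$. So the orbit of $v_0$ under $T$ is a single cycle through all $n$ vertices. This proves transitivity. The only care needed is the case split on the parity of $n$, which decides which color sits at the far loop; both cases are checked the same way. The degenerate case $n=1$, where the vertex is fixed by both involutions, is trivial.

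\emph{Cycle case.} Here $G_{ij}$ is a cycle $v_0,\dots,v_{2m-1}$ (indices mod $2m$) with edges of alternating colors. Say $V_j$ swaps $v_{2k}\leftrightarrow v_{2k+1}$ and $V_i$ swaps $v_{2k+1}\leftrightarrow v_{2k+2}$. Then $T(v_{2k}) = v_{2k+2}$ and $T(v_{2k+1}) = v_{2k-1}$. So $T$ preserves the parity of the index, and it acts on the evens and on the odds as a rotation by one step of a cycle of length $m$. This gives exactly two orbits, each of size $m$. They are distinct because $T$ preserves parity.

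The main obstacle is only bookkeeping: setting up the labeling correctly and handling the endpoints in the path case. We should also treat the degenerate $2$-cycle ($m=1$) as part of the cycle case. By the footnote on bigons it cannot occur as a genuine two-vertex cycle, but if it did the argument still gives two singleton orbits.
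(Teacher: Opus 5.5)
Your proof is correct and is essentially the paper's argument: label the vertices along the path or even cycle, check that $V_i\circ V_j$ advances two steps within one parity class, and in the path case use the two end loops to switch from the even-indexed run to the odd-indexed run (your $0$-indexed bookkeeping matches the paper's $v_1,\dots,v_n$). Two small fixes. First, for $n$ even both end loops can have color $j$, so your normalization that $v_0$ carries an $i$-loop needs the paper's justification: pass to $(V_i\circ V_j)^{-1}=V_j\circ V_i$, which has the same orbits. Second, your aside describing $V_i$ and $V_j$ as the reflections $m\mapsto -m$ and $m\mapsto 1-m$ of the path indices is not accurate (e.g.\ $V_j(v_2)=v_3$), but your concrete computation does not depend on it.
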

\begin{proof}
    Suppose first that $G_{ij}$ is a cycle. By Lemma \ref{twoColoredOrbits}, $G_{ij}$ must have even order, and the result follows immediately as $V_i\circ V_j$ rotates the cycle by two steps.
    
    Suppose next that $G_{ij}$ is a line with a self-loop on each end. Let $v_1$ be a vertex of $G_{ij}$ with a self-loop, and number the remaining vertices of $G_{ij}$ consecutively, so that $v_2$ is the unique vertex adjacent to $v_1$, $v_3$ is the vertex adjacent to $v_2$ other than $v_1,$ etc., until reaching the other vertex with a self-loop, which we label $v_n$ ($n = |G_{ij}|$). Assume without loss of generality that the self-loop of $v_1$ is of color $i$ (else, study the backwards iterates of $V_i\circ V_j$ instead of the forwards iterates, which interchanges the roles of $i$ and $j$). Then $V_i\circ V_j$ maps $v_k$ to $v_{k+2}$ for $k < n-1,$ and thus iterating $V_i\circ V_j$ allows us to map $v_1$ to any vertex of odd index. If $n$ is odd, then $v_1$ is mapped to $v_n$, which will be fixed by $V_j$, and thus mapped under $V_i\circ V_j$ to $v_{n-1}$; if $n$ is even, then $v_1$ will be mapped to $v_{n-1}$, which will then be mapped to $v_n$ by $V_j$, and $v_n$ will be fixed by $V_i$, so that $V_i \circ V_j$ will take $v_{n-1}$ to $v_n$. Either way, this takes $v_1$ to a vertex of even index, and iterating further will take $v_1$ to all vertices of even index.
\end{proof}

We now move from the combinatorial study of the two-colored orbit of a point $(x, y, z)$ to the algebraic study of this orbit. For definiteness, we will focus on the iteration of the map $D_1 \coloneq V_3\circ V_2;$ analogous results about the iteration of $D_2\coloneq V_3\circ V_1$ and $D_3\coloneq V_2\circ V_1$ follow after permuting $A, B,$ and $C$ by symmetry. For a fixed value of $x$, this map is the affine map

\[
\begin{pmatrix}
    y\\ z
\end{pmatrix}\mapsto
\begin{pmatrix}
    -1 & x\\
    -x & x^2-1
\end{pmatrix}\begin{pmatrix}
    y\\ z
\end{pmatrix} + \begin{pmatrix}
    B\\ Bx + C
\end{pmatrix}.
\]
For the purposes of studying the iterates of this map, it will be convenient to embed solutions in three dimensional space to linearize the map, obtaining:
\[
\begin{pmatrix}
    y\\ z\\ 1
\end{pmatrix}\mapsto
\begin{pmatrix}
    -1 & x & B\\
    -x & x^2-1 & Bx+C\\
    0 & 0 & 1
\end{pmatrix}\begin{pmatrix}
    y\\ z\\ 1
\end{pmatrix}.
\]

For $x \neq \pm 2$, this matrix diagonalizes as

\[\begin{gathered}
\begin{pmatrix}
    -1 & x & B\\
    -x & x^2-1 & Bx+C\\
    0 & 0 & 1
\end{pmatrix}\\[0.4em]
= 
\varphi^{-1}
\begin{pmatrix}
    1 & 1 & \varphi^{-2}(-2B - Cx)\\
    \chi & \chi^{-1} & \varphi^{-2}(-Bx - 2C)\\
    0 & 0 & 1
\end{pmatrix}\begin{pmatrix}
    \chi^2 & 0 & 0\\
    0 & \chi^{-2} & 0\\
    0 & 0 & 1
\end{pmatrix}
\begin{pmatrix}
    \chi^{-1} & -1 & \varphi^{-1}(B + C\chi^{-1})\\
    -\chi & 1 & \varphi^{-1}(B + C\chi)\\
    0 & 0 & \varphi
\end{pmatrix},
\end{gathered}\]
where $\chi$ is a root of $\lambda^2 - x\lambda + 1$ and $\varphi = \chi^{-1}-\chi$. If $B = C$ we can instead use the map $\tau_{yz}\circ V_2,$ which may be represented by the matrix

\[
\begin{pmatrix}
    y\\ z\\ 1
\end{pmatrix}\mapsto
\begin{pmatrix}
    0 & 1  & 0\\
    -1 & x & B\\
    0 & 0 & 1
\end{pmatrix}\begin{pmatrix}
    y\\ z\\ 1
\end{pmatrix},
\]

and if $B = -C$ we can instead use the map $\operatorname{neg}_{yz}\circ\tau_{yz}\circ V_2,$ which may be represented by the matrix

\[
\begin{pmatrix}
    y\\ z\\ 1
\end{pmatrix}\mapsto
\begin{pmatrix}
    0 & -1  & 0\\
    1 & -x & -B\\
    0 & 0 & 1
\end{pmatrix}\begin{pmatrix}
    y\\ z\\ 1
\end{pmatrix}.
\]

In each case, letting $\pm$ denote consistently the sign in $B = \pm C$, the corresponding matrix may be diagonalized as

\[\varphi^{-1}\begin{pmatrix}
    1 & 1 & \varphi^{-2}(-2B - Cx)\\
    \chi & \chi^{-1} & \varphi^{-2}(-Bx - 2C)\\
    0 & 0 & 1
\end{pmatrix}\begin{pmatrix}
    \pm\chi & 0 & 0\\
    0 & \pm\chi^{-1} & 0\\
    0 & 0 & 1
\end{pmatrix}\begin{pmatrix}
    \chi^{-1} & -1 & \varphi^{-1}(B + C\chi^{-1})\\
    -\chi & 1 & \varphi^{-1}(B + C\chi)\\
    0 & 0 & \varphi
\end{pmatrix},
\]

and gives a square root of the map $V_3\circ V_2$. We remark that it follows that $x = \chi + \chi^{-1}$, and that if $\chi\in\mathbb{F}_{p^2}\setminus\mathbb{F}_p,$ $\chi^{p+1} = N(\chi) = 1$. In all of these cases, $x$ will be said to be \textit{hyperbolic} if $\chi\in\mathbb{F}_p$ and to be \textit{elliptic} if $\chi\in\mathbb{F}_{p^2}\setminus \mathbb{F}_p.$ The coordinate change diagonalizing the Dehn twist $D_1$ also gives a particularly nice form for the conic $C_1(x)$ given by intersecting $\mathcal{S}_{A, B, C, D}$ with the plane $X = x$: indeed, writing

\begin{equation}\label{DiagonalizedComponents}
    \begin{pmatrix}
    \xi\\ \eta\\ 1
\end{pmatrix} = (\chi^{-1} - \chi)^{-1}\begin{pmatrix}
    \chi^{-1} & -1 & (\chi^{-1} - \chi)^{-1}(B + C\chi^{-1})\\
    -\chi & 1 & (\chi^{-1} - \chi)^{-1}(B + C\chi)\\
    0 & 0 & (\chi^{-1} - \chi)
\end{pmatrix}\begin{pmatrix}
    y\\ z\\ 1
\end{pmatrix},
\end{equation}
a routine computation shows that:
\[\begin{split}
    \xi\eta &= (\chi^{-1} - \chi)^{-2}\left(\chi^{-1}y - z + (\chi^{-1} - \chi)^{-1}(B + C\chi^{-1})\right)\left(-\chi y + z + (\chi^{-1} - \chi)^{-1}(B + C\chi)\right)\\
    &= (x^2 - 4)^{-1}\left(-y^2 - z^2 + xyz + By + Cz + (x^2 -4)^{-1}(B^2 + C^2 + BCx)\right)\\
    &= (x^2 - 4)^{-1}(x^2 - Ax - D + (x^2 - 4)^{-1}(B^2 + C^2 + BCx)).
\end{split}
\]
We shall denote by $\kappa_1(x)$ the all-important quantity
$$\kappa_1(x) = (x^2 - 4)^{-1}(x^2 - Ax - D + (x^2 - 4)^{-1}(B^2 + C^2 + BCx)).$$

If $\xi$ and $\eta$ in (\ref{DiagonalizedComponents}) both equal 0, then the orbit of $(x, y, z)$ under $V_3\circ V_2$ will be trivial. Fortunately, these are easily controlled:

\begin{proposition}[Double Fixed Points]\label{DoubleFixedPoints}
    Let $(x, y, z)$ be a point of $\mathcal{S}_{A, B, C, D},$ with $x\neq \pm2,$ and let $\xi$ and $\eta$ be defined as in (\ref{DiagonalizedComponents}). Then the following are equivalent:
    \begin{enumerate}
        \item $\xi = \eta = 0;$
        \item $y = \frac{-2B - Cx}{x^2-4}$ and $z = \frac{-2C - Bx}{x^2 -4}.$
        \item $y = \frac{B + xz}{2}$ and $z = \frac{C + xy}{2};$ and
        \item $(x, y, z)$ is fixed by both $V_2$ and $V_3$.
    \end{enumerate}
    There are at most four such points.
\end{proposition}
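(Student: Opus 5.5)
The plan is to prove the cycle of equivalences through the two linear-algebraic descriptions, and then to count the points using the identity $\xi\eta=\kappa_1(x)$ established just above the statement. Throughout, $x\neq\pm2$, so $x^2-4=\varphi^2\neq 0$ and all the denominators make sense.

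\emph{(3)$\Leftrightarrow$(4).} Since $V_2(x,y,z)=(x,B+xz-y,z)$, the point is fixed by $V_2$ exactly when $y=B+xz-y$, that is, $y=\frac{B+xz}{2}$. Likewise, $V_3$ fixes it exactly when $z=\frac{C+xy}{2}$. Here $p$ is odd, as agreed earlier.

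\emph{(2)$\Leftrightarrow$(3).} Condition (3) is the linear system
\[
2y-xz=B,\qquad -xy+2z=C .
\]
Its determinant is $4-x^2\neq 0$, so the system has a unique solution. Cramer's rule gives $y=\frac{2B+Cx}{4-x^2}$ and $z=\frac{2C+Bx}{4-x^2}$, which is exactly (2).

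\emph{(1)$\Leftrightarrow$(2).} The $3\times 3$ matrix in (\ref{DiagonalizedComponents}) is invertible, since its determinant is a nonzero multiple of a power of $\varphi$. Hence $\xi=\eta=0$ holds if and only if $(y,z,1)^T$ is the image of $(0,0,1)^T$ under the inverse change of coordinates. By the diagonalization displayed before the statement, the inverse is $\varphi^{-1}$ times the left-hand matrix, up to the normalization of the bottom row. Its third column is $\varphi^{-2}(-2B-Cx,\,-Bx-2C,\,1)^T$ after rescaling the last coordinate to $1$. Using $\varphi^2=\chi^{-2}-2+\chi^2=x^2-4$, this gives precisely (2).

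I expect the only real work to be bookkeeping the scalar $\varphi^{-1}$ in this inversion. As a safer alternative, I would substitute the values in (2) directly into $\xi$ and $\eta$ from (\ref{DiagonalizedComponents}) and check that they vanish. Combined with the uniqueness of the solution from the previous step, this gives the equivalence.

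\emph{Counting.} Suppose $(x,y,z)\in\mathcal{S}_{A,B,C,D}$ satisfies (1). The computation preceding the proposition shows that $\xi\eta=\kappa_1(x)$ on the surface, so $\kappa_1(x)=0$. Multiplying by $(x^2-4)^2$ gives
\[
(x^2-4)(x^2-Ax-D)+B^2+C^2+BCx=0 .
\]
This is a monic quartic in $x$, so it has at most four roots in $\mathbb{F}_p$. For each such root $x$, condition (2) determines $(y,z)$ uniquely. Therefore there are at most four double fixed points.
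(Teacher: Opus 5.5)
Your proof is correct and follows essentially the same route as the paper: (3)$\Leftrightarrow$(4) comes directly from the formulas for $V_2,V_3$, (2)$\Leftrightarrow$(3) comes from solving the $2\times 2$ linear system with determinant $4-x^2$, (2)$\Rightarrow$(1) is checked by substitution, and the count comes from $\xi\eta=\kappa_1(x)$ forcing the monic quartic $f_1(x)$ to vanish. The only difference is cosmetic, in (1)$\Rightarrow$(2): the paper adds and subtracts the equations $\varphi^2\xi=0$ and $\varphi^2\eta=0$, while you invert the coordinate change. Two small corrections there: the inverse of the matrix in (\ref{DiagonalizedComponents}) is exactly the left factor of the diagonalization (the left and right factors multiply to $\varphi I$), and the uniqueness your fallback needs comes from this invertibility, not from the system in (3).
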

\begin{remark}
    In the setting of (\ref{GyodaMatsushitaEq}), de Courcy-Ireland, Litman, and Mizuno prove a similar result in their Lemma 2.1 (\cite{pDivisibilityClusterAlg}). These double fixed points also arise \cite{LT}, where they simply called ``bad points.''
\end{remark}
\begin{proof}
    The equivalence of (3) and (4) is clear from the formulas defining $V_2$ and $V_3$.
    Assuming (3), we find that
    $$y = \frac{B + xz}{2} = \frac{x^2y + Cx + 2B}{4},$$
    so that
    $$y = \frac{-2B-Cx}{x^2-4};$$
    in the same way (3) also implies that
    $$z = \frac{-2C - Bx}{x^2 -4},$$
    so that (3) implies (2). We also have that (2) implies (3), for
    $$B + x\frac{-2C - Bx}{x^2-4} = \frac{Bx^2 - 4B - 2Cx - Bx^2}{x^2-4} = 2\frac{-2B - Cx}{x^2-4},$$
    so that (2) implies
    $$y = \frac{B + xz}{2};$$
    in the same way (2) also implies that
    $$z = \frac{C + xy}{2},$$
    so that (2) implies (3).
    Now, if we assume $\xi = \eta = 0$, we have that
    $$0 = (\chi^{-1} - \chi)^2\xi = \chi^{-2}y - y - \chi^{-1}z + \chi z + B + C\chi^{-1},\text{ and}$$
    $$0 = (\chi^{-1}-\chi)^2\eta = -y + \chi^2y + \chi^{-1}z - \chi z + B + C\chi.$$
    Adding these equations we get
    $$(\chi^{-2} - 2 + \chi^{2})y + 2B + C(\chi + \chi^{-1}) = 0,$$
    so that $$y = \frac{-2B-Cx}{x^2-4}.$$
    Subtracting these equations gives
    $$(\chi^{-2}-\chi^2)y - 2\chi^{-1}z + 2\chi z + C(\chi^{-1}-\chi) = 0,$$
    so that
    $$(\chi + \chi^{-1})y - 2z + C = 0$$
    and
    $$z = \frac{C + xy}{2} = \frac{-2C-Bx}{x^2-4}.$$
    Thus, (1) implies (2).
    Finally, plugging the formulas for $y$ and $z$ in (2) into the formulas for $\xi$ and $\eta$ from (\ref{DiagonalizedComponents}) shows that (2) implies (1). This proves the equivalence. For the bound, for any double fixed point $(x, y, z)$, we have $\xi\eta = \kappa_1(x) = 0$, whence $f_1(x) = 0$ as well, so there are at most 4 possible values of $x$. As the value of $x$ uniquely determines the values of $y$ and $z$ via (2), this gives at most 4 such points.
\end{proof}
We also note that if $\chi\in\mathbb{F}_{p^2}-\mathbb{F}_p,$ $\chi^{p+1} = 1$ implies that $\xi^p = \eta$. To simplify our treatment of the elliptic case, we introduce the following notation:

\begin{notation}
    Let $\mu_{p+1}$ denote the \textit{norm-one subgroup of} $\mathbb{F}_{p^2},$ i.e., \[
\mu_{p+1}=\{u\in\mathbb F_{p^2}^*:u^{p+1}=1\}.
\]
\end{notation}

All in all, the above discussion proves the following pair of lemmas:

\begin{lemma}\label{hyperbolics}
    Let $x$ be hyperbolic, with $\kappa_1(x)\not\equiv 0\pmod{p}$, and write 
    $x = \chi + \chi^{-1}$
    for $\chi\in \mathbb{F}_p.$ Then $C_1(x)$ is a hyperbola with $p-1$ points. Let
    $$\widetilde{C}_1(x) = \left\{\left(t, \frac{\kappa_1(x)}{t}\right)\mid t\in\mathbb{F}_p^*\right\}.$$
    Then the map
    $$\left(t, \frac{\kappa_1(x)}{t}\right)\mapsto \left(t + \frac{\kappa_1(x)}{t} + (x^2 - 4)^{-1}(-2B - Cx), \chi t + \frac{\kappa_1(x)}{\chi t} + (x^2 - 4)^{-1}(-Bx-2C)\right)$$
    is a bijection from $\widetilde{C}_1(x)$ to $C_1(x)$. In these coordinates, the map $D_1 = V_3\circ V_2$ acts as:
    $$\left(t, \frac{\kappa_1(x)}{t}\right)\mapsto\left(\chi^2 t, \frac{\kappa_1(x)}{\chi^2 t}\right).$$
\end{lemma}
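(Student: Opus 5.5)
The plan is to read everything off the diagonalizing coordinate change (\ref{DiagonalizedComponents}) and the identity $\xi\eta=\kappa_1(x)$ already established above.

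\textbf{Step 1: the coordinate change is a bijection.} Since $x$ is hyperbolic, $\chi\in\mathbb{F}_p$. Moreover $x\neq\pm2$, because $\kappa_1(x)$ is only defined (and nonzero) when $x^2-4\neq0$. Hence $\chi\neq\chi^{-1}$ and $\varphi=\chi^{-1}-\chi\in\mathbb{F}_p^*$. The $3\times3$ matrix in (\ref{DiagonalizedComponents}) is then an invertible affine transformation of $\mathbb{F}_p^2$, defined over $\mathbb{F}_p$. Its inverse is the first factor $\varphi^{-1}(\cdots)$ in the diagonalization displayed earlier.

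\textbf{Step 2: identify the image of $C_1(x)$.} The routine computation preceding Proposition \ref{DoubleFixedPoints} shows that $(y,z)\in C_1(x)$ if and only if $\xi\eta=\kappa_1(x)$. Because $\kappa_1(x)\neq0$, both $\xi$ and $\eta$ are nonzero, and the solution set is exactly $\{(t,\kappa_1(x)/t): t\in\mathbb{F}_p^*\}=\widetilde C_1(x)$. This gives $|C_1(x)|=p-1$ and the hyperbola structure: the conic is nondegenerate, with two distinct rational points at infinity corresponding to $\xi=0$ and $\eta=0$.

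\textbf{Step 3: write down the inverse and read off the action.} Applying the inverse matrix $\varphi^{-1}\begin{pmatrix}1&1&\varphi^{-2}(-2B-Cx)\\ \chi&\chi^{-1}&\varphi^{-2}(-Bx-2C)\\0&0&1\end{pmatrix}$ to $(\xi,\eta,1)=(t,\kappa_1(x)/t,1)$ gives the parametrization. Since $\varphi^2=x^2-4$, the constant terms become $(x^2-4)^{-1}(-2B-Cx)$ and $(x^2-4)^{-1}(-Bx-2C)$, matching the stated formula.

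There is one bookkeeping issue to watch. The stated formula appears to omit the overall scalar $\varphi^{-1}$ and uses $\chi t+\kappa_1/(\chi t)$ rather than $\chi t+\chi^{-1}\kappa_1/t$. I would reconcile this by a harmless rescaling of the parameter $t$ (replacing $t$ by $\varphi t$, which changes $\kappa_1$ by $\varphi^2$), adjusting the normalization as needed. Bijectivity survives any such reparametrization of $\mathbb{F}_p^*$.

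The action of $D_1$ is immediate from the diagonalization $D_1=P\,\mathrm{diag}(\chi^2,\chi^{-2},1)\,P^{-1}$. In $(\xi,\eta)$ coordinates, $D_1$ sends $(t,\kappa_1/t)$ to $(\chi^2t,\chi^{-2}\kappa_1/t)=(\chi^2t,\kappa_1/(\chi^2t))$.

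\textbf{Main obstacle.} The only real difficulty is the normalization consistency in Step 3: ensuring the explicit parametrization matches the exact diagonalizing matrix, since the conjugation formula and the quantity $\kappa_1$ are sensitive to the factor $\varphi^{-1}$. I would verify this by directly checking that the given map lands on $C_1(x)$, i.e.\ that it satisfies $y^2+z^2-xyz-By-Cz=x^2-Ax-D$. The remaining steps are formal.
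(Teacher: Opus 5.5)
Your plan is exactly the paper's: the lemma is deduced there from the preceding discussion, i.e.\ the diagonalization of $D_1$, the coordinate change (\ref{DiagonalizedComponents}), and the identity $\xi\eta=\kappa_1(x)$ on $C_1(x)$. Steps 1 and 2 and the computation of the $D_1$-action are correct. The ``bookkeeping issue'' in Step 3 does not exist, however. It comes from misidentifying the inverse of the coordinate change, and the repair you propose would actually be wrong.

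Write the displayed factorization as $\varphi^{-1}P\Lambda Q$, with $P$ the first matrix, $\Lambda=\mathrm{diag}(\chi^2,\chi^{-2},1)$, and $Q$ the last matrix (whose corner entry is $\varphi$). Multiplying out and using $\chi+\chi^{-1}=x$ gives $PQ=\varphi I$. So the scalar $\varphi^{-1}$ belongs with $Q$, not with $P$. Equation (\ref{DiagonalizedComponents}) says precisely $(\xi,\eta,1)^T=\varphi^{-1}Q\,(y,z,1)^T$, and its inverse is $P$ itself; by contrast, $(\varphi^{-1}P)(\varphi^{-1}Q)=\varphi^{-1}I$. Therefore
\[
y=\xi+\eta+\varphi^{-2}(-2B-Cx),\qquad z=\chi\xi+\chi^{-1}\eta+\varphi^{-2}(-Bx-2C).
\]
Substituting $\xi=t$, $\eta=\kappa_1(x)/t$ and $\varphi^2=x^2-4$ yields the stated map exactly, because $\chi^{-1}\kappa_1(x)/t$ and $\kappa_1(x)/(\chi t)$ are the same element of $\mathbb{F}_p$. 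Your version is also internally inconsistent: with the extra factor $\varphi^{-1}$ the constant terms would be $\varphi^{-3}(\cdots)$, not $(x^2-4)^{-1}(\cdots)$ as you assert.

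The rescaling $t\mapsto\varphi t$ you suggest is not harmless either. As you note yourself, it changes the constant in the hyperbola $\xi\eta=\kappa_1(x)$, so it would give a parametrization by a set other than $\widetilde C_1(x)$ and would not prove the lemma as stated. Delete that paragraph. With $P$ as the inverse, Step 3 is a one-line substitution, and the direct check you mention is just the paper's computation of $\xi\eta$ run in reverse.
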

\begin{lemma}\label{elliptics}
    Let $x$ be elliptic, with $\kappa_1(x)\not\equiv0\pmod{p}$, and write 
    $x = \chi + \chi^{-1}$
    for $\chi\in \mu_{p+1} - \mathbb{F}_p$. Then $C_1(x)$ is an ellipse with $p+1$ points. Let
    $$\widetilde{C}_1(x) = \left\{\left(t, \frac{\kappa_1(x)}{t}\right)\mid t\in\mathbb{F}_{p^2}^*, t^{p+1} = \kappa_1(x)\right\}.$$
    Then the map
    $$\left(t, \frac{\kappa_1(x)}{t}\right)\mapsto \left(t + \frac{\kappa_1(x)}{t} + (x^2 - 4)^{-1}(-2B - Cx), \chi t + \frac{\kappa_1(x)}{\chi t} + (x^2 - 4)^{-1}(-Bx-2C)\right)$$
    is a bijection from $\widetilde{C}_1(x)$ to $C_1(x)$. In these coordinates, the map $D_1 = V_3\circ V_2$ acts as:
    $$\left(t, \frac{\kappa_1(x)}{t}\right)\mapsto\left(\chi^2 t, \frac{\kappa_1(x)}{\chi^2 t}\right).$$
\end{lemma}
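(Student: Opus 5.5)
The plan follows the hyperbolic case (Lemma \ref{hyperbolics}) step by step. The substantive differences are a Galois-descent argument and a count of norm-$\kappa_1(x)$ elements. Fix $x$ elliptic, so $\chi\in\mu_{p+1}\setminus\mathbb{F}_p$, $\chi^p=\chi^{-1}$, and $\varphi=\chi^{-1}-\chi\neq 0$ with $\varphi^p=-\varphi$ and $\varphi^2=x^2-4\in\mathbb{F}_p^*$. The linear change of coordinates (\ref{DiagonalizedComponents}) is invertible over $\mathbb{F}_{p^2}$, since it is the inverse of the diagonalizing matrix displayed above. The computation preceding Proposition \ref{DoubleFixedPoints} then shows that a pair $(y,z)\in\mathbb{F}_{p^2}^2$ lies on $C_1(x)$ if and only if $\xi\eta=\kappa_1(x)$.

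Next we identify the $\mathbb{F}_p$-rational pairs. I would apply Frobenius to the formula for $\xi$, using $\chi^p=\chi^{-1}$ and $\varphi^p=-\varphi$, to check that $(y,z)\in\mathbb{F}_p^2$ forces $\xi^p=\eta$, as already noted before the statement. Conversely, inverting (\ref{DiagonalizedComponents}) gives
\[
y=\xi+\eta+(x^2-4)^{-1}(-2B-Cx), \qquad z=\chi\xi+\chi^{-1}\eta+(x^2-4)^{-1}(-Bx-2C).
\]
Setting $t=\xi$, we have $\eta=\kappa_1(x)/t$, and $\chi^{-1}\eta=\kappa_1(x)/(\chi t)$, which matches the map in the statement. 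If $\eta=\xi^p$, then $\xi+\eta$ and $\chi\xi+\chi^{-1}\eta=\chi\xi+(\chi\xi)^p$ are traces, hence lie in $\mathbb{F}_p$. The constants lie in $\mathbb{F}_p$ as well, so $y,z\in\mathbb{F}_p$.

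This gives the bijection. Points of $C_1(x)(\mathbb{F}_p)$ correspond exactly to pairs $(t,\kappa_1(x)/t)$ with $\kappa_1(x)/t=t^p$, which is the condition $t^{p+1}=\kappa_1(x)$. Since $\kappa_1(x)\neq 0$, $t$ is automatically nonzero, and injectivity is inherited from the invertibility of the linear change of coordinates.

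For the count, note that $\kappa_1(x)\in\mathbb{F}_p^*$, because all the quantities defining it are rational. The norm map $\mathbb{F}_{p^2}^*\to\mathbb{F}_p^*$, $t\mapsto t^{p+1}$, is a surjective homomorphism with kernel $\mu_{p+1}$. Each fiber therefore has exactly $p+1$ elements, so $C_1(x)$ has $p+1$ points. The conic is a nondegenerate conic with no $\mathbb{F}_p$-rational points at infinity: the quadratic part $\xi\eta=\xi^{p+1}$ is anisotropic over $\mathbb{F}_p$. Hence $C_1(x)$ is an ellipse.

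Finally, the action of $D_1$ comes from the diagonalization: $D_1$ multiplies $\xi$ by $\chi^2$ and $\eta$ by $\chi^{-2}$, so $(t,\kappa_1(x)/t)\mapsto(\chi^2t,\kappa_1(x)/(\chi^2t))$. This preserves the condition $t^{p+1}=\kappa_1(x)$ because $(\chi^2)^{p+1}=1$.

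The step I expect to need the most care is the descent computation, namely verifying that Frobenius interchanges $\xi$ and $\eta$. This requires tracking the factor $\varphi^{-2}=(x^2-4)^{-1}$, which is rational, against the single factor $\varphi^{-1}$ in the constant column. The sign change $\varphi^p=-\varphi$ must be matched correctly with $B+C\chi^{-1}\mapsto B+C\chi$. I would check this directly entry by entry.
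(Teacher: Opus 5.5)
Your proposal is correct and follows essentially the paper's own route. The paper derives this lemma from the diagonalization (\ref{DiagonalizedComponents}), the identity $\xi\eta=\kappa_1(x)$, and the observation that $\chi^{p+1}=1$ forces $\xi^p=\eta$; you make explicit the two directions of the Frobenius descent and the count of $p+1$ points via surjectivity of the norm $t\mapsto t^{p+1}$, both of which the paper leaves implicit.
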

In an important difference from the setting of \cite{BGS}, even when $\chi$ has maximal order, the Dehn twist does not act transitively on $C_1(x)$. Fortunately, our above discussion of the combinatorial structure of orbits provides a partial remedy.
\begin{proposition}
    Suppose that $x\neq \pm 2,$ $\kappa_1(x)\neq 0$, and $x = \chi + \chi^{-1},$ with $\chi$ of maximal order, i.e. $p-1$ (if hyperbolic) or $p+1$ (if elliptic). Then $C_1(x)$ forms a single orbit under the action of the group generated by $V_2$ and $V_3$ if and only if
    $$\left(\frac{\kappa_1(x)}{p}\right) = -1.$$
\end{proposition}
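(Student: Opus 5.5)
Combining Lemma \ref{comparisonOfTwoColoredOrbits} with Lemmas \ref{hyperbolics} and \ref{elliptics}, the question reduces to counting the orbits of $D_1 = V_3\circ V_2$ on $C_1(x)$. In the coordinates $\widetilde{C}_1(x)$, $D_1$ acts by $t\mapsto \chi^2 t$. Suppose $\chi$ has maximal order $N$, where $N = p-1$ or $p+1$. Then $\chi^2$ has order $N/2$, and $C_1(x)$ has exactly $N$ points. So $D_1$ has exactly two orbits on $C_1(x)$, each of size $N/2$. (In the elliptic case, the torsor $\{t : t^{p+1}=\kappa_1(x)\}$ is a coset of $\mu_{p+1}$, on which $\langle\chi^2\rangle$ acts freely with index $2$.)

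By Lemma \ref{comparisonOfTwoColoredOrbits}, if the $\langle V_2,V_3\rangle$-orbit of a point is a cycle, it splits into two $D_1$-orbits; if it is a segment with self-loops at both ends, it is a single $D_1$-orbit. Hence $C_1(x)$ is a single $\langle V_2,V_3\rangle$-orbit if and only if $V_2$ (or equivalently $V_3$) interchanges the two $D_1$-orbits. Since there are only two $D_1$-orbits, $\langle V_2, V_3\rangle$ has either one orbit or two. It has one orbit exactly when some, hence every, point has its $D_1$-orbit not preserved by $V_2$.

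The plan is therefore to compute $V_2$ in the $t$-coordinate. Since $V_2 = V_3\circ D_1^{-1}\cdots$ is an involution conjugating $D_1$ to $D_1^{-1}$, I expect it to act by $t\mapsto c\,\kappa_1(x)/t$ for an explicit constant $c$ depending on $\chi$. From the diagonalization, $V_3$ swaps the roles of $\xi$ and $\eta$ up to a power of $\chi$: $V_3$ should be $(\xi,\eta)\mapsto(\chi^{a}\eta,\chi^{-a}\xi)$, and then $V_2 = V_3 D_1$ is of the same shape with the exponent shifted by $2$. I will check this directly by conjugating the matrix of $V_2$ by the diagonalizing matrix.

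Granting this, $V_2(t) = \chi^{a}\kappa_1(x)/t$. Two points $t$ and $t'$ lie in the same $D_1$-orbit if and only if $t'/t \in \langle \chi^2\rangle$, that is, $t'/t$ is a square in the cyclic group $\langle\chi\rangle$ (the group $\mathbb{F}_p^*$ or $\mu_{p+1}$). So the criterion is whether $\chi^{a}\kappa_1(x)/t^2$ is a non-square in $\langle\chi\rangle$.

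\emph{Hyperbolic case.} Here $t^2$ is a square, and so the criterion becomes whether $\chi^{a}\kappa_1(x)$ is a non-square in $\mathbb{F}_p^*$. The parity of $a$ must be determined; I expect $a$ to be even, or the relevant factor to be a square such as $\chi^2$. Then the criterion reduces to $\left(\frac{\kappa_1(x)}{p}\right)=-1$.

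\emph{Elliptic case.} Here $t^{p+1}=\kappa_1(x)$, and the condition is whether $\chi^{a}\kappa_1(x)t^{-2}$ lies outside $\mu_{p+1}^2$. Write $t = s\,u$ with $u \in \mu_{p+1}$ and $s$ a fixed element satisfying $s^{p+1} = \kappa_1(x)$. Then the criterion becomes whether $\kappa_1(x)/s^2 = s^{p-1}$ is a square in $\mu_{p+1}$. This holds if and only if $s^{(p-1)(p+1)/2} = \kappa_1(x)^{(p-1)/2}$ equals $1$, again giving the Legendre symbol condition.

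The main obstacle is pinning down the exact form of $V_2$ (or $V_3$) in the $t$-coordinates, including the power of $\chi$ and the affine shift. If $a$ turns out to be odd, the answer would involve the quadratic character of $\chi$, which is a non-square when $\chi$ is a generator. In that case the sign conventions would have to be rechecked carefully against the bijection in Lemma \ref{hyperbolics}.
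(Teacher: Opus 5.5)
Your argument is correct, and the computation you defer comes out the way you hoped. Set $\varphi=\chi^{-1}-\chi$. From the formulas defining $\xi,\eta$, and using $\chi^{-1}-x=-\chi$ and $\varphi^{-2}C(\chi^{-1}-\chi)=\varphi^{-1}C$, one checks that $V_3$ swaps $\xi$ and $\eta$ exactly, with no constant shift. So in the coordinate $t=\xi$, $V_3$ acts by $t\mapsto\kappa_1(x)/t$, which is your $a=0$. Then $V_2=V_3\circ D_1$ acts by $t\mapsto\chi^{-2}\kappa_1(x)/t$, which is $a=-2$. The parity concern therefore never arises. Your coset criterion, $\kappa_1(x)/t^2\in\langle\chi^2\rangle$, together with your norm computation $s^{(p^2-1)/2}=\kappa_1(x)^{(p-1)/2}$ in the elliptic case, yields exactly the Legendre-symbol condition. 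You also do not need Lemma~\ref{comparisonOfTwoColoredOrbits} for the reduction: $\langle V_2,V_3\rangle=\langle D_1,V_3\rangle$, and $V_3$ normalizes $\langle D_1\rangle$, so $V_3$ simply permutes the two $D_1$-orbits.

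The paper takes a different route. It uses Lemma~\ref{comparisonOfTwoColoredOrbits} in the other direction: there are two $\langle V_2,V_3\rangle$-orbits if and only if both $D_1$-orbits are segments with self-loops at the ends, if and only if $V_2$ has a fixed point on $C_1(x)$. It then finds the fixed points of $V_2$ by substituting $y=(B+xz)/2$ into the surface equation. This gives a quadratic in $z$ whose discriminant is $f_1(x)=(x^2-4)^2\kappa_1(x)$.

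That argument never touches the diagonalizing coordinates and treats hyperbolic and elliptic $x$ uniformly, which is why it is shorter. Your route works with the explicit dihedral action $t\mapsto\chi^2t$, $t\mapsto\kappa_1(x)/t$ on $\widetilde C_1(x)$, close in spirit to the Burnside remark after the proof. It costs a separate norm computation in the elliptic case but gives more in return.

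First, when $\kappa_1(x)$ is a square, it identifies the two $\langle V_2,V_3\rangle$-orbits explicitly as the two cosets of $\langle\chi^2\rangle$ in the $t$-coordinate. That is precisely the description the paper has to recompute later in Lemma~\ref{lemma:TwoPieceConicSignComponents}. Second, the fixed points of $V_3$ in your coordinates are the solutions of $t^2=\kappa_1(x)$, which makes the equivalence with the paper's discriminant criterion transparent.
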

\begin{proof}
    As $x$ is of maximal order with $\kappa_1(x)\neq 0$, $C_1(x)$ splits into two orbits of size $\frac{p\pm 1}{2}$ under the action of $D_1 = V_3\circ V_2$. Fix $(x, y, z)\in C_1(x)$. By Lemma \ref{comparisonOfTwoColoredOrbits}, these orbits either jointly form a single two-colored cycle in $G$, or remain separate as two lines each with a self-loop on each end. In the latter case, $C_1(x)$ must contain four points each of which is fixed by one of $V_2$ or $V_3.$ Now, for a fixed point of $V_2$, we have that:
    \begin{equation}\label{fixedPointV2}y = \frac{B}{2} + \frac{xz}{2}.\end{equation}
    Substituting this into the Markoff equation, one finds that
    \begin{equation}\label{conditionOnX}
        \left(1-\frac{x^2}{4}\right)z^2 + \left(-C - \frac{Bx}{2}\right)z + \left(x^2 - Ax - D - \frac{B^2}{4}\right) = 0.
    \end{equation}
    For a fixed value of $x\neq \pm2$, this equation is quadratic in $z$, so that there are at most two solutions for $z$; also, as $y$ and $z$ are linearly related by (\ref{fixedPointV2}), each value of $z$ solving the above gives rise to only one fixed point in $C_1(x)$, so that $V_2$ has at most two fixed points. Similarly, $V_3$ can have at most two fixed points. It follows that under the action of the group generated by $V_2$ and $V_3$, $C_1(x)$ decomposes into two orbits if and only if it contains four points each of which is fixed by one of $V_2$ or $V_3$, if and only if it contains two fixed points under $V_2$ and two fixed points under $V_3$, and that the group generated by $V_2$ and $V_3$ acts transitively on $C_1(x)$ if and only if neither $V_2$ nor $V_3$ has a fixed point in $C_1(x)$; in particular, $C_1(x)$ decomposes into two orbits under the action of the group generated by $V_2$ and $V_3$ if and only if (\ref{conditionOnX}) has a solution. This will occur if and only if
    \[\begin{aligned}&\left(-C-\frac{Bx}{2}\right)^2 - 4\left(1-\frac{x^2}{4}\right)\left(x^2 - Ax - D - \frac{B^2}{4}\right) \\
    &=x^4 - Ax^3 - (D + 4)x^2 + (4A + BC)x + (4D + C^2 + B^2)\end{aligned}\]
    is a square modulo $p$. Finally, we note that
    \[\begin{split}(x^2-4)^2\kappa_1(x) &= (x^2 - 4)(x^2 - Ax - D) + (B^2 + C^2 + BCx)\\
    &= x^4 - Ax^3 - (D + 4)x^2 + (4A + BC)x + (4D + B^2 + C^2);\end{split}\]
    the result follows.
    
\end{proof}
\begin{remark}[Another path to this result]
    The key input to the above was the relationship between fixed points and the number of orbits under the action of the group generated by $V_2$ and $V_3$ arising from the combinatorics of Lemma \ref{comparisonOfTwoColoredOrbits}. This could also be proven via Burnside's lemma, using the fact that the group generated by $V_2$ and $V_3$ acts on the conic section as a dihedral group of order $2N$, where $N$ is the order of $D_1,$ with $V_3$ (say) chosen as a reflection, using that all group elements of the form $V_3\circ D_1^k$ will have the same number of fixed points. This approach was inspired by some of the analysis in \cite{pDivisibilityClusterAlg}.
\end{remark}
We also must describe the action of $D_1 = V_3\circ V_2$ on $C_1(\pm 2)$. For these values of $x$, the equation cutting out $\mathcal{S}_{A, B, C, D}$ becomes
$$y^2 + z^2 + 4 = \pm2yz \pm2A + By + Cz + D;$$
for a fixed value of $y$, this has two solutions if and only if
\begin{equation}\label{parabolicCondition}
    C^2 \pm4Cy + 4y^2 - 4(y^2 \mp2A - By - D + 4) = (4B\pm 4C)y \pm 8A + C^2 + 4D - 16
\end{equation}
is a quadratic residue and one if and only if (\ref{parabolicCondition}) is 0. Our analysis splits into several cases. Throughout the case breakdown, equalities and inequalities are all understood in $\mathbb{F}_p$:

First, assume that $B \neq \pm C$. In this case, $|C_1(\pm 2)| = p$. Furthermore, $D_1$ acts transitively on $C_1(\pm 2),$ and may be parameterized by putting the matrix of $D_1$ into Jordan canonical form, as in the following, where $\pm$ is the same sign as $x = \pm 2$ and $\mp$ is the opposite sign:

$$\begin{pmatrix}
    -1 & \pm2 & B\\
    \mp2 & 3 & \pm2B+C\\
    0 & 0 & 1
\end{pmatrix} =\begin{pmatrix}
    1 & 0 & B(4B\pm 4C)^{-1}\\
    \pm1 & \pm\frac{1}{2} & 0\\
    0 & 0 & (2B \pm 2C)^{-1}
\end{pmatrix}\begin{pmatrix}
    1 & 1 & 0\\
    0 & 1 & 1\\
    0 & 0 & 1
\end{pmatrix}\begin{pmatrix}
    1 & 0 & -\frac{B}{2}\\
    -2 & \pm2 & B\\
    0 & 0 & 2B\pm 2C
\end{pmatrix},$$
which certainly has order $p$. It further follows that the action of $D_1^t$ on $C_1(\pm2)$ may be written as
$$\begin{pmatrix}
    1 & 0 & B(4B\pm 4C)^{-1}\\
    \pm1 & \pm\frac{1}{2} & 0\\
    0 & 0 & (2B \pm 2C)^{-1}
\end{pmatrix}\begin{pmatrix}
    1 & t & \binom{t}{2}\\
    0 & 1 & t\\
    0 & 0 & 1
\end{pmatrix}\begin{pmatrix}
    1 & 0 & -\frac{B}{2}\\
    -2 & \pm2 & B\\
    0 & 0 & 2B\pm 2C
\end{pmatrix}.$$

If on the other hand we assume $B = \pm C,$ then the situation becomes more complicated, and depends on the relative signs of $B/C$ and $x/2.$ We break into cases depending on this sign.

If $B = C\neq 0,$ then for $x = 2$ the situation is as before: the number of $z$ such that $(2, y, z)$ is a solution is
$$1 + \left(\frac{(4B + 4C)y + 8A + C^2 + 4D - 16}{p}\right),$$
so that $|C_1(2)| = p,$ and $D_1$ may be put into Jordan canonical form as
$$\begin{pmatrix}
    -1 & 2 & B\\
    -2 & 3 & 2B+C\\
    0 & 0 & 1
\end{pmatrix} =\begin{pmatrix}
    1 & 0 & B(4B+ 4C)^{-1}\\
    1 & \frac{1}{2} & 0\\
    0 & 0 & (2B + 2C)^{-1}
\end{pmatrix}\begin{pmatrix}
    1 & 1 & 0\\
    0 & 1 & 1\\
    0 & 0 & 1
\end{pmatrix}\begin{pmatrix}
    1 & 0 & -\frac{B}{2}\\
    -2 & 2 & B\\
    0 & 0 & 2B+ 2C
\end{pmatrix},$$
and the action of $D_1^t$ on $C_1(2)$ is as before. However, for $x = -2$, the situation is different, because $B - C = 0.$ In this case, as well as when $B = C = 0$, setting $x = -2$ in the equation for $\mathcal{S}_{A, B, C, D}$ gives
$$y^2 + 2yz + z^2 - By - Bz + \frac{B^2}{4} = \left(y+z-\frac{B}{2}\right)^2= D + \frac{B^2}{4} - 2A - 4.$$
It follows that the number of pairs $(y, z)$ is 
$$\left(1 + \left(\frac{- 8A + C^2 + 4D - 16}{p}\right)\right)p,$$
so that there are either $0$ or $2p$ solutions if $(A, B, C, D)$ is nondegenerate modulo $p$, and exactly $p$ solutions if $(A, B, C, D)$ is degenerate modulo $p$. The Jordan canonical form of $D_1$ is
$$\begin{pmatrix}
    -1 & -2 & B\\
    2 & 3 & -B\\
    0 & 0 & 1
\end{pmatrix} =\begin{pmatrix}
    1 & 0 & \frac{B}{2}\\
    -1 & -\frac{1}{2} & 0\\
    0 & 0 & 1
\end{pmatrix}\begin{pmatrix}
    1 & 1 & 0\\
    0 & 1 & 0\\
    0 & 0 & 1
\end{pmatrix}\begin{pmatrix}
    1 & 0 & -\frac{B}{2}\\
    -2 & -2 & B\\
    0 & 0 & 1
\end{pmatrix}.$$
Although the matrix has order $p$ as a matrix, its action on $C_1(-2)$ has smaller order precisely when
$$(-2, -2, B)\cdot (y, z, 1) = -2y - 2z + B = 0,$$
that is, exactly when $4D + B^2 = 8A + 16$, in which case $(A, B, C, D)$ is degenerate modulo $p$. In this case, each point $P$ of $C_1(-2)$ is fixed by $V_3\circ V_2,$ so that $P$ and $V_2P$ form a bigon and thus $P$ is fixed by each of $V_2$ and $V_3$ (cf. footnote \ref{NoBigons}). The action of $D_1^t$ on $C_1(-2)$ may be written as
$$\begin{pmatrix}
    1 & 0 & \frac{B}{2}\\
    -1 & -\frac{1}{2} & 0\\
    0 & 0 & 1
\end{pmatrix}\begin{pmatrix}
    1 & t & 0\\
    0 & 1 & 0\\
    0 & 0 & 1
\end{pmatrix}\begin{pmatrix}
    1 & 0 & -\frac{B}{2}\\
    -2 & -2 & B\\
    0 & 0 & 1
\end{pmatrix}.$$

Similarly, if $B = -C \neq 0,$ then for $x = -2,$ the situation is as in the $B \neq \pm C$ case, while if $x = 2$, including also the case $B = C = 0$, $C_1(2)$ may consist of $0$ or $2p$ points if $(A, B, C, D)$ is nondegenerate modulo $p$, and exactly $p$ points if $(A, B, C, D)$ is degenerate modulo $p$, depending on the value of $p$:
$$|C_1(2)| = \left(1  + \left(\frac{8A + C^2 + 4D -16}{p}\right)\right)p.$$
The Jordan canonical form of $D_1$ may be written as
$$\begin{pmatrix}
    -1 & 2 & B\\
    -2 & 3 & B\\
    0 & 0 & 1
\end{pmatrix} =\begin{pmatrix}
    1 & 0 & \frac{B}{2}\\
    1 & \frac{1}{2} & 0\\
    0 & 0 & 1
\end{pmatrix}\begin{pmatrix}
    1 & 1 & 0\\
    0 & 1 & 0\\
    0 & 0 & 1
\end{pmatrix}\begin{pmatrix}
    1 & 0 & -\frac{B}{2}\\
    -2 & 2 & B\\
    0 & 0 & 1
\end{pmatrix},$$
which again has order $p$ as a matrix; as before the action on $C_1(2)$ has order $p$ as well in the nondegenerate modulo $p$ case, while each point of $C_1(2)$ is a double fixed point in the degenerate modulo $p$ case. The Jordan canonical form of $D_1^t$ is given by
$$\begin{pmatrix}
    1 & 0 & \frac{B}{2}\\
    1 & \frac{1}{2} & 0\\
    0 & 0 & 1
\end{pmatrix}\begin{pmatrix}
    1 & t & 0\\
    0 & 1 & 0\\
    0 & 0 & 1
\end{pmatrix}\begin{pmatrix}
    1 & 0 & -\frac{B}{2}\\
    -2 & 2 & B\\
    0 & 0 & 1
\end{pmatrix}.$$
Ultimately, the above discussion proves the following lemma:
\begin{lemma}\label{Parabolics}
    Suppose first that $B\neq C$ for $x=-2$, or that $B\neq -C$ for $x=2$. In each of these cases, $C_1(x)$ is a parabola containing $p$ points. The group generated by $V_2$ and $V_3$ acts transitively on $C_1(x)$, and the associated subgraphs of $G$ are each a line with a self-loop on each end. Algebraically, writing
    $x=\mp2$ and taking the corresponding condition $B\neq\pm C$, if $(\mp2,y,z)\in C_1(\mp2)$ then $C_1(\mp2)$ may be parameterized as
    $$C_1(\mp2) = \{(\mp2, y \mp 2t(z\pm y) + Bt^2 \mp C(t^2-t), z + 2t(z\pm y) \mp B(t^2 + t) + Ct^2)| t\in\mathbb{F}_p\}.$$
    
    If $B = C,$ then
    $$|C_1(-2)| = \left(1  + \left(\frac{-8A + C^2 + 4D -16}{p}\right)\right)p,$$
    so that $C_1(-2)$ consists of $0$ or $2p$ points if $(A, B, C, D)$ is nondegenerate modulo $p$, and exactly $p$ points if $(A, B, C, D)$ is degenerate modulo $p$ with $4D + C^2 = 8A + 16.$ In the latter case, each point in $C_1(-2)$ is fixed by each of $V_2$ and $V_3$, whereas in the former case each orbit in $C_1(-2)$ under $D_1$ has size $p,$ and may be parameterized from an arbitrary point $(-2, y, z)$ on $C_1(-2)$ as
    $$\mathcal{O}_x(-2, y, z) = \{(-2, y + t(-2y-2z + B), z - t(-2y-2z+B))\mid t\in\mathbb{F}_p\}.$$
    In either case, $C_1(-2)$ consists of the union of $0, 1,$ or $2$ lines; in the case of $0$ or $2$ lines each orbit under $D_1$ is a line in $\mathbb{F}_p^3$ with $p$ points.
    Similarly, if $B = -C,$ then
    $$|C_1(2)| = \left(1  + \left(\frac{8A + C^2 + 4D -16}{p}\right)\right)p,$$
    so that $C_1(2)$ consists of $0$ or $2p$ points if $(A, B, C, D)$ is nondegenerate modulo $p$, and exactly $p$ points if $(A, B, C, D)$ is degenerate modulo $p$ with $4D + C^2 = -8A + 16.$ In the latter case, each point in $C_1(2)$ is fixed by each of $V_2$ and $V_3$, whereas in the former case each orbit in $C_1(2)$ under $D_1$ has size $p,$ and may be parameterized from an arbitrary point $(2, y, z)$ on $C_1(2)$ as
        $$\mathcal{O}_x(2, y, z) = \{(2, y + t(-2y+2z + B), z + t(-2y+2z+B))\mid t\in\mathbb{F}_p\}.$$
    In either case, $C_1(2)$ consists of the union of $0, 1,$ or $2$ lines; in the case of $0$ or $2$ lines each orbit under $D_1$ is a line in $\mathbb{F}_p^3$ with $p$ points.
    
\end{lemma}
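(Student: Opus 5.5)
Most of this lemma is a summary of the computations just above; what remains is to organize them into a proof. I will treat the two parabolic slices $x=\pm2$ separately, splitting according to whether the affine part of $D_1$ is a genuine translation on the conic.

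\emph{Generic case.} Take $x=-2$ with $B\neq C$; the case $x=2$ with $B\neq -C$ is identical after flipping signs. Then (\ref{parabolicCondition}) is a nonconstant linear function of $y$. As $y$ runs over $\mathbb{F}_p$ it takes each value once, so the number of $z$ for a given $y$ is $1+\left(\frac{\cdot}{p}\right)$. Summing over $y$ gives $\sum_y \bigl(1+\bigl(\frac{ay+b}{p}\bigr)\bigr)=p$, since the Legendre symbols of a full set of residues sum to zero. Hence $|C_1(\mp2)|=p$.

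\emph{Orbit structure in the generic case.} The Jordan form displayed above shows that $D_1$ is a unipotent affine map with a single $3\times 3$ Jordan block, so every orbit has size $1$ or $p$. A fixed point would be a double fixed point. By the relation $y=\frac{B+xz}{2}$, $z=\frac{C+xy}{2}$ with $x=\mp2$, this forces $0=B\mp C$ after eliminating, which is excluded. So $D_1$, and hence $\langle V_2,V_3\rangle$, is transitive on the $p$ points. Since $p$ is odd and Lemma \ref{twoColoredOrbits} says cycles have even length, the two-colored subgraph must be a line with self-loops at both ends.

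\emph{Explicit parametrization.} I will apply $D_1^t$ in the Jordan coordinates, where it is $\begin{pmatrix}1&t&\binom t2\\0&1&t\\0&0&1\end{pmatrix}$, to $(y,z,1)$ and conjugate back. The result is quadratic in $t$. To match the stated formula it should suffice to check the coefficients of $t$ and $t^2$ directly: I will verify that the linear part is $D_1-I$ applied at $t=1$, and that the second difference is constant, using the equation of the surface to simplify. This is routine bookkeeping, and also the place where a sign slip is most likely.

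\emph{Degenerate-sign case} ($x=-2$, $B=C$; $x=2$, $B=-C$ is symmetric). The completed-square identity $(y+z-\frac B2)^2=D+\frac{B^2}{4}-2A-4$ shows that $C_1(-2)$ is the union of the lines $y+z=\frac B2\pm s$, where $s^2$ is the right-hand side. This gives $0$, $1$ or $2$ lines according to the Legendre symbol of $4D+C^2-8A-16$, and the count $\bigl(1+\bigl(\frac{\cdot}{p}\bigr)\bigr)p$. On this slice $D_1$ is the translation $(y,z)\mapsto (y,z)+(-2y-2z+B)(1,-1)$, read off from the Jordan form, and $-2y-2z+B$ is constant on each line. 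When $s\neq0$ each line is therefore a single $D_1$-orbit of size $p$, with the stated parametrization. When $s=0$, which is exactly degeneracy mod $p$ with $4D+C^2=8A+16$, the translation is trivial, so every point is fixed by $D_1$. The no-bigon observation (footnote \ref{NoBigons}) then shows each such point is fixed by both $V_2$ and $V_3$.

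The only real obstacle is keeping the signs straight in the explicit quadratic parametrization; everything else follows from the Jordan forms already computed.
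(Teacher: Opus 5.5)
Your proposal is correct and follows essentially the same route as the paper: the point count from the discriminant (\ref{parabolicCondition}), the unipotent Jordan form of $D_1$, the parity argument via Lemma \ref{twoColoredOrbits}, and, when $B=\pm C$, the completed square $(y+z-\frac{B}{2})^2=D+\frac{B^2}{4}-2A-4$ (and its analogue at $x=2$), on whose lines $D_1$ acts as a translation. Your explicit exclusion of $D_1$-fixed points via the double-fixed-point equations tightens the paper's bare remark that the matrix has order $p$. For the parametrization you do not need the surface equation at all: with $N=D_1-I$ one has $D_1^t=I+tN+\binom{t}{2}N^2$, where $N^2$ vanishes outside its last column (whose entries are multiples of $B\mp C$), and expanding gives the stated formula identically in $(y,z)$; note that the coefficient of $t$ is $Nv-\frac12N^2v$, not $Nv$.
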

\begin{proof}
    In all cases, the count of points in $C_1(\pm2)$ has been established above, which proves transitivity under $V_2$ and $V_3$ in the cases where $D_1$ acts transitively (in the other cases, no assertion is made for the action of $V_2$ and $V_3$). The parameterizations follow from acting on $(\pm2, y, z)$ by $D_1^t,$ whose matrix has been computed. All that remains to be established is the characterization of the subgraphs of $G$ in the cases where the conic has exactly $p$ points; this follows from the fact that $C_1(\pm2)$ has odd order, and thus cannot form a cycle, which must have even order by Lemma \ref{twoColoredOrbits}.
\end{proof}

\begin{remark}
    In the case where $B = \pm C,$ a careful choice of base points $(2, y, z)$ for the orbits shows that when $C_1(\mp 2)$ consists of two orbits, they are not interchanged by the action of $\tau_{yz}$ (if $B = C$) or $\operatorname{neg}_{yz}\circ\tau_{yz}$ (if $B = -C$). If $B = C = 0$ then $\tau_{yz}$ interchanges the orbits within $C_1(2)$ while $\operatorname{neg}_{yz}\circ\tau_{yz}$ interchanges the orbits within $C_1(-2)$.  
\end{remark}
Finally, we remark that the same results hold for the actions of the maps $V_3\circ V_1$ and $V_2\circ V_1$ on the conics $C_2(y)$ and $C_3(z)$ that are given by fixing $Y = y$ and $Z = z$, respectively, modulo the appropriate interchange of the constants $A, B,$ and $C$. In particular, one must replace the function $\kappa_1(x)$ with either
$$\kappa_2(y) = (y^2 - 4)^{-1}(y^2 - By - D + (y^2 - 4)^{-1}(A^2 + C^2 + ACy))$$
or
$$\kappa_3(z) = (z^2 - 4)^{-1}(z^2 - Cz - D + (z^2 - 4)^{-1}(A^2 + B^2 + ABz)).$$

We also introduce three closely related polynomials, which arise naturally in the endgame:

$$f_1(x) = (x^2 - 4)^2 \kappa_1(x) = x^4 - Ax^3 - (D + 4)x^2 + (4A + BC)x + (4D + B^2 + C^2);$$
$$f_2(y) = (y^2 - 4)^2 \kappa_2(y) = y^4 - By^3 - (D + 4)y^2 + (4B + AC)y + (4D + A^2 + C^2);$$
$$f_3(z) = (z^2 - 4)^2 \kappa_3(z) = z^4 - Cz^3 - (D + 4)z^2 + (4C + AB)z +(4D + A^2 + B^2).$$

The following computation was done using Macaulay2 \cite{Macaulay2}:
\begin{lemma}\label{DeltaUniversality}
Let $\Delta_i = \Delta_i(A, B, C, D)$ denote the discriminant of $f_i$ with respect to its variable. Then we have
$$\Delta_1 = \Delta_2 = \Delta_3 = \Delta,$$
where $\Delta = \Delta(A, B, C, D)$ is the polynomial defined in Appendix A.
\end{lemma}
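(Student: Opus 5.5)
The statement is a polynomial identity in $\mathbb{Z}[A,B,C,D]$, so I would verify it by explicit symbolic computation, organized so that only one genuinely new calculation is needed.

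\textbf{Step 1: reduce to one discriminant.} The polynomials $f_1,f_2,f_3$ are obtained from one another by permuting the roles of $A,B,C$:
\begin{itemize}
\item $f_2$ is $f_1$ with $A\leftrightarrow B$;
\item $f_3$ is $f_1$ with $A\leftrightarrow C$.
\end{itemize}
Taking a discriminant commutes with specializing coefficients. Hence $\Delta_2(A,B,C,D)=\Delta_1(B,A,C,D)$ and $\Delta_3(A,B,C,D)=\Delta_1(C,B,A,D)$. It therefore suffices to prove two things: that $\Delta_1$ is symmetric under the transpositions $A\leftrightarrow B$ and $A\leftrightarrow C$, and that $\Delta_1=\Delta$.

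\textbf{Step 2: compute $\Delta_1$ and compare with $\Delta$.} Here $f_1$ is a monic quartic with coefficients
\[
a_3=-A,\quad a_2=-(D+4),\quad a_1=4A+BC,\quad a_0=4D+B^2+C^2.
\]
I would substitute these into the standard closed form for the discriminant of a quartic, equivalently $\operatorname{Res}(f_1,f_1')$ up to the sign $(-1)^{6}=1$ for a monic quartic. I would expand the result with Macaulay2 \cite{Macaulay2} and check that it agrees term by term with the polynomial $\Delta$ of Appendix A.

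Full symmetry in $A,B,C$ would then be checked either directly on the expansion or, more conceptually, by comparing with the symmetric polynomial obtained from Lemma \ref{ZariskiClose}. Once $\Delta_1=\Delta$ is established and $\Delta$ is known to be symmetric, Step 1 yields $\Delta_2=\Delta_3=\Delta$.

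\textbf{Step 3: conceptual cross-check (optional).} As a sanity check, $\Delta_1=0$ should cut out the parameters for which $f_1$ has a repeated root. By the proposition on double fixed points and the analysis of fixed points of $V_2$, $V_3$ on $C_1(x)$, such a repeated root occurs exactly when two double fixed points collide. This is the geometric reason to expect the locus to coincide with the closure of the image in Lemma \ref{ZariskiClose}, which is intrinsic and symmetric.

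I do not plan to turn this into a proof of equality of polynomials (equality as hypersurfaces would also require matching multiplicities and constants). The computation in Step 2 is the actual proof.

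\textbf{Main obstacle.} The only real difficulty is the size of the expansion of the quartic discriminant, a degree-6 form in the coefficients. That is why I would delegate it to computer algebra. The one point needing care is normalization: the leading-coefficient and sign conventions in the discriminant must match those used to define $\Delta$ in Appendix A. I would fix this by checking a specialization, for instance $\Delta(0,0,0,k)=64k(k-4)^4$ from Remark \ref{rmk:DifferentNondegConds}, against the direct discriminant of $x^4-(k+4)x^2+4k=(x^2-4)(x^2-k)$.
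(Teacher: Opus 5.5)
Your proposal is correct and matches the paper's proof, which is likewise a direct Macaulay2 computation: Appendix B computes all three discriminants, checks $\Delta_1=\Delta_2=\Delta_3$, and checks that $\Delta_1$ generates the kernel ideal of Lemma~\ref{ZariskiClose}. The only difference is that you observe $f_2$ and $f_3$ arise from $f_1$ by $A\leftrightarrow B$ and $A\leftrightarrow C$, which replaces two of those discriminant computations with a symmetry check on $\Delta$; your term-by-term comparison, with the normalization check $\Delta(0,0,0,k)=64k(k-4)^4$, is if anything a more direct verification of the stated equality with the displayed polynomial.
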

\begin{remark}
    The polynomial $\Delta$ also plays a role in understanding which of the surfaces $\mathcal{S}_{A, B, C, D}$ arise from generalized cluster algebras as in \cite{GyodaMatsushita} and \cite{pDivisibilityClusterAlg}; see Lemma \ref{ZariskiClose} for details.
\end{remark}
\section{Endgame}\label{EndgameSection}
By the \textit{order} of an element $x \neq \pm2$ of $\mathbb{F}_p$, we mean the order of $\chi$ in $\mathbb{F}_p^\times$ or $\mu_{p+1}$ according as $x$ is hyperbolic or elliptic, where $x = \chi + \chi^{-1}$. If $x = \pm2$ we say its order is $p$. By the \textit{order} of a point $(x, y, z)\in \mathcal{S}_{A, B, C, D}^*(p)$ we mean the maximum of the orders of its coordinates. In this section, we show that the induced subgraph on the vertices of order at least $p^{\frac{1}{2} + \delta}$, save for a few ``forbidden'' vertices whose relevant high-order coordinate lies in a bounded exceptional set, is connected. There are two key steps, encoded in Propositions \ref{endgame} and \ref{cageConnectivity}.

Before stating and proving these Propositions, we must remark on a slight technical point. Recall from the introduction that we have defined $\Gamma$ to be the group generated by $V_1, V_2,$ and $V_3,$ $H$ to be the group generated by all double sign flips that preserve the set $\mathcal{S}_{A, B, C, D}$ (these occur only when at least two of $A$, $B$, or $C$ are zero), and $\Gamma'$ to be the group generated by $\Gamma$, $H$, and all transpositions or negated transpositions that preserve $\mathcal{S}_{A, B, C, D}$. Throughout this section, in cases where $\Gamma'$ is larger than $\Gamma$, it will be convenient to admit transformations from the larger group $\Gamma'$ when trying to join points, as when we can include a transposition or negated transposition, the dynamics on the conic sections becomes simpler. For example, as we saw above in Section \ref{ConicsSection}, if $B = C$, then the map $\tau_{yz}\circ V_2$ is a square root of $V_3\circ V_2$, and $\tau_{yz}\circ V_2$ acts transitively on $C_1(x)$ when it has maximal order, while $V_3\circ V_2$ never can. We will show below in Section \ref{Comparison}, specifically in Theorem \ref{comparisonOfGroupsThm}, that this cheat does not impact the form of our results---indeed, we shall show that if $\Gamma'$ acts transitively on $\mathcal{S}_{A, B, C, D}^*(p)$ for $p$ sufficiently large, so does $\Gamma$. 

The proof of the following proposition is based on that of Proposition 10 in \cite{BGS}; we largely mimic their notation as well. 
\begin{proposition}\label{endgame}
There exists an absolute constant $N$, independent of both $p$ and the parameters, such that for all but at most $N$ values of $x$, we have that if $(x,y,z)\in \mathcal{S}_{A, B, C, D}^*(p)$ with $(A, B, C, D)$ nondegenerate modulo $p$ and the order of the induced rotation in $x$ is at least $p^{\frac{1}{2} + \delta}$ ($\delta > 0$, fixed), then for $p$ sufficiently large (depending on $\delta$), $(x, y, z)$ is joined via edges from $\Gamma'$ to a point $(x', y', z')$ such that the group generated by $V_1$ and $V_3$ (and, if $A = \pm C,$ also $\pm\tau_{xz}$) acts transitively on $C_2(y')$. Moreover, any point whose relevant coordinate lies in the bounded exceptional set but has order at least $p^{3/4}$ may be connected to a point with a non-exceptional coordinate of order at least $p^{5/8}$ (and thus $>p^{1/2 + \delta}$).
\end{proposition}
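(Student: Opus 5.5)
Fix $x$ with the Dehn twist $D_1=V_3\circ V_2$ of order $t\ge p^{1/2+\delta}$, and work inside the conic $C_1(x)$ via the parametrization of Lemma \ref{hyperbolics} or Lemma \ref{elliptics}. The point $(x,y,z)$ corresponds to some $t_0$, and its $D_1$-orbit is $\{(\chi^{2k}t_0,\kappa_1(x)/(\chi^{2k}t_0))\}$. I want to find, in this orbit (or in the orbit of its $V_2$-image, which lies in the same $\langle V_2,V_3\rangle$-orbit), a point whose $y$-coordinate $y'$ satisfies two conditions. First, $y'$ has maximal order, i.e.\ $y'=\psi+\psi^{-1}$ with $\psi$ a generator of $\mathbb{F}_p^*$ or of $\mu_{p+1}$. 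Second, by the Proposition following Lemma \ref{elliptics} (with $A,C$ in place of $B,C$), $\langle V_1,V_3\rangle$ acts transitively on $C_2(y')$ exactly when $\kappa_2(y')$, equivalently $f_2(y')$ since $(y'^2-4)^2$ is a square, is a non-residue. When $A=\pm C$ one can instead use the square root $\pm\tau_{xz}\circ V_1$, which is transitive once $\psi$ has maximal order, so only the first condition is needed.

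In the coordinate $u$ running over the coset $t_0\langle\chi^2\rangle$, the $y$-coordinate is $y(u)=u+\kappa_1(x)/u+c$. The maximal-order condition is that $y(u)^2-4$ be a square (or non-square) of the right type and that $\psi$ not be a $q$-th power for any prime $q\mid p\mp1$. Following Proposition 10 of \cite{BGS}, I detect these conditions with multiplicative characters. I use Möbius inversion over divisors $d\mid p\mp1$, truncated at $d\le p^{\epsilon}$ in a sieve so that only small primes need exact treatment. The non-residue condition for $f_2(y(u))$ is the quadratic character of a rational function of $u$. I restrict $u$ to the subgroup coset by writing $u=t_0w^{(p\mp1)/t}$, or equivalently by summing over all $w$ against additive/multiplicative characters of the index. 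Each resulting character sum is then $\sum_w \chi_1(R_1(w))\chi_2(R_2(w))$ over a curve, and Weil's bound gives $O(p^{1/2}\cdot\text{poly(degree)})$. Against the main term $\asymp t\cdot\phi(p\mp1)/(p\mp1)\cdot\tfrac12$, this wins because $t\ge p^{1/2+\delta}$. The degrees are bounded independently of $p$ and of the parameters, so the implied constants are absolute.

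The main obstacle is verifying that the rational functions fed into Weil are not perfect $d$-th powers (respectively not squares) as functions of $w$. The critical case is that $f_2(y(u))$ must not be a square times a constant in $\overline{\mathbb{F}}_p(u)$. I would expand $f_2(u+\kappa_1/u+c)u^4$ as a degree-$8$ polynomial in $u$ and show that it is a square only when $x$ is a root of a certain resultant or discriminant polynomial in $x$ that is nonzero unless a relation like $A=B$, $4D+A^2=8C+16$ holds, up to equivalence. This is where nondegeneracy mod $p$ enters, and where $\Delta$ from Lemma \ref{DeltaUniversality} should appear. The bad $x$ are the roots of finitely many nonzero polynomials of absolutely bounded degree, plus $x$ with $\kappa_1(x)=0$ (at most four, by Proposition \ref{DoubleFixedPoints}), which gives the bounded exceptional set of size $N$. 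I expect to need a Macaulay2-assisted factorization here, and the independence of $x$ and the square-root check will be the delicate part.

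For the final clause, take $x$ exceptional of order at least $p^{3/4}$. Apply the same character sum argument on $C_1(x)$, but now only ask that $y(u)$ avoid the $N$ exceptional values and have order at least $p^{5/8}$. Points of order below $p^{5/8}$ number $O(p^{5/8+\epsilon})$ by a divisor bound. Each $y$-value is hit at most twice, and $N$ values are excluded. Since the orbit has at least $p^{3/4}$ points, some $u$ survives, and the resulting point connects to a non-exceptional coordinate of order at least $p^{5/8}>p^{1/2+\delta}$ for small $\delta$.
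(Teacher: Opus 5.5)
Your outline matches the paper's: the same parametrization of $C_1(x)$, the same two conditions on $y'$, the $\pm\tau_{xz}\circ V_1$ trick when $A=\pm C$, inclusion--exclusion plus Weil, and the same counting for the final clause. However, your check of the Weil hypotheses misses a condition, and with it a bounded family of $x$ that must be excluded.

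Besides the character of $\omega f_2(y)$, your sums involve characters of $u$ (on the coset $t_0\langle\chi^2\rangle$) and of $\psi$ (the maximal-order condition, with $\psi+\psi^{-1}=y(u)$). These can be dependent. Write $c=(x^2-4)^{-1}(-2B-Cx)$. Suppose a critical value $c\pm2\sqrt{\kappa_1(x)}$ of $u\mapsto y(u)$ equals $2$, say $c=2+2r$ with $r^2=\kappa_1(x)$. Then $y(u)-2=(u+r)^2/u$, while also $y-2=(\psi-1)^2/\psi$, so $\psi/u$ is a square on the curve; the value $-2$ behaves the same way using $y+2$. Consequently $\chi_p(y'-2)$ is constant along the whole $D_1$-orbit. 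Since it determines the square class of $\psi$ in $\mathbb{F}_p^\times$ or $\mu_{p+1}$, for one of the two types (hyperbolic or elliptic) no $y'$ in the orbit has maximal order, however long the orbit is, and the $d=2$ terms of your sieve do not cancel. This is exactly the first hypothesis of the paper's Lemma \ref{endgameIrreducibility}, namely $\pm2\sqrt{\alpha_1\alpha_2}+\alpha_3\neq\pm2$ (the irreducibility condition of \cite{BGS}, Lemma 12). Your exceptional set, consisting of the bad $x$ for $f_2(y(u))$ together with $\kappa_1(x)=0$, does not contain these $x$.

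Showing these $x$ are boundedly many is where $f_1$, not $f_2$, enters. Clearing denominators, the condition reads $4f_1(x)=(\pm2(x^2-4)+2B+Cx)^2$. This cannot hold identically because $\kappa_1$ is not a perfect square (Lemma \ref{NonresidueCondition} with $i=1$), so only boundedly many $x$ are lost; the paper counts at most $16$. The point matters most when $A=\pm C$. There you drop the nonresidue condition, so this is the \emph{only} Weil hypothesis left, and your plan checks nothing. Nondegeneracy is genuinely needed here: for the degenerate quadruples with $A=-C$ and $4D+C^2=16-8B$, one has $4f_1(x)=(2(x^2-4)+2B+Cx)^2$ identically, so a critical value equals $2$ for every $x$.

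Your $f_2$ criterion, by contrast, is the right second condition when $A\neq\pm C$. Because $f_2(\pm2)=(A\pm C)^2\neq0$, non-squareness of $f_2(y(u))$ in $u$ yields an odd-multiplicity root of $f_2$ away from $\pm2$ and from the critical values, which handles the mixed characters. It also follows directly from Lemma \ref{NonresidueCondition}, since the only bad case is when the odd roots of $f_2$ are exactly $c\pm2\sqrt{\kappa_1(x)}$; no degree-$8$ computation is needed.

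Three minor points:
\begin{itemize}
\item In the formulation $u=t_0w^{(p\mp1)/t}$ the degrees are not bounded but $\asymp(p\mp1)/t$. The $O(\sqrt p)$ error survives only after dividing by that multiplicity, as in the paper's $f_H(K)=p/(2e_Hd_K)+O(\sqrt p)$.
\item Truncating the M\"obius sum is unnecessary, since $p\mp1$ has $p^{o(1)}$ divisors.
\item Your parametrization does not cover $x=\pm2$; these should be routed through the final clause using Lemma \ref{Parabolics}.
\end{itemize}
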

\begin{proof}
We first write the proof in the case $A\neq \pm B$, $A\neq \pm C$, $B \neq \pm C$, and $x$ is either hyperbolic or elliptic; at the end of the proof we shall outline the other cases. From the very outset we discard the at most four values of $x$ such that $f_1(x) = 0$. Consider first the case where $x = \chi + \chi^{-1} $ is hyperbolic. In light of Lemma \ref{hyperbolics}, $(x, y, z)$ is connected by $V_3\circ V_2$ to the points of the form
$$(x, \alpha_1 t + \alpha_2 t^{-1} + \alpha_3, \alpha_4 t +\alpha_5 t^{-1} + \alpha_6)$$
with $t\in H \vcentcolon =\langle \chi^2 \rangle$. Let $e_H = \frac{p-1}{|H|}$. We must produce $t$'s in $H$ such that
$$\alpha_1 t + \alpha_2 t^{-1} + \alpha_3 = s + s^{-1}$$
with $s$ a primitive root in $\mathbb{F}_p$ such that
$$\left(\frac{f_2(s + s^{-1})}{p}\right) = -1;$$
let $P(H)$ be the number of such solutions. To do so, we use inclusion/exclusion on the subgroups of $\mathbb{F}_p^\times$. Each such subgroup is determined by its size $K$ (necessarily a divisor of $p-1$); let $d_K = \frac{p-1}{|K|}$. Let $f_H(K) = f_H(d_K)$ be the number of solutions to
$$\alpha_1 t + \alpha_2 t^{-1} + \alpha_3 = s + s^{-1},\, t\in H, s\in K,$$
with
$$\left(\frac{f_2(s + s^{-1})}{p}\right) = -1.$$
Letting $\omega$ denote a fixed quadratic nonresidue in $\mathbb{F}_p$, this last condition may be written
$$\omega u^2 = f_2(s + s^{-1}),\, u \neq 0.$$
By inclusion/exclusion we have
$$P(H) = \sum\limits_{d_K|(p-1)}\mu(d_K)f_H(d_K)$$
where $\mu$ is the M\"obius function. To show $P(H) > 0$ we therefore must estimate $f_H(d_K)$. We do this with Weil's bound. The map
$$\xi \mapsto \xi^{d_K},\;\eta\mapsto\eta^{e_H}$$
sends solutions of
$$C_{\alpha_1, \alpha_2, \alpha_3}:\;\alpha_1\eta^{e_H} + \alpha_2\eta^{-e_H} + \alpha_3 = \xi^{d_K} + \xi^{-d_K},\; \omega u^2 = f_2(\xi^{d_K} + \xi^{-d_K})$$
to solutions of
$$\alpha_1 t + \alpha_2 t^{-1} + \alpha_3 = s + s^{-1},\, t\in H, s\in K, \; \omega u^2 = f_2(s + s^{-1}),$$
and is $2e_Hd_K$ to 1.
As we prove below (Lemma \ref{endgameIrreducibility}), for all but at most 22 values of $x$, $C_{\alpha_1, \alpha_2, \alpha_3}$ is irreducible of genus $O(e_Hd_K)$, so that applying Weil's bound gives
$N(C_{\alpha_1, \alpha_2, \alpha_3}) = (p-1) + O(e_Hd_K\sqrt{p}).$
The points with $u=0$, as well as points over the poles/ramification points of the parameterizations, contribute $O(e_Hd_K)$, which is absorbed into the error term after dividing by $2e_H d_K$. Hence, 
$$f_H(K) = \frac{p}{2e_Hd_K} + O(\sqrt{p}),$$
and inclusion/exclusion yields
\[\begin{split}
    P(H) &= \sum\limits_{d_K|(p-1)}\mu(d_K)\left(\frac{|H|}{2d_K} + O(\sqrt{p})\right)\\
    &= |H|\sum\limits_{d_K|(p-1)}\frac{\mu(d_K)}{2d_K} + O_\epsilon\left(p^{\frac{1}{2} + \epsilon}\right)\\
    &= |H|\frac{\varphi(p-1)}{2(p-1)} + O_\epsilon\left(p^{\frac{1}{2} + \epsilon}\right)
\end{split}\]
where $\varphi$ is the Euler Totient function. As $\varphi$ satisfies $\varphi(n) = \Omega_{\epsilon}(n^{1-\epsilon}),$ the assumption $|H|\geq p^{\frac{1}{2} + \delta}$ implies that for $p$ large enough, $P(H) > 0.$

Now, suppose that $x$ is elliptic, so $x = v + v^p$ for $v\in\mathbb{F}_{p^2}\setminus\mathbb{F}_p$ with $v^{p+1} = 1$. If we were to proceed as in the hyperbolic case, we would need to count solutions to the system
$$\alpha_1\eta^{e_H} + \alpha_2\eta^{-e_H} + \alpha_3 = \xi^{d_K} + \xi^{-d_K},\; \omega u^2 = f_2(\xi^{d_K} + \xi^{-d_K}),\, \eta^{p+1} = 1$$
where $\eta$ lies in $\mathbb{F}_{p^2}$ but $\xi$ must lie in $\mathbb{F}_p$, which is not the sort of counting problem that can be directly attacked using the Weil bound. To get around this, we reparameterize the relevant curve in such a way that the solutions we're looking for may be described as $\mathbb{F}_p$-points of this curve. As the reparameterized curve is isomorphic to the original one, at least over $\overline{\mathbb{F}}_p,$ the proof of geometric irreducibility and the bounds on its genus from Lemma \ref{endgameIrreducibility} carry over.

To this end, fix a quadratic nonresidue $\omega\in\mathbb{F}_p$ and a choice of $\sqrt{\omega}\in\mathbb{F}_{p^2}$. In the setting of Lemma \ref{elliptics}, we write $t = a + b\sqrt{\omega}$ with $a, b\in \mathbb{F}_p$; the condition $t^{p+1} = N(t) = \kappa_1(x)$ then becomes
$$a^2 - \omega b^2 = \kappa_1(x).$$
Similarly, a subgroup $H$ of $\mu_{p+1}$ may be parameterized in an $e_H\vcentcolon = \frac{p+1}{|H|}$-to-one way as
$$\{(\xi + \eta\sqrt{\omega})^{e_H}\mid \xi, \eta\in\mathbb{F}_p,\, \xi^2 - \omega\eta^2 = 1\}.$$
Crucially, by the binomial theorem, if $g_{n}(\xi, \eta)$ and $h_n(\xi, \eta)$ are the integral polynomials defined by

\begin{equation}\label{ellipticsModificationPolys}\begin{split}
    g_n(\xi, \eta) &= \sum\limits_{i = 0}^{\lfloor\frac{n}{2}\rfloor} \binom{n}{2i}\omega^i\xi^{n-2i}\eta^{2i},\\
    h_n(\xi, \eta) &= \sum\limits_{i = 0}^{\lfloor\frac{n-1}{2}\rfloor} \binom{n}{2i+1}\omega^i\xi^{n-2i-1}\eta^{2i+1}
\end{split}\end{equation}
then
$$(\xi + \eta\sqrt{\omega})^n = g_n(\xi, \eta) + h_n(\xi, \eta)\sqrt{\omega}.$$

The second coordinates of the elements of the orbit of our base point then take the form

$$\text{Tr}\left((a + b\sqrt{\omega})(g_{e_H}(\xi, \eta) + h_{e_H}(\xi, \eta)\sqrt{\omega})\right) + \alpha_3 = 2ag_{e_H}(\xi, \eta) + 2b\omega h_{e_H}(\xi, \eta) + \alpha_3.$$

It then follows that for $x$ elliptic, with $H = \langle\chi^2\rangle$ and $K$ a subgroup of $\mathbb{F}_p^\times,$ the number of points on $\mathcal{S}_{A, B, C, D}(\mathbb{F}_p)$ with $X = x$ and $Y = y$ such that $y = t + t^{-1}$ with $t\in K$ and $f_2(y)$ a nonresidue, is given by $\frac{N(e_H, d_K)}{2e_Hd_K}$ where $N(e_H, d_K)$ is the number of solutions over $\mathbb{F}_p$ to the system

\begin{align*}
    &\xi^2 - \omega\eta^2 = 1\\
    & y = 2ag_{e_H}(\xi, \eta) + 2b\omega h_{e_H}(\xi, \eta) + \alpha_3\\
    & y = r^{d_K} + r^{-d_K}\\
    & f_2(y) = \omega v^2,
\end{align*}

where $a$ and $b$ satisfy
\[\begin{split}
    &a^2 - \omega b^2 = \kappa_1(x)\\
    &y_0 = 2a + \alpha_3.
\end{split}\]

This curve is isomorphic over $\overline{\mathbb{F}}_p$ to the curve considered in the hyperbolic case, so that, as before, for all but a bounded number of values of $x$, this is an irreducible curve, and applying Weil's bound and inclusion/exclusion gives the result.

We now discuss the case where extra transpositions are available. As long as the quadruple of parameters $(A, B, C, D)$ is nondegenerate, the above argument goes through even if $A = \pm B$ or $B = \pm C$ so long as $A \neq \pm C$, which is the hypothesis needed for Lemma \ref{endgameIrreducibility}. If on the contrary $A = \pm C$, then the map
\[
    \tau:(x, y, z)\mapsto (\pm z, y, \pm x)\\
\]
will also preserve $\mathcal{S}_{A, B, C, D}^*(p)$. In this case, the group generated by $V_1, V_3,$ and $\tau$ will act transitively on $C_2(y)$ as long as the corresponding value of $\pm y$ has maximal order: simply replace $V_3\circ V_1$ with $\tau\circ V_1$ as outlined in Section \ref{ConicsSection}. Thus, we can remove the condition for $f_2(y)$ to be a quadratic nonresidue, resulting in the kind of system considered in \cite{BGS}, Lemma 12; this will give an irreducible curve for all but a bounded number of values of $x$.

If $x = \pm2$, or if $x$ has order at least $p^\frac{3}{4},$ then so long as we can connect it to a non-forbidden hyperbolic or elliptic point of order at least $p^\frac{5}{8}$, we can connect it to the cage by way of this point. But this is straightforward: in either case, the set of $2^\text{nd}$-coordinates of points in the orbit of $(x, y, z)$ has size at least $cp^\frac{3}{4}$ for some absolute constant $c$, by the parameterizations in Lemmas \ref{hyperbolics}, \ref{elliptics}, and \ref{Parabolics}, whereas the collection of elements of $\mathbb{F}_p$ that are parabolic or have order at most $p^\frac{5}{8}$ has size at most 
$$2 + \frac{1}{2}\sum\limits_{\substack{d|(p^2-1)\\d\leq p^\frac{5}{8}}}d \leq 2 + p^\frac{5}{8}d(p^2 - 1) = O(p^{\frac{5}{8} + \epsilon}),$$
so the second coordinate of at least one of the points in an orbit of a parabolic point or a forbidden hyperbolic or elliptic point of order at least $p^\frac{3}{4}$ must be a non-forbidden hyperbolic or elliptic point of order at least $p^\frac{5}{8}.$

\end{proof}
\begin{lemma}\label{NonresidueCondition}
    Let $\omega\in\mathbb{F}_p$ be a fixed quadratic nonresidue, and assume that $(A, B, C, D)$ is nondegenerate modulo $p$. Then for $i = 1, 2, \text{ and }3$ the equation

    $$\omega \eta^2 = f_i(\xi)$$

    defines a geometrically irreducible affine curve $E$ which may be extended to a projective curve isomorphic to either an elliptic curve, or the nodal or cuspidal cubic. The map from $E\rightarrow \mathbb{P}^1$ given by $(\xi, \eta)\mapsto \xi$ is a two-sheeted ramified cover, ramified over the roots of odd multiplicity of the polynomial $f_i(\xi).$
\end{lemma}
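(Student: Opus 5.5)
By symmetry of the statement under permuting $(A,B,C)$, I will treat $i=1$; the cases $i=2,3$ follow by the same argument with the parameters permuted. The curve $\omega\eta^2 = f_1(\xi)$ with $\deg f_1 = 4$ is a double cover of the $\xi$-line. The plan has three steps: show $f_1$ is not a constant multiple of a square in $\overline{\mathbb{F}}_p[\xi]$, read off irreducibility and the branch locus, then identify the smooth projective model. Since $\omega\neq 0$, the polynomial $\omega\eta^2 - f_1(\xi)$ is reducible over $\overline{\mathbb{F}}_p$ exactly when $f_1 = \omega g^2$ for some $g\in\overline{\mathbb{F}}_p[\xi]$. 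Because $f_1$ is monic of degree $4$, this forces $f_1 = (\xi^2 + a\xi + b)^2$ up to the scalar, with $\omega$ then a square in $\overline{\mathbb{F}}_p$, which is harmless. So the key step is: \emph{if $(A,B,C,D)$ is nondegenerate mod $p$, then $f_1$ is not a perfect square.}

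To prove the key step, I expand $(\xi^2 + a\xi + b)^2 = \xi^4 + 2a\xi^3 + (a^2+2b)\xi^2 + 2ab\xi + b^2$ and compare coefficients with $f_1$. This gives $a = -A/2$ and $a^2 + 2b = -(D+4)$, so $b = -(D+4)/2 - A^2/8$. The two remaining equations are
\[
2ab = 4A + BC, \qquad b^2 = 4D + B^2 + C^2 .
\]
I then eliminate $b$ and treat these as two conditions on $(A,B,C,D)$. The expectation is that they cut out exactly the degenerate locus, i.e.\ the union of the equivalence classes of $\{A=B,\ 4D+A^2 = 8C+16\}$. It is natural to use the other pairing too, namely $B=\pm C$ with $4D + C^2 = \pm 8A + 16$, which is exactly the condition that appeared in Lemma \ref{Parabolics} for $C_1(\mp 2)$ to degenerate.

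Concretely, the first equation reads $-A\bigl(-(D+4)/2 - A^2/8\bigr) = 4A + BC$. Substituting $4D = 8(\mp A)+16 - C^2$ into it suggests that it factors through $B \mp C$. I would verify the whole elimination by a Gr\"obner basis / resultant computation in Macaulay2, as the paper does elsewhere. The goal is to show that the ideal generated by the two equations has radical equal to the intersection of the ideals of the degenerate components. Whatever components are equivalent to $A=B$ type (in the $(B,C)$ pairing singled out by $f_1$, or by the sign-flip/permutation equivalences of Definition \ref{def:EquivalentParams}) are then nondegeneracy violations, and the argument is done. One sanity check is already available: Proposition \ref{DegeneratesAndDelta} and Lemma \ref{DeltaUniversality} show that degenerate parameters make the discriminant $\Delta$ of $f_1$ vanish, which is consistent with this, since a perfect square has a vanishing discriminant. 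I expect this elimination to be the main obstacle. There is a real risk that the square locus includes extra components, such as the Cayley point $(0,0,0,4)$, which is degenerate anyway, or components needing separate treatment. Any such components would have to be checked against Definition \ref{nondegeneracyCondition} one by one. Care is also needed when $p$ divides denominators like $8$, but we have assumed $p$ odd.

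Once $f_1$ is known not to be a square, the rest is standard. The curve $E$ is geometrically irreducible. Its smooth projective model is a hyperelliptic-type double cover of $\mathbb{P}^1$ via $\xi$, ramified exactly at the roots of odd multiplicity of $f_1$. It is unramified at $\infty$ because $\deg f_1$ is even. The possible root patterns are:
\begin{itemize}
\item four simple roots, which give genus $1$, an elliptic curve;
\item one double and two simple roots, which give genus $0$ with the node at the double root, a nodal cubic model;
\item a triple and a simple root, which give the cuspidal model.
\end{itemize}
The pattern of two double roots is excluded by the key step. Mapping to a plane cubic by sending a root to infinity gives the stated cubic models. The description of the map $(\xi,\eta)\mapsto\xi$ as two-sheeted, ramified over the odd-multiplicity roots, follows directly from the form $\eta^2 = \omega^{-1}f_1(\xi)$.
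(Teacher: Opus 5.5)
\textbf{Verdict: genuine gap.} Your overall structure matches the paper's, and your closing discussion of the root patterns and branch locus is fine. The structure is: reduce geometric irreducibility to showing $f_1\neq(\xi^2+a\xi+b)^2$, compare coefficients, then read off the elliptic/nodal/cuspidal trichotomy from the root pattern.

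The gap is that the key step, square $\Rightarrow$ degenerate, is never proved. You state an expectation and defer to a Gr\"obner computation you have not run. Worse, the target you set for that computation is false. The locus where $f_1$ is a square is \emph{not} the whole degenerate locus. It consists only of the components $A=\pm B,\ 4D+A^2=\pm 8C+16$ and $A=\pm C,\ 4D+A^2=\pm 8B+16$. It misses the $B=\pm C$ components. For example, $A=0$, $B=C\neq 0$, $4D+C^2=16$ is degenerate, yet $2ab=0\neq B^2=4A+BC$. So the radical equality you aim for fails. Your heuristic of substituting the $B=\pm C$ degeneracy condition is also misdirected twice over. It tests degenerate $\Rightarrow$ square, which is not needed. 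And it cannot succeed, since $f_1$ is generally not a square on those components. (A Gr\"obner certificate over $\mathbb{Q}$ would also not automatically cover every odd $p$.)

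The missing computation is short, is what the paper does, and works in any characteristic $\neq 2$.
\begin{itemize}
\item From $2a=-A$ and $2ab=4A+BC$ you get $-Ab=4A+BC$.
\item If $A=0$, then $BC=0$. Say $B=0$. The second and fourth equations give $(D+4)^2=4b^2=16D+4C^2$, i.e.\ $(D-4)^2=4C^2$, so $4D=\pm 8C+16$. This is degenerate via the sign-flip equivalence, and the case $C=0$ is the same.
\item If $A\neq 0$, then $b=-4-BC/A$, and the second equation gives $D=4+2BC/A-A^2/4$. Substituting into $b^2=4D+B^2+C^2$ and clearing denominators yields $(A^2-B^2)(A^2-C^2)=0$.
\item If $A=\pm B$, the relation $-Ab=4A+BC$ gives $b=-4\mp C$. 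Then $b^2=4D+A^2+C^2$ becomes $4D+A^2=\pm 8C+16$.
\item The case $A=\pm C$ gives $4D+A^2=\pm 8B+16$ in the same way.
\end{itemize}
Each outcome is degenerate after the equivalences of Definition~\ref{def:EquivalentParams}. This gives exactly the contrapositive you need.
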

\begin{proof}
    We prove the lemma for $i = 1;$ the other cases are identical. Under a change of variables defined over $\overline{\mathbb{F}}_p,$ the given equation may be written as
    \[\begin{split}\eta^2 &= \xi^4 - A\xi^3 - (D + 4)\xi^2 + (4A + BC)\xi + (4D + B^2 + C^2).\end{split}\]
    The proof is particularly easy if $\Delta(A, B, C, D) \neq 0$. Indeed, this implies that the roots of $f_1(\xi)$ are all distinct, so that $E$ is an irreducible smooth curve of genus $1$, and the specified map is ramified over the roots of $f_1$ and nowhere else by the argument in the proof of Proposition 19.5.2 of \cite{Vakil}. For the general case, the curve will be irreducible if and only if $f_1$ is not the square of some other polynomial, in which case at least one root of $f_1$ must have multiplicity one; moving this to infinity will then give singular Weierstrass equations, proving the rest of the lemma. Thus, to prove the lemma, it suffices to show that
    $$f_1(\xi) \neq (\xi^2 + a\xi + b)^2$$
    for any $a, b\in\overline{\mathbb{F}}_p.$
    If on the contrary $f_1(\xi) = (\xi^2 + a\xi + b)^2,$ then we have

    \[\begin{split}
        2a &= -A\\
        2b + a^2 &= -(D+4)\\
        2ab & = 4A + BC\\
        b^2 &= 4D + B^2 + C^2.
    \end{split}\]

    From the first and third equations, we have that

    $$-Ab = 4A + BC.$$

    It follows that if $A = 0,$ then either $B = 0$ or $C = 0.$ If $B=0$, then squaring the second equation and comparing it with the fourth we find that
    $$D^2 + 8D + 16 = 4b^2= 16D + 4C^2,$$
    whence
    $$4C^2 = (D-4)^2$$
    and so
    $$\pm2C = D-4,$$
    and
    $$4D + A^2 = \pm8C + 16,$$
    so that $(A, B, C, D)$ is degenerate. This argument also establishes degeneracy if $C = 0$. Thus, we may assume $A \neq 0,$ so that

    \[\begin{split}
        a &= -\frac{A}{2};\\
        b &= -4-\frac{BC}{A}.
    \end{split}\]

    We use the second equation to solve for $D$ in terms of $A, B,$ and $C$:

    \[\begin{split}
        D &= -4 - 2b - a^2\\
        &= 4 + \frac{2BC}{A} -\frac{A^2}{4}.
    \end{split}\]
    The fourth equation then implies
    $$4D + B^2 + C^2 = b^2 = 16 + \frac{8BC}{A} + \frac{B^2C^2}{A^2};$$

    Inputting our expression for $D$ gives
    $$16 + \frac{8BC}{A} - A^2 + B^2 + C^2 = 16 + \frac{8BC}{A} + \frac{B^2C^2}{A^2},$$

    which simplifies to

    $$A^2B^2 + A^2C^2 - A^4 - B^2C^2 = 0.$$

    Finally, we factor as
    $$-(A^2 - B^2)(A^2 - C^2) = 0$$

    so that $A = \pm B$ or $A = \pm C$. If $A = \pm B \neq 0$, then the third equation implies that

    $$b = -4 \mp C.$$

    Squaring and comparing with the final equation gives

    $$16 \pm 8C + C^2 = 4D + A^2 + C^2,$$
    once again implying degeneracy; a similar argument gives degeneracy under the condition $A = \pm C$. This proves the lemma. 
\end{proof}

We are now in a position to prove the irreducibility result used in the endgame. Our proof is by monodromy; cf. \cite{CasselsPolynomials} and \cite{Pakovich} for the technique over the complex numbers, and \cite{Fried} and Chapter 6 of \cite{LMT} for modifications in characteristic $p$.

\begin{lemma}\label{endgameIrreducibility}
Assume that $(A, B, C, D)$ is nondegenerate modulo $p$ and $A \neq \pm C$, and that $e$ and $d$ are positive integers with $p\nmid ed$. Then for all but at most $22$ values of $x$, the system of equations
$$y = t^d + t^{-d}$$
$$y = \alpha_1 s^e + \alpha_2 s^{-e} + \alpha_3$$
$$\omega\eta^2 = f_2(y)$$

where $\omega$ is a fixed quadratic nonresidue mod $p$, $\alpha_1\alpha_2 = \kappa_1(x),$ and $\alpha_3 = (x^2 - 4)^{-1}(-2B - Cx)$ defines a geometrically irreducible curve of genus $O(ed)$. The forbidden values of $x$ are independent of the values of $e$ and $d$.
\end{lemma}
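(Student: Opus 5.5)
I will prove geometric irreducibility by a monodromy (Galois-theoretic) argument on the tower of covers of the $y$-line. The curve in the statement is the fibre product over $\mathbb{P}^1_y$ of three covers:
\begin{itemize}
\item $\pi_1:\ t\mapsto t^d+t^{-d}$, of degree $2d$;
\item $\pi_2:\ s\mapsto \alpha_1 s^e+\alpha_2 s^{-e}+\alpha_3$, of degree $2e$;
\item $\pi_3:\ E\to\mathbb{P}^1$, the double cover $\omega\eta^2=f_2(y)$ of Lemma \ref{NonresidueCondition}.
\end{itemize}
A fibre product of covers is irreducible once the function fields are linearly disjoint over $\overline{\mathbb{F}}_p(y)$. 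A clean sufficient condition is that the Galois closures have suitably disjoint branching behaviour. The genus bound $O(ed)$ will then follow from Riemann--Hurwitz, since the total degree is $8ed$ and the number of branch points is bounded independently of $e,d$.

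\textbf{Step 1: branch loci.} $\pi_1$ is a Chebyshev-type cover, branched only over $y=\pm 2$ and $y=\infty$. Over $\pm2$ the ramification is of index at most $2$, coming from the critical points $t^{2d}=1$ of $t^d+t^{-d}$. Over $\infty$ it is totally ramified in two points ($t=0,\infty$). Similarly, $\pi_2$ is branched over $\infty$ (indices $e,e$) and over the two finite values $y=\alpha_3\pm2\sqrt{\alpha_1\alpha_2}=\alpha_3\pm 2\sqrt{\kappa_1(x)}$, where only simple ramification occurs. By Lemma \ref{NonresidueCondition}, $\pi_3$ is branched exactly over the odd-multiplicity roots of $f_2$, and at least one such root exists by nondegeneracy.

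\textbf{Step 2: disjointness of finite branch points.} Let
$$\beta_\pm(x)=\alpha_3\pm2\sqrt{\kappa_1(x)}.$$
Then $\beta_\pm$ are the $y$-coordinates of the $V_1$-style fixed points; indeed $(\beta-\alpha_3)^2=4\kappa_1(x)$ is a polynomial relation in $x$ after clearing the powers of $(x^2-4)$. I will show that, for all but boundedly many $x$:
\begin{itemize}
\item $\beta_+\neq\beta_-$, i.e.\ $\kappa_1(x)\ne0$ (we already excluded the roots of $f_1$);
\item $\beta_\pm\ne\pm2$;
\item $\beta_\pm$ is not a root of $f_2$.
\end{itemize}
Each condition is a nonzero polynomial equation in $x$ of bounded degree. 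For example, the condition that $\beta$ be a root of $f_2$ becomes a resultant in $x$ of degree about $16$. I expect the main obstacle to be verifying that these polynomials are not identically zero. The hypothesis $A\ne\pm C$ should enter precisely here: $\beta_\pm=\pm2$ identically in $x$ corresponds to the symmetric situation in which $\tau_{xz}$ identifies the conics. I would verify nonvanishing by a leading-coefficient computation as $x\to\infty$, where $\alpha_3\sim -C/x$ and $\kappa_1\to1$, so $\beta_\pm\to\pm2$ at leading order. One must therefore expand to the next order and see that the correction involves $A\mp C$. Adding up the exceptional $x$'s from the finitely many resultants should give a bound like $22$, with the roots of $f_1$ included.

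\textbf{Step 3: monodromy.} Given Step 2, the local monodromy of $\pi_2$ at $\beta_\pm$ is a transposition. These points are unramified for $\pi_1$ and $\pi_3$, so by Abhyankar's lemma the Galois closure of $\pi_2$ cannot be contained in the compositum of the other two. Since the monodromy group of $\pi_2$ is generated by conjugates of transpositions and is transitive (it is the dihedral-type group of the cover $s\mapsto s^e+\kappa s^{-e}$), the fibre product of $\pi_2$ with $\pi_1$ stays irreducible. The argument is the Cassels/Fried-style one: the inertia at $\beta_\pm$ in the compositum acts only on the $\pi_2$ factor, so the fibres glue transitively. Next, $\pi_3$ is branched at a root of $f_2$ where $\pi_1$ and $\pi_2$ are both unramified (again by Step 2, together with the roots of $f_2$ not being $\pm2$, which follows from $f_2(\pm2)=(2B\pm 2A\pm\ldots)$-type expressions being nonzero under nondegeneracy). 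Hence the quadratic extension is not contained in the previous compositum, and the full fibre product is irreducible.

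Throughout, the condition $p\nmid ed$ guarantees tame ramification, so Abhyankar's lemma and Riemann--Hurwitz apply as in characteristic zero. The exceptional set of $x$ comes only from the resultants of Step 2, and is therefore independent of $e$ and $d$.
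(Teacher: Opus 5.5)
Your proposal is correct and follows essentially the paper's route. Both write the system as the fibre product over $\mathbb{P}^1_y$ of the two Chebyshev-type covers and the double cover $E$, then deduce transitivity of the product monodromy from two facts: the finite branch points $\alpha_3\pm2\sqrt{\kappa_1(x)}$ avoid $\pm2$ (this is where $A\neq\pm C$ enters, as your expansion $\alpha_3\pm2\sqrt{\kappa_1(x)}=\pm2-(C\pm A)/x+O(x^{-2})$ correctly detects), and some odd-multiplicity root of $f_2$ avoids all other branch points; the genus bound then follows from Riemann--Hurwitz.

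A few minor corrections:
\begin{itemize}
\item The inertia over $\alpha_3\pm2\sqrt{\kappa_1(x)}$ is the fixed-point-free involution $(12)(34)\cdots(2e-1\,2e)$, not a transposition. Step~3 still works because these two involutions act trivially on the $t$-fibre and generate the transitive dihedral monodromy of the $s$-cover; mere non-containment of Galois closures would not suffice.
\item $f_2(\pm2)=(A\pm C)^2$, so it is the hypothesis $A\neq\pm C$, not nondegeneracy, that keeps the roots of $f_2$ away from $\pm2$.
\item You exclude every $x$ for which $\alpha_3\pm2\sqrt{\kappa_1(x)}$ hits any root $r$ of $f_2$. This is cruder than the paper's split on whether $\Delta=0$, where only one odd-multiplicity root needs to survive, but it still removes only boundedly many $x$: at most four per root, since $r\neq\pm2$.
\end{itemize}
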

\begin{proof}
Let $g(s) = s^d + s^{-d}$ and $h(t) = \alpha_1t^e + \alpha_2 t^{-e} + \alpha_3$. The curve $X$ defined by the above system of equations may be viewed as the fiber product $\mathbb{P}_{t}^1\times_{\mathbb{P}_y^1}\mathbb{P}_{s}^1\times_{\mathbb{P}_y^1} E$ where $E$ is the curve from Lemma \ref{NonresidueCondition} with the cover as described there, and the covers $\mathbb{P}_{t}^1\rightarrow \mathbb{P}_{y}^1$ and $\mathbb{P}_{s}^1\rightarrow \mathbb{P}_{y}^1$ are given by
\[\begin{split}
g: \mathbb{P}_{s}^1&\rightarrow \mathbb{P}_{y}^1\; \;\;\;\;\;\;g(s) - y = 0\\
h: \mathbb{P}_{t}^1&\rightarrow \mathbb{P}_{y}^1\; \;\;\;\;\;\; h(t) - y = 0.
\end{split}\]
The condition $p\nmid ed$ ensures that ramification is tame. As in \cite{BGS}, Lemma 12, the branch points for $g$ are $\{-2, 2, \infty\}$ with branch cycles given by
\begin{equation}
\begin{cases} \sigma_{-2} = (12)(34)\dots(2d-1\, 2d)\\
\sigma_2 = (1\,2d)(23)\dots(2d-2\, 2d-1)\\
\sigma_\infty = (135\dots2d-1)(246\dots2d),\end{cases}
\end{equation}
while the branch points for $h$ are $\{-2\sqrt{\alpha_1\alpha_2} + \alpha_3, 2\sqrt{\alpha_1\alpha_2} + \alpha_3, \infty\}$ with branch cycles given by
\begin{equation}
\begin{cases} \sigma_{-2\sqrt{\alpha_1\alpha_2} + \alpha_3} = (12)(34)\dots(2e-1\, 2e)\\
\sigma_{2\sqrt{\alpha_1\alpha_2}+\alpha_3} = (1\,2e)(23)\dots(2e-2\, 2e-1)\\
\sigma_\infty = (135\dots2e-1)(246\dots2e).\end{cases}
\end{equation}
Finally, the branch points for $E\rightarrow \mathbb{P}_y^1$ are given by the roots of $f_2(y)$, as in Lemma \ref{NonresidueCondition}, with branch cycle about root $r$ given by $(12)^m$ where $m$ is the multiplicity of $r$ as a root of $f_2(y)$. 

The curve $X$ will be irreducible if and only if the subgroup of the product of the monodromy groups generated by the branch cycles acts transitively on the $2\cdot 2d\cdot 2e$ sheeted covering. This is immediate so long as $\pm2\sqrt{\alpha_1\alpha_2} + \alpha_3 \neq \pm 2$, and at least one root of $f_2(y)$ of odd multiplicity is not $\pm 2$ or $\pm2\sqrt{\alpha_1\alpha_2} + \alpha_3$. The first condition is equivalent to $4\alpha_1\alpha_2 = (\pm 2 - \alpha_3)^2.$ Keeping in mind that $\alpha_1\alpha_2 = \kappa_1(x)$ is a degree 4 rational function of $x$, shown not to be a perfect square in the proof of Lemma \ref{NonresidueCondition}, the first condition forbids at most 16 values of $x$. As for the second, if $\pm 2$ is a root of $f_2(y)$, then

\[\begin{split}0 &= 16 \mp 8B - 4(D+4) \pm 2(4B + AC) + (4D + A^2 + C^2) \\
&= A^2 \pm 2AC + C^2
\end{split}\]
violating the assumptions of the lemma. If $\Delta \neq 0$, this suffices, as $f_2(y)$ will have four distinct roots, each of (odd) multiplicity one. If $\Delta = 0$, then $f_2(y)$ has exactly two roots of odd multiplicity by Lemma \ref{NonresidueCondition}, and these might indeed be $\pm2\sqrt{\alpha_1\alpha_2} + \alpha_3$. However, if they are, then the polynomial
$$y^2 - 2\alpha_3y + (\alpha_3^2 - 4\alpha_1\alpha_2)$$
must divide $f_2(y)$ (and in fact be the polynomial having as roots all roots of $f_2(y)$ with odd multiplicity). Keeping in mind that $\alpha_3$ and $\alpha_1\alpha_2$ are both rational functions of $x$, with $\alpha_3$ of degree 1 and $\alpha_1\alpha_2$ of degree 4, this can occur for at most 6 values of $x$. This proves irreducibility.
Bounds on genus follow from the Riemann-Hurwitz formula after analyzing the ramification of the map $X\rightarrow\mathbb{P}_y^1$. If $X$ is not smooth, then we may pass to its normalization, and still find that the genus is $O(ed).$ As the normalization map is generically one to one, this will not disrupt the application of the Weil bound in Proposition \ref{endgame}, though it may increase the implicit constant. 
\end{proof}
\subsection{Connectivity of the Cage}\label{cageConnectivitySubsect}
We say that a point $(x_1, x_2, x_3)$ is $i$-connecting if the subgraph induced by $C_i(x_i)$ is connected. This occurs exactly in the parabolic case when $x_i = \pm2$ and $C_i(x_i)$ has exactly $p$ points, in the hyperbolic and elliptic cases when $x_i$ has order $p\pm1$ with 
$$\left(\frac{f_i(x_i)}{p}\right) = -1,$$
or, if the linear coefficients on $x_{i-1}$ and $x_{i+1}$ (indices taken cyclically mod 3) are equal or equal to each other's negatives, simply when $x_i$ is hyperbolic or elliptic of maximal order. We define the \textit{cage} $\mathcal{C}_{A, B, C, D}(p)$ to be the set of points that are $1$-, $2$-, or $3$-connecting; when the parameters are fixed we will simply use $\mathcal{C}(p).$ In Proposition \ref{endgame}, it is shown that any point of sufficiently high order connects to the cage. In this subsection, we show that the cage is connected; this serves as the ``skeleton'' of the graph, to which almost all other points will ultimately be connected. 
\begin{proposition}\label{cageConnectivity}
Suppose that $(x, y, z)$ and $(x', y', z') \in\mathcal{C}(p)$. Then there is a path from $(x, y, z)$ to $(x', y', z'),$ and moreover, one can assume this path only involves vertices of order at least $p^\frac{5}{8}.$
\end{proposition}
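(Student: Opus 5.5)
The plan follows the cage-connectivity argument of \cite{BGS} (their Lemma 14 and surrounding discussion), adapted to the fact that connecting conics are now determined by a Legendre-symbol condition rather than by maximal order alone. The basic observation is this. If $(x,y,z)$ is $1$-connecting, then every point of $C_1(x)$ is joined to it. Hence if some point of $C_1(x)$ is $2$- or $3$-connecting, we may pass to that conic, and so on. So it suffices to show two things. First, for any two connecting coordinate values (say $x$ with $C_1(x)$ connected and $x'$ with $C_1(x')$ connected), the conics $C_1(x)$ and $C_1(x')$ can be linked through a bounded chain of connecting conics in the other directions. Second, the same holds after a coordinate permutation, by symmetry of the argument in $A,B,C$.

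\textbf{Step 1: linking two $1$-connecting conics.} Given $x,x'$ both $1$-connecting, I look for $y$ such that $C_2(y)$ is connected and meets both $C_1(x)$ and $C_1(x')$. The set $C_2(y)\cap C_1(x)$ is nonempty exactly when $(x,y)$ lifts to a point of the surface, that is, when the discriminant in $z$,
\[
(C+xy)^2-4(x^2+y^2-Ax-By-D),
\]
is a square. So I need $y=s+s^{-1}$ with $s$ a generator of $\mathbb{F}_p^\times$ (or of $\mu_{p+1}$). I also need $f_2(y)$ to be a nonresidue (dropped if $A=\pm C$), and the two discriminants above, for $x$ and for $x'$, to be squares. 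I handle this exactly as in Proposition \ref{endgame}: inclusion/exclusion over the divisors $d_K$, with Weil's bound applied to the fiber product of $t\mapsto t^{d}+t^{-d}$, the curve $\omega\eta^2=f_2(y)$, and the two double covers $w_1^2=\mathrm{disc}_x(y)$, $w_2^2=\mathrm{disc}_{x'}(y)$. The elliptic case is treated with the same trick as in that proof. The main term is roughly $\frac{\varphi(p-1)}{16(p-1)}p$, against an error of $O(p^{1/2+\epsilon})$, so a solution exists for $p$ large. To keep all vertices of order at least $p^{5/8}$, I additionally require $s$ of maximal order, which is automatic, and choose the resulting third coordinates freely; alternatively I discard the $O(p^{5/8+\epsilon})$ low-order values as in the end of the proof of Proposition \ref{endgame}.

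\textbf{Step 2: linking conics in different directions.} If $(x,y,z)$ is $1$-connecting and $(x',y',z')$ is $2$-connecting, then by Step 1 applied inside $C_2(y')$ (with roles permuted) it suffices to find one $2$-connecting value of $y$ realized on $C_1(x)$. That is the same Weil-bound count, now with only one discriminant condition. The $3$-connecting case is symmetric. Parabolic connecting conics ($x=\pm 2$ with exactly $p$ points) have $p$ distinct $y$-values by Lemma \ref{Parabolics}, so they reduce immediately to a hyperbolic or elliptic connecting conic through them, by a counting argument like the last paragraph of Proposition \ref{endgame}.

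\textbf{Main obstacle.} The hard part is the geometric irreducibility of the fiber product in Step 1. I must show that the quadratic discriminant polynomials in $y$ for $x$ and for $x'$, together with $f_2(y)$, are multiplicatively independent modulo squares. I also need their odd-multiplicity roots to avoid $\pm 2$, so that the monodromy argument of Lemma \ref{endgameIrreducibility} applies. The first thing I would try is comparing branch points. The discriminant for fixed $x$ is a quadratic in $y$ whose roots vary with $x$, so for $x\neq x'$ its branch points differ from those for $x'$ outside a bounded set of pairs. Nondegeneracy, through Lemma \ref{NonresidueCondition}, guarantees that $f_2$ has an odd-multiplicity root distinct from these. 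The finitely many exceptional $x$ or pairs $(x,x')$ would be bypassed by first moving along $C_1(x)$ to a different connecting conic, which is possible because $C_1(x)$ has $p\pm1$ points and hits many $y$-values.
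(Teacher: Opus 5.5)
Your argument is correct and uses the same machinery as the paper's proof. Both rest on a Weil-bound count with M\"obius sieving over the fiber product of $\tilde y=t^\ell+t^{-\ell}$, the cover $\omega u^2=f_2(\tilde y)$, and two discriminant double covers. In both, geometric irreducibility comes from each double cover having a private branch point off $\{\pm2,\infty\}$, and the boundedly many bad values are rerouted through the endgame.

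The difference is the decomposition. The paper's basic move joins a $1$-connecting point directly to a $3$-connecting point through one $2$-connecting conic $C_2(\tilde y)$ that meets both $C_1(x)$ and $C_3(z')$, so its two discriminants come from different directions. It then joins two points connecting in the same direction through a common third point. Your basic move joins $C_1(x)$ to $C_1(x')$, and a cross-direction pair costs you an extra single-discriminant hop. So the paper spends one Weil count on cross-direction pairs and two on same-direction pairs, and you do the reverse.

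What your version buys is a cleaner branch-point comparison. Both of your discriminants are specializations of the single biquadratic $q(x,y)=x^2y^2-4x^2-4y^2+2Cxy+4Ax+4By+C^2+4D$. For a root $r\neq\pm2$ of $q(x,\cdot)$, the polynomial $q(\cdot,r)$ is a nonzero quadratic, so $r$ is a root of $q(x',\cdot)$ for at most one $x'\neq x$.

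Three small corrections:
\begin{itemize}
\item You say the roots of the discriminants must avoid $\pm2$ but do not check it for $q(x,\cdot)$. This is where nondegeneracy is used a second time. The value $q(x,\pm2)$ is linear in $x$ unless $A=\mp C$, and in that case it is the constant $C^2\pm8B+4D-16$, which vanishes exactly when the parameters are degenerate.
\item A parabolic connecting conic has only about $p/2$ distinct $y$-values, not $p$, since the projection to $y$ is generically $2$-to-$1$. Also, getting from it to a connecting conic needs the endgame's Weil count after the counting step; counting alone does not suffice.
\item The $p^{5/8}$ clause is automatic on connecting conics, whose fixed coordinate has order $p\pm1$ or $p$. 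It has nothing to do with choosing $s$ or third coordinates. It only constrains the endgame detours for parabolic or forbidden values, which pass through non-forbidden points of order at least $p^{5/8}$.
\end{itemize}
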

\begin{proof}
It suffices to show this assuming that $(x, y, z)$ is 1-connecting and $(x', y', z')$ is 3-connecting: by symmetry, one can then connect any $i$-connecting point to any $j$-connecting point so long as $i\neq j$; then given a pair of $i$-connecting points, one can connect each to a common, arbitrarily chosen $j$-connecting point, and thereby to each other. Further, we may assume that $x$ and $z'$ are hyperbolic, as parabolic and elliptic connecting coordinates are connected to hyperbolic connecting elements in the proof of Proposition \ref{endgame}.

We therefore seek a value of $\tilde{y}$ such that there is a
\[P \in C_1(x) \cap C_2(\tilde y)\]
\[Q \in C_3(z') \cap C_2(\tilde y)\]
and $\tilde y$ is 2-connecting, i.e. $\tilde y$ has maximal order and $f_2(\tilde y)$ is a quadratic nonresidue. As in the proof of Proposition \ref{endgame}, we can do this using inclusion/exclusion so long as we can describe the conditions
\[P \in C_1(x) \cap C_2(\tilde y) \]
\[Q \in C_3(z') \cap C_2(\tilde y)\]
with a system of polynomial equations, and then show that this system, together with the equations 
\[\begin{split}
y &= t^\ell + t^{-\ell}\\
\omega u^2 &= f_2(y)
\end{split}\] 
with $\omega$ a fixed quadratic nonresidue, cut out an irreducible curve.

By the quadratic formula, there exists $\tilde y$ with
\[P \in C_1(x) \cap C_2(\tilde y) \]
if and only if for some $v\in\mathbb{F}_p$ we have
$$(x^2 - 4)\tilde y^2 + (2Cx + 4B)\tilde y - v^2 = (4x^2 - 4Ax - 4D - C^2).$$
Dividing through by $(x^2 - 4)$, we arrive at a curve of the form
$$\tilde y^2 + A_1\tilde y - B_1 v^2 = C_1.$$
Similarly, the condition 
\[Q \in C_3(z') \cap C_2(\tilde y)\]
leads to an equation of the form
$$\tilde y^2 + A_2\tilde y - B_2w^2 = C_2.$$
We are thus led to consider the system of equations
\[\begin{split}
    &\omega u^2 = f_2(\tilde y)\\
    &\tilde y^2 + A_1\tilde y - B_1 v^2 = C_1\\
    &\tilde y^2 + A_2\tilde y - B_2w^2 = C_2\\
    &\tilde y = t^\ell + t^{-\ell}\\
\end{split}\]
As in the proof of Lemma \ref{endgameIrreducibility}, this system describes a fiber product of four ramified covers of $\mathbb{P}_{\tilde y}^1$; if the fiber product is smooth and irreducible, the Riemann-Hurwitz formula implies that the genus is $O(\ell),$ and if the fiber product is nonsmooth, we may pass to its normalization while preserving this bound on the genus and still apply the sieving argument.
Each of the first three equations corresponds to a double-sheeted cover. The first equation corresponds to a cover ramified at the roots of $f_2(\tilde y)$ of odd multiplicity, with each ramification cycle swapping the two sheets. The second and third are ramified at
$$\frac{-A_1\pm\sqrt{A_1^2 + 4C_1}}{2}$$
and
$$\frac{-A_2\pm\sqrt{A_2^2 + 4C_2}}{2},$$
respectively, with each ramification cycle swapping the two sheets, unless $A_1^2 = -4C_1$ (respectively, unless $A_2^2 = -4C_2$). 
The last equation is ramified at $\pm 2$ and $\infty.$

The first three double covers are not ramified at $\infty$, so that as long as none of the ramification points of the first three equations are $\pm2$, and as long as each of the first three equations has a ramification point not shared by any of the other equations, it is trivial that the product of the monodromy groups of each ramified cover acts transitively on the product of the sheets, and so the curve is irreducible. As in the proof of Proposition \ref{endgameIrreducibility}, $\pm 2$ is not a root of $f_2(\tilde y)$ unless $A = \pm C$, in which case acting on $C_2(\tilde y)$ by $\tau\circ V_3$ instead of $V_1\circ V_3$, and demanding that $-\tilde{y}$ has maximal order if $A = C\neq 0$ as in the proof of Proposition \ref{endgame} gives a transitive action, and we may remove the first equation from the system entirely. 

Now, $A_1$ and $C_1$ are rational functions of $x$, each with denominator $x^2 - 4$ and with numerators of degree 1 and 2, respectively, and similarly $A_2$ and $C_2$ are rational functions of $z'$, each with denominator $z'^2 - 4$ and with numerators of degree 1 and 2, respectively. Thus, by avoiding $O(1)$ values of $x$ and $z'$, we may ensure $A_1^2 \neq -4C_1$ and $A_2^2 \neq -4C_2.$ Similarly, we shall show that for all but $O(1)$ values of $x$ and all but $O(1)$ values of $z'$, $\pm 2$ is not a ramification point of the cover associated to the second and third equations, nor are any of the roots of $f_2(\tilde y).$ Indeed, the condition for fixed $c$ to be a ramification point of the cover associated to the second equation has the shape
$$(2c + A_1)^2 = A_1^2 + 4C_1,$$
which simplifies to
$$cA_1 + c^2 = C_1,$$
which after multiplication by $(x^2 - 4)$ gives a degree $2$ polynomial in $x$ unless $c = \pm 2.$ For $c = \pm2$, it leads to a linear equation in $x$ unless $A = \mp C$, in which case the equation reduces to the degeneracy condition
$$4D + C^2 = \mp8B + 16,$$
which we have assumed does not hold. Therefore avoiding a bounded number of values of $x$ allows one to ensure that none of the ramification points of the second equation is $\pm2,$ and none of the ramification points of the second equation is also a ramification point of the first.

By avoiding a bounded number of values of $z'$ one can assume the same for the first and third equations. Finally, unless 
$$A_1 = A_2$$
and
$$C_1 = C_2,$$
the pair of ramification points associated to the second equation cannot be the same as the pair of ramification points associated to the third equation. $A_1 = A_2$ only occurs if $A = B = C = 0,$ in which case $C_1 = C_2$ requires $D = 4,$ the generally forbidden Cayley parameters. It follows that for each permitted value of $x$, for all but $O(1)$ values of $z'$, the relevant curve is irreducible, and $(x, y, z)$ connects to $(x', y', z')$ in the cage.

In the endgame (Proposition \ref{endgame}), it is shown that any point of order at least $p^{\frac{3}{4} + \delta}$ connects to at least $cp^{\frac{5}{8}-\epsilon} + O_\epsilon(p^{\frac{1}{2}+\epsilon})$ points in the cage.\footnote{This is the worst case behavior, where we have to pass to a non-forbidden point of order $p^\frac{5}{8}$ before applying the Weil bound. Typically, the endgame shows one can connect a point of order $p^\frac{1}{2} + \delta$ to at least $cp^{\frac{1}{2} + \delta -\epsilon} + O_\epsilon(p^{\frac{1}{2}+\epsilon})$.} Thus, if either $x$ or $z'$ is in the globally forbidden set of values, then we can connect it to a point in the cage with no coordinate in the forbidden set via the endgame, and similarly, if $z'$ is in the set of $x$-dependent forbidden values, we can connect it to a point in the cage not having problematic coordinates via the endgame. This proves the proposition.

\end{proof} 

\section{Comparison of  \texorpdfstring{$\Gamma$}{Gamma} and  \texorpdfstring{$\Gamma '$}{Gamma'}}\label{Comparison}
 As discussed at the beginning of Section \ref{EndgameSection}, throughout the endgame we have availed ourselves of the potentially larger group of transformations $\Gamma'$, including when necessary double sign-flips and transpositions in addition to Vieta involutions. The following result lets us transfer transitivity from the action of the group $\Gamma'$ to our primary group of interest $\Gamma$. The proof is inspired by the remarks at the end of the proof of Theorem 4.7 in \cite{MartinBigPaper}.
\begin{theorem}\label{comparisonOfGroupsThm}
    Suppose that \((A, B, C, D)\) is nondegenerate modulo $p$, that $\Gamma'$ acts transitively on $\mathcal{S}_{A, B, C, D}^*(p)$, and that $p$ is sufficiently large (independent of $A, B, C,$ and $D$). Then $\Gamma$ also acts transitively on $\mathcal{S}_{A, B, C, D}^*(p).$
\end{theorem}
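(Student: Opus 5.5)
The plan is to exploit the fact that $\Gamma$ is normal in $\Gamma'$: each extra generator $g\in\Gamma'\setminus\Gamma$ (a double sign flip $\operatorname{neg}$, a transposition $\tau$, or a composite $\operatorname{neg}\circ\tau$) is an automorphism of $\mathcal{S}_{A,B,C,D}$ that conjugates each Vieta involution to another Vieta involution, $gV_ig^{-1}=V_{\sigma(i)}$. Hence $\Gamma'$ permutes the $\Gamma$-orbits. It also preserves $E(p)$, since it carries finite $\Gamma$-orbits over $\mathbb{C}$ to finite $\Gamma$-orbits, and therefore preserves $\mathcal{S}^*_{A,B,C,D}(p)$. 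If $\Gamma'$ is transitive on $\mathcal{S}^*$, then $\Gamma'/\Gamma$, a finite group of order at most $4\cdot 6=24$, acts transitively on the set of $\Gamma$-orbits in $\mathcal{S}^*$. So these orbits all have the same size $\geq |\mathcal{S}^*|/24\gg p^2$, and there are at most $24$ of them. The goal is to show there is only one.

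The key step is to find, for each extra generator $g$, a single point $P\in\mathcal{S}^*$ with $gP\in\Gamma P$. Then $g$ maps the orbit $\Gamma P$ to itself. Since $g$ permutes orbits and the generators $g$ together with $\Gamma$ generate $\Gamma'$, the $\Gamma'$-orbit $\Gamma' P=\mathcal{S}^*$ would equal $\Gamma P$. To see why, note that the stabilizer of the orbit $\Gamma P$ in $\Gamma'$ contains $\Gamma$ and every generator $g$, so it is all of $\Gamma'$.

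To find such $P$, I would use fixed points of $g$, following Martin's remark. Take $g=\tau_{yz}$ when $B=C$. Its fixed locus is the curve $\{y=z\}\cap\mathcal{S}$, namely $x^2+2y^2=xy^2+Ax+(B+C)y+D$. This has on the order of $p$ points, and I would check it is not contained in $E(p)$, which has bounded size. Any fixed point $P\in\mathcal{S}^*$ trivially satisfies $gP=P\in\Gamma P$. The analogous curves work for the other cases:
\begin{itemize}
\item for $\operatorname{neg}_{yz}\circ\tau_{yz}$ when $B=-C$, use the curve $z=-y$;
\item for $\operatorname{neg}_{yz}$ when $B=C=0$, use the curve $y=z=0$, which gives only about two points, or better the points with $y=0$ or $z=0$ where $\operatorname{neg}_{yz}P$ is reached by a Vieta involution.
\end{itemize}

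The hard case is the double sign flips, whose fixed loci are tiny: for $\operatorname{neg}_{yz}$, $(x,0,0)$ is a Type II orbit and lies in $E(p)$. For these I would instead look for a point $P$ with $\operatorname{neg}_{yz}P=V_2P$ or $\operatorname{neg}_{yz}P=V_3V_2P$, using that $V_2(x,y,z)=(x,xz-y,z)$ when $B=0$. The condition $\operatorname{neg}_{yz}P=V_3V_2P$ says that $D_1$ acts as $-1$ on $(y,z)$, which happens exactly when $\chi^2=-1$, i.e.\ $x=0$. For $x=0$ the whole conic $C_1(0)$ then has $D_1$ acting as $\operatorname{neg}_{yz}$, which gives $\sim p$ points, most outside $E(p)$. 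Similarly, $\operatorname{neg}_{xy}$ with $A=B=0$ is handled via $z=0$.

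The main obstacle I anticipate is verifying in every case that these curves contain a point of $\mathcal{S}^*$, i.e.\ that they are not swallowed by the bounded set $E(p)$ and are nonempty over $\mathbb{F}_p$. For $p$ large, a conic with $\geq p-1$ points exceeds the bounded size of $E(p)$, and this is where "$p$ sufficiently large, independent of parameters" enters. Degenerate cases, such as $C_1(0)$ being a degenerate conic, need a quick check, which nondegeneracy should supply.
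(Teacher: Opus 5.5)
\textbf{There is a genuine gap: the step that fails is ``the stabilizer of the orbit $\Gamma P$ in $\Gamma'$ contains $\Gamma$ and every generator $g$.''} Your point $P$ depends on $g$. So you only learn that each generator $g$ stabilizes \emph{some} $\Gamma$-orbit $\Gamma P_g$, not that one orbit is stabilized by all generators at once.

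When $\Gamma'/\Gamma$ is abelian this is harmless. In a transitive action of an abelian group all point stabilizers coincide, so a $g$ that stabilizes one $\Gamma$-orbit stabilizes every one. That covers $H$, a single (negated) transposition, and the case $A=B=0\neq C$, and it is essentially how the paper argues there.

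But when $A=\pm B=\pm C$, the quotient $\Gamma'/(\Gamma\times H)$ is $S_3$. Fixed points of single transpositions do rule out two or six $(\Gamma\times H)$-orbits, but not three. Consider three orbits permuted as $S_3$ permutes three letters, with each of $\tau_{xy},\pm\tau_{yz},\pm\tau_{xz}$ stabilizing exactly one of them. This configuration is consistent with everything you prove.

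It actually occurs. For degenerate parameters with $A=B=C$, the sets $\mathcal{S}_2,\mathcal{S}_3,\mathcal{S}_4$ of Theorem \ref{QuadraticObstructions} form a single $\Gamma'$-orbit. A $\tau_{xy}$-fixed point $(x,x,z)$ has $x+2$ and $y+2$ of equal quadratic character, so each transposition has fixed points in exactly one of the three sets. Running your argument on that $\Gamma'$-orbit would conclude that $\Gamma$ is transitive there, which is false. Relatedly, nondegeneracy plays no real role in your proof, whereas it must.

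\textbf{The missing input is one orbit stabilized by two \emph{distinct} transpositions.} The paper first passes from $\Gamma'$ to $\Gamma\times H$ and only then to $\Gamma$; this input is what handles the $S_3$ step.

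The paper starts from a point $(x,\pm z,z)\in\mathcal{S}^*_{A,B,C,D}(p)$ fixed by $\pm\tau_{yz}$, with $z=\chi+\chi^{-1}$ of odd order $2k+1$ and $\kappa_3(z)\neq0$. Since $D_3$ acts by $t\mapsto\chi^2t$ and $\chi^{2k+2}=\chi$, the point $D_3^{k+1}(x,\pm z,z)$ has first coordinate $\pm z$. At most one further $V_2$ then gives $(\pm z,x,z)$, which is fixed by $\pm\tau_{xz}$.

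Such points exist for large $p$ by a Weil-bound count on a fiber product. One factor is $Z=t^2+t^{-2}$ (hyperbolic if $p\equiv3\pmod4$, elliptic if $p\equiv1\pmod4$); the other is the fixed curve of $\pm\tau_{yz}$. Geometric irreducibility needs that fixed curve to be unramified over $Z=-2$, and this is exactly the condition $4D+A^2\neq8C+16$. In the degenerate four-orbit case such points exist but all lie in $\mathcal{S}_1$.

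\textbf{A smaller issue:} your explicit slices for the sign flips can all miss $\mathcal{S}^*$. For the Markoff parameters $(0,0,0,0)$ with $p\equiv3\pmod4$, each of $x=0$, $y=0$, $z=0$ meets the surface only at the origin, which lies in $E(p)$. Use instead a power $D_1^k$ at some $x=\chi+\chi^{-1}$ with $4\mid\operatorname{ord}(\chi)$. There $\chi^{2k}=-1$, so $D_1^k$ acts as $(x,y,z)\mapsto(x,-y,-z)$.
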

\begin{proof}    
    Our proof proceeds in steps: first, we show that the extension of $\Gamma$ by $H$ is harmless; then, we shall deal with transpositions, first in the case where only one is present, and then in the case where all three are present. 
    \begin{lemma}
        Using the notation defined above, we have that $\langle \Gamma, H\rangle\cong\Gamma\times H.$
    \end{lemma}
    \begin{proof}
        Elements of $H$ commute with each other and with the Vieta involutions. As $\Gamma\cong (\mathbb{Z}/2)*(\mathbb{Z}/2)*(\mathbb{Z}/2)$ (\cite{elHuti}, Theorem 1), it has a trivial center, so that $\Gamma\cap H$ is trivial. The result follows by elementary group theory. 
    \end{proof}
    \begin{lemma} \label{lemma:RemovalOfH}
        Suppose that $\Gamma\times H$ acts transitively on $\mathcal{S}_{A, B, C, D}^*(p).$ Then so does $\Gamma$.
    \end{lemma} 
    \begin{proof}
        As $\Gamma \unlhd \Gamma \times H$, we have that $(\Gamma\times H)/\Gamma \cong H$ acts on the $\Gamma$ orbits in $\mathcal{S}_{A, B, C, D}^*(p).$ By our assumption that $\Gamma\times H$ acts transitively on $\mathcal{S}_{A, B, C, D}^*(p),$ $H$ acts transitively on the set of $\Gamma$-orbits; we aim to show that there is only one $\Gamma$-orbit. By the orbit-stabilizer theorem, there can be either one, two, or four orbits. Further, if there is more than one orbit, there must be some element of $H$ which acts nontrivially on every single orbit. This is clear if there are two orbits (some element of $H$ must interchange them); if there are four orbits then $|H| = 4$ and the action of $H$ on the collection of $\Gamma$-orbits must be isomorphic to that of $H$ on itself by translation, and thus no nonidentity element can stabilize any $\Gamma$-orbit. 

        However, using the parameterizations of the conic sections from Lemmas \ref{hyperbolics} and \ref{elliptics}, we can show that each element of $H$ stabilizes some $\Gamma$-orbit; we will write the argument for $\operatorname{neg}_{yz}$ (implicitly assuming we're in the setting where $B = C = 0$). For $p$ sufficiently large, the arguments of the endgame guarantee the existence of a point $(x,y,z)\in\mathcal{S}_{A,B,C,D}^*(p)$ with $x=\chi+\chi^{-1}$ and $4\mid\operatorname{ord}(\chi)$.  In this setting, the parameterizations in Lemmas \ref{hyperbolics} and \ref{elliptics} give us that if $x = \chi + \chi^{-1}$ with the order of $\chi$ divisible by 4, then $(x, y, z)$ can be connected to $(x, -y, -z)$ using only iterations of $V_2\circ V_3.$ In particular, the $\Gamma$-orbit of $(x, y, z)$ must be stabilized by $\operatorname{neg}_{yz}$. A similar argument shows that $\operatorname{neg}_{xy}$ and $\operatorname{neg}_{xz}$ each stabilize a $\Gamma$-orbit whenever the parameters are such that these automorphisms are able to act on $\mathcal{S}_{A, B, C, D}.$ This proves the result.
    \end{proof}

    Next, we show that we can remove the transpositions and negated transpositions. We shall need the following lemma:
    \begin{lemma}\label{lemma:TranspositionFixedPointsExist}
    Suppose that $A = B \neq 0$. Then for $p$ sufficiently large (independent of the parameters) there is a point $v\in\mathcal{S}_{A, B, C, D}^*(p)$ which is fixed by $\tau_{xy},$ i.e., $v = (x, x, z).$ 

    Suppose that $A = -B \neq 0.$ Then for $p$ sufficiently large (independent of the parameters) there is a point $v\in\mathcal{S}_{A, B, C, D}^*(p)$ which is fixed by $\operatorname{neg}_{xy}\circ\tau_{xy},$ i.e., $v = (x, -x, z).$

    Suppose that $A = B = 0$ (so that $A = -B$ as well). Then for $p$ sufficiently large (independent of the parameters) there is a point $v\in\mathcal{S}_{A, B, C, D}^*(p)$ which is fixed by one of $\tau_{xy}$ or $\operatorname{neg}_{xy}\circ\tau_{xy},$ i.e., $v = (x, x, z)$ or $v=(x, -x, z).$
    \end{lemma}
    \begin{proof}
    This is a straightforward application of the Weil bound---points of the desired form on $\mathcal{S}_{A, B, C, D}(\mathbb{F}_p)$ correspond to points on curves whose geometric irreducibility we can establish, and whose degree (and thus genus) is bounded independent of the parameters and of the prime $p$; these curves must then have roughly $p$ points, so that for $p$ large enough there are points which survive the removal of small orbits in the passage to $\mathcal{S}_{A, B, C, D}^*(p).$ Suppose first $A = B \neq 0$. Points fixed by $\tau_{xy}$ correspond to the points of the curve
    \begin{equation}\label{eq:tauFixed}
        2X^2 + Z^2 - X^2Z - 2AX - CZ - D = 0,
    \end{equation}
    a proof of whose geometric irreducibility we now sketch. As $A \neq 0,$ the polynomial on the left-hand side is not divisible by any polynomial of the form $Z - z_0$ or $X - x_0$, as evaluation of $X$ or $Z$ at a constant value cannot eliminate the $Z^2$ and $2AX$ terms. By considering total degree, $X$ degree, and $Z$ degree one may show that the only possible remaining factorization is one of the form
    $$(aX + bZ + c)(dX + eZ + fXZ + g) = 2X^2 + Z^2 - X^2Z- 2AX - CZ - D,$$
    and looking at terms of highest degree we find that $af = -1,$ $be = 1$, yet $bf = 0,$ plainly a contradiction.
    
    If instead $A = -B \neq 0$, the relevant curve is of the form
    \begin{equation}\label{eq:negTauFixed}
        2X^2 + Z^2 + X^2Z - 2AX - CZ - D = 0
    \end{equation}
    and we can argue exactly as before. 
    
    If $A = B = 0$, then the only part of the preceding argument which fails is the portion ruling out divisibility of the polynomials (\ref{eq:tauFixed}) and (\ref{eq:negTauFixed}) by polynomials of the form $Z-z_0.$ By evaluation we see the only such possibilities are for $Z-2$ to divide the left hand side of (\ref{eq:tauFixed}) and for $Z + 2$ to divide the left hand side of (\ref{eq:negTauFixed}). Again by evaluation, the former requires $D = -2C + 4,$ while the latter requires $D = 2C + 4$. These can only occur simultaneously if $C = 0$ and $D = 4$, in which case the surface $\mathcal{S}_{A, B, C, D}$ is the Cayley cubic, which is generally forbidden.
    \end{proof}
    
    With this lemma now in hand, we can start the passage from $\Gamma'$ to $\Gamma\times H$. Once again, the key fact is that $(\Gamma\times H)\unlhd \Gamma';$ we will use the (transitive) action of the quotient $\Gamma'/(\Gamma\times H)$ on the $(\Gamma\times H)$-orbits in $\mathcal{S}_{A, B, C, D}^*(p)$ together with orbit-stabilizer. 
    \begin{lemma}\label{lemma:OneTranspositionCase}
        Suppose that $\Gamma'/(\Gamma\times H)\cong\mathbb{Z}/2;$ i.e., there is only one transposition or negated transposition in $\Gamma'.$ Suppose also that $\Gamma'$ acts transitively on $\mathcal{S}_{A, B, C, D}^*(p).$ Then for $p$ sufficiently large (independent of the parameters), so does $\Gamma\times H$.
    \end{lemma}
    \begin{proof}
        By orbit-stabilizer, there are at most two $(\Gamma\times H)$-orbits, and if there are two, the transposition or negated transposition interchanges them. By Lemma \ref{lemma:TranspositionFixedPointsExist}, for $p$ sufficiently large, there is a point $w$ of $\mathcal{S}_{A, B, C, D}^*(p)$ fixed by this transposition or negated transposition. The (possibly negated) transposition cannot act nontrivially on the orbit containing $w$, so there cannot be two $(\Gamma\times H)$-orbits. The result follows.
    \end{proof}

    If there is more than one transposition, then we shall need to work a bit more structurally. We certainly have that $A = \pm B = \pm C$; any setting where this is the case is equivalent to one where $A = B$ and $C = \pm B;$ we write the proof in this case. Here we have that $\Gamma'/(\Gamma\times H)\cong S_3$, containing the three involutions $\tau_{xy}, \pm\tau_{yz},$ and $\pm\tau_{xz},$ where the sign is chosen consistently with $C = \pm B.$ Under the assumption that $\Gamma'$ acts transitively, there are either one, two, three, or six $(\Gamma\times H)$-orbits. In fact, the following lemma implies that the six-orbit and two-orbit cases are impossible.
    
    \begin{lemma}\label{lemma:TwoAndSixOrbitsImpossible}
        Suppose that $A = B = \pm C,$ and that $\Gamma'$ acts transitively on $\mathcal{S}_{A, B, C, D}^*(p).$ Then there cannot be exactly two or exactly six $(\Gamma\times H)$-orbits.
    \end{lemma}\begin{proof}
        Any transitive action of $S_3$ on a six-element set must have trivial point-stabilizers by the orbit-stabilizer theorem, and any transitive action of $S_3$ on a two-element set factors through the quotient $C_2$. In particular, in these cases, no transposition can stabilize any orbit. But this contradicts Lemma \ref{lemma:TranspositionFixedPointsExist}, as in the proof of Lemma \ref{lemma:OneTranspositionCase}.
    \end{proof}  
    We are left with the task of eliminating the case of three orbits. This requires a bit more effort, which is not a coincidence: in the degenerate four-orbit case, three of the orbits ($\mathcal{S}_2,\mathcal{S}_3$, and $\mathcal{S}_4$) are joined when passing to the larger action of $\Gamma'$, and all of the properties of the action of $\Gamma'$ on nondegenerate $\mathcal{S}_{A, B, C, D}^*(p)$ which we have used so far (namely, normality of $\Gamma\times H$ and existence of fixed points under each $\tau$) obtain for this combined orbit in the degenerate case as well. We break the argument into two lemmas:

    \begin{lemma}\label{lemma:oddOrderSymmetryAvoidsTranspositions}
        Suppose that $A = B = \pm C,$ and that the parameters $(A, B, C, D)$ are nondegenerate. Suppose there exists a point of the form $(x, \pm z, z)\in\mathcal{S}_{A, B, C, D}^*(p)$ where $z$ is hyperbolic or elliptic, of odd order, and $\kappa_3(z) \neq 0$ (the sign being taken consistently with that in $A = \pm C$; if $A = B = C = 0,$ we can take either sign). Then there is a path from $(x, \pm z, z)$ to $(\pm z, x, z)$ using only edges arising from Vieta involutions. 
    \end{lemma}
    In order to use this lemma, we need another:
    \begin{lemma}\label{lemma:oddOrderSymmetryExists}
         Suppose that $A = B = \pm C,$ and that the parameters $(A, B, C, D)$ are nondegenerate. Then for $p$ large enough (independent of the parameters) there exists a point of the form $(x, \pm z, z)\in\mathcal{S}_{A, B, C, D}^*(p)$ where $z$ is hyperbolic or elliptic, of odd order, and $\kappa_3(z) \neq 0$.
    \end{lemma}
    Once we establish these two lemmas, we can rule out the case of 3 $(\Gamma\times H)$-orbits relatively quickly: by Lemmas \ref{lemma:oddOrderSymmetryAvoidsTranspositions} and \ref{lemma:oddOrderSymmetryExists}, there is a $(\Gamma\times H)$-orbit containing points fixed by each of the two transpositions $\pm\tau_{yz}$ and $\pm\tau_{xz},$ so that these two maps also stabilize this $(\Gamma\times H)$-orbit setwise. But in any transitive action of $S_3$ on a three-element set, each element is stabilized by exactly one of the transpositions in $S_3.$
    \begin{remark}[Comparison with the four-orbit degenerate case]
        Points satisfying the conclusions of Lemma \ref{lemma:oddOrderSymmetryExists} do exist when $(A, B, C, D)$ are degenerate parameters producing four orbits. However, they are all confined in $\mathcal{S}_1,$ so that the arguments of this section do not apply to the $\Gamma'$-orbit $\mathcal{S}_2\cup\mathcal{S}_3\cup\mathcal{S}_4$.
    \end{remark}

    We now prove the lemmas.
    \begin{proof}[Proof of Lemma \ref{lemma:oddOrderSymmetryAvoidsTranspositions}]
        Using the parameterizations of the conic sections in Lemmas \ref{hyperbolics} and \ref{elliptics}, we find that our point $P = (x, \pm z, z)$ corresponds either to some $t\in\mathbb{F}_p^*$ or to some $t\in\mathbb{F}_{p^2}$ with $t^{p+1} = \kappa_3(z),$ with the identities
        \begin{align*}
            x &= t + \frac{\kappa_3(z)}{t} + (z^2 - 4)^{-1}(-2A - Az)\\
            \pm z &= \chi t + \frac{\kappa_3(z)}{\chi t} + (z^2 - 4)^{-1}(-2A - Az)
        \end{align*}
        where $z = \chi + \chi^{-1},$ and the map $D_3$ acts as $t\mapsto \chi^2t.$ As $\chi$ has odd order, we have that $\chi^{2k+1} = 1$ for some positive integer $k$, whence the first coordinate of $D_3^{k+1}P$ is
        $$\chi^{2k+2}t + \frac{\kappa_3(z)}{\chi^{2k+2}t} + (z^2 - 4)^{-1}(-2A - Az) = \chi t + \frac{\kappa_3(z)}{\chi t} + (z^2 - 4)^{-1}(-2A - Az) = \pm z.$$
        If the second coordinate of $D_3^{k+1}P$ is $x$, we are done; if not, since $x$ is certainly a solution to the quadratic obtained from the equation defining $\mathcal{S}_{A, B, C, D}$ by setting $X = \pm z$ and $Z = z$, $V_2D_3^{k+1}P$ will have second coordinate $x$ and first and third coordinates unchanged. The result follows.
    \end{proof}

    \begin{proof}[Proof of Lemma \ref{lemma:oddOrderSymmetryExists}]
        Choose $s = \pm1$ so that $C = sA$. As in the proof of Lemma \ref{lemma:TranspositionFixedPointsExist}, to find a point of the form $(x, sz, z),$ we seek points of the curve
        \begin{equation}\label{eq:FixedByATransposition}
            X^2 + 2Z^2 - sXZ^2 - AX - 2sAZ - D = 0,
        \end{equation}
        but we now want to add in the demand that $z$ have odd order. Depending on the congruence class of $p\pmod{4}$, we will choose to make $z$ hyperbolic or elliptic to make this easy to achieve. First suppose that $p\equiv 3\pmod{4}.$ Then $\frac{p-1}{2}$ is odd, so that if $z = t^2 + t^{-2}$ for $t\in\mathbb{F}_p^*,$ $z$ will be hyperbolic of odd order. As in the proof of Lemma \ref{lemma:TranspositionFixedPointsExist}, the constraints $A = B = \pm C$ and $(A, B, C, D)\neq (0, 0, 0, 4)$ following from nondegeneracy imply that the curve cut out by (\ref{eq:FixedByATransposition}) is irreducible. We therefore seek to use the Weil bound to show that the system
        \begin{equation*}
        \begin{split}
            &Z = t^2 + t^{-2}\\
            &X^2 + 2Z^2 - sXZ^2 - AX - 2sAZ - D = 0
        \end{split}
        \end{equation*}
        has approximately $p$ solutions, so that there are approximately $p/4$ choices of $x$ and $z$ satisfying the conclusions of this lemma in $\mathcal{S}_{A, B, C, D}(\mathbb{F}_p)$, and some such choices remain after removing the finite orbits and the points for which $z=\pm2$ or $\kappa_3(z) = 0.$ As in the endgame, we realize this system as the fiber product of ramified covers of $\mathbb{P}^1_Z$. As in the endgame, the equation
        $$Z = t^2 + t^{-2}$$
        ramifies over $2, -2$, and $\infty$; one can show that the equation (\ref{eq:FixedByATransposition}) is a double cover of $\mathbb{P}^1_Z$ which extends to have two points at infinity, so that the cover does not ramify over $\infty$. By re-writing (\ref{eq:FixedByATransposition}) as
        $$X^2 + (-A- sZ^2)X + (2Z^2 - 2sAZ - D) = 0,$$
        we find that the finite ramification points of (\ref{eq:FixedByATransposition}) over $\mathbb{P}^1_Z$ are exactly the solutions of
        \begin{equation}\label{eq:RamificationAnnoyingLemma}
            (-A - sZ^2)^2 - 4 (2Z^2 - 2sAZ - D) = A^2 + 4D + sA(2Z^2 + 8Z) + (Z^4 - 8Z^2) = 0.
        \end{equation}
        As long as at least one of $2$ or $-2$ is not a root of (\ref{eq:RamificationAnnoyingLemma}), the fiber product is geometrically irreducible, as in the arguments of the endgame, and we are done. Nondegeneracy guarantees we do not have ramification over $Z = -2$: substituting $Z = -2$ into (\ref{eq:RamificationAnnoyingLemma}) gives
        $$A^2 + 4D - 8sA - 16 = 0,$$
        which may be rearranged as
        $$4D + A^2 = 8sA + 16 = 8C + 16,$$
        which is exactly the condition for the parameters $(A, B, C, D)$ to be degenerate.

        If in contrast $p \equiv 1 \pmod{4},$ then $\frac{p+1}{2}$ is odd, so that if $z = t^2 + t^{-2}$ for $t\in\mathbb{F}_{p^2}$ with $t^{p+1} = 1,$ $z$ will be elliptic of odd order. As in the elliptic case of the endgame, we merely need to re-write the system in such a way that the coefficients are all over $\mathbb{F}_p,$ such as by re-writing it in the form
        \begin{align*}
            &X^2 + 2Z^2 - sXZ^2 - AX - 2sAZ - D = 0\\
            &Z = 2 g_{2}(\xi, \eta)\\
            &\xi^2 - \omega\eta^2 = 1 
        \end{align*}
        where $g_2(\xi, \eta)$ is as in the elliptic case of the endgame. As this system is equivalent to the one appearing in the hyperbolic case of the proof of this lemma over an algebraically closed field, this curve is also geometrically irreducible, and the Weil bound produces points of the desired form just as in the hyperbolic case for $p\equiv 3 \pmod{4}$.
    \end{proof}
    With the lemmas proved, the proof of Theorem \ref{comparisonOfGroupsThm} is now complete.
\end{proof}
\section{Middlegame}\label{middlegameSection}
In the endgame, it is shown that the cage is connected, that almost all points of order at least $p^{\frac{1}{2} + \delta}$ connect to the cage in a single move, and that \textit{all} points of order at least $p^\frac{3}{4}$ connect to the cage in at most two moves. In this section we allow arbitrarily many moves, and prove the following:
\begin{proposition}
    Each point of order at least $p^\epsilon$ in $\mathcal{S}^*_{A, B, C, D}(p)$ which is not a double fixed point (i.e. a fixed point of both $V_i$ and $V_j$ for $i\neq j$) connects to the cage (for $p$ sufficiently large depending on $\epsilon$).
\end{proposition}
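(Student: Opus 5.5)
Following the middlegame of \cite{BGS} (their Proposition for points of order between $p^{\epsilon}$ and $p^{1/2+\delta}$), I plan to show that from any point of order at least $p^{\epsilon}$ one can move to points of strictly larger order. After boundedly many steps (depending on $\epsilon$) this reaches order at least $p^{3/4}$, and Proposition \ref{endgame} together with Proposition \ref{cageConnectivity} then connects the point to the cage.

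\textbf{Step 1 (setup).} Let $(x,y,z)$ have order $t\ge p^\epsilon$, achieved say in the coordinate $x=\chi+\chi^{-1}$. We may assume $x\neq\pm2$, since parabolic coordinates already have order $p$ and are handled in the endgame. We may also assume $\kappa_1(x)\neq0$. Indeed, if $\kappa_1(x)=0$, then $\xi\eta=0$; since the point is not a double fixed point, the $D_1$-orbit is still parametrized by a single monomial $t\mapsto \chi^2t$ in one of $\xi,\eta$. The same argument below then applies with one of the Laurent terms deleted, so the lines are genuinely affine in $t$, and only $O(1)$ values of $x$ need extra care.

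By Lemmas \ref{hyperbolics} and \ref{elliptics}, the $D_1$-orbit of the point consists of the points
$$(x,\ \alpha_1 u+\alpha_2u^{-1}+\alpha_3,\ \alpha_4u+\alpha_5u^{-1}+\alpha_6),\qquad u\in u_0\langle\chi^2\rangle.$$
This orbit has size at least $t/2$ (or $t$). The $y$-coordinates $y(u)$ take at least about $t/4$ distinct values, because $u\mapsto \alpha_1u+\alpha_2u^{-1}$ is at most $2$-to-$1$.

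\textbf{Step 2 (order growth).} I want to show that some $y(u)$ in this orbit has order at least roughly $t^{1+c}$ for a fixed $c>0$, or else order at least $p^{3/4}$. Suppose instead that every $y(u)$ has order at most $T$. Then each $\psi(u)$ with $\psi+\psi^{-1}=y(u)$ lies in $\mu_{d}$ for some divisor $d\mid p^2-1$ with $d\le T$. This yields many solutions to
$$\alpha_1u+\alpha_2u^{-1}+\alpha_3=v+v^{-1},\qquad u\in\langle\chi^2\rangle,\ v\in\bigcup_{d\le T}\mu_d.$$
That is, many points on a torus curve in $\mathbb{G}_m^2$ lie in a product of small multiplicative subgroups. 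Here I will invoke the same input as \cite{BGS}: the Corvaja--Zannier bound (or the Bourgain--Gamburd--Sarnak sum-product estimate). It says the number of such solutions with $u$ in a subgroup of order $t$ and $v$ in a subgroup of order $T$ is $\ll (tT)^{1/3+o(1)}$, provided the curve $\alpha_1u+\alpha_2u^{-1}+\alpha_3-v-v^{-1}=0$ contains no translate of a subtorus.

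Summing over the $d(p^2-1)=p^{o(1)}$ possible subgroups and comparing with the $\gg t$ required solutions gives $T\gg t^{2-o(1)}$, which is the desired growth. The curve contains a coset of a subtorus only if $\alpha_3=0$ and $\{\alpha_1,\alpha_2\}=\{1,1\}$ up to the obvious symmetries, i.e.\ only on a bounded set of $x$ values where $\alpha_1\alpha_2=\kappa_1(x)=1$ and $\alpha_3=0$ simultaneously. For such $x$ I will instead run the argument in the $z$-coordinate.

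\textbf{Step 3 (iteration and exceptions).} Iterating Step 2 boundedly many times, depending on $\epsilon$, reaches order at least $p^{3/4}$. Proposition \ref{endgame} then connects the point to the cage, including the case of a forbidden coordinate of order at least $p^{3/4}$. At each step the new coordinate is hyperbolic or elliptic by construction. I must also ensure it lands neither on a bounded bad set (where $\kappa_i=0$, $\alpha_3=0$ with $\kappa=1$, etc.) nor on a double fixed point. This is automatic: removing $O(1)$ values of $y$ still leaves $\gg t$ usable orbit points.

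\textbf{Main obstacle.} The hard step is Step 2: verifying, uniformly over the relevant $x$, that the curve has no torsion-coset components so that the Corvaja--Zannier type bound applies. The worry is that, unlike \cite{BGS}, we do not have $\alpha_1\alpha_2=1$. I expect the degenerate cases to be exactly those where $\alpha_1u+\alpha_2u^{-1}$ is itself a shifted ``$v+v^{-1}$''. These force polynomial conditions on $x$ that hold for only $O(1)$ values, or force the point into a finite orbit, which is excluded by working in $\mathcal{S}^*$. This must be checked in the elliptic case as well, via the same $\mathbb{F}_p$-rational reparametrization used in Proposition \ref{endgame}.
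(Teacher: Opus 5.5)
Your strategy is the paper's. You parametrize the $D_1$-orbit and apply Corvaja--Zannier to $\alpha_1u+\alpha_2u^{-1}+\alpha_3=v+v^{-1}$ with $u\in\langle\chi^2\rangle$ and $v$ in a small subgroup. You then sum over the $p^{o(1)}$ subgroups and exclude torus-coset components, which occur only when $\alpha_1\alpha_2=1$ and $\alpha_3=0$ (or $\alpha_1=\alpha_2=0$, a double fixed point). Aiming for growth $t\mapsto t^{1+c}$ rather than the paper's strict increase is fine. You must, however, cap the target order at $p^{3/4}$, so that the first term of the Corvaja--Zannier maximum dominates; the paper restricts to $|H_2|\le|H_1|\le p^{3/4}$ for this reason.

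One step fails: the fallback ``for such $x$, run the argument in the $z$-coordinate.'' The $z$-coordinates of the orbit are $\alpha_4u+\alpha_5u^{-1}+\alpha_6$, again with $\alpha_4\alpha_5=\kappa_1(x)$. Here $\alpha_6=(x^2-4)^{-1}(-Bx-2C)$, compared with $\alpha_3=(x^2-4)^{-1}(-2B-Cx)$.

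\begin{itemize}
\item For $B\neq\pm C$ the switch works, since $\alpha_3=\alpha_6=0$ would force $-2B/C=-2C/B$.
\item For $B=\pm C\neq0$ the bad value is $x=\mp2$, which you already excluded as parabolic.
\item For $B=C=0$, both $\alpha_3$ and $\alpha_6$ vanish identically, and $\kappa_1(x)=1$ reduces to $Ax=4-D$. So for $A\neq0$ and $x_0=(4-D)/A$, the whole $D_1$-orbit is $\{(x_0,\,u+u^{-1},\,\chi u+(\chi u)^{-1})\}$. Both the $y$-curve and the $z$-curve are then unions of torus cosets, and the counting gives nothing in either coordinate.
\end{itemize}

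This is not a finite-orbit phenomenon: generic points of $C_1(x_0)$ lie in $\mathcal{S}^*_{A,B,C,D}(p)$. So your closing remark about $\mathcal{S}^*$ does not dispose of it either.

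The repair, as in the paper, has two parts.
\begin{itemize}
\item During the iteration, forbid landing on $x_0$. This is a single value, negligible against the $\gg t$ good orbit points, and your Step 3 already allows for it.
\item If the starting pivot is $x_0$, first step off it. The coset $u_0\langle\chi^2\rangle$ contains at least $\varphi(|\langle\chi^2\rangle|)$ elements of order at least $|\langle\chi^2\rangle|$. So the $D_1$-orbit has many points whose $y$-coordinate has order at least $\operatorname{ord}(\chi)/2$. Pivoting on such a $y$ is safe, because the constant term for the $x$-coordinates on $C_2(y)$ is $-2A/(y^2-4)\neq0$.
\end{itemize}

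Note also that your claim that the bad set of $x$ is bounded uses that the parameters are not Cayley: for $(0,0,0,4)$ one has $\kappa_1\equiv1$ and $\alpha_3\equiv0$.
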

\begin{remark}[Counting double fixed points]
    In the nondegenerate case, there are at most 12 double fixed points: Propositions \ref{DoubleFixedPoints} and \ref{Parabolics} establish that there are at most four points fixed by both $V_3$ and $V_2,$ and the same holds by symmetry for bounding the number of points fixed by $V_3$ and $V_1$ or by $V_2$ and $V_1$.
\end{remark} 

The key to our argument is the following result of Corvaja and Zannier (\cite{CorvajaZannier}, Corollary 2):
\begin{proposition}\label{CorvajaZannierBound}
Let $X\subset \mathbb{G}_m^2$ be a geometrically irreducible plane curve of Euler characteristic $\chi$, not the translate of a subtorus. Suppose it is defined by an equation $f(x, y) = 0$ of bidegree $(d_1, d_2)$. Then
$$|X \cap (\mu_{m_1} \times \mu_{m_2} )| \leq \max\left(3\sqrt[3]{2}(m_1m_2d_1d_2\chi)^\frac{1}{3}, 12\frac{m_1m_2d_1d_2}{p}\right).$$
\end{proposition}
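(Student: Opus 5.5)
This is a result quoted from Corvaja and Zannier (\cite{CorvajaZannier}, Corollary 2), so in the paper it is used as a black box. If I had to reprove it, I would follow their Stepanov-type auxiliary-function method on the function field $K=\overline{\mathbb{F}}_p(X)$ of the curve. Let $x,y\in K$ be the coordinate functions, and let $\Sigma=X\cap(\mu_{m_1}\times\mu_{m_2})$. Every point of $\Sigma$ is a common zero of $x^{m_1}-1$ and $y^{m_2}-1$. The goal is a nonzero function $\varphi\in K$ that vanishes to order at least $T$ at every point of $\Sigma$ and whose pole divisor has controlled degree. Then
$$T|\Sigma|\le \deg(\varphi)_\infty,$$
and optimizing the parameters yields the bound.

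\textbf{Construction of $\varphi$.} Fix integers $a,b,c$ and consider
$$\varphi=\sum_{i<a,\ j<b} P_{ij}(x,y)\,x^{m_1 i}y^{m_2 j},$$
where the $P_{ij}$ are polynomials of degree less than $c$ with unknown coefficients. The key point is how derivatives behave. Fix a derivation $\partial$ on $K$. Then $\partial(x^{m_1 i})=m_1 i\,x^{m_1 i}\,\partial x/x$, and on $\Sigma$ we have $x^{m_1}=y^{m_2}=1$. Hence each condition ``$\partial^k\varphi$ vanishes on $\Sigma$'' is a single functional identity of the form
$$\sum_{i,j} Q^{(k)}_{ij}(x,y,\partial x,\partial y,\dots)=0,$$
imposed identically in $K$. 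It is not a separate linear condition at each point of $\Sigma$. Each such identity costs about $(c+k)\cdot(\text{degree of }X)$ linear conditions, roughly $(c+k)(d_1+d_2)$, whatever the size of $|\Sigma|$. Once the unknowns $abc^2$ exceed $T\cdot(c+T)(d_1+d_2)$, linear algebra gives a nontrivial solution. Derivatives of order $k<p$ are harmless; this is one source of the requirement that the parameters stay below $p$.

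\textbf{Zero count versus pole count.} To bound the poles of $\varphi$, I would track those of $\partial^k x$ and $\partial^k y$. Choosing $\partial=d/dt$ for a separating function $t$, the extra poles introduced per derivative are bounded by $2g-2$ plus the number of points at infinity. This is exactly $-\chi$, which is how the Euler characteristic enters. The degree of $\varphi$ itself is about $(am_1+bm_2)(d_1+d_2)+c\deg X$. Balancing $T|\Sigma|$ against these quantities, and choosing $a,b,c,T$ as suitable cube roots of $m_1m_2d_1d_2\chi$, produces the first term $3\sqrt[3]{2}(m_1m_2d_1d_2\chi)^{1/3}$.

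\textbf{Main obstacle: $\varphi\neq0$.} The hard step is showing that $\varphi$ is not identically zero as a function on $X$. This needs the monomials $x^{m_1i}y^{m_2j}$ to be linearly independent over the space of low-degree polynomials in $x,y$. That independence is where the hypothesis that $X$ is not a translate of a subtorus enters, because on a torus translate a multiplicative relation makes such monomials collapse. In characteristic $p$ there is a second failure mode: $\varphi$ could become a $p$-th power, and then all its derivatives vanish trivially. I would handle both together through a Wronskian or degree argument, requiring $am_1,bm_2,c,T<p$. When these constraints fail, one instead falls back on a cruder count: B\'ezout-type bounds, with $\varphi$ replaced by a direct intersection estimate. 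That regime is the origin of the alternative term $12\,m_1m_2d_1d_2/p$.
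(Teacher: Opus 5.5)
Your proposal matches the paper: the bound is imported from Corvaja--Zannier (Corollary 2) and the paper gives no proof of it, so treating it as a black-box citation is exactly the paper's approach. (The proof environment that follows the statement in the source actually belongs to the preceding middlegame proposition.) Your Stepanov-type reconstruction is a plausible heuristic outline of where $\chi$, the subtorus hypothesis and the $p$-dependent term enter. However, the parameter optimization producing $3\sqrt[3]{2}$ and the nonvanishing step are only asserted, so it is the citation, not the sketch, that carries the proof.
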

\begin{proof}
Suppose without loss of generality that $(x, y, z)\in \mathcal{S}_{A, B, C, D}(\mathbb{F}_p)$ with $x = \chi + \chi^{-1}$ with $\chi$ of order dividing $p-1$ or $p+1$ and at least $p^\epsilon$. Let $H_1 = \langle\chi^2 \rangle.$ As in the endgame, for $H_2$ a subgroup of $\mathbb{F}_{p^2}^\times$ of order dividing either $p-1$ or $p+1$, let $f_{H_1}(H_2)$ denote the number of solutions to
\begin{equation}\label{yCoordEq}\alpha_1t + \alpha_2t^{-1} + \alpha_3 = s + s^{-1}\end{equation}
with $t\in H_1$ and $s\in H_2$, where
\[\alpha_1 = (\chi^{-1} - \chi)^{-1}(\chi^{-1}y - z + (\chi^{-1} - \chi)^{-1} (B + C\chi^{-1}))\]
\[\alpha_2 = (\chi^{-1} - \chi)^{-1}(-\chi y + z + (\chi^{-1} - \chi)^{-1}(B + C\chi))\]
\[\alpha_3 = (\chi^{-1} - \chi)^{-2}(-2B - Cx)\]
If we can show that
$$\sum\limits_{|H_2|\leq |H_1|}f_{H_1}(H_2) \leq |H_1| - 8,$$
then we can show that there is at least one solution to 
$$\alpha_1t + \alpha_2t^{-1} + \alpha_3 = s + s^{-1}$$
with $s$ having order strictly greater than that of $\chi$ such that $f_2(s + s^{-1})\neq 0,$ which will connect our original point to a new point of strictly greater order that is not fixed by both $V_1$ and $V_3$, from which we can iterate to eventually reach a point of large enough order to apply the endgame. Rewriting (\ref{yCoordEq}), as long as
\begin{equation}\label{middlegameEq}
    \alpha_1t^2s + \alpha_3ts + \alpha_2s - ts^2 - t
\end{equation}
is not divisible by a polynomial cutting out a translate of a subtorus, then we apply Proposition \ref{CorvajaZannierBound} to the at most $3$ irreducible components of the curve (\ref{middlegameEq}) for $|H_2|\leq |H_1| \leq p^\frac{3}{4}$. The constraints on the sizes of $H_1$ and $H_2$ allow us to drop the maximum, while the constant comes from using the bound on the number of irreducible components together with the bounds on the genus of each component found in the endgame. We therefore obtain
$$f_{H_1}(H_2) \leq 20(|H_1||H_2|)^\frac{1}{3}.$$

But then
$$\sum\limits_{\substack{|H_2|\leq |H_1|}}f_{H_1}(H_2)\leq 20|H_1|^\frac{1}{3}\sum\limits_{\substack{d|(p-1)\text{ or } (p+1)\\d\leq |H_1|}}d^\frac{1}{3} \leq 20|H_1|^\frac{2}{3}d(p^2-1)\leq C_\epsilon|H_1|^\frac{2}{3}p^\eta,$$
where $\eta$ is chosen sufficiently small relative to $\epsilon$, giving the desired bound. We therefore merely have to show that no irreducible factor of (\ref{middlegameEq}) is a translate of a subtorus. Now, the equation of a translate of a subtorus of $\mathbb{G}_m^2$ takes the shape $t^ds^e - \zeta = 0$ or $t^d - \zeta s^e = 0$ where $\zeta$ is a root of unity. We may safely restrict to eliminating irreducible subtori, at which point constraints on degree imply that the only subtorus translates we have to check are those cut out by $t - \zeta,$ $s - \zeta$, $st-\zeta,$ $t-\zeta s$, $t^2 - \zeta s$, and $t-\zeta s^2.$ Of these, we can eliminate all but $s - \zeta$, $st-\zeta$, and $t-\zeta s$ fairly straightforwardly: for a subtorus translate cut out by $f(s, t)$, just describe the general polynomial $g(s, t)$ such that the product has total degree, $s$-degree, and $t$-degree no larger than (\ref{middlegameEq}), and compare terms. 

For $s - \zeta$, we evaluate (\ref{middlegameEq}) at $s = \zeta$ as
$$\alpha_1\zeta t^2 + (\alpha_3\zeta - \zeta^2 - 1)t + \alpha_2\zeta,$$
which is identically zero only if $\alpha_1 = \alpha_2 = 0$, and in particular $(x, y, z)$ is a double fixed point.

The cases of $st-\zeta$, and $t-\zeta s$ are actually the same: these factors only occur if $\alpha_1\alpha_2 = 1$ and $\alpha_3 = 0,$ in which case we have the factorization
$$\zeta^{-1}t^2s + \zeta s - ts^2 - t = (t - \zeta s)(\zeta^{-1}ts-1).$$
The requirement that $\alpha_3 = 0$ implies that either $B = C = 0$ or  
$$x = \frac{-2B}{C}.$$ 

If $B \neq \pm C,$ then instead of searching through the $y$-coordinates of the orbit to find a point of higher order, we can search through the $z$-coordinates; in doing so we get a curve of a similar form but where $\alpha_3 = 0$ again requires either $B = C = 0$ (forbidden by assumption $B \neq \pm C$) or $x = \frac{-2C}{B}.$ But

$$\frac{-2C}{B} = x = \frac{-2B}{C}$$

contradicts $B \neq \pm C$. 

If $B = \pm C \neq 0$, then $x = \pm 2,$ so that we started from a parabolic point which was connected to the cage already in the endgame.

Finally, if $B = C = 0,$ then using $\alpha_1\alpha_2 = \kappa_1(x)$ the condition $\alpha_1\alpha_2 = 1$ becomes the condition
$$x^2 - 4 = \left(x^2 - Ax - D\right),$$
or
$$Ax = -D+4.$$

If $A = 0,$ then our starting parameters are $(A, B, C, D) = (0, 0, 0, 4),$ which is the Cayley cubic (generally forbidden). However, if $A \neq 0$, then if we have 
$$x = \frac{-D + 4}{A},$$
the above argument fails. We circumvent this in two steps: if the point we started with does not have 
$$x = \frac{-D + 4}{A},$$
we simply demand that at every step, the point of higher order we move to does not end up having
$$x = \frac{-D + 4}{A};$$
this can be done because
$$\sum\limits_{\substack{|H_2|\leq |H_1|}}f_{H_1}(H_2)\leq C_\epsilon|H_1|^\frac{2}{3}p^\epsilon$$
is significantly smaller than $|H_1|$ and we are only forbidding one value of $x$. Second, if we start at a point with
$$x = \frac{-D + 4}{A},$$
we first move to a point of slightly smaller order but where
$$x \neq \frac{-D + 4}{A},$$
similarly to how we handled parabolic and ``forbidden'' points in the endgame (Proposition \ref{endgame}). This finishes the proof.
\end{proof}
\section{Opening}\label{openingSection}
In order to prove our main theorem, we need to connect an arbitrary point $(x_0, y_0, z_0)\in\mathcal{S}_{A, B, C, D}^*(p)$ possibly of (\textit{a priori}) extremely small order to a point of order at least $p^\epsilon$ for some $\epsilon>0$ which is not a double fixed point. We do this in two steps, following Sections 5 and 6 of \cite{BGS}: first, we shall connect $(x_0, y_0, z_0)$ to a point whose order grows (albeit very slowly) with $p$; then, for density one of $p$, we shall connect this second point to a third of order $\gg p^\epsilon$. 

To accomplish our first step, we lift to characteristic zero. Fix a primitive $(p^2-1)^\text{st}$ root of unity $\chi\in\mathbb{F}_{p^2}^*$. Then, for each point $P=(x,y, z)\in\mathcal{S}_{A, B, C, D}^*(p),$ we may write $x = \chi^{a_1(P)} + \chi^{-a_1(P)}$, $y = \chi^{a_2(P)} + \chi^{-a_2(P)}$, and $z = \chi^{a_3(P)} + \chi^{-a_3(P)}$, with each $a_i(P)$ divisible by one of $p\pm 1$ depending on whether the corresponding coordinate is hyperbolic or elliptic. Fix a primitive $(p^2-1)^\text{st}$ root of unity $\zeta\in\mathbb{C},$ and, for each $P=(x,y, z)\in\mathcal{S}_{A, B, C, D}^*(p),$ define $P'\in\mathbb{C}^3$ by $P' = (x', y', z')$ where
\[\begin{split}
&x' = \zeta^{a_1(P)} + \zeta^{-a_1(P)},\\
&y' = \zeta^{a_2(P)} + \zeta^{-a_2(P)},\text{ and}\\
&z' = \zeta^{a_3(P)} + \zeta^{-a_3(P)}.
\end{split}\]
From a given starting point $Q\in\mathcal{S}_{A, B, C, D}^*(p),$ we shall seek to navigate to a point $P$ in its orbit which is not a double fixed point and such that some polynomial in the coordinates $(x', y', z')$ of the corresponding $P'\in\mathbb{C}^3$ does not vanish. The first condition is necessary in order to translate our lower bound on the order of $P$ to a lower bound on the size of its orbit. The second condition will allow us to obtain a lower bound on the order of $P$ as in \cite{BGS}. There are several cases:
\begin{enumerate}
    \item\label{case:OpeningGeneric} Some $P'$ whose corresponding $P$ is not a double fixed point and lies in the orbit of $Q$ does not lie on $\mathcal{S}_{A, B, C, D}(\mathbb{C}).$
    \item\label{case:OpeningSelfLoop} Each $P'$ whose corresponding $P$ is not a double fixed point and lies in the orbit of $Q$ lies on $\mathcal{S}_{A, B, C, D}(\mathbb{C}),$ but either some double fixed point $R$ in the orbit of $Q$ does not lie on $\mathcal{S}_{A, B, C, D}(\mathbb{C}),$ or the identities implied by some edge in the graph corresponding to the orbit of $Q$ (over $\mathbb{F}_p$) do not hold in $\mathbb{C}$. 
    \item \label{case:OpeningFiniteOrbits} Each $P'$ whose corresponding $P$ lies in the orbit of $Q$ lies on $\mathcal{S}_{A, B, C, D}(\mathbb{C}),$ and the identities implied by each edge in the graph corresponding to the orbit of $Q$ (over $\mathbb{F}_p$) hold in $\mathbb{C}$. 
\end{enumerate}

We dispense with case (\ref{case:OpeningFiniteOrbits}) first, as it is the simplest. In this case, we have completely lifted the orbit of $Q$ to an orbit over $\mathbb{C}$. This orbit is necessarily finite, so that the orbit of $Q$ is an element of $\mathcal{E}_{A, B, C, D}(p)$ (see Notation \ref{notation:FiniteOrbits}), and therefore the point $Q$ must have been removed in the passage from $\mathcal{S}_{A, B, C, D}(\mathbb{F}_p)$ to $\mathcal{S}_{A, B, C, D}^*(p).$

Case (\ref{case:OpeningGeneric}) is the case most analogous to the arguments in \cite{BGS}. In this case, we simply choose $P$ to be our point of interest in the orbit of $Q$, and take $\eta$ to be the value of the polynomial  
\[\begin{split}\eta = (\zeta^{a_1(P)} + \zeta^{-a_1(P)})^2 &+ (\zeta^{a_2(P)} + \zeta^{-a_2(P)})^2 + (\zeta^{a_3(P)} + \zeta^{-a_3(P)})^2\\
&-  (\zeta^{a_1(P)} + \zeta^{-a_1(P)})(\zeta^{a_2(P)} + \zeta^{-a_2(P)})(\zeta^{a_3(P)} + \zeta^{-a_3(P)})\\
&- A (\zeta^{a_1(P)} + \zeta^{-a_1(P)}) - B(\zeta^{a_2(P)} + \zeta^{-a_2(P)}) - C(\zeta^{a_3(P)} + \zeta^{-a_3(P)})- D.\end{split}\]

Case (\ref{case:OpeningSelfLoop}) is a little bit trickier. For any edge between \textit{distinct} vertices in $\mathcal{S}_{A, B, C, D}^*(p),$ this cannot happen---the vertices incident with such an edge in $\mathcal{S}_{A, B, C, D}^*(p)$ necessarily share two coordinates by the very definition of the Vieta involutions which give rise to the edges, and thus the lifts of the two vertices will also share two coordinates; if both of the corresponding points lie in $\mathcal{S}_{A, B, C, D}(\mathbb{C})$, they therefore must be connected by an edge by the nature of the Vieta involutions. However, if a vertex $P\in\mathcal{S}_{A, B, C, D}^*(p)$ has a self-loop, the implied relation between the coordinates of $P$ may no longer apply to $P'$. Thus, we must split this case further:

\begin{enumerate}
    \item \label{case:OpeningOneFixedPoint} Some point $P$ is fixed by exactly one of the Vieta involutions, but its lift $P'$ is not fixed by any Vieta involution, or
    \item \label{case:OpeningDoubleFixedPoint} Some point $P$ which is a double fixed point either has a lift $P'\not\in\mathcal{S}_{A, B, C, D}(\mathbb{C})$ or has a lift $P'\in\mathcal{S}_{A, B, C, D}(\mathbb{C})$ which is no longer a double fixed point.
\end{enumerate}

In subcase (\ref{case:OpeningOneFixedPoint}), the exact form of the polynomial we take depends on which Vieta involution fixes $P$. We assume $P$ is fixed by $V_1$, the other cases being similar. In this case we may take the polynomial

$\eta = A +(\zeta^{a_2(P)} + \zeta^{-a_2(P)})(\zeta^{a_3(P)} + \zeta^{-a_3(P)}) - 2(\zeta^{a_1(P)} + \zeta^{-a_1(P)}).$

In subcase (\ref{case:OpeningDoubleFixedPoint}), we have to work slightly harder. If $R$ is a double fixed point whose lift $R'$ does not lie on $\mathcal{S}_{A, B, C, D}(\mathbb{C}),$ or whose lift $R'$ lies on $\mathcal{S}_{A, B, C, D}(\mathbb{C})$ but is no longer a double fixed point, we proceed indirectly. Let $P$ denote the unique neighbor of $R$ in the graph of $\mathcal{S}_{A, B, C, D}(\mathbb{F}_p)$, and let $P'$ be the corresponding lift of $P$. We then search for a polynomial in the coordinates of $P'$, rather than in the coordinates of $R'$, as a lower bound on the order of $R$ does not translate to a usable lower bound on the size of the orbit containing $R$, while a lower bound on the order of $P$ does. Let $V_i$ be the Vieta involution changing $R$ into $P$. In the case where $R'$ does not lie on $\mathcal{S}_{A, B, C, D}$, we may assume that $V_iP'$ is not a double fixed point, as this reduces to case (\ref{case:OpeningFiniteOrbits}) of our original case breakdown. Using these facts, we will find a Vieta involution $V_j$ such that 
$$V_j\circ V_i P' \neq V_iP',$$
from which we can extract a nonvanishing polynomial and corresponding algebraic integer $\eta$. In the case where $R'\in\mathcal{S}_{A, B, C, D}(\mathbb{C})$, we may take $V_j$ to be the Vieta involution which fixes $R$ but does not fix $R'$; in the case where $R'\not\in\mathcal{S}_{A, B, C, D}(\mathbb{C})$ we may take $V_j$ to be any Vieta involution other than $V_i$ which does not fix $V_iP'$, at least one of which exists from our above discussion. Now, from 
$$V_j\circ V_i P' \neq V_iP',$$
we can extract our choice of $\eta.$ We will spell out the case where $i = 1$ and $j=2$; other cases are (essentially) symmetric. Comparing the $y$-coordinates of $V_j\circ V_i P'$ and $V_iP'$, we find that

$$B + AZ + YZ^2 - XZ - 2Y\neq 0,$$

from which we set

\[\begin{split}\eta = B + &A(\zeta^{a_3(P)} + \zeta^{-a_3(P)}) + (\zeta^{a_2(P)} + \zeta^{-a_2(P)})(\zeta^{a_3(P)} + \zeta^{-a_3(P)})^2\\
&- (\zeta^{a_1(P)} + \zeta^{-a_1(P)})(\zeta^{a_3(P)} + \zeta^{-a_3(P)}) - 2(\zeta^{a_2(P)} + \zeta^{-a_2(P)}).\end{split}\]

Thus, for each case, we have produced the desired polynomial. We remark that in all cases, we have that 
$$|\eta|\leq 20 + 2|A| + 2|B| + 2|C| + |D|,$$
and the same is true of any conjugate of $\eta$. We now use these facts to show that $P$ has order at least

$$\frac{1}{2}(\log_{(20 + 2|A| + 2|B| + 2|C| + |D|)}p)^\frac{1}{3},$$

following \cite{BGS}. For the point $P'$ which we have constructed in the course of the case breakdown, let $\eta$ be the corresponding polynomial, $\chi_i = \chi^{a_i(P)},$ and $l_i$ denote the order of $\zeta^{a_i(P)}$ (and thus also $\chi_i$). Let $n = \text{lcm}(l_1, l_2, l_3)$ and $L_n = \mathbb{Q}(\zeta_n)$ with $\zeta_n$ a primitive $n^\text{th}$ root of unity; note that $\eta\in L_n$ with $\eta\neq 0$ by the above. Using the bound 
$$|\eta|\leq 20 + 2|A| + 2|B| + 2|C| + |D|,$$
known also for any conjugate of $\eta$, we have that
$$|\operatorname{Norm}(\eta)| \leq (20 + 2|A| + 2|B| + 2|C| + |D|)^{\phi(n)} \leq (20 + 2|A| + 2|B| + 2|C| + |D|)^n.$$
For any prime $\mathfrak{P}$ of $\mathcal{O}_{L_n},$ if $\mathfrak{P}\mid(\eta),$ then
$$\operatorname{Norm}(\mathfrak{P}) \leq |\operatorname{Norm}(\eta)| \leq (20 + 2|A| + 2|B| + 2|C| + |D|)^n.$$
In particular, if 
$$\log_{(20 + 2|A| + 2|B| + 2|C| + |D|)}\operatorname{Norm}(\mathfrak{P}) > n,$$
then $\eta \not \equiv 0\pmod{\mathfrak{P}}.$
If each $\chi_i\in\mathbb{F}_p$, then $n\mid p-1,$ so that $p$ splits completely in $L_n$. We may thus fix a prime $\mathfrak{P}$ above $p$ such that
$$\mathcal{O}_{L_n}/\mathfrak{P} \cong \mathbb{F}_p, \, \operatorname{Norm}(\mathfrak{P}) = p,$$
and $\pi(\zeta^{a_i(P)}) = \chi_i.$ As we assumed that $P=(x, y, z)$ is a solution to 
$$X^2 + Y^2 + Z^2 = XYZ + AX + BY+CZ+D$$
over $\mathbb{F}_p$, we must have that $\eta \equiv 0\pmod{\mathfrak{P}}$, whence
$$\log_{(20 + 2|A| + 2|B| + 2|C| + |D|)}p \leq \text{lcm}(l_1, l_2, l_3).$$
If on the other hand we have that some $\chi_i\in\mathbb{F}_{p^2}\setminus\mathbb{F}_p,$ then $n\mid p^2-1$ but $n\nmid p-1,$ so that there is a prime $\mathfrak{P}$ of $\mathcal{O}_{L_n}$ lying above $p$ such that
$$\mathcal{O}_{L_n}/\mathfrak{P} \cong \mathbb{F}_{p^2}, \, \operatorname{Norm}(\mathfrak{P}) = p^2,$$
and $\pi(\zeta^{a_i(P)}) = \chi_i.$ Repeating the previous argument, we now find that
$$2\log_{(20 + 2|A| + 2|B| + 2|C| + |D|)}p \leq \text{lcm}(l_1, l_2, l_3).$$
In either case, we may conclude that
$$\max(l_1, l_2, l_3) \geq (\log_{(20 + 2|A| + 2|B| + 2|C| + |D|)}p)^\frac{1}{3}.$$
Since we have constructed $P$ to lie in the orbit of $Q = (x_0, y_0, z_0)$ and to not be a double fixed point, we obtain that the orbit of $P$, and thus the orbit of $Q$, has size at least
$$\frac{1}{2}(\log_{(20 + 2|A| + 2|B| + 2|C| + |D|)}p)^\frac{1}{3},$$
given that the order of $D_i$ is at least half of the order of the associated coordinate. All in all, we have proven
\begin{proposition}\label{AlgNTBound}
    Let $(x_0, y_0, z_0)\in\mathcal{S}_{A, B, C, D}^*(p).$ Then $(x_0, y_0, z_0)$ connects to a point which is not a double fixed point and which has order at least 
    $$(\log_{(20 + 2|A| + 2|B| + 2|C| + |D|)}p)^\frac{1}{3}.$$ In particular, any connected component $F$ of $\mathcal{S}_{A, B, C, D}^*(p)$ satisfies
    $$|F|\geq \frac{1}{2}(\log_{(20 + 2|A| + 2|B| + 2|C| + |D|)}p)^\frac{1}{3}.$$
\end{proposition}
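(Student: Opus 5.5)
The plan is to assemble the case analysis carried out above into a single chain, then convert a lower bound on the order of a coordinate into a lower bound on orbit size. Starting from $Q=(x_0,y_0,z_0)\in\mathcal{S}^*_{A,B,C,D}(p)$, I would first dispose of case (\ref{case:OpeningFiniteOrbits}). There the whole $\mathbb{F}_p$-orbit lifts to a finite orbit over $\mathbb{C}$. By Theorem \ref{FiniteOrbitClassification} and Notation \ref{notation:FiniteOrbits}, that orbit lies in $E(p)$, which contradicts $Q\in\mathcal{S}^*$. So we are in case (\ref{case:OpeningGeneric}) or case (\ref{case:OpeningSelfLoop}).

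In each subcase I would produce a point $P$ in the orbit of $Q$ that is not a double fixed point, together with an explicit integer polynomial expression $\eta$ in $\zeta^{\pm a_i(P)}$.
\begin{enumerate}
\item In case (\ref{case:OpeningGeneric}), $\eta$ is the defining equation evaluated at the lift $P'$.
\item In subcase (\ref{case:OpeningOneFixedPoint}), $\eta$ is the fixed-point relation of the relevant $V_i$.
\item In subcase (\ref{case:OpeningDoubleFixedPoint}), $\eta$ is the $y$-coordinate discrepancy between $V_jV_iP'$ and $V_iP'$, with $P$ taken to be the unique neighbour of the double fixed point $R$.
\end{enumerate}
In every case $\eta\neq0$ in $\mathbb{C}$ by construction, while its reduction modulo a suitable prime vanishes, because the corresponding identity holds over $\mathbb{F}_p$. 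Expanding each expression with $|\zeta^{\pm a}|=1$ and $|\zeta^a+\zeta^{-a}|\le 2$ gives the uniform bound $|\eta|\le 20+2|A|+2|B|+2|C|+|D|=:M$, and the same bound holds for every Galois conjugate.

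Next comes the norm argument. Let $l_i$ be the orders of the $\chi_i$ and $n=\mathrm{lcm}(l_1,l_2,l_3)$, so that $\eta\in\mathbb{Q}(\zeta_n)$. Then $|\mathrm{Norm}(\eta)|\le M^{\phi(n)}\le M^n$. I would choose a prime $\mathfrak{P}$ above $p$ for which reduction sends $\zeta^{a_i(P)}\mapsto\chi_i$. This prime has norm $p$ if all coordinates are hyperbolic, since then $n\mid p-1$ and $p$ splits completely; otherwise it has norm $p^2$. Because $\mathfrak{P}\mid\eta$ and $\eta\neq0$, we get $\mathrm{Norm}(\mathfrak P)\le M^n$. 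This yields $\log_M p\le n\le l_1l_2l_3\le\max(l_i)^3$, so some coordinate of $P$ has order at least $(\log_M p)^{1/3}$. One caveat needs checking: a coordinate equal to $\pm2$ has order $p$ by definition, and then the bound is trivial.

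To get the orbit-size statement, take the coordinate of maximal order, say $x$, so that $x=\chi+\chi^{-1}$. By Lemmas \ref{hyperbolics} and \ref{elliptics}, $D_1$ acts on $C_1(x)$ as $t\mapsto\chi^2t$, and this is free as long as $\kappa_1(x)\neq0$. So the $D_1$-orbit of $P$ has size $\mathrm{ord}(\chi^2)\ge\frac12\mathrm{ord}(\chi)$, and it lies in the component $F$. The parabolic case is covered by Lemma \ref{Parabolics}, where orbits have size $p$. The main obstacle is precisely this step. We need $P$ not to be a double fixed point for the coordinate $x$ specifically, that is, $(\xi,\eta)\neq(0,0)$ in (\ref{DiagonalizedComponents}). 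We also need to handle $\kappa_1(x)=0$, where one of $\xi,\eta$ vanishes and the action is still by multiplication on the nonzero component, so the same count applies. Proposition \ref{DoubleFixedPoints} identifies $\xi=\eta=0$ exactly with being fixed by both $V_2$ and $V_3$, and this is ruled out by the choice of $P$, which completes the bound.
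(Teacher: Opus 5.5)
Your proposal is correct and follows essentially the same route as the paper. It uses the same three-way split on how the lifted $\mathbb{F}_p$-orbit sits over $\mathbb{C}$, the same nonvanishing algebraic integers $\eta$ bounded together with all their conjugates by $20+2|A|+2|B|+2|C|+|D|$, the same comparison of $\operatorname{Norm}(\mathfrak{P})\in\{p,p^2\}$ with $|\operatorname{Norm}(\eta)|$, and the same halving from $\operatorname{ord}(\chi)$ to $\operatorname{ord}(\chi^2)$ for the Dehn twist. Your extra checks on $\kappa_1(x)=0$ and on coordinates equal to $\pm 2$ only make explicit what the paper leaves implicit; note that in the norm argument a $\pm 2$ coordinate contributes $l_i\le 2$, so it cannot attain the maximum once the logarithmic bound exceeds $2$.
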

This completes our first step. In order to complete the proof of our main theorem \ref{th:mainFirstForm}, we need the following result (\cite{BGS}, Theorem 18), which is a variant of a result from \cite{CKSZ14} (see also \cite{BeukersSmyth} for the simpler setting of characteristic 0).
\begin{theorem}[\cite{BGS}, Theorem 18]\label{LowerBoundOnOrder}
    Fix $d\in\mathbb{Z}_+$ and $\delta>0$. There is an $\epsilon>0,\,\epsilon = \epsilon(d, \delta)$ such that for all primes $p\leq z$ ($z$ sufficiently large) with the exception of at most $z^\delta$ of them, the following property holds: Let $f(x, y)\in\mathbb{F}_p[x, y]$ be of degree at most $d$ and not divisible by any non-constant polynomial of the form $\rho x^\alpha y^\beta - 1$ or $\rho y^\beta - x^\alpha$ for any $\rho\in\overline{\mathbb{F}}_p$ and nonnegative integers $\alpha$ and $\beta$. Then all solutions $(x, y)\in\overline{\mathbb{F}}_p^\times\times\overline{\mathbb{F}}_p^\times$ of $f(x, y) = 0$ satisfy
    $\text{ord}(x) + \text{ord}(y) \geq p^\epsilon$
    except for at most $11d^3 + d$ of them.
\end{theorem}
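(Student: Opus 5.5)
The plan is to reduce to a characteristic-zero statement about torsion points on curves, and to control the primes where reduction mod $p$ creates spurious torsion points by a norm-counting (resultant) argument, in the spirit of \cite{CKSZ14} and \cite{BeukersSmyth}. Fix $d$ and $\delta$, and let $T=z^{\epsilon}$ with $\epsilon$ to be chosen small in terms of $d$ and $\delta$. Call a prime $p\le z$ \emph{bad} if some admissible $f\in\mathbb{F}_p[x,y]$ of degree at most $d$ has more than $11d^3+d$ zeros $(x,y)$ with $\operatorname{ord}(x)+\operatorname{ord}(y)<p^\epsilon\le T$. Let $M=\binom{d+2}{2}$ be the number of monomials of degree $\le d$. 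The statement then reduces to showing that there are at most $z^\delta$ bad primes.

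First I would linearize the condition. Suppose $f$ vanishes at points $(x_1,y_1),\dots,(x_N,y_N)$, all of small order, with $N>11d^3+d$. Every such $x_i,y_i$ lies in $\mu_m$ for some $m<T$, so it is the reduction of a root of unity $\zeta^{a_i},\zeta^{b_i}\in\mathbb{C}$ of the same order. Lift each chosen tuple of $M$ points this way. The existence of a nonzero $f$ of degree $\le d$ through any $M$ of the points is equivalent to the vanishing mod $\mathfrak{P}$ of the generalized Vandermonde determinant $\det\bigl(\zeta^{a_i u+b_i v}\bigr)_{i,\ u+v\le d}$. This determinant is an algebraic integer $\eta$ in $\mathbb{Q}(\zeta_n)$ with $n\le T^2$. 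Each conjugate of $\eta$ has absolute value at most $M!$, so
\[
|\operatorname{Norm}(\eta)|\le (M!)^{n}.
\]
Consequently, if $\eta\neq0$, at most $O_d(T^2)$ primes divide it. The number of tuples of $M$ torsion points of order $<T$ is at most $T^{4M}$. Summing over tuples, the number of primes that can arise from some tuple with $\eta\ne0$ in characteristic zero is $O_d(T^{4M+2})$. Choosing $\epsilon<\delta/(4M+3)$ makes this at most $z^\delta$, and these primes are discarded.

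For the remaining primes, every $M$-tuple of small-order points lying on a mod-$p$ curve of degree $\le d$ already lies on a complex curve of degree $\le d$. I would then build, by the same Vandermonde-rank argument applied to the full point set (the rank over $\mathbb{C}$ equals the rank mod $\mathfrak{P}$ for good primes), a complex polynomial $F$ of degree $\le d$ vanishing on all the lifted points, chosen so that its reduction divides (or shares the relevant components with) $f$. Now apply the characteristic-zero theory. By Beukers–Smyth, a complex curve of degree $\le d$ with no component a torsion coset $\rho x^\alpha y^\beta=1$ or $\rho y^\beta=x^\alpha$ contains at most $O(d^2)$ torsion points, with an explicit constant; I would aim to land under $11d^3+d$. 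The extra factor $d$ covers two sources of loss: summing over components, and the points absorbed from coset components whose reductions are not cosets.

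The main obstacle is precisely this transfer of the ``no torsion-coset factor'' hypothesis. It is imposed on $f$ mod $p$, but must be applied to $F$ over $\mathbb{C}$. The danger is that $F$ has a coset component containing many lifted points while $f$ does not; the reduction of a coset is a coset, so that component's points must lie on some other factor of $f$. My first attempt would be to handle this by a second resultant. For each candidate coset of bounded degree through lifted points, take the resultant of $f$'s lift against the coset equation. This is again a cyclotomic integer of controlled size, so it is nonvanishing mod $p$ for good primes. The argument would then enlarge the bad-prime set only by another $O(T^{O(d)})\le z^\delta$ primes, and should account for the $d$ in the final bound $11d^3+d$.
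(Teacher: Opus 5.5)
The paper gives no proof of this statement; it is quoted from \cite{BGS} (a variant of \cite{CKSZ14}), so your proposal has to stand on its own. Your first step is sound. If $f$ vanishes at the $N$ reduced points, every $M\times M$ minor of the evaluation matrix vanishes mod $\mathfrak{P}$. For primes dividing no nonzero such minor, the complex evaluation matrix therefore has rank $<M$, which gives a nonzero $F$ of degree $\le d$ through all lifted points.

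Two bookkeeping corrections. First, $n$ is the lcm of up to $2M$ orders, so $n<T^{2M}$, not $T^2$, and the bad-prime count becomes $O_d(T^{6M})$; this only shrinks $\epsilon$. Second, you cannot arrange that $\bar F$ divides or shares components with $f$, since the kernel may be large. You never need to.

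The genuine gap is the coset-component step. Your ``second resultant'' of $f$'s lift against a coset equation cannot work. The polynomial $f$ is arbitrary over $\mathbb{F}_p$ and is handed to you after the bad primes are fixed. It has no lift of bounded height, so that resultant is not a cyclotomic integer of controlled norm, and any primes it excluded would depend on $f$, of which there are about $p^M$. Moreover, it genuinely vanishes mod $p$, because $f$ and $\bar C$ do share the reduced points.

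The missing idea is just B\'ezout over $\overline{\mathbb{F}}_p$:
\begin{itemize}
\item Write $F=F_0\prod_j C_j$. Here each $C_j$ is an irreducible torsion-coset polynomial $\rho x^\alpha y^\beta-1$ or $\rho y^\beta-x^\alpha$, with $\gcd(\alpha,\beta)=1$ and $\rho$ a root of unity of order $<p$, and $F_0$ has no coset factor.
\item The lifted points on $F_0$ number at most $22\cdot\frac{d^2}{2}=11d^2$ by \cite{BeukersSmyth}.
\item The lifted points on $C_j$ reduce injectively into $V(f)\cap V(\bar C_j)$. By a monomial change of variables, $\bar C_j$ is still an irreducible coset polynomial, so by hypothesis it does not divide $f$. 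B\'ezout then allows at most $d\deg C_j$ such points, hence at most $d\sum_j\deg C_j\le d^2$ in total.
\end{itemize}
This yields at most $12d^2\le 11d^3+d$ small-order solutions, with no comparison between $F$ and $f$ ever required.
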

We are now in a position to prove Theorem \ref{th:mainFirstForm}. We apply Theorem \ref{LowerBoundOnOrder} to the curves
$$\alpha_1 t + \alpha_2 t^{-1} + \alpha_3 = s + s^{-1},$$
which, by avoiding double fixed points and carefully choosing the coordinate of the orbit under $D_i$ to study as in the middlegame, may be chosen so as to not be divisible by any of the polynomials forbidden in Theorem \ref{LowerBoundOnOrder}. Now, starting from an arbitrary point $(x_0, y_0, z_0)\in\mathcal{S}_{A, B, C, D}^*(p),$ we first navigate as in Proposition \ref{AlgNTBound} to a point $(x, y, z)\in\mathcal{S}_{A, B, C, D}^*(p)$ with the property that one of the Dehn twists starting from this point has order at least
$$\frac{1}{2}(\log_{(20 + 2|A| + 2|B| + 2|C| + |D|)}p)^\frac{1}{3} > 324.$$
If the order of this point $(x, y, z)$ is $\gg p^\epsilon$, it is connected to the cage in the middlegame. Otherwise, for primes $p$ in a density one set, we connect it to such a point via Theorem \ref{LowerBoundOnOrder}. Indeed, for $p$ outside the exceptional set in Theorem \ref{LowerBoundOnOrder}, there is a point $(s, t)$ on the curve
$$\alpha_1 t + \alpha_2 t^{-1} + \alpha_3 = s + s^{-1},$$
lying in the chosen Dehn-twist orbit of $(x, y, z)$, which is neither one of the at most $11d^3 + d$ exceptional points from Theorem \ref{LowerBoundOnOrder} (here $d = 3$), nor one of the at most $24$ parameter pairs corresponding to double fixed points. For such $(s, t)$, the corresponding point on $\mathcal{S}_{A, B, C, D}^*(p)$ has order $\gg p^\epsilon$, and thus connects to the cage via the middlegame, endgame, and comparison theorem of Section \ref{Comparison}. (Indeed, we have that $s$ must have order $\gg p^\epsilon$, as if instead $t$ has order $\gg p^\epsilon$, then so did the original point $(x, y, z)$, which we assumed was not the case.)

\section{Obstruction to Transitivity for Degenerate Parameters}\label{ObstructionsSection}
In this section, we prove the following theorem:
\begin{theorem}\label{QuadraticObstructions}
Suppose that $(A, B, C, D)$ is a degenerate quadruple. Then for every sufficiently large prime $p$ (independent of the parameters), $\mathcal{S}_{A, B, C, D}^*(p)$ contains at least two orbits. If $\pm A =\pm B = \pm C$ and $(A, B, C, D)$ is degenerate, then $\mathcal{S}_{A, B, C, D}^*(p)$ contains at least four large orbits. When we include the action of the larger group $\Gamma'$, three of the four $\Gamma$-invariant subsets of $\mathcal{S}_{A, B, C, D}^*(p)$ are joined to each other.
\end{theorem}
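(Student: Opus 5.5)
The plan is to construct an explicit $\Gamma$-invariant $\{\pm1\}$-valued function on $\mathcal{S}^*_{A,B,C,D}(p)$, in the spirit of the quadratic-character obstruction of de Courcy-Ireland, Litman, and Mizuno (Theorem \ref{degeneracyClusterAlg}). Then I will show by a point count that both values are taken on many points. By the equivalences of Definition \ref{def:EquivalentParams}, which map $\Gamma$-orbits to $\Gamma$-orbits, I may assume $A=B$ and $4D+A^2=8C+16$.

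\textbf{Step 1: an identity exhibiting the double cover.} With $s=x+y$ and $d=x-y$, the defining equation becomes
$$\tfrac{2-z}{4}s^2+\tfrac{2+z}{4}d^2-As+z^2-Cz-D=0.$$
Multiplying by $4(2-z)$ and completing the square gives
$$\bigl((2-z)s-2A\bigr)^2=(2+z)\Bigl((z-2)d^2+\tfrac{4A^2-4(2-z)(z^2-Cz-D)}{2+z}\Bigr).$$
The degeneracy condition is exactly what makes $4A^2-4(2-z)(z^2-Cz-D)$ vanish at $z=-2$, so the quotient is a polynomial $q(z)$.

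Writing $\left(\frac{\cdot}{p}\right)$ for the Legendre symbol, this identity says that $\left(\frac{z+2}{p}\right)$ agrees with $\left(\frac{(z-2)d^2+q(z)}{p}\right)$ wherever both sides are nonzero.

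\textbf{Step 2: the invariant $\varepsilon$.} I will define $\varepsilon(P)=\left(\frac{z+2}{p}\right)$. This is manifestly invariant under $V_1$ and $V_2$, since they fix $z$. For $V_3$, which fixes $x$ and $y$ and swaps the two roots $z,z'$ of a quadratic, I will look for a second expression for $\varepsilon$ depending only on $(x,y)$, or on a symmetric function of $z$ and $z'$. The candidate comes from the same computation with the roles of the variables rearranged: solve the quadratic in $z$ (with $x$, $y$ fixed) and use that $z+z'=C+xy$ and $zz'$ are functions of $x$ and $y$. The aim is to show that $(z+2)(z'+2)$ is a nonzero square on the relevant locus.

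I expect this to be the main obstacle. It means verifying the polynomial identity, ideally by Macaulay2 as elsewhere in the paper, and, more delicately, handling the exceptional loci where $z=-2$ or the square vanishes. For those I will use the analysis of $C_3(-2)$ from Lemma \ref{Parabolics}. In the degenerate case every point of the fibre $z=-2$ is a double fixed point of $V_1$ and $V_2$, which suggests that this fibre decouples: such points are only joined to the rest through $V_3$, and I will assign them the sign forced along that $V_3$-edge. Pieces that are genuinely isolated from the rest fall into $E(p)$ or are a bounded number of points.

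\textbf{Step 3: both classes are large.} The number of points with $\varepsilon=\pm1$ is
$$\tfrac12\sum_{z}\Bigl(1\pm\left(\tfrac{z+2}{p}\right)\Bigr)\,|C_3(z)|.$$
Using $|C_3(z)|=p\pm1$ from Lemmas \ref{hyperbolics} and \ref{elliptics}, together with a Weil-bound estimate of the resulting character sum, each class has $\tfrac12p^2+O(p^{3/2})$ points. Since $E(p)$ has bounded size, each $\Gamma$-invariant set survives in $\mathcal{S}^*_{A,B,C,D}(p)$ once $p$ exceeds an absolute bound. This gives at least two orbits.

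\textbf{Step 4: four orbits and the action of $\Gamma'$.} If $\pm A=\pm B=\pm C$ and the quadruple is degenerate, I will check that the degeneracy relation also holds for a second equivalent normalization, for instance with the roles of $y$ and $z$ exchanged. This supplies a second invariant, $\varepsilon'=\left(\frac{\pm x+2}{p}\right)$ or the analogous expression. The joint sign $(\varepsilon,\varepsilon')$ then separates four classes, each of size $\tfrac14p^2+O(p^{3/2})$ by a two-character Weil estimate. This requires showing that the product character is nontrivial on the surface.

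Finally, the transpositions and negated transpositions in $\Gamma'$ permute the three invariants $\left(\frac{\pm x+2}{p}\right)$, $\left(\frac{\pm y+2}{p}\right)$, $\left(\frac{z+2}{p}\right)$. Their product is constrained by the identity, so $\Gamma'$ fixes the class on which all three equal $1$ and permutes the remaining three classes transitively. This matches the statement, with three of the four $\Gamma$-invariant sets joined under $\Gamma'$.
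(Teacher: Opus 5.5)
Your Steps 2 and 3 follow the paper's own argument. From $z+z'=C+xy$ and $zz'=x^2+y^2-A(x+y)-D$ you get $(z+2)(z'+2)=(x+y-\tfrac{A}{2})^2$ directly; this is a two-line computation, not a real obstacle. The fibre $C_3(-2)$, whose points are fixed by $V_1$ and $V_2$, is handled exactly as you propose, by reading off the sign at the unique $V_3$-neighbour. A point with $z=z'=-2$ is a triple fixed point and so lies in $E(p)$. Your Step 1 identity is correct but is never used. Your fibre count in Step 3 is fine; the character sums there are actually $O(1)$, so the error is $O(p)$ without any Weil bound.

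The gap is in the last part of Step 4. You say the product of the three characters $\chi_p(\epsilon x+2)$, $\chi_p(\epsilon y+2)$, $\chi_p(z+2)$ (where $C=\epsilon A$) ``is constrained by the identity''. Nothing you have written constrains it: the Step 1 identity involves only $z$ and $x\pm y$, and the $V_3$ identity only $z$ and $z'$. The missing ingredient is
\[
(\epsilon x+2)(\epsilon y+2)(z+2)=\Bigl(\epsilon x+\epsilon y+z+2-\tfrac{\epsilon A}{2}\Bigr)^2,
\]
obtained by expanding and eliminating $xyz$ with the surface equation, using $A=B=\epsilon C$ and $D=2C+4-\tfrac{A^2}{4}$.

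Without it, the third character is a further $\Gamma$-invariant with no known relation to $(\varepsilon,\varepsilon')$, and a priori the three characters could cut out eight classes. Since $\tau_{xy}$ fixes $\varepsilon$ but sends $\varepsilon'$ to the third character, you cannot say where it sends the class $\varepsilon=\varepsilon'=-1$. So the claim that $\Gamma'$ fixes one class and permutes the other three is unsupported.

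With the identity, the argument closes. Away from the fibres where a factor vanishes (handled through neighbours as in Step 2; a point on two such fibres is a triple fixed point), only sign patterns with an even number of $-1$'s occur. Hence the third character equals $\varepsilon\varepsilon'$. The (negated) transpositions permute these three characters exactly; your choice $\chi_p(\epsilon x+2)$ is what makes this exact. So they fix $(+,+,+)$ and permute $(+,-,-),(-,+,-),(-,-,+)$ transitively.

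The same identity also supplies the nontriviality your two-character count needs. Off $O(p)$ points, $\chi_p\bigl((z+2)(\epsilon x+2)\bigr)=\chi_p(\epsilon y+2)$, whose sum over the surface is $O(p)$ after fibering over $y$.
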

\begin{remark}[The Cayley Cubic]
    This theorem and its proof do hold for the Cayley parameters $(0, 0, 0, 4),$ but the resulting lower bound on the number of orbits is a radical undercount. For other degenerate parameters, we will show in Theorem \ref{DegenerateParameterOrbits} that the lower bound in the above theorem is in fact the true count, at least for density one of $p$.
\end{remark}
\begin{proof}
We write the proof in the degenerate case where $A = B$ and $D = -\frac{A^2}{4} + 2C + 4;$ the other cases follow similarly, and in fact may be reduced to this case via equivalence of parameters as in Subsection \ref{EquivalenceParamsSubect}. More precisely, we will prove this result by constructing certain invariants, and applying these equivalence maps to the invariants in the case $A = B$ gives the invariants in the other cases, where $A = -B$, $A = \pm C$, or $B = \pm C.$ This will be done implicitly to produce the larger set of invariants when $A = B = \pm C$.

Suppose that $(x, y, z)$ and $(x, y, z')$ are a pair of solutions, connected by $V_3$. Then we have
$$zz' = x^2 + y^2 - A(x+y) + \frac{A^2}{4} - 2C - 4 = \left(x+y-\frac{A}{2}\right)^2 -2C - 2xy - 4,$$
and
$$z + z' = C + xy,$$
so that
\[\begin{split}
(z+2)(z'+2) &= zz' + 2(z+z') + 4 \\&= \left[\left(x+y-\frac{A}{2}\right)^2 -2C - 2xy - 4\right] + 2(C + xy) + 4\\&=  \left(x+y-\frac{A}{2}\right)^2,
\end{split}\]
and thus
$$\chi_p(z+2)\chi_p(z'+2) \geq 0,$$
where $\chi_p(\cdot)\coloneq \left(\frac{\cdot}{p}\right)$ denotes the quadratic character modulo $p$, extended by $\chi_p(0) = 0.$
This forms the basis for a quadratic obstruction---points $(x, y, z)$ with $\chi_p(z+2) = 1$ will not connect to points with $\chi_p(z+2) = -1$. To prove our result, we must properly define this obstruction when $z = -2$. The simplest way to do so is by way of Lemma \ref{Parabolics}: each point of $C_3(-2)\cap\mathcal S^*_{A,B,C,D}(p)$ is fixed by \(V_1\)
and \(V_2\), and hence has a unique nontrivial neighbor in
\(\mathcal S^*_{A,B,C,D}(p)\), namely its image under \(V_3\). We may assign $\pm 1$ to each point of $C_3(-2)$ according to the value of $\chi_p(z+2)$ in its unique neighbor, namely, its image under $V_3$. As each point of $C_3(-2)$ is fixed by $V_1$ and $V_2$, there is no path between points with $\chi_p(z+2) = 1$ and $\chi_p(z+2) = -1$ by passing through points of $C_3(-2)$, and this invariant indeed separates $\mathcal{S}_{A, B, C, D}^*(p)$ as desired. 

Alternatively, we can mimic the method of \cite{pDivisibilityClusterAlg}. This method is more intricate, requiring a number of complicated polynomial identities, some of which we verify by hand and others we verify with Macaulay2 \cite{Macaulay2}, but gives a slightly more uniform description of the $\Gamma$-invariant sets we identify. We begin by noting that, for 
$$z' = C + xy - z,$$
\[\begin{split}
\phi(z) &\vcentcolon= (z+2)(z+z' + 2x + 2y - A + 4)\\
&=(z+2)(xy + C + 2x + 2y - A + 4)\\
&= (xyz + Cz) - Az + 4z +  2xy + 2xz + 2yz + 4x + 4y + 2C - 2A + 8\\
&= \left(x^2 + y^2 + z^2 - Ax - Ay + \frac{A^2}{4} - 2C - 4\right)\\ &\,\,\,\,\,\,\,\,\,\,\,\,\,\,\,\,\,\,- Az + 4z + 2xy + 2xz + 2yz + 4x +4y + 2C - 2A + 8\\
&= x^2 + y^2 + z^2 + 2xy + 2xz + 2yz - Ax - Ay - Az + 4x + 4y + 4z - 2A + \frac{A^2}{4} + 4\\
&= (x + y + z - \frac{A}{2} + 2)^2;
\end{split}\]
it follows that we thus also have
$$\chi_p(z+2)\chi_p(z+z'+2x+2y-A+4) \geq 0.$$
Further, these cannot both be zero. Indeed, if $z = -2$, then
$$x^2 + y^2 + 4 = -2xy + Ax + Ay - 2C - \frac{A^2}{4} + 2C + 4,$$
so that
$$x^2 + 2xy + y^2 - Ax - Ay + \frac{A^2}{4} = \left(x+y-\frac{A}{2}\right)^2 = 0,$$
or, in other words,
$$A = 2x + 2y = 2x - yz = 2y - xz$$
as we have assumed $z = -2$. We may also rewrite the equation
$$z+z' + 2x + 2y - A + 4 = 0,$$
as
$$C + xy + 2x + 2y - A + 4 = 0.$$
As we know that
$$x + y -\frac{A}{2} = 0,$$
this is equivalent to
$$C = -4 - xy = 2z - xy.$$
But then Proposition \ref{singletonOrbits} implies that $(x, y, z)$ is a fixed point of each of $V_1$, $V_2$, and $V_3$, and so is not a point of $\mathcal{S}_{A, B, C, D}^*(p).$ We may thus write $\mathcal{S}_{A, B, C, D}^*(p) = \mathcal{S}_1 \sqcup \mathcal{S}_2$, where 
$$\mathcal{S}_1 = \{(x, y, z)\in\mathcal{S}_{A, B, C, D}^*(p)\mid\chi_p(z+2)\geq 0,\,\chi_p(z+z'+2x+2y-A+4)\geq 0\}$$
and
$$\mathcal{S}_2 = \{(x, y, z)\in\mathcal{S}_{A, B, C, D}^*(p)\mid\chi_p(z+2)\leq 0,\,\chi_p(z+z'+2x+2y-A+4)\leq 0\}.$$
To prove our theorem we show these two sets are invariant under $V_1$, $V_2$, and $V_3$, giving our quadratic obstruction. We already know this for $V_3$: we have that $\chi_p(z+2)\chi_p(z'+2) \geq 0$, and by symmetry in $z$ and $z'$ applying $V_3$ will not change the value of $\chi_p(z+z'+2x+2y-A+4)$ one bit. We also have that $V_1$ and $V_2$ do not change the value of $z$, so that $\chi_p(z+2)$ is invariant under $V_1$ and $V_2$. The hard part is to show that they cannot change the value of $\chi_p(z+z'+2x+2y-A+4) = \chi_p(xy + 2x + 2y + C - A + 4).$ We do this for $V_2$; the proof for $V_1$ is identical as the quantity and equation are symmetric in $x$ and $y$. The key identity is
\[\begin{split}
&(xy + 2x + 2y + C - A + 4)(x(xz + A - y) + 2x + 2(xz+A-y) + C - A + 4) \\&= (xy + 2x + 2y + C - A + 4)(x^2z + 2xz - xy + 2x + Ax - 2y + C + A + 4)\\
&\equiv \frac{1}{4}(Ax - 2x^2 - 2xz + 2C - 4x - 4z)^2\text{ mod } (x^2 + y^2 + z^2 - xyz - Ax - Ay - Cz +\frac{A^2}{4} - 2C - 4),
\end{split}\]
which may be verified with Macaulay2 \cite{Macaulay2}.

It is straightforward to show that $|\mathcal{S}_{A, B, C, D}^*(p)| = p^2 + O(p)$ where the implied constant doesn't depend on $A, B, C, D$, or $p$, and that each set $\mathcal{S}_i$ can have at most $\frac{1}{2}p^2 + O(p)$ elements. As $\mathcal{S}_1\cup\mathcal{S}_2 = \mathcal{S}_{A, B, C, D}^*(p),$ the corresponding upper bound must also hold, i.e., each $\mathcal{S}_i$ must contain approximately $\frac{p^2}{2}$. In particular, for $p$ sufficiently large, neither $\mathcal{S}_i$ can be empty, and in fact each $\mathcal{S}_i$ is quite large, giving a dramatic failure of strong approximation.

If we have $A = B = \pm C$, then we get decompositions of this form depending on each of $\chi_p(x\pm2)$, $\chi_p(y\pm2),$ and $\chi_p(z+2)$; \textit{a priori} intersecting these decompositions would seem to give eight invariant sets. However, exactly four of these are empty, as we have
\[\begin{split}(x\pm2)(y\pm2)(z+2) &= xyz + 2xy \pm 2yz \pm 2xz \pm 4x \pm 4y + 4z + 8 \\ 
&= x^2 + y^2 + z^2 + 2xy \pm 2yz \pm 2xz \\
&\,\,\,\,\,\,\,\,\,\,\,\,\,\,\,+ (\pm 4-A)x + (\pm 4-A)y + (4\mp A)z + \frac{A^2}{4} \mp 2A + 4\\
&= \left(\pm x \pm y + z + 2 \mp \frac{A}{2}\right)^2.\end{split}\]
Away from those problematic points where either $x\pm2=0, y\pm2=0$, or $z+2=0$, we must have that either zero or two of $x\pm2, y\pm2,$ or $z+2$ are quadratic nonresidues, proving that four of the potential invariant sets contain only these problematic points; however, each problematic point has a nonproblematic neighbor and must be in the same invariant set as that neighbor, so that those four potential invariant sets are truly empty. Finally, adding in transpositions and negated transpositions collapses together the sets in which exactly two of $(x\pm2), (y\pm2),$ or $(z+2)$ are quadratic nonresidues. 
\end{proof}
For future reference, we record these $\Gamma$-invariant subsets.
\begin{notation}
    Suppose that $(A, B, C, D)$ is a degenerate quadruple of parameters, normalized via equivalence so that $A = B.$ If $C \neq \pm A,$ then we set
$$\mathcal{S}_1(p) = \{(x, y, z)\in\mathcal{S}_{A, B, C, D}^*(p)\mid\chi_p(z+2)\geq 0,\,\chi_p(z+z'+2x+2y-A+4)\geq 0\}$$
and
$$\mathcal{S}_2(p) = \{(x, y, z)\in\mathcal{S}_{A, B, C, D}^*(p)\mid\chi_p(z+2)\leq 0,\,\chi_p(z+z'+2x+2y-A+4)\leq 0\};$$
these are the $\Gamma$-invariant subsets identified in Theorem \ref{QuadraticObstructions}. Here, as there, $z' = C + xy - z$. If $C = \pm A,$ then we set
\[\begin{split}\mathcal{S}_1(p) = \{(x, y, z)\in\mathcal{S}_{A, B, C, D}^*(p)\mid\,&\chi_p(z+2)\geq 0,\,\chi_p(z+z'+2x+2y-A+4)\geq 0,\\ &\chi_p(\pm x+2)\geq 0,\,\chi_p(\pm x\pm x'+2y\pm 2z-A+4)\geq 0\},\end{split}\]
\[\begin{split}\mathcal{S}_2(p) = \{(x, y, z)\in\mathcal{S}_{A, B, C, D}^*(p)\mid\,&\chi_p(z+2)\geq 0,\,\chi_p(z+z'+2x+2y-A+4)\geq 0,\\ &\chi_p(\pm x+2)\leq 0,\,\chi_p(\pm x\pm x'+2y\pm 2z-A+4)\leq 0\},\end{split}\]
\[\begin{split}\mathcal{S}_3(p) = \{(x, y, z)\in\mathcal{S}_{A, B, C, D}^*(p)\mid\,&\chi_p(z+2)\leq 0,\,\chi_p(z+z'+2x+2y-A+4)\leq 0,\\ &\chi_p(\pm x+2)\geq 0,\,\chi_p(\pm x\pm x'+2y\pm 2z-A+4)\geq 0\},\end{split}\]
and
\[\begin{split}\mathcal{S}_4(p) = \{(x, y, z)\in\mathcal{S}_{A, B, C, D}^*(p)\mid\,&\chi_p(z+2)\leq 0,\,\chi_p(z+z'+2x+2y-A+4)\leq 0,\\ &\chi_p(\pm x+2)\leq 0,\,\chi_p(\pm x\pm x'+2y\pm 2z-A+4)\leq 0\};\end{split}\] 
these are the $\Gamma$-invariant sets identified in Theorem \ref{QuadraticObstructions}. Once again, $z' = C + xy - z$; additionally we define $x' = A + yz - x$. We note that under the action of the larger group $\Gamma',$ $\mathcal{S}_2$, $\mathcal{S}_3,$ and $\mathcal{S}_4$ are mapped to each other by permutations and negated permutations of coordinates.

Throughout the paper, when $p$ is fixed, we abbreviate $\mathcal{S}_i(p)$ to $\mathcal{S}_i$ in both the two-orbit and four-orbit case.
\end{notation}
\section{Modifications for Degenerate Parameters}\label{ModificationsDegenerate}
For degenerate parameters $(A, B, C, D),$ the previous section showed that there must be at least two orbits on $\mathcal{S}_{A, B, C, D}^*(p)$ for $p$ large enough, and in some cases at least four orbits. For the parameters $(A, B, C, D) = (0, 0, 0, 4),$ corresponding to the Cayley cubic, there are many more orbits, but for other degenerate parameters, we can show that there are in fact exactly two or four orbits corresponding to the $\Gamma$-invariant sets identified in the previous section by adapting our methods from Sections \ref{ConicsSection}, \ref{Comparison}, \ref{EndgameSection}, \ref{middlegameSection}, and \ref{openingSection}. In fact, as long as the parameters are not $(0, 0, 0, 4),$ the proofs of the results from these sections go through essentially unchanged, save for the endgame and cage connectivity arguments of Section \ref{EndgameSection}. In this section, we prove the following theorem:
\begin{theorem}\label{DegenerateParameterOrbits}
    Suppose that $(A, B, C, D)$ is degenerate, has been normalized so that \[
A=B,\qquad 4D+A^2=8C+16,
\] and is not equal to $(0, 0, 0, 4)$. If $C \neq \pm A$, then $\Gamma$ acts transitively on each of $\mathcal{S}_1$ and $\mathcal{S}_2$ for a density-one set of primes. If $C = \pm A$, then $\Gamma$ acts transitively on each of $\mathcal{S}_1$, $\mathcal{S}_2$, $\mathcal{S}_3,$ and $\mathcal{S}_4$ for a density-one set of primes.
\end{theorem}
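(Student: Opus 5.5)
The plan is to rerun the nondegenerate argument (opening, middlegame, endgame/cage, comparison) inside each invariant set $\mathcal{S}_i$ of Theorem \ref{QuadraticObstructions}, replacing the single cage by one cage per $\mathcal{S}_i$.

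\textbf{Opening and middlegame.} These carry over with only bookkeeping changes. Proposition \ref{AlgNTBound} and Theorem \ref{LowerBoundOnOrder} only use $(A,B,C,D)\neq(0,0,0,4)$ and the finite-orbit classification. The middlegame uses Corvaja--Zannier on the curves $\alpha_1t+\alpha_2t^{-1}+\alpha_3=s+s^{-1}$, where subtorus factors were excluded without nondegeneracy. The one change concerns double fixed points: Lemma \ref{Parabolics} shows that in the degenerate normalization every point of $C_3(-2)$ is fixed by $V_1$ and $V_2$, so there are $p$ double fixed points rather than $O(1)$. I would treat these as in the proof of Theorem \ref{QuadraticObstructions}, attaching each to its unique $V_3$-neighbour. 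Since $\Gamma$ preserves each $\mathcal{S}_i$, every point of $\mathcal{S}_i$ is then joined inside $\mathcal{S}_i$ to a point of order $\gg p^\epsilon$, and hence of order $\geq p^{1/2+\delta}$, for a density-one set of primes.

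\textbf{Endgame and cage.} This is where nondegeneracy was used, through Lemma \ref{NonresidueCondition} and the ramification checks in Lemma \ref{endgameIrreducibility} and Proposition \ref{cageConnectivity}. I would identify exactly how these fail. When $A=B$ and $4D+A^2=8C+16$, compute $f_1,f_2,f_3$; I expect $f_3(z)=(z+2)^2\,q(z)$, or a full square times $(z+2)$-type factors, since $\kappa_3$ controls transitivity on $C_3(z)$. In that case the Legendre condition $\left(\frac{f_3(z)}{p}\right)=-1$ and the value $\chi_p(z+2)$ are linked.

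Conversely, the condition for $\langle V_1,V_2\rangle$ to act transitively on $C_3(z)$ should be correlated with the invariant $\chi_p(z+2)$ and the companion quantity $xy+2x+2y+C-A+4$. So I would redefine the cage per invariant set. A point of $\mathcal{S}_i$ is $i$-connecting if its relevant conic, intersected with $\mathcal{S}_i$, is a single $\langle V_j,V_k\rangle$-orbit. For $C_3(z)$ with $z$ of maximal order, the conic lies entirely in $\mathcal{S}_1$ or $\mathcal{S}_2$ according to $\chi_p(z+2)$, and two-orbit splittings must be tracked.

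I would then redo the sieve of Proposition \ref{endgame}. The extra equation $\omega u^2=f_2(y)$ is replaced by $\epsilon u^2=y+2$-type conditions, using whichever nonsquare factor remains, together with the nonresidue condition that is still genuinely nontrivial. Irreducibility is checked by the same monodromy argument: one needs a branch point of odd multiplicity outside $\{\pm2,\pm2\sqrt{\alpha_1\alpha_2}+\alpha_3,\infty\}$, or, when the only odd branch point is $-2$, the residue condition itself forces the target set $\mathcal{S}_i$ and one simply drops that equation.

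\textbf{Case analysis and comparison.} This produces the case breakdown: $A=B$ with $C\neq\pm A$, $C=\pm A$, $A=B=0$, and so on, each with its own surviving residue conditions. In each case I would aim to show that the within-$\mathcal{S}_i$ cage is connected, using the two-step $\tilde y$ construction of Proposition \ref{cageConnectivity} with the extra invariant conditions added as double covers. Finally, the comparison of Section \ref{Comparison} is adapted. When $C=\pm A$, $\Gamma'$ merges $\mathcal{S}_2,\mathcal{S}_3,\mathcal{S}_4$, so transitivity of $\Gamma'$ on that union is first descended to $\Gamma$ on each $\mathcal{S}_i$. This uses the $S_3$ quotient, which permutes the three sets transitively; a $\tau$-fixed point obtained by Weil bounds in each set then shows that the $\Gamma$-orbits inside each $\mathcal{S}_i$ are not further split.

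\textbf{Main obstacle.} I expect the hardest step to be the irreducibility and ramification bookkeeping in the degenerate endgame. The discriminant $\Delta$ vanishes, so the double covers coming from $f_i$ share branch points with each other and with $\pm2$. Separating the genuinely invariant residue conditions, which must be imposed rather than sieved, from those giving irreducible curves requires the case-by-case analysis.
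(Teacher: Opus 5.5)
Your overall skeleton is the paper's: restrict to each $\mathcal{S}_i$, inherit the opening and middlegame, and redefine the cage by conic components that are single orbits inside $\mathcal{S}_i$. But the endgame and cage, where essentially all the new work lies, are left to an unspecified case analysis, and the mechanism you sketch for them is not the right one.

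In the normalization $A=B$, $4D+A^2=8C+16$ one has $f_1(x)=\bigl(x^2-\tfrac{A}{2}x-(C+4)\bigr)^2$, and likewise for $f_2$. As you guessed, $f_3(z)=(z+2)^2q(z)$ with $q(z)=z^2-(C+4)z+2C+4+\tfrac{A^2}{4}$. Two consequences follow:
\begin{itemize}
\item There is no nonsquare factor of $f_2$ left to sieve with.
\item $\chi_p(f_3(z))=\chi_p(q(z))$ is \emph{not} linked to $\chi_p(z+2)$.
\end{itemize}
Every maximal-order $C_1(x)$ or $C_2(y)$ therefore splits into two $\langle V_j,V_k\rangle$-orbits. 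What must be proved is that these two orbits lie in different $\mathcal{S}_i$. The paper does this in Lemma~\ref{lemma:TwoPieceConicSignComponents}, from $z+2=(x^2-4)^{-2}(\xi t)^{-1}\bigl(\xi t(x^2-4)+x^2-\tfrac{A}{2}x-(C+4)\bigr)^2$ on the parameterized conic.

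What is linked to $\chi_p(z+2)$ is the \emph{order} of $z$. Writing $z=\zeta+\zeta^{-1}$ gives $z+2=\zeta^{-1}(\zeta+1)^2$, so $\chi_p(z+2)=1$ iff $\zeta$ is a square in $\mathbb{F}_p^\times$ or $\mu_{p+1}$. Hence every maximal-order $C_3(z)$ lies in $\mathcal{S}_2$, and $\mathcal{S}_1$ contains no maximal-order $3$-connecting points at all. The pattern of Proposition~\ref{cageConnectivity} (join a $1$-connecting point to a $3$-connecting one through a $2$-connecting $\tilde y$) thus has no endpoints in $\mathcal{S}_1$, and a new connector is needed there. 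The paper uses the following:
\begin{itemize}
\item When $A\neq 0$: the parabolic conic $C_3(2)$, which lies in $\mathcal{S}_1$ and is a single orbit of $p$ points. The endgame is modified to also force $8Ax+16C$ to be a square.
\item When $A=B=0$: that conic splits. The paper instead uses $\tau_{xy}\circ V_1$ together with $\operatorname{neg}_{xy}$ on $C_3(z)$ with $z$ of odd order $\frac{p\pm1}{2}$, and then descends from $\Gamma'$ to $\langle\Gamma,\tau_{xy}\rangle$ to $\Gamma$.
\end{itemize}
Even in $\mathcal{S}_2$, the sieve must run over odd $\ell$ only.

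In the four-orbit case your irreducibility criterion (an odd-multiplicity branch point of the $f$-cover) has nothing to act on, since all three $f_i$ are squares. The actual obstruction is different. With $C=\epsilon A$, the finite branch points of $h(t)=\alpha_1t^e+\alpha_2t^{-e}+\alpha_3$ are $-2\epsilon$ and $2\epsilon\frac{x^2-Ax-2\epsilon A-4}{x^2-4}$. So $h$ and $g(s)=s^d+s^{-d}$ share the branch points $-2\epsilon$ and $\infty$ for \emph{every} $x$. The transition curve then has exactly $\gcd(e,d)$ geometric components (Lemma~\ref{lemma:FourPieceNonCoprimeMonodromy}). Discarding boundedly many $x$ does not help, and for large $\gcd(e,d)$ the point counts are no longer uniformly $\frac{p}{2ed}+O(\sqrt p)$, which breaks the inclusion--exclusion.

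The paper handles this in four steps:
\begin{itemize}
\item It alternates hyperbolic and elliptic coordinates, so that $\gcd(e,d)\mid\gcd(p-1,p+1)=2$.
\item It splits the $\gcd=2$ curves explicitly via $(\sqrt{\alpha_1}t^{e/2}+\epsilon\sqrt{\alpha_2}t^{-e/2})^2=(s^{d/2}+\epsilon s^{-d/2})^2$, with $\mathbb{F}_p$-models of each factor.
\item It aims only at the maximal \emph{allowable} order, $p\pm1$ or $\frac{p\pm1}{2}$, as forced by $\chi_p(y+2\epsilon)$.
\item It links components through $z''$ of such order (odd in the half-order case), for which $C_3(z'')\cap\mathcal{S}_i$ is already a single $\langle V_1,V_2\rangle$-orbit.
\end{itemize}
This gives $\Gamma$-transitivity directly. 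Your proposed descent from $\Gamma'$ presupposes $\Gamma'$-transitivity on $\mathcal{S}_2\cup\mathcal{S}_3\cup\mathcal{S}_4$, and proving that is exactly the missing endgame and cage argument.
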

The proof of this theorem is long, relying ultimately on a breakdown into eleven distinct cases, and will occupy the remainder of this section, broken into a series of lemmas. We begin by recording the reductions inherited from the nondegenerate case, as mentioned above:
\begin{lemma} \label{lemma:DegenerateInheritedReductions}
    Suppose that $(A,B,C,D)$ is degenerate, is not equivalent to the Cayley parameters $(0,0,0,4)$, and has been normalized so that
\[
A=B,\qquad 4D+A^2=8C+16.
\]
Then the opening and middlegame arguments of Sections \ref{openingSection} and \ref{middlegameSection} go through for $\mathcal{S}_{A,B,C,D}^*(p)$, after restricting throughout to a single one of the sets identified in Theorem \ref{QuadraticObstructions}.
\end{lemma}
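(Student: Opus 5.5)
The plan is to re-read the proofs in Sections \ref{openingSection} and \ref{middlegameSection} line by line, isolating each place where nondegeneracy (or the global structure of $\mathcal{S}^*_{A,B,C,D}(p)$) was used. The key observation is that both arguments are \emph{local along orbits}: they start from a point and move it by Vieta involutions (or Dehn twists) to points of larger order. Each $\mathcal{S}_i$ is $\Gamma$-invariant by Theorem \ref{QuadraticObstructions}, so every path produced automatically stays inside the invariant set containing its starting point. Thus "restricting to a single $\mathcal{S}_i$" costs nothing, provided the individual estimates survive degeneracy.

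For the opening, I would check that Proposition \ref{AlgNTBound} never used nondegeneracy. The case analysis (generic lift, self-loop lifts, full lift to a finite complex orbit) uses only the lifting $P\mapsto P'$, the bound $|\eta|\le 20+2|A|+2|B|+2|C|+|D|$, and Theorem \ref{FiniteOrbitClassification}. The last is valid for all parameters except $(0,0,0,4)$, which is exactly why that quadruple is excluded. One point needs care. In the degenerate case, Lemma \ref{Parabolics} shows that the whole conic $C_3(-2)$ consists of double fixed points, so there are $p$ of them rather than at most $12$. I would handle this exactly as in Theorem \ref{QuadraticObstructions}: each such point has a unique nontrivial neighbor, which is not on $C_3(-2)$, so we may always step to that neighbor before starting. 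The lower bound on orbit size then goes through, and the application of Theorem \ref{LowerBoundOnOrder} is unchanged, since the curves $\alpha_1t+\alpha_2t^{-1}+\alpha_3=s+s^{-1}$ and the excluded subtorus factors do not depend on degeneracy.

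For the middlegame, the Corvaja--Zannier step (Proposition \ref{CorvajaZannierBound}) only needs that (\ref{middlegameEq}) has no subtorus-translate factor. The proof of that fact used $\alpha_1\alpha_2\ne0$ (i.e.\ not a double fixed point), $B\ne\pm C$ or the parabolic escape, and $(A,B,C,D)\ne(0,0,0,4)$; none of these uses the degeneracy condition. The one modification is the count of excluded points. The requirement $\sum f_{H_1}(H_2)\le |H_1|-8$ must now avoid landing on $C_3(-2)$ (or its equivalents). Since that is a single forbidden coordinate value, it is absorbed exactly as the value $x=(4-D)/A$ was. I would also note that moving to a point of higher order never leaves $\mathcal{S}_i$, again by invariance.

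I expect the main obstacle to be the bookkeeping around the enlarged set of double fixed points on the degenerate parabolic conic. There I must confirm that its points never serve as the "point of interest" $P$ in the opening, and never arise as targets in the middlegame. If the naive step to the unique neighbor fails to produce order growth, I would instead pass along a $D_1$- or $D_2$-orbit through that neighbor, using Lemmas \ref{hyperbolics}--\ref{elliptics}, to reach a point off $z=-2$.
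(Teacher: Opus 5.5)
Your audit matches the paper's proof in substance. Both arguments are orbit-local, and Theorem~\ref{FiniteOrbitClassification} only forces excluding $(0,0,0,4)$. The sole new phenomenon is the roughly $p$ double fixed points on the degenerate parabolic conic $C_3(-2)$ (and its equivalents in the four-orbit case). These are harmless because a Dehn-twist orbit on a transverse conic meets them in $O(1)$ points: on $C_1(x)$ with $x\neq\pm2$, the relation $(x+y-\tfrac{A}{2})^2=0$ on $C_3(-2)$ allows at most one.

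The difference is only in where you use this observation. The paper needs it in the opening, at the application of Theorem~\ref{LowerBoundOnOrder}. There the new point of order $\gg p^\epsilon$ is chosen inside a single Dehn-twist orbit while avoiding ``parameter pairs corresponding to double fixed points,'' and that exclusion stays bounded only because of this count. So the step you call unchanged is exactly the one that needs comment. Your stepping off $C_3(-2)$ at the start is already built into the opening's case analysis, via the unique neighbor of a double fixed point.

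Conversely, the middlegame needs no modification. A point of $C_3(-2)\cap\mathcal{S}^*_{A,B,C,D}(p)$ is fixed by $V_1$ and $V_2$ but not by $V_3$, since otherwise it would be a Type I orbit. Hence the next Dehn twist about its high-order $x$- or $y$-coordinate still has a full-size orbit. Your ``single forbidden coordinate value'' exclusion there is harmless but unnecessary, and should be moved to that step of the opening.
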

\begin{proof}
    For the opening, the only modification needed in this setting is our treatment of double fixed points. We must treat this phenomenon a little bit more carefully, as there are approximately $p$ double fixed points in the degenerate case. However, the opening works entirely within a single Dehn-twist orbit, and all but at most 12 double fixed points arise from points with some coordinate equal to \(\pm2\). In a conic section not obtained by fixing that particular coordinate, there are at most two such points, so the opening argument is unaffected.

    For the middlegame, no modifications are needed, as the only exceptional case is the Cayley cubic.    
\end{proof}
We will separate the remainder of the proof of Theorem \ref{DegenerateParameterOrbits} into the two orbits and four-orbit cases.
\subsection{The Two-Orbit Case}
In the two-orbit case, we are able to make use of the larger group $\Gamma_{xy}\coloneq \langle\Gamma, \tau_{xy}\rangle$; as in this case we have $C \neq \pm A,$ this only requires the easy parts of Theorem \ref{comparisonOfGroupsThm}. 
\begin{remark}[$\Gamma_{xy}$ vs. $\Gamma'$.]
    In the case where $A = B = 0,$ $\Gamma_{xy}$ is not equal to $\Gamma',$ as $\Gamma_{xy}$ does not include the map $\operatorname{neg}_{xy}.$ As it turns out, in the course of Lemma \ref{lemma:TwoPieceS1CageConnectivityAZero} where $A = B = 0$ we will have to use the full group $\Gamma',$ but we will transfer transitivity from $\Gamma'$ to $\Gamma$ by way of $\Gamma_{xy}$ rather than by way of $\Gamma\times H.$ 
\end{remark}
\begin{lemma}\label{lemma:Degenerate2OrbitsComparisonOfGroups}
    Suppose that $(A, B, C, D)$ is degenerate, with $A = B \neq \pm C,$ normalized so
    $$4D + A^2 = 8C + 16,$$
    and that $p$ is sufficiently large (independent of the parameters). Suppose that $\Gamma_{xy}$ acts transitively on $\mathcal{S}_1$ (respectively, $\mathcal{S}_2$). Then $\Gamma$ also acts transitively on $\mathcal{S}_1$ (respectively, $\mathcal{S}_2$). 
\end{lemma}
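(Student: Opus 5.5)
We would argue exactly as in Lemma \ref{lemma:OneTranspositionCase}, with $\mathcal{S}_{A,B,C,D}^*(p)$ replaced by $\mathcal{S}_1$ (respectively $\mathcal{S}_2$). First, check that $\tau_{xy}$ preserves each $\mathcal{S}_i$. Since $A=B$, $\tau_{xy}$ is an automorphism of $\mathcal{S}_{A,B,C,D}$. It fixes $z$ and the quantity $z' = C + xy - z$, and the invariant $z+z'+2x+2y-A+4$ is symmetric in $x$ and $y$. Hence both defining characters $\chi_p(z+2)$ and $\chi_p(z+z'+2x+2y-A+4)$ are unchanged, and $\tau_{xy}$ maps $\mathcal{S}_i$ to itself. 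Since $\tau_{xy}$ conjugates $V_1$ and $V_2$ into each other and commutes with $V_3$, the group $\Gamma$ is normal in $\Gamma_{xy}$ with quotient of order at most $2$. If $\Gamma_{xy}$ acts transitively on $\mathcal{S}_i$, then the quotient acts transitively on the set of $\Gamma$-orbits in $\mathcal{S}_i$. So there are at most two such orbits, and if there are two, $\tau_{xy}$ interchanges them.

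It therefore suffices to exhibit, for $p$ large independent of the parameters, a point of $\mathcal{S}_i$ fixed by $\tau_{xy}$. Such a point lies in a $\Gamma$-orbit that $\tau_{xy}$ stabilizes, which rules out the two-orbit alternative. The fixed points of $\tau_{xy}$ are the points $(x,x,z)$, which lie on the curve (\ref{eq:tauFixed}). When $A \neq 0$, Lemma \ref{lemma:TranspositionFixedPointsExist} already shows this curve is geometrically irreducible. When $A=B=0$, degeneracy gives $D = 2C+4$, which is exactly the exceptional case $D=-2C+4$ or $D=2C+4$ in that proof. Here I would redo the factorization argument: the line $Z=-2$ may split off, but $C\neq 0$ (since $C\neq\pm A$), so a residual irreducible component of bounded degree remains. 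By the Weil bound, this component has $\asymp p$ points.

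The main obstacle is that we need fixed points in \emph{each} $\mathcal{S}_i$ separately, not merely in $\mathcal{S}_{A,B,C,D}^*(p)$. On the curve $(x,x,z)$, the value of $\chi_p(z+2)$ decides membership for generic points, so we need many fixed points with $\chi_p(z+2)=1$ and many with $\chi_p(z+2)=-1$. My plan is to adjoin the equation $w^2=\epsilon(z+2)$, with $\epsilon=1$ or a fixed nonresidue, and count points on the resulting double cover of the fixed curve. The fixed curve is a double cover of $\mathbb{P}^1_Z$, with branch locus given by the discriminant $(x^2 - \dots)$ in $Z$, analogous to (\ref{eq:RamificationAnnoyingLemma}). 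Irreducibility of the fiber product holds as long as $Z=-2$ is not the only branch point shared with the cover $w^2=\epsilon(z+2)$; the cover $w^2=\epsilon(z+2)$ is also branched at $\infty$, whereas the fixed curve is not. So the fiber product should be geometrically irreducible, and the Weil bound will give about $p/2$ points of each type.

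Finally, we discard the $O(1)$ points with $z=-2$, those lying in the finite orbits $E(p)$, and those where the second character vanishes. The remaining points are fixed points of $\tau_{xy}$ in each $\mathcal{S}_i$, which completes the argument for $p$ sufficiently large.
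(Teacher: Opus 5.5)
Your reduction matches the paper's. Since $\Gamma$ is normal in $\Gamma_{xy}$ with quotient of order at most $2$, it suffices to find a $\tau_{xy}$-fixed point in each $\mathcal{S}_i$, and you get these by adjoining $w^2=\epsilon(Z+2)$ to the fixed curve and applying Weil. Your check that $\tau_{xy}$ preserves each $\mathcal{S}_i$ is a useful detail the paper leaves implicit.

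The gap is in the irreducibility of the fiber product. Your claim that the fixed curve is unbranched over $Z=\infty$ is false. As a quadratic in $X$, $(2-Z)X^2-2AX+(Z^2-CZ-D)$ has discriminant $4\bigl(A^2+(Z-2)(Z^2-CZ-D)\bigr)$, which is $4$ times a monic cubic in $Z$. So this double cover is ramified at $\infty$, exactly like $w^2=\epsilon(Z+2)$. The analogy with (\ref{eq:RamificationAnnoyingLemma}) misleads you: there the free coordinate was $X$ and the discriminant was quartic. Hence $\infty$ does not distinguish the two covers. Notice also that your argument never uses the hypothesis $C\neq\pm A$ at this step, which is a warning sign, because the conclusion is false without it.

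The missing computation is the paper's. Set $W=(2-Z)X-A$ and use $D=2C+4-\frac{A^2}{4}$ to get $W^2=(Z+2)q(Z)$, where $q(Z)=Z^2-(C+4)Z+\frac{A^2}{4}+2C+4$. The compositum $\overline{\mathbb{F}}_p(Z)\bigl(\sqrt{Z+2},\sqrt{(Z+2)q(Z)}\bigr)$ has degree $4$ if and only if $q$ is not a square in $\overline{\mathbb{F}}_p(Z)$, that is, if and only if $\operatorname{disc}(q)=C^2-A^2\neq 0$. So the distinguishing branch points are the roots of $q$, not $\infty$, and this is exactly where $C\neq\pm A$ enters. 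When $C=\pm A$ one has $q(Z)=\bigl(Z-\frac{C+4}{2}\bigr)^2$. Then $z+2$ is a square at every $\tau_{xy}$-fixed point with $z\neq\frac{C+4}{2}$, so your count of fixed points with $\chi_p(z+2)=-1$ would be zero.

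This computation also treats $A=0$ and $A\neq 0$ uniformly, so your case split is unnecessary. That case split was in any event confused. For $\tau_{xy}$ with $A=0$, the only possible linear factor is $Z-2$, which requires $D=-2C+4$. The factor $Z+2$ belongs to the curve fixed by $\operatorname{neg}_{xy}\circ\tau_{xy}$. With $D=2C+4$ and $C\neq 0$, no line splits off.
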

\begin{proof}
    As in the proof of Theorem \ref{comparisonOfGroupsThm}, since there is only one transposition, it suffices to show that each of $\mathcal{S}_1$ and $\mathcal{S}_2$ contains a point fixed by $\tau_{xy}.$ Such points in $\mathcal{S}_1$ correspond to solutions of the system 
    \begin{gather*}
    2X^2 + Z^2 - X^2Z - 2AX - CZ - D = 0\\
    Z = u^2 - 2\\
    Z\neq -2,\\
    \end{gather*}  
    while those in $\mathcal{S}_2$ correspond to solutions of the system 
    \begin{gather*}
        2X^2 + Z^2 - X^2Z - 2AX - CZ - D = 0\\
    Z = \omega u^2 - 2\\
    Z\neq -2,\\
    \end{gather*}
    where $\omega$ is a fixed quadratic nonresidue. We will show that these define (Zariski) open subsets of geometrically irreducible curves, from which the Weil bound will produce the points we need. These two systems are equivalent over $\overline{\mathbb{F}}_p$, so it
suffices to treat the first. We write the first equation as
\[
(2-Z)X^2-2AX+(Z^2-CZ-D)=0
\]
and set
\[
W=(2-Z)X-A.
\]
Using
\[
D=2C+4-\frac{A^2}{4},
\]
we then obtain
\[
\begin{split}
W^2
&=A^2-(2-Z)(Z^2-CZ-D)\\
&=(Z+2)\left(
Z^2-(C+4)Z+\frac{A^2}{4}+2C+4
\right).
\end{split}
\]
Let
\[
q(Z)=Z^2-(C+4)Z+\frac{A^2}{4}+2C+4.
\]
The discriminant of $q$ is
\[
(C+4)^2-4\left(\frac{A^2}{4}+2C+4\right)
=C^2-A^2,
\]
which is nonzero because $C\neq\pm A$. Hence $q(Z)$ is not a square in
$\overline{\mathbb{F}}_p(Z)$. Therefore, the first curve ramifies over the roots of $q(Z);$ as
$$Z = u^2 - 2$$
only ramifies above $-2$ and at infinity, the monodromy group acts transitively on the fiber product, establishing geometric irreiducibility.
\end{proof}

We are now in a position to adapt the endgame to our setting. We remark that $f_i$ is a perfect square polynomial for $i = 1, 2.$ Indeed, inputting our degeneracy condition into the formula for $f_1(x)$ yields
    \[\begin{split}
        f_1(x) &= x^4 - Ax^3 - \left(2C + 8 - \frac{A^2}{4}\right)x^2 + A(C+4)x + (C^2 + 8C + 16)\\
        &= \left(x^2 - \frac{A}{2}x - (C+4)\right)^2;
    \end{split}\]
a similar identity also holds for $f_2(y)$. Thus, we must remove equations involving $f_1$ and $f_2$ from systems in the endgame in order to apply the Weil bound. As $A = B$, we also have to remove the equation $f_3$ from consideration, but this is less of an issue as when moving through $C_3(z)$ we can use the map $\tau_{xy}\circ V_1$ due to Lemma \ref{lemma:Degenerate2OrbitsComparisonOfGroups}, as we have already done in the nondegenerate case of the endgame under $A = B$; this map acts transitively on $C_3(z)$ if $z$ is of maximal order. With these modifications made, we recover irreducibility of the relevant systems in the endgame, and can argue as there to establish:
\begin{lemma}\label{endgameDegenerate}
Suppose that $(A,B,C,D)$ is degenerate, normalized so
\[
A=B,\qquad 4D+A^2=8C+16,
\]
and that $C \neq \pm A.$
Then for $p$ sufficiently large (independent of the parameters) any point of order $\gg p^{1/2 + \delta}$ in either $\mathcal{S}_1$ or $\mathcal{S}_2$ connects to a point in the same set whose $x$- or $y$-coordinate has maximal order.
\end{lemma}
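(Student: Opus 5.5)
The plan is to mimic the proof of Proposition \ref{endgame}, replacing the nonresidue conditions (which are useless now) by conditions adapted to the degenerate geometry. Since $f_1$ and $f_2$ are perfect squares, the two-coloured orbit structure on $C_1(x)$ and $C_2(y)$ is rigid. By the proposition following Lemma \ref{elliptics}, $\langle V_2,V_3\rangle$ never acts transitively on $C_1(x)$ when $\kappa_1(x)\neq 0$ is a nonzero square. So we do not aim for $1$- or $2$-connecting points. We only aim to reach a point whose $x$- or $y$-coordinate has maximal order, as the lemma asserts.

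Start from $(x,y,z)$ in $\mathcal{S}_1$ or $\mathcal{S}_2$ with some coordinate of order $\gg p^{1/2+\delta}$. If that coordinate is $x$ or $y$, say $x$, move along the $D_1$-orbit in $C_1(x)$ and look for a point whose $y$-coordinate is $s+s^{-1}$ with $s$ a generator of $\mathbb{F}_p^\times$ or $\mu_{p+1}$. If instead the large coordinate is $z$, use $\tau_{xy}\circ V_1$ on $C_3(z)$; this is legitimate by Lemma \ref{lemma:Degenerate2OrbitsComparisonOfGroups}. Then search the $x$-coordinates of that orbit for one of maximal order. In both cases we stay in the same $\Gamma$-invariant set automatically, because we move only by elements of $\Gamma_{xy}$, which preserves each $\mathcal{S}_i$ ($\tau_{xy}$ preserves the defining characters since they are symmetric in $x,y$). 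After the comparison lemma, only $\Gamma$-moves are used.

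The counting is identical to Proposition \ref{endgame}. Use Möbius inversion over subgroups $K$ and the Weil bound on the curve
\[
\alpha_1\eta^{e_H}+\alpha_2\eta^{-e_H}+\alpha_3=\xi^{d_K}+\xi^{-d_K},
\]
now with no third equation $\omega u^2=f_i(y)$. In the elliptic case, reparametrize as before via $g_n,h_n$. The resulting main term is $|H|\varphi(p\mp1)/(2(p\mp1))$, which dominates the $O(p^{1/2+\epsilon})$ error once $|H|\ge p^{1/2+\delta}$.

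The main obstacle is geometric irreducibility of this two-cover fiber product. Lemma \ref{endgameIrreducibility} used the elliptic cover $E$ to supply an extra branch point, and that cover is now gone. Monodromy of the product of the covers $g$ and $h$ is transitive provided the branch points $\pm2\sqrt{\alpha_1\alpha_2}+\alpha_3$ of $h$ are not both in $\{\pm2\}$ (cf. \cite{BGS}, Lemma 12). With $\alpha_1\alpha_2=\kappa_1(x)$ and $\alpha_3$ rational in $x$, the condition $4\kappa_1(x)=(\pm2-\alpha_3)^2$ is a polynomial equation in $x$. I expect it to fail for only $O(1)$ values of $x$, but this must be checked, since $\kappa_1$ is now $\bigl(x^2-\tfrac A2x-(C+4)\bigr)^2/(x^2-4)^2$. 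Concretely, clearing denominators, I would verify that
\[
4\bigl(x^2-\tfrac A2x-C-4\bigr)^2-\bigl(\pm2(x^2-4)+2B+Cx\bigr)^2
\]
is not identically zero. It factors as a difference of squares, and each factor is visibly a nonzero polynomial unless $C=\pm A$ and the constant terms align, which is excluded. The same check, with $A$ and $C$ roles swapped, is needed for the $C_3(z)$ route with $\tau_{xy}\circ V_1$, where the relevant diagonalization is the square-root one from Section \ref{ConicsSection}. I expect this factorization to be the crux.

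The bounded set of bad $x$ (or $z$) values is then handled exactly as at the end of Proposition \ref{endgame}. First pass through the orbit to a non-forbidden coordinate of order $\ge p^{5/8}$, using the count of low-order elements. This is harmless because it only moves within one set $\mathcal{S}_i$.
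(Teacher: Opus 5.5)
Your proposal is correct and is essentially the paper's argument. You drop the nonresidue equations (since $f_1,f_2$ are squares), use $\tau_{xy}\circ V_1$ on $C_3(z)$, and verify by hand that $4\kappa_1(x)\neq(\pm2-\alpha_3)^2$; your difference-of-squares factorization does this cleanly, with linear factors $-(A+C)(x+2)$ and $(C-A)(x-2)$, which is exactly the ``manual'' check the paper's remark alludes to. The only slips are minor: irreducibility needs \emph{neither} branch point of $h$ to lie in $\{\pm2\}$, not merely ``not both'' (your check of both signs does give this), and on the $C_3(z)$ route $\kappa_3$ is not a perfect square, so the analogous check is not a literal $A\leftrightarrow C$ swap but a comparison of $z^3$-coefficients of $4f_3(z)$ and $(z+2)^2(\pm2(z-2)+A)^2$, namely $-4C$ versus $\pm4A$, which again succeeds because $C\neq\pm A$.
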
 
\begin{remark}
    In contrast, we cannot connect an arbitrary such point to one whose $z$-coordinate is of maximal order, and indeed we shall see below that any point in $\mathcal{S}_1$ whose $z$-coordinate is not $2$ will have $z$ of order at most $\frac{p\pm 1}{2}$. In order to prove Lemma \ref{endgameIrreducibility}, we needed to know that $\alpha_1\alpha_2\neq (\pm 2 - \alpha_3)^2$ as rational functions of $x$. For nondegenerate parameters, this was immediate, as $\kappa_1 = \alpha_1\alpha_2$ was not a perfect square rational function; in the degenerate case this may be shown manually for the functions $\alpha_i(x)$ which arise when taking the $y$-coordinates of the orbit under $D_1$ (equivalently, when taking the $x$-coordinates of the orbit under $D_2$), but fails for the functions which arise when taking the $z$-coordinates of said orbits.
\end{remark}

On conics $C_3(z)$ for $z$ of maximal order, the rotation $\tau_{xy}\circ V_1$ acts transitively, connecting the entire conic section, but on conics $C_1(x)$ or $C_2(y)$ with $x$ or $y$ hyperbolic or elliptic and of maximal order, no transposition intervenes to produce transitivity, and $\langle V_2, V_3\rangle$ (respectively, $\langle V_1, V_3\rangle$) splits $C_1(x)$ (respectively, $C_2(y)$) into two orbits. These orbits do not join up even after including the action by $V_1$ (respectively, $V_2$), and in fact one lies in $\mathcal{S}_1$ while the other lies in $\mathcal{S}_2.$ 

However, when $x$ is of maximal order (hyperbolic or elliptic), the two orbits can be identified in the coordinate systems of Lemmas \ref{hyperbolics} and \ref{elliptics} relatively simply, as we shall discuss below.

\begin{lemma}[Conic components and the sign of \(z+2\)]
\label{lemma:TwoPieceConicSignComponents}
Suppose that
\[
A=B\neq \pm C,\qquad 4D+A^2=8C+16.
\]
Let \(x\neq \pm2\) have maximal order, with $\kappa_1(x) \neq 0,$ and parameterize \(C_1(x)\) as in
Lemmas \ref{hyperbolics} and \ref{elliptics}, so that
\[
x=\xi+\xi^{-1}
\]
and \(D_1=V_3\circ V_2\) acts on the parameter \(t\) by
\[
t\mapsto \xi^2t.
\]
Then \(C_1(x)\cap\mathcal{S}_{A,B,C,D}^*(p)\) is the union of two
\(\langle V_2,V_3\rangle\)-orbits, and these two orbits are distinguished by
the extended quadratic character invariant defining \(\mathcal S_1\) and
\(\mathcal S_2\).

More explicitly, in the hyperbolic case \(t\in\mathbb F_p^*\), the two
\(\langle V_2,V_3\rangle\)-orbits on \(C_1(x)\) are the two cosets of
\((\mathbb F_p^*)^2\) in the \(t\)-coordinate. In this case,
\[
\chi_p(z+2)=1
\]
if and only if \(t\) lies in the nonsquare coset of \(\mathbb F_p^*\), while
\[
\chi_p(z+2)=-1
\]
if and only if \(t\) lies in the square coset.

In the elliptic case, write
\[
\lambda_x=\frac{x^2-\frac A2x-(C+4)}{x^2-4},
\]
so that, in the degenerate setting,
\[
\kappa_1(x)=\lambda_x^2.
\]
In the parameterization by \(\widetilde{C}_1(x)\), set
\[
u=\frac{\xi t}{\lambda_x}.
\]
Then \(u^{p+1}=1\), and the two \(\langle V_2,V_3\rangle\)-orbits on
\(C_1(x)\) are distinguished by the value of \(u^\frac{p+1}{2}\), and hence by the extended sign invariant defining the $\mathcal{S}_i$.
\[
\chi_p(z+2)=1
\]
if and only if \(u^{\frac{p+1}{2}}=\chi_p(\lambda_x)\), and
\[
\chi_p(z+2)=-1
\]
if and only if \(u^{\frac{p+1}{2}}=-\chi_p(\lambda_x)\).

The analogous statements hold for \(C_2(y)\), with the roles of \(x\) and
\(y\), and of \(D_1=V_3\circ V_2\) and \(D_2=V_3\circ V_1\), interchanged.
Thus one component of each such conic lies in \(\mathcal S_1\), and the other
lies in \(\mathcal S_2\).
\end{lemma}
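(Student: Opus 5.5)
The plan is to make the quadratic character $\chi_p(z+2)$ explicit in the parameter $t$ of Lemmas \ref{hyperbolics} and \ref{elliptics}. The key identity should be that $z+2$ is a perfect square up to the factor $\xi t$. From those lemmas, with $B=A$,
\[
z=\xi t+\frac{\kappa_1(x)}{\xi t}+\alpha,\qquad \alpha=\frac{-Ax-2C}{x^2-4}.
\]
A one-line computation gives
\[
\alpha+2=\frac{2x^2-Ax-2C-8}{x^2-4}=2\lambda_x .
\]
Combined with the factorization $f_1(x)=(x^2-\frac A2x-(C+4))^2$, which gives $\kappa_1(x)=\lambda_x^2$, this yields
\[
z+2=\xi t+\frac{\lambda_x^2}{\xi t}+2\lambda_x=\frac{(\xi t+\lambda_x)^2}{\xi t}.
\]
Everything else follows from this identity.

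\emph{Hyperbolic case.} Here $t,\xi\in\mathbb F_p^*$. Whenever $\xi t\neq-\lambda_x$, the identity gives $\chi_p(z+2)=\chi_p(\xi)\chi_p(t)$. Since $x$ has maximal order, $\xi$ generates $\mathbb F_p^*$ and is therefore a nonsquare, so $\chi_p(z+2)=-\chi_p(t)$. This is exactly the stated rule. Next, $D_1$ acts by $t\mapsto\xi^2t$, and $\xi^2$ generates $(\mathbb F_p^*)^2$. Hence the two $D_1$-orbits are precisely the two cosets of $(\mathbb F_p^*)^2$, each of size $\frac{p-1}{2}$.

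\emph{Elliptic case.} Set $u=\xi t/\lambda_x$. Then $N(u)=N(\xi)N(t)/\lambda_x^2=1$, because $N(t)=\kappa_1(x)=\lambda_x^2$, so $u\in\mu_{p+1}$. The identity becomes
\[
z+2=\lambda_x\,\frac{(1+u)^2}{u}.
\]
Put $w=1+u$. From $u^p=u^{-1}$ we get $w^p=w/u$, so $(1+u)^2/u=w^{p+1}=N(w)\in\mathbb F_p$. By Euler's criterion,
\[
\chi_p(N(w))=N(w)^{\frac{p-1}2}=w^{\frac{p^2-1}2}=(w^{p-1})^{\frac{p+1}2}=u^{-\frac{p+1}2}=u^{\frac{p+1}2},
\]
using $u^{\frac{p+1}{2}}=\pm1$. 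Hence $\chi_p(z+2)=\chi_p(\lambda_x)\,u^{\frac{p+1}2}$ whenever $u\neq-1$. Since $D_1$ multiplies $u$ by $\xi^2$, and $\xi^2$ generates the index-two subgroup of $\mu_{p+1}$, the two $D_1$-orbits are the fibres of $u\mapsto u^{\frac{p+1}2}$.

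\emph{Identifying the $\langle V_2,V_3\rangle$-orbits.} Because $\kappa_1(x)=\lambda_x^2$ is a nonzero square, the proposition of Section \ref{ConicsSection} on transitivity of $\langle V_2,V_3\rangle$ shows that $C_1(x)$ splits into exactly two $\langle V_2,V_3\rangle$-orbits. Each of these is therefore one of the two $D_1$-orbits just described. The computations above show that these two orbits carry opposite values of $\chi_p(z+2)$, apart from the single exceptional parameter value $\xi t=-\lambda_x$ (respectively $u=-1$), where $z=-2$. That exceptional point lies on $C_3(-2)$, so Lemma \ref{Parabolics} makes it a double fixed point of $V_1$ and $V_2$. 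Its sign under the extended invariant of Theorem \ref{QuadraticObstructions} is, by definition, the sign of its $V_3$-neighbour, and that neighbour lies in the same $\langle V_2,V_3\rangle$-orbit. So the extended invariant is constant on each orbit and takes opposite values on the two, placing one orbit in $\mathcal S_1$ and the other in $\mathcal S_2$.

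\emph{The conic $C_2(y)$.} This case follows by the symmetry $\tau_{xy}$, which is available since $A=B$. It exchanges $x\leftrightarrow y$ and $D_1\leftrightarrow D_2$ and fixes $z$, so it preserves both defining characters of the $\mathcal S_i$.

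\emph{Main obstacle.} The expected difficulty is spotting the identity $\alpha+2=2\lambda_x$, which makes $z+2$ a square up to the factor $\xi t$. Once that is in hand, the only delicate step is the Euler-criterion sign computation in $\mathbb F_{p^2}$, together with a check that the sign conventions (the choice of $\xi$ versus $\xi^{-1}$, and of $\lambda_x$ versus $-\lambda_x$) match the statement.
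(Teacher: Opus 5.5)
Your proof is correct and follows essentially the same route as the paper: both rest on the identity $z+2=(\xi t+\lambda_x)^2/(\xi t)$. From it, the hyperbolic case is read off from $\chi_p(\xi t)$, and the elliptic case comes from an Euler-criterion/Frobenius computation in $\mathbb{F}_{p^2}$; your packaging as $z+2=\lambda_x N(1+u)$ is a tidier version of the paper's direct computation of $(z+2)^{(p-1)/2}$. You are also more careful than the paper in three places: you explicitly identify the $\langle V_2,V_3\rangle$-orbits with the $D_1$-orbits, you handle the single point with $z=-2$ through the extended invariant, and you derive the $C_2(y)$ case from the symmetry $\tau_{xy}$.
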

\begin{proof}
    For points on $C_1(x)$ with $x$ hyperbolic or elliptic of maximal order, applying the parameterization from Lemmas \ref{hyperbolics} and \ref{elliptics} gives
    \[\begin{split}
        z + 2 &= (x^2 - 4)^{-2}\left(\xi t(x^2-4)^2 + (2(x^2-4) - Ax - 2C)(x^2 - 4) + \frac{f_1(x)}{\xi t}\right)\\
        &=(x^2 - 4)^{-2}\frac{1}{\xi t}\left(\xi^2 t^2 (x^2-4)^2 + 2\left(x^2 - \frac{A}{2}x - (C+4)\right)\xi t + \left(x^2 - \frac{A}{2}x - (C+4)\right)^2\right)\\
        &= (x^2 - 4)^{-2}\frac{1}{\xi t}\left(\xi t (x^2 - 4) + \left(x^2 - \frac{A}{2}x - (C+4)\right)\right)^2.
    \end{split}\]
    In the hyperbolic case, we are done: $z + 2$ is a quadratic residue essentially if and only if $\xi t$ is, if and only if $t$ is a quadratic nonresidue. As the action in these coordinates by $V_3\circ V_2$ takes $t\mapsto \xi^2 t,$ this perfectly distinguishes the two components of $C_1(x).$ 

    In the elliptic case, we now note that
    \[\begin{split}
        (z+2)^\frac{p-1}{2} &= (x^2 - 4)^{-(p-1)}\frac{\left(\xi t (x^2 - 4) + \left(x^2 - \frac{A}{2}x - (C+4)\right)\right)^{p-1}}{(\xi t)^{\frac{p-1}{2}}}\\
        &=\frac{\left(\xi t (x^2 - 4) + \left(x^2 - \frac{A}{2}x - (C+4)\right)\right)^{p}}{(\xi t)^{\frac{p-1}{2}}\left(\xi t (x^2 - 4) + \left(x^2 - \frac{A}{2}x - (C+4)\right)\right)}\\
        &= \frac{(\xi t)^p (x^2 - 4) + \left(x^2 - \frac{A}{2}x - (C+4)\right)}{(\xi t)^{\frac{p-1}{2}}\left(\xi t (x^2 - 4) + \left(x^2 - \frac{A}{2}x - (C+4)\right)\right)}\\
        &= \frac{\frac{\kappa_1(x)}{\xi t} (x^2 - 4) + \left(x^2 - \frac{A}{2}x - (C+4)\right)}{(\xi t)^{\frac{p-1}{2}}\left(\xi t (x^2 - 4) + \left(x^2 - \frac{A}{2}x - (C+4)\right)\right)}\\
        &= \frac{ \left(x^2 - \frac{A}{2}x - (C+4)\right)^2(x^2 - 4)^{-1} + \xi t\left(x^2 - \frac{A}{2}x - (C+4)\right)}{(\xi t)^{\frac{p+1}{2}}\left(\xi t (x^2 - 4) + \left(x^2 - \frac{A}{2}x - (C+4)\right)\right)}.\\
    \end{split}\]
    Now, as
    $$(\xi t)^{p+1} = \kappa_1(x) = \frac{\left(x^2 - \frac{A}{2}x - (C+4)\right)^2}{(x^2-4)^2},$$
    we have that
    $$(\xi t)^\frac{p+1}{2} = \sigma\frac{x^2 - \frac{A}{2}x - (C+4)}{x^2-4}$$
    for $\sigma = \pm 1,$ where $\sigma = 1$ if and only if $u \coloneq \frac{\xi t}{\lambda_x} = a^2$ for some $a\in\mu_{p+1}.$ In this notation, we get
    \[\begin{split}
        (z+2)^\frac{p-1}{2} &= \frac{ \left(x^2 - \frac{A}{2}x - (C+4)\right)^2(x^2 - 4)^{-1} + \xi t\left(x^2 - \frac{A}{2}x - (C+4)\right)}{\sigma\left(x^2 - \frac{A}{2}x - (C+4)\right)(x^2-4)^{-1}\left(\xi t (x^2 - 4) + \left(x^2 - \frac{A}{2}x - (C+4)\right)\right)}\\
        &= \frac{\left(x^2 - \frac{A}{2}x - (C+4)\right) + \xi t(x^2 - 4)}{\sigma \left(\xi t (x^2 - 4) + \left(x^2 - \frac{A}{2}x - (C+4)\right)\right)}\\
        &= \frac{1}{\sigma}\\
        &= \sigma.
    \end{split}\]
    We find that $z + 2$ is a quadratic residue if and only if $\sigma = 1$, which occurs if and only if $u$ is a square in $\mu_{p+1}$. From this we can obtain the classification stated in the lemma. As the action in these coordinates by $V_3\circ V_2$ takes $t\mapsto \xi^2 t,$ this again perfectly distinguishes the two components of $C_1(x).$ 
\end{proof} 
    
    To prove our result, it suffices now to prove a cage-connectivity-type result for each of $\mathcal{S}_1$ and $\mathcal{S}_2$, say allowing us to connect the component of $C_1(x)$ in $\mathcal{S}_i$ to $C_2(y)$ by way of some $C_3(z)$ lying in $\mathcal{S}_i$ for $i = 1, 2$ assuming $x$ and $y$ have maximal order. A complication is that, writing 
    $$z = \zeta + \zeta^{-1},$$
    we have
    $$z + 2 = \zeta + 2 + \zeta^{-1} = \zeta^{-1}(\zeta + 1)^2,$$
    so that for hyperbolic $z$, $z + 2$ is a quadratic residue if and only if $\zeta$ is. For elliptic $z$, we can compute that
    $$(z+2)^\frac{p-1}{2} = \frac{(\zeta + 1)^p}{\zeta^\frac{p-1}{2}(\zeta + 1)} = \frac{1 + \zeta^{-1}}{\zeta^\frac{p-1}{2}(\zeta + 1)} = \frac{\zeta + 1}{\zeta^\frac{p+1}{2}(\zeta + 1)} = \zeta^{-\frac{p+1}{2}},$$
    so that once again $z+2$ is a quadratic residue if and only if $\zeta$ is a square in $\mu_{p+1}.$ In either case, we lack transitive action on $C_3(z)$ whenever $(x, y, z)\in\mathcal{S}_1.$ Thus, we shall have to treat the two components separately.
\begin{lemma}[Cage connectivity for \(\mathcal{S}_2\)]
\label{lemma:TwoPieceS2CageConnectivity}
Suppose that
\[
A=B\neq \pm C,\qquad 4D+A^2=8C+16.
\]
For all sufficiently large primes $p$ (independent of the parameters),
the cage inside $\mathcal{S}_2$ is connected. Explicitly, if $x$ and $y$ are coordinates of maximal order for which
\[
C_1(x)\cap \mathcal{S}_2\neq \emptyset,\qquad
C_2(y)\cap \mathcal{S}_2\neq \emptyset,
\]
then, except possibly for a bounded number of exceptional pairs $(x,y)$, there is a value $z$ of maximal order such that
\[
C_1(x)\cap C_3(z)\cap \mathcal{S}_2\neq \emptyset
\]
and
\[
C_2(y)\cap C_3(z)\cap \mathcal{S}_2\neq \emptyset.
\]
Consequently the corresponding maximal-order conic components in
$\mathcal{S}_2$ lie in a single $\Gamma_{xy}$ orbit.
\end{lemma}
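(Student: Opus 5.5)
We adapt the proof of Proposition \ref{cageConnectivity}, replacing each quadratic-residue condition by one that encodes membership in $\mathcal{S}_2$. Fix $x$ and $y$ of maximal order. By Lemma \ref{lemma:TwoPieceConicSignComponents}, $C_1(x)\cap\mathcal{S}_2$ is exactly the $\langle V_2,V_3\rangle$-orbit on which $\chi_p(z+2)=-1$, and similarly for $C_2(y)$. Thus we need a $z$ of maximal order satisfying three conditions:
\begin{itemize}
\item[(i)] some point $(x,\tilde y,z)\in C_1(x)$ exists;
\item[(ii)] some point $(\tilde x,y,z)\in C_2(y)$ exists;
\item[(iii)] $\chi_p(z+2)=-1$.
\end{itemize}
Condition (iii) places both intersection points in $\mathcal{S}_2$, since the invariant $\chi_p(z+2)$ depends only on $z$ away from $z=-2$.

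The key observation is that (iii) is automatic for maximal-order $z$. Indeed, $\chi_p(z+2)$ equals the quadratic character of $\zeta$ (hyperbolic case) or whether $\zeta$ is a square in $\mu_{p+1}$ (elliptic case), where $z=\zeta+\zeta^{-1}$. A generator of $\mathbb{F}_p^*$ or of $\mu_{p+1}$ is never a square. Moreover, on $C_3(z)$ with $z$ of maximal order, the map $\tau_{xy}\circ V_1\in\Gamma_{xy}$ acts transitively, so the whole conic is connected. Hence the two endpoints are joined inside $\mathcal{S}_2$ by a $\Gamma_{xy}$-path.

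It therefore remains to find a maximal-order $z$ satisfying (i) and (ii). By the quadratic formula, as in Proposition \ref{cageConnectivity}, these become $z^2+A_1z-B_1v^2=C_1$ and $z^2+A_2z-B_2w^2=C_2$. Here $A_1,B_1,C_1$ are rational in $x$, and $A_2,B_2,C_2$ are rational in $y$. We add $z=s^\ell+s^{-\ell}$ and sieve over $\ell$ by M\"obius inclusion--exclusion, exactly as in Proposition \ref{endgame}. We drop the equation $\omega u^2=f_3(z)$ altogether, which is legitimate because $\tau_{xy}\circ V_1$ already gives transitivity. The elliptic case is reparameterized over $\mathbb{F}_p$ using the polynomials $g_n,h_n$.

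The Weil bound then produces $\gg p^{1-\epsilon}$ suitable $z$, provided the fiber product of the two double covers with $s\mapsto s^\ell+s^{-\ell}$ is geometrically irreducible. This irreducibility check is the main obstacle. We must show, for all but $O(1)$ values of $x$ and of $y$, three things:
\begin{itemize}
\item[(a)] each double cover is genuinely ramified, i.e. $A_i^2+4C_i\neq 0$;
\item[(b)] neither double cover ramifies at $\pm2$;
\item[(c)] the two branch-point pairs differ.
\end{itemize}
For (b), the condition that $c=\pm2$ is a branch point reduces, after clearing $x^2-4$, to a polynomial in $x$ of degree at most $2$. In the nondegenerate case this polynomial was nonzero by nondegeneracy. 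Here it is degenerate exactly when the $B=\pm C$ or $A=\pm C$ condition is paired with the relevant form of degeneracy. Since $A=B\neq\pm C$ and our degeneracy involves only the $z$-fiber at $-2$, we expect the polynomial for $c=-2$ may vanish identically. If so, I would instead use $c=2$: ramification of the parameterizing cover at $2$ alone is enough, provided at least one double cover has a branch point distinct from $\pm2$ and from the other cover's branch points.

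Condition (c) fails only if $A_1=A_2$ and $C_1=C_2$ as functions. By the computation in Proposition \ref{cageConnectivity}, this forces the Cayley parameters, which are excluded. Exceptional $x$ or $y$ are handled as in the endgame: we first move within the orbit to a non-exceptional maximal-order coordinate in the same $\mathcal{S}_2$-component. Finally, Lemma \ref{lemma:Degenerate2OrbitsComparisonOfGroups} transfers the result from $\Gamma_{xy}$ to $\Gamma$.
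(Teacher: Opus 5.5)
Your argument is correct in outline and takes a leaner route than the paper's. The paper encodes membership in $\mathcal{S}_2$ by adding the cover $z=\omega u^2-2$. That cover is branched only at $-2$ and $\infty$, which are also branch points of $z=t^\ell+t^{-\ell}$. So the paper must restrict the sieve to odd $\ell$ and use that $\sigma_\infty^2$ acts as two odd cycles on the $t$-sheets and trivially on the $u$-sheets.

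You instead note that a generator of $\mathbb{F}_p^\times$ or $\mu_{p+1}$ is a non-square. Hence every maximal-order $z$ already has $\chi_p(z+2)=-1$, and all of $C_3(z)$ lies in $\mathcal{S}_2$. The ordinary M\"obius sieve over all $\ell\mid p-1$, using only the $t$-cover and the two double covers, then suffices. The main terms agree: $\frac{p-1}{8}\sum_{\ell\mid p-1}\mu(\ell)/\ell$ and $\frac{p-1}{16}\sum_{\ell\mid p-1,\,\ell\text{ odd}}\mu(\ell)/\ell$ both equal $\varphi(p-1)/8$.

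Your suspicion about $c=-2$ is right, and the polynomial in fact vanishes identically. Under $A=B$ and $4D+A^2=8C+16$, the fiber $C_3(-2)$ is the double line $(x+y-\frac A2)^2=0$. So the two double covers are branched at $\{-2,r(x)\}$ and $\{-2,r(y)\}$, where $r(w)=2(w^2-Aw-2C-4)/(w^2-4)$. The paper's proof claims these covers can be made unramified at $\pm2$, which is not literally true, but the shared point $-2$ is harmless in both arguments.

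What actually gives irreducibility is that each double cover has a private branch point, $r(x)$ resp.\ $r(y)$, lying outside $\{\pm2,\infty\}$ and distinct from the other. These give independent sheet swaps on top of the transitive monodromy of the $t$-cover. Your ``at least one double cover'' should read ``each'', although your conditions (a), (c) and non-ramification at $2$ together do supply this.

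The step that is wrong as written is your justification of (c). Because $A=B$, the two branch functions are the same rational function $r$, so (c) does not reduce to the Cayley computation of Proposition~\ref{cageConnectivity}. It fails exactly when $r(x)=r(y)$, that is, when $(y-x)\bigl(Axy+2C(x+y)+4A\bigr)=0$. This is a curve of pairs, not $O(1)$ values of $x$ and of $y$.

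On the diagonal the failure is harmless, since $\tau_{xy}$ makes (ii) a consequence of (i). On the other branch one computes $(y^2-4)/(x^2-4)=4(C^2-A^2)/(Ax+2C)^2$. So when $C^2-A^2$ is a non-residue mod $p$, the discriminants in (i) and (ii) have opposite quadratic characters. Then (i) and (ii) can hold together only for $z\in\{-2,r(x)\}$, and the sieve produces nothing.

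The correct statement is that, for each fixed $x$, only $O(1)$ values of $y$ are bad, and those pairs must be joined through an intermediate conic. This is what the lemma's exceptional-pairs clause permits, read as ``bounded for each $x$''. With this correction your proof goes through.
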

\begin{proof}
    As in the nondegenerate case, we will achieve this by applying the Weil bound and sieving. As in Subsection \ref{cageConnectivitySubsect}, those choices of (say, hyperbolic) $z$ of order dividing $(p-1)/\ell$ for which $z+2$ is a quadratic (non)residue are given by solutions to the system
    \[\begin{split}
    &z = t^\ell + t^{-\ell}\\
    &z = \omega u^2 - 2\\
    &z^2 + A_1 z - B_1 v^2 = C_1\\
    &z^2 + A_2 z - B_2w^2 = C_2\\
\end{split}\]

where $\omega$ is a fixed quadratic nonresidue and $A_1, A_2, B_1, B_2, C_1,$ and $C_2$ are polynomials in $x$ and $y$. As we are working in $\mathcal{S}_2,$ we already know from Lemma \ref{lemma:TwoPieceConicSignComponents} that if
$$z = \zeta + \zeta^{-1},$$
then the parameter \(\zeta\) must be a quadratic nonresidue in $\mathbb{F}_p^\times$, whence only odd values of \(\ell\) contribute to the sieve for maximal-order \(z\). Thus, we may restrict our attention to those equations where $\ell$ is odd. We consider each equation as defining a ramified cover of $\mathbb{P}_z^1,$ and the system as defining a projective curve which is the fiber product of these ramified covers. As in Subsection \ref{cageConnectivitySubsect}, by removing a bounded number of values of $x$ and $y$, and an additional bounded number of pairs $(x, y)$ from consideration, we can force the last two equations not to ramify at $z=\pm2$ and not to have the same ramification points; the values of $x$ and $y$ and pairs $(x, y)$ we exclude may then be connected to the cage via the endgame as in Subsection \ref{cageConnectivitySubsect}.

We must treat the first two equations with a bit more care, however: the cover associated to the first equation ramifies at $z = 2,-2, \infty$ and the cover associated to the second equation ramifies at $z = -2, \infty.$ As in \cite{BGS}, Lemma 12 or Subsection \ref{cageConnectivitySubsect}, the branch cycles for the first equation are given by

\begin{equation}
\begin{cases} \sigma_{-2} = (12)(34)\dots(2\ell-1\, 2\ell)\\
\sigma_2 = (1\,2\ell)(23)\dots(2\ell-2\, 2\ell-1)\\
\sigma_\infty = (135\dots2\ell-1)(246\dots2\ell),\end{cases}
\end{equation}

while the second equation defines a two sheeted cover, whose branch cycles simply interchange the two sheets. For ease of reference, we will label these sheets $a$ and $b$. As we have assumed $\ell$ to be odd, the action of $\sigma_\infty$ on sheets $1$ through $2\ell$ may be written as the product of two disjoint cycles of odd order, so that the same holds true of the action of $\sigma_\infty^2$ on these sheets. In contrast, $\sigma_\infty^2$ acts trivially on sheets $a$ and $b$ of the cover associated to the second equation. Thus, the product of monodromy groups will act transitively on the sheets of the fiber product: from a starting sheet $(i, c)$ with $i\in\{1, 2, \dots, 2\ell\}$ and $c = a, b,$ we can get to the sheet $(j, d)$ by first applying either $\sigma_2$ or $\sigma_{-2}$ to ensure that the parities of $i$ and $j$ and the values of $c$ and $d$ match, then acting by powers of $\sigma_\infty^2.$ In this case, the Weil bound implies that the system has $p + O(\ell\sqrt{p})$ solutions $(z, t, u, v, w);$ as the projection down to $z$ is generically $16\ell$-to-1, we get 
$$\frac{p}{16\ell} + O(\sqrt{p})$$
possible values of $z$.

Sieving now over odd divisors of $p-1$, and noting that 
$$z = t^\ell + t^{-\ell}$$
for even values of $\ell$ cannot yield the desired sign invariant, we find that the number of values of $z$ of maximal order connecting $C_1(x)$ to $C_2(y)$ is

$$\frac{p-1}{16}\sum\limits_{\substack{d|(p-1)\\ d\text{ odd}}}\frac{\mu(d)}{d} + O_\epsilon\left(p^{\frac{1}{2} + \epsilon}\right).$$
Factoring $p-1$ as $2^km$ for $m$ odd, we may simplify the sum to
$$\frac{2^k\varphi(m)}{16} + O_\epsilon\left(p^{\frac{1}{2} + \epsilon}\right) =\frac{\varphi(p-1)}{8} + O_\epsilon\left(p^{\frac{1}{2} + \epsilon}\right),$$
which is positive for $p$ sufficiently large as before. 
\end{proof} 

At this point, we have shown that for a density-one set of primes, $\mathcal{S}_2$ is connected. Let us now turn to $\mathcal{S}_1$. We first assume that $A = B \neq 0;$ in this setting the presence of parabolics makes connectivity relatively more straightforward:

\begin{lemma}[Cage connectivity for \(\mathcal{S}_1\) when \(A\neq0\)]
\label{lemma:TwoPieceS1CageConnectivityNonzeroA}
Suppose that
\[
A=B\neq 0,\qquad C\neq \pm A,\qquad 4D+A^2=8C+16.
\]
Then, for all sufficiently large primes $p$ in the density-one set under consideration, every sufficiently high-order point of $\mathcal{S}_1$ is connected, within
$\mathcal{S}_1$, to the parabolic conic section $C_3(2),$ which itself forms a connected subgraph of $\mathcal{S}_1.$
\end{lemma}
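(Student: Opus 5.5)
The plan is to use $C_3(2)$ as the hub. Because $A=B\neq 0$, we have $A\neq -B$. The analogue of Lemma \ref{Parabolics} for the conic $C_3(z)$ then applies at $z=2$: this is the case ``$B\neq -C$ for $x=2$'' with the linear coefficients $A,B$ in place of $B,C$. So $C_3(2)$ is a parabola with exactly $p$ points, $\langle V_1,V_2\rangle$ acts transitively on it, and $C_3(2)$ is connected. Every point of $C_3(2)$ has $z+2=4$, so $\chi_p(z+2)=1$. We should also check, as in the proof of Theorem \ref{QuadraticObstructions}, that the point cannot be forced into $\mathcal S_2$ through the second character. It would then follow that $C_3(2)\cap\mathcal S^*_{A,B,C,D}(p)\subseteq\mathcal S_1$, apart from boundedly many points lying in $E(p)$. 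This gives the second assertion of the lemma.

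Next, by Lemma \ref{endgameDegenerate}, any point of order $\gg p^{1/2+\delta}$ in $\mathcal S_1$ connects within $\mathcal S_1$ to a point whose $x$- or $y$-coordinate has maximal order. By the $x\leftrightarrow y$ symmetry we may take it to be $x$. By Lemma \ref{lemma:TwoPieceConicSignComponents}, the $\mathcal S_1$-component of $C_1(x)$ is exactly the set of points with $\chi_p(z+2)=1$. Hence any point of $C_1(x)\cap C_3(2)$ automatically lies in that component, and such a point links the component to the hub. Substituting $z=2$ into the surface equation and using $4D+A^2=8C+16$, the condition $C_1(x)\cap C_3(2)\neq\emptyset$ becomes
\[
(2x+A)^2-4\bigl(x^2-Ax+4-2C-D\bigr)=8Ax+16C
\]
being a square. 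Since $A\neq0$, this discriminant is a nonconstant linear function of $x$. So it suffices to connect our point, within $\mathcal S_1$, to some maximal-order $x'$ with $\chi_p(8Ax'+16C)\in\{0,1\}$.

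For this I would rerun the sieve of Lemma \ref{lemma:TwoPieceS2CageConnectivity}. Starting from the $\mathcal S_1$-component of a maximal-order conic (or directly from the high-order Dehn-twist orbit of the original point), parametrize the $x$-coordinates of that component. Then impose the system
\[
x=s^{\ell}+s^{-\ell},\qquad 8Ax+16C=v^2,
\]
together with the curve equations describing membership in the component, and sieve over $\ell\mid p\pm1$ with Möbius inversion. The extra double cover $8Ax+16C=v^2$ ramifies only at $x=-2C/A$ and at $\infty$.

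The main obstacle is geometric irreducibility of this fiber product. There are two sources of trouble. First, the branch point $-2C/A$ may coincide with $\pm2$, which happens when $C=\mp A$, excluded here, but degenerate coincidences with the other covers must still be ruled out. Second, both this cover and the $s^\ell$ cover ramify at $\infty$, and the restriction to the $\mathcal S_1$-component is a parity condition on the parameter (the square coset of $t$). This could make the monodromy intransitive, exactly as the odd-$\ell$ restriction did for $\mathcal S_2$. I would first try the same $\sigma_\infty^2$ and $\sigma_{\pm2}$ bookkeeping used in Lemma \ref{lemma:TwoPieceS2CageConnectivity}, using the branch point $-2C/A\neq\pm2$ to swap the sheets of the new double cover independently. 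I would discard the boundedly many bad values of the base coordinate and handle those points through the endgame detour, as in Proposition \ref{endgame}. The main term of the sieve is then a positive multiple of $\varphi(p\pm1)$ against an error of $O_\epsilon(p^{1/2+\epsilon})$. This yields the desired $x'$ for all sufficiently large $p$, and hence connectivity of $\mathcal S_1$ to $C_3(2)$.
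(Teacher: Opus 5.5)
Your proposal is correct and follows the paper's proof. Both use $C_3(2)$ as a connected hub of $p$ points lying in $\mathcal S_1$, both observe that a maximal-order conic meets $C_3(2)$ exactly when $8Ax+16C$ (or $8Ay+16C$) is a square, and both run a modified endgame sieve with the extra double cover $8Ay+16C=u^2$, whose irreducibility comes from the branch point $-2C/A\neq\pm2$.

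The parity obstruction you worry about does not arise here. Unlike the $\mathcal S_2$ case, the new double cover has a finite branch point $-2C/A$ that, for all but boundedly many base values, no other cover shares. So the finite branch cycles already generate a transitive group, which is exactly the paper's one-line justification.
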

\begin{proof}
We first note that $C_3(2)\cap\mathcal{S}^*\subset\mathcal{S}_1$ as for points in $C_3(2)$ we have $z + 2 = 4$, a quadratic residue. As we have assumed $A = B \neq 0,$ so that $A \neq -B$, Lemma \ref{Parabolics} implies that $C_3(2)$ is a single orbit, consisting of exactly $p$ points. For a fixed value of $x$ (respectively, $y$), there will be a point $(x, y, 2)$ on $\mathcal{S}_{A, B, C, D}^*(p)$ if and only if $8Ax + 16C$ (respectively, $8Ay + 16C$) is zero or a quadratic residue, by using $A = B$ and $4D + A^2 = 8C + 16$ in (\ref{parabolicCondition}), keeping in mind that (\ref{parabolicCondition}) is written with $x = \pm 2$ rather than $z = 2$ so that the coefficients $A, B,$ and $C$ must be appropriately permuted. Now, if $x$ and $y$ are of maximal order and for some $u, v\in\mathbb{F}_p$ we have
$$8Ax + 16C = u^2;$$
$$8Ay + 16C = v^2$$
then $C_1(x)\cap\mathcal{S}_1$ and $C_2(y)\cap\mathcal{S}_1$ are connected subgraphs, each of which contains a point with $z = 2$, and thus connect to each other by way of $C_3(2)$. To connect up all of $\mathcal{S}_1,$ we will need to modify the endgame to connect points of large order to points whose $x$ or $y$ order is maximal and for which $8Ax + 16C$ (respectively, $8Ay + 16C$) is a quadratic residue. This is rather straightforward: Starting from a point $(x, y, z)$ with $x$ of order at least $p^{\frac{1}{2} + \delta}$ the relevant system of equations is
$$y = t^d + t^{-d}$$
$$y = \alpha_1 s^e + \alpha_2 s^{-e} + \alpha_3$$
$$8Ay + 16C = u^2.$$
As the cover associated to the last equation ramifies over $\infty$ and 
$$y = -\frac{2C}{A}$$ 
and since we have assumed $C \neq \pm A$ and $A \neq 0$ we have that
$$-\frac{2C}{A} \neq \pm 2$$
and irreducibility follows, possibly excepting a small number of values of $x$, $y$, and pairs $(x, y)$ as in the endgame. Thus, we can sieve as before, and connectivity of $\mathcal{S}_1$ follows.
\end{proof}

If, however, we have $A = 0$, this approach can fail, as now we also have $A = -B$, so that $C_3(2)$ is nonempty if and only if $C$ is a quadratic residue mod $p$, and in this case $C_3(2)$ breaks into two orbits. Fortunately, we have access to another map, $\operatorname{neg}_{xy} :(x,y, z)\mapsto (-x, -y, z)$. We will first enlarge the group under consideration to $\Gamma'$, before transferring transitivity to $\Gamma_{xy}$, and from there to $\Gamma$ via Lemma \ref{lemma:Degenerate2OrbitsComparisonOfGroups}.
\begin{lemma}[Transitivity on $\mathcal{S}_1$ when \(A=B=0\), \(C\neq0\)]
\label{lemma:TwoPieceS1CageConnectivityAZero}
Suppose that
\[
A=B=0,\qquad C\neq0,\qquad D=2C+4.
\]
Then, for all sufficiently large primes $p$ in the density-one set under consideration, $\Gamma$ acts transitively on $\mathcal{S}_1$.
\end{lemma}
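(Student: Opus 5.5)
The plan is to prove transitivity on $\mathcal{S}_1$ first for the full group $\Gamma'$, and then descend from $\Gamma'$ to $\Gamma_{xy}$ and from $\Gamma_{xy}$ to $\Gamma$. Since $A=B=0$, the group $\Gamma'$ contains $\operatorname{neg}_{xy}$, $\tau_{xy}$ and $\operatorname{neg}_{xy}\circ\tau_{xy}$. Lemma \ref{lemma:DegenerateInheritedReductions} already supplies the opening and middlegame inside $\mathcal{S}_1$. Lemma \ref{endgameDegenerate} (with $C\neq\pm A$, since $C\neq 0$) connects every point of order $\gg p^{1/2+\delta}$ in $\mathcal{S}_1$ to a point of $\mathcal{S}_1$ whose $x$- or $y$-coordinate has maximal order. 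By Lemma \ref{lemma:TwoPieceConicSignComponents}, for such $x$ the set $C_1(x)\cap\mathcal{S}_1$ is a single $\langle V_2,V_3\rangle$-orbit, and similarly for $C_2(y)$. So everything reduces to a cage-connectivity statement: any two such maximal-order components in $\mathcal{S}_1$ must be linked through some conic $C_3(z)$ contained in $\mathcal{S}_1$.

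The difficulty is that in $\mathcal{S}_1$ we have $z=\zeta+\zeta^{-1}$ with $\zeta$ a square, so $D_3$ never has maximal order there. To get around this, I would use the square roots of $D_3$ available from Section \ref{ConicsSection}. As computed there, $\tau_{xy}\circ V_1$ acts on the parameter of $C_3(z)$ by multiplication by $\chi$, where $\chi^2=\zeta$, and $\operatorname{neg}_{xy}\circ\tau_{xy}\circ V_1$ acts by multiplication by $-\chi$.
\begin{itemize}
\item If $p\equiv 3\pmod 4$, I take $z$ hyperbolic with $\zeta$ of order $\frac{p-1}{2}$, which is odd. Then $\chi$ can be chosen of order $\frac{p-1}{2}$, and $-\chi$ has order $p-1$, so $\langle V_1,V_2,\operatorname{neg}_{xy}\circ\tau_{xy}\rangle$ acts transitively on $C_3(z)$.
\item If $p\equiv 1\pmod 4$, I do the same with $z$ elliptic, using that $\frac{p+1}{2}$ is odd.
\end{itemize}
The sieve is then modelled on Lemma \ref{lemma:TwoPieceS2CageConnectivity}. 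It runs over odd $\ell$, using the system $z=t^\ell+t^{-\ell}$, $z=u^2-2$, together with the two quadrics expressing $C_1(x)\cap C_3(z)\neq\emptyset$ and $C_2(y)\cap C_3(z)\neq\emptyset$. In the elliptic case I rewrite it over $\mathbb{F}_p$ via the polynomials $g_n,h_n$ of (\ref{ellipticsModificationPolys}). The monodromy argument there carries over: for odd $\ell$, $\sigma_\infty^2$ acts as two odd cycles on the first cover and trivially on the double cover ramified at $-2,\infty$. After discarding $O(1)$ values of $x$ and $y$, and $O(1)$ pairs, so that the quadrics avoid ramification at $\pm2$ and have distinct branch points, this gives geometric irreducibility. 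Weil's bound and Möbius inversion then produce $\gg\varphi(p-1)$ or $\gg\varphi(p+1)$ admissible $z$ for large $p$. I still need to check that the points found lie in $\mathcal{S}_1$ for the second invariant $\chi_p(z+z'+2x+2y-A+4)$. This should follow from invariance of $\mathcal{S}_1$ and $\mathcal{S}_2$ under $\Gamma$, since the component of $C_1(x)$ meeting $C_3(z)$ is then forced to be the $\mathcal{S}_1$ one. Alternatively, when $\chi_p(C)=1$ there is the cheaper route through $C_3(2)\subset\mathcal{S}_1$: its two $D_3$-lines are swapped by $\tau_{xy}$ (the analogue of the remark after Lemma \ref{Parabolics}), and the condition $8Ax+16C=16C\in(\mathbb{F}_p^*)^2$ holds automatically.

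For the descent, I would use the argument of Lemma \ref{lemma:RemovalOfH} to pass from $\Gamma'$ to $\Gamma_{xy}$, because $\langle\Gamma_{xy},\operatorname{neg}_{xy}\rangle=\Gamma_{xy}\times\langle\operatorname{neg}_{xy}\rangle$. So it suffices that $\operatorname{neg}_{xy}$ stabilizes some $\Gamma_{xy}$-orbit in $\mathcal{S}_1$. For this I take a point of $\mathcal{S}_1$ with $z=\chi+\chi^{-1}$ and $4\mid\operatorname{ord}(\chi)$; it exists by the same Weil-bound count, and only requires $\zeta$ to be a square, which is compatible. Iterating $D_3$ along $C_3(z)$ then sends $(x,y,z)$ to $(-x,-y,z)$. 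Finally, Lemma \ref{lemma:Degenerate2OrbitsComparisonOfGroups} takes us from $\Gamma_{xy}$ to $\Gamma$.

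The main obstacle is the interaction between the sign invariants and the square-root maps. One must verify that $\operatorname{neg}_{xy}\circ\tau_{xy}$ preserves $\mathcal{S}_1$ rather than swapping it with $\mathcal{S}_2$; here $z$ is unchanged and $xy$ is preserved, so this looks fine. One must also check that in the fixed-point-in-$\mathcal{S}_1$ step of Lemma \ref{lemma:Degenerate2OrbitsComparisonOfGroups} the curve stays irreducible, since the discriminant there is $C^2-A^2=C^2\neq0$.
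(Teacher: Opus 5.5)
Your overall architecture matches the paper's. You get transitivity for $\Gamma'$ on $\mathcal{S}_1$ by linking maximal-order components of $C_1(x)$ and $C_2(y)$ through a conic $C_3(z)$ with $\zeta$ of odd order $\frac{p+\epsilon_0}{2}$, on which a square-root map together with $\operatorname{neg}_{xy}$ acts transitively. You run the $\mathcal{S}_2$-style sieve over odd $\ell$ with $\omega=1$, or with the $g_\ell$ reparametrization in the elliptic case. You then descend $\Gamma'\to\Gamma_{xy}\to\Gamma$.

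The gap is in the descent $\Gamma'\to\Gamma_{xy}$. First a normalization issue. With $z=\zeta+\zeta^{-1}$, the matrix of $\tau_{xy}\circ V_1$ on $(x,y)$ is $\bigl(\begin{smallmatrix}0&1\\-1&z\end{smallmatrix}\bigr)$, with eigenvalues $\zeta^{\pm1}$. So $\tau_{xy}\circ V_1$ multiplies the parameter by $\zeta$ itself, and $D_3=V_2\circ V_1$ multiplies it by $\zeta^2$. Your claim that it acts by a square root of $\zeta$ is off by a square. This is harmless in your transitivity bullet, because both multipliers $\pm\zeta$ are available in $\Gamma'$.

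It is not harmless in the descent. There you ask for a point of $\mathcal{S}_1$ whose $z$-parameter has order divisible by $4$, and then iterate $D_3$. On $\mathcal{S}_1$ (with $z\neq\pm2$), $\zeta$ must be a square in $\mathbb{F}_p^\times$ or in $\mu_{p+1}$, so $\operatorname{ord}(\zeta)\mid\frac{p\pm1}{2}$. Then $4\mid\operatorname{ord}(\zeta)$ forces $p\equiv\pm1\pmod 8$. For $p\equiv 3,5\pmod 8$ no such point exists. Every admissible $\zeta^2$ then has odd order, so no iterate of $D_3$ realizes $t\mapsto -t$, that is, $(x,y,z)\mapsto(-x,-y,z)$. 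Your assertion that the order condition is ``compatible'' with $\zeta$ being a square is exactly where the argument breaks, and it fails on a set of primes of density $1/2$.

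The fix is what the paper does. Since you only need to descend to $\Gamma_{xy}$, you may iterate $\tau_{xy}\circ V_1\in\Gamma_{xy}$ itself, whose multiplier is $\zeta$. Choose $\zeta$ a square of even order, namely order $\frac{p-\epsilon_0}{2}$: hyperbolic if $p\equiv1\pmod4$, elliptic if $p\equiv3\pmod4$. Then $-1\in\langle\zeta\rangle$, so some power of $\tau_{xy}\circ V_1$ sends $P$ to $\operatorname{neg}_{xy}P$. Hence $\operatorname{neg}_{xy}$ stabilizes a $\Gamma_{xy}$-orbit, and normality of $\Gamma_{xy}$ in $\Gamma'$ finishes the step; no direct-product decomposition is needed.

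The rest of your proposal is fine:
\begin{itemize}
\item The descent $\Gamma_{xy}\to\Gamma$ via Lemma \ref{lemma:Degenerate2OrbitsComparisonOfGroups} is correct.
\item Your check that $\operatorname{neg}_{xy}\circ\tau_{xy}$ preserves $\mathcal{S}_1$ is correct.
\item Your concern about the second sign invariant is moot. Since $\mathcal{S}^*_{A,B,C,D}(p)=\mathcal{S}_1\sqcup\mathcal{S}_2$, any point with $\chi_p(z+2)=1$ already lies in $\mathcal{S}_1$.
\end{itemize}
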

\begin{proof}
We first seek to show that $\Gamma'$ connects up many choices of $C_3(z)$ in $\mathcal{S}_1$. Choose $\epsilon_0 = \pm1$ so that $p+\epsilon_0 \equiv 2$ (mod 4) (and thus $p - \epsilon_0 \equiv 0$ (mod 4)). If $z$ is chosen so that 
$$z = \zeta + \zeta^{-1}$$
with $\zeta$ of order $\frac{p+\epsilon_0}{2}$, so that $z$ is hyperbolic or elliptic depending on the value of $\epsilon_0$, and of the maximal order possible in $\mathcal{S}_1$, then the group generated by $\tau_{xy}\circ V_1$ and $\operatorname{neg}_{xy}$ acts transitively on $C_3(z)$, as in the coordinates given in Lemmas \ref{hyperbolics} and \ref{elliptics} $\tau_{xy}\circ V_1$ acts by multiplication by a value of (odd) order $\frac{p+\epsilon_0}{2}$ while $\operatorname{neg}_{xy}$ acts by multiplication by $-1$, which has order $2$. 

Before proceeding further, we must show, as in Section \ref{Comparison}, that a transitive $\Gamma'$ action on $\mathcal{S}_1$ implies a transitive $\Gamma$ action on $\mathcal{S}_1.$ By Lemma \ref{lemma:Degenerate2OrbitsComparisonOfGroups}, it suffices to show that a transitive $\Gamma'$ action implies a transitive $\Gamma_{xy}$ action. Now, the map $\operatorname{neg}_{xy}$ commutes with $V_1,$ $V_2,$ $V_3,$ and $\tau_{xy}$, so that $\Gamma_{xy}\unlhd \Gamma'$, so that as in the proof of Lemma \ref{lemma:RemovalOfH} it suffices to exhibit a point $P\in\mathcal{S}_1(p)$ which connects to $\operatorname{neg}_{xy}P$ via iterates of $\tau_{xy}\circ V_1$. But this is guaranteed to happen for any point $P$ on $C_3(z)$ where $z = \zeta + \zeta^{-1}$ is chosen so that $\zeta$ has multiplicative order $\frac{p-\epsilon_0}{2}.$ As there exist points of $\mathcal{S}_1$ whose $z$-coordinates have this form, the result follows.

To complete our connection of $\mathcal{S}_1$ in the case $A = B = 0$, we need only show that there is a $z$ of order $\frac{p+\epsilon_0}{2}$ such that
$$C_1(x)\cap C_3(z)\neq \emptyset\text{ and}$$
$$C_2(y)\cap C_3(z) \neq \emptyset,$$
for all but a small number of pairs of values $(x,y)$ each of maximal order. If $\epsilon_0 = -1,$ this may be achieved with essentially the same system used to connect $\mathcal{S}_2,$ but with $\omega = 1$ instead of a fixed quadratic nonresidue. If $\epsilon_0 = 1,$ we need $z$ to be elliptic instead of hyperbolic. This may be handled just as the elliptic case of the endgame, replacing the equation
$$z = t^\ell + t^{-\ell}$$
with the pair of equations
\begin{equation}\label{EqForConnectingToElliptic}
    \begin{split}
        &z = 2 g_{\ell}(\xi, \eta)\\
    &\xi^2 - \omega\eta^2 = 1
    \end{split}
\end{equation}
where $\omega$ is a fixed quadratic nonresidue and $g_\ell$ is as in (\ref{ellipticsModificationPolys}). In either case, we have established connectivity for $\mathcal{S}_1$ in the case $A = B = 0$ as well.
\end{proof}

This completes our proof of transitivity on $\mathcal{S}_1$ for sufficiently large primes in a density-one set, and thus completes our work in the two-orbit case.

\subsection{The Four-Orbit Case}

We now study the four-orbit case, wherein $A = B = \epsilon C$ and $D = 2C + 4 - \frac{A^2}{4} = -\frac{A^2}{4} + 2\epsilon A + 4.$ We remark that as we exclude the Cayley cubic parameters $(A, B, C, D) = (0, 0, 0, 4)$ from consideration, in this case we have $A, B, C \neq 0$. We record some basic facts about this setting in the following lemma:
\begin{lemma}[Conic components in the four-piece case]
\label{lemma:FourPieceConicSignComponents}
Suppose that
\[
A=B=\epsilon C,\qquad \epsilon\in\{\pm1\},
\]
and that $(A,B,C,D)$ is not equivalent to the Cayley parameters. Then the relevant polynomials $f_1$, $f_2$, and $f_3$ are perfect squares. Moreover, for each non-parabolic conic section $C_i$, only two of the $\Gamma$-invariant sets $\mathcal{S}_1$, $\mathcal{S}_2$, $\mathcal{S}_3$, and $\mathcal{S}_4$ can possibly contain points in $C_i$ due to the sign relations defining the sets $\mathcal{S}_i$, and if said conic splits into exactly two orbits under the action of the subgroup generated by the two Vieta involutions fixing the conic setwise, those two orbits lie in different $\Gamma$-invariant sets.
\end{lemma}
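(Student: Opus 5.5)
The plan is to prove the three assertions in turn, reducing throughout to the normalized case $A=B=\epsilon C$, $D=-\frac{A^2}{4}+2\epsilon A+4$ with $A\neq0$.

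\textbf{Perfect squares.} For $f_1$ and $f_2$ this was already shown in the two-orbit discussion: substituting $A=B$ and $4D+A^2=8C+16$ gives
\[
f_1(x)=\left(x^2-\tfrac A2x-(C+4)\right)^2 ,
\]
and symmetrically for $f_2$. For $f_3$ I would not compute directly. Instead I would note that the quadruple is also degenerate after the equivalence $\tau_{xz}$ (or $\operatorname{neg}_{xz}\circ\tau_{xz}$ when $\epsilon=-1$). That equivalence carries $f_3$ to $f_1$ of the equivalent quadruple, up to the substitution $z\mapsto \epsilon z$. Since $A=\epsilon C$, the equivalent quadruple again has two equal first parameters and satisfies the same relation (the quantity $4D+A^2-8C-16$ is preserved after the appropriate sign change). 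So the identity for $f_1$ transfers to $f_3$, and a one-line expansion can confirm it.

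\textbf{Only two sets meet each conic.} For $C_3(z)$ with $z\neq\pm2$, the value $\chi_p(z+2)$ is constant on the conic, and it is nonzero for $z\neq-2$. Membership in $\mathcal S_1,\mathcal S_2$ versus $\mathcal S_3,\mathcal S_4$ is decided by the sign of $\chi_p(z+2)$, so only two of the four sets can occur. For $C_1(x)$, the same reasoning applies with the invariant $\chi_p(\pm x+2)$, which separates $\{\mathcal S_1,\mathcal S_3\}$ from $\{\mathcal S_2,\mathcal S_4\}$. For $C_2(y)$ I would use the identity from the proof of Theorem~\ref{QuadraticObstructions},
\[
(x\pm2)(y\pm2)(z+2)=\left(\pm x\pm y+z+2\mp\tfrac A2\right)^2 .
\]
Off the problematic points, this forces $\chi_p(y\pm2)=\chi_p(x\pm2)\chi_p(z+2)$. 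Thus fixing $y$ fixes the product of the two signs, which again leaves exactly two of the four sets. The problematic points have been absorbed into the set of a neighbor, so they do not change this count.

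\textbf{Two orbits lie in different sets.} Suppose the conic splits into two orbits under its two involutions. I would reuse the computation of Lemma~\ref{lemma:TwoPieceConicSignComponents}, applied to each pair of coordinates via the equivalences. On $C_1(x)$, in the parameterization of Lemmas~\ref{hyperbolics} and~\ref{elliptics}, $z+2$ equals $(\xi t)^{-1}$ times a square (hyperbolic case), or its character is $u^{(p+1)/2}$ up to a fixed sign (elliptic case). The two $\langle V_2,V_3\rangle$-orbits are the two square classes of $t$ (respectively of $u$), so $\chi_p(z+2)$ takes opposite values on them. Because $\chi_p(\pm x+2)$ is constant on $C_1(x)$, the two orbits land in different $\mathcal S_i$. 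The same argument applies to $C_2(y)$ via $\chi_p(z+2)$. For $C_3(z)$, I would run the symmetric computation with $\chi_p(\pm x+2)$ in place of $\chi_p(z+2)$, using that the relevant quadratic factor of $f_3$ is a square.

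The main obstacle is this last case. It requires the algebraic identity expressing $\pm x+2$ as $(\text{parameter})^{-1}\cdot(\text{square})$ on $C_3(z)$, and this identity must be derived with the $\epsilon$-signs handled correctly. The parabolic and $\kappa=0$ exclusions also need to be tracked.
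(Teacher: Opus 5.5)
Your proposal is correct and follows essentially the paper's route: the paper verifies the three square identities by direct expansion and then says the conic statements follow exactly from the proof of Lemma~\ref{lemma:TwoPieceConicSignComponents}, which is what you do (you get $f_3$ from the equivalence instead of expanding, and you spell out the ``only two sets'' count that the paper leaves implicit). The $C_3(z)$ case you flag as the main obstacle needs no new identity: the automorphism you already used for $f_3$, namely $\tau_{xz}$ for $\epsilon=1$ or $\operatorname{neg}_{xz}\circ\tau_{xz}$ for $\epsilon=-1$, conjugates $V_1$ to $V_3$, carries $C_3(z)$ onto $C_1(\epsilon z)$, and exchanges the $x$- and $z$-invariants, so it transports the $C_1$ computation verbatim.
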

\begin{proof}
    For the statements about the polynomials $f_1$, $f_2,$ and $f_3,$ note that 
    $$f_1(x) = x^4 - Ax^3 + \left(\frac{A^2}{4} \mp 2A - 8 \right)x^2 + A(\pm A + 4)x + (A^2 \pm 8A + 16) = \left(x^2 - \frac{A}{2}x - (\pm A+4)\right)^2,$$
the same polynomial formula holds for $f_2(y)$, and
$$f_3(z) = z^4 \mp Az^3 +\left(\frac{A^2}{4} \mp 2A - 8\right)z^2 + A(A\pm 4)z +\left(A^2 \pm 8A + 16\right) = \left(z^2 \mp \frac{A}{2}z \mp (A\pm 4)\right)^2.$$
The statement about conic components lying in different $\Gamma$-invariant sets follows exactly from the proof of \ref{lemma:TwoPieceConicSignComponents}.
\end{proof}

As in the two-orbit case, the opening and middlegame may be extended with minimal modifications, as has been recorded in Lemma \ref{lemma:DegenerateInheritedReductions}, and in the endgame we remove equations requiring $f_i$ to be a quadratic nonresidue. However, the endgame requires some further modification. The relevant proposition is as follows:
\begin{proposition}[Modified endgame in the four orbit case]
\label{proposition:FourPieceModifiedEndgame}
Suppose that
\[
A=B=\epsilon C,\qquad \epsilon\in\{\pm1\},
\]
and that $(A,B,C,D)$ is not equivalent to the Cayley parameters. Let
$i\in\{1,2,3,4\}$. Then, for all sufficiently large primes $p$, every sufficiently high-order point of $\mathcal{S}_i$ is connected via edges from $\Gamma$ to a point with a coordinate which may be freely chosen to be hyperbolic or elliptic and whose chosen coordinate has maximal allowable order in $\mathcal{S}_i$, namely one of
\[
p\pm1, \frac{p\pm1}{2},
\]
as determined by the sign invariant defining $\mathcal{S}_i$ and whether or not the coordinate is hyperbolic or elliptic.
\end{proposition}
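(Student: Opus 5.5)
Starting from a point $(x,y,z)\in\mathcal{S}_i$ with (say) $x$ of order at least $p^{1/2+\delta}$, I will follow the scheme of Proposition \ref{endgame}, with the quadratic-nonresidue condition dropped. Lemma \ref{lemma:FourPieceConicSignComponents} shows that $f_1,f_2,f_3$ are perfect squares, so the equation $\omega\eta^2=f_j$ no longer gives an irreducible cover and must be removed. In its place I impose the sign condition that places the new coordinate in the correct component. By the computation after Lemma \ref{lemma:TwoPieceConicSignComponents}, writing $y=\zeta+\zeta^{-1}$, the value of $\chi_p(y\pm2)$ is determined by whether $\zeta$ is a square in $\mathbb{F}_p^\times$ (respectively in $\mu_{p+1}$). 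The invariant defining $\mathcal{S}_i$ fixes $\chi_p(z+2)$ and $\chi_p(\pm x+2)$. By the product identity in the proof of Theorem \ref{QuadraticObstructions}, it therefore also fixes $\chi_p(y\pm 2)$ away from the parabolic values. So for each coordinate the maximal allowable order is $p\pm1$ when the prescribed character value is compatible with $\zeta$ a nonsquare, and $\frac{p\pm1}{2}$ otherwise.

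The counting system for the $y$-coordinates of the $D_1$-orbit will be
\[
y=t^{d}+t^{-d},\qquad y=\alpha_1 s^{e}+\alpha_2 s^{-e}+\alpha_3,
\]
with $d$ ranging over divisors of the target order $N\in\{p\pm1,\frac{p\pm1}{2}\}$. In the elliptic case I use the reparametrization via $g_n,h_n$ from (\ref{ellipticsModificationPolys}). I will sieve by M\"obius inversion over divisors of $N$ only. When $N=\frac{p\pm1}{2}$, I add the condition that $\zeta$ be a square by writing $y=t^{2d}+t^{-2d}$. When $N=p\pm1$, I restrict to $d$ odd, exactly as in Lemma \ref{lemma:TwoPieceS2CageConnectivity}. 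Weil's bound then gives a main term proportional to $|H|\varphi(N)/N$, for example $|H|\varphi(N)/(cN)$ with $c\le 4$, plus an error $O_\epsilon(p^{1/2+\epsilon})$, which is positive once $|H|\ge p^{1/2+\delta}$. Since points of $C_1(x)$ lie in at most two of the $\mathcal{S}_j$ and the component reached via $D_1$ stays inside $\mathcal{S}_i$ (Lemma \ref{lemma:FourPieceConicSignComponents}), the new point automatically lies in $\mathcal{S}_i$. To reach the coordinate of one's choice, which is free, I use $D_1$ or $D_2$ to reach $y$ or $x$ as needed. To reach $z$ I use one extra step, passing through an intermediate coordinate of order $\ge p^{5/8}$ exactly as in the last paragraph of the proof of Proposition \ref{endgame}.

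The main obstacle is geometric irreducibility of this fiber product, which is the analogue of Lemma \ref{endgameIrreducibility}. Since the $E$-cover is gone, irreducibility reduces to the two branch loci $\{\pm2,\infty\}$ and $\{\alpha_3\pm2\sqrt{\alpha_1\alpha_2},\infty\}$ being distinct. Here $\alpha_1\alpha_2=\kappa_1(x)=\lambda_x^2$ is a square, so the branch points become rational functions $\alpha_3\pm2\lambda_x$ of $x$. I must show neither equals $\pm2$ identically in $x$. As the Remark after Lemma \ref{endgameDegenerate} warns, this fails for the $z$-coordinate of $D_1$-orbits, which is why I route $z$ through an intermediate step. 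For the $y$-coordinate I will check directly that $\alpha_3\pm2\lambda_x\mp2\not\equiv0$, using $A=B=\epsilon C\ne0$. This identity check is the step I expect to need care, together with the extra monodromy argument when only odd $d$ are used. The latter works because $\sigma_\infty^2$ acts by odd cycles and mixes the parity classes, as in Lemma \ref{lemma:TwoPieceS2CageConnectivity}. Excluding the boundedly many bad $x$ and handling them by the $p^{3/4}\to p^{5/8}$ detour completes the argument.
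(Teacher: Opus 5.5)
Your overall scheme matches the paper's: drop the equation $\omega\eta^2=f_j$, sieve with a parity restriction, and stay inside $\mathcal{S}_i$ by $\Gamma$-invariance. The gap is the step you flag as needing care. The identity you plan to check is false, and it fails for every $x$, not for a bounded set. With $B=A$, $C=\epsilon A$ and $D=2\epsilon A+4-\frac{A^2}{4}$ you have $\alpha_3=\frac{-2A-\epsilon Ax}{x^2-4}$ and $\lambda_x=\frac{x^2-\frac A2x-\epsilon A-4}{x^2-4}$, hence
\[
\alpha_3-2\epsilon\lambda_x=\frac{-2\epsilon x^2+8\epsilon}{x^2-4}=-2\epsilon
\]
identically in $x$. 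The other finite branch point is $2\epsilon\frac{x^2-Ax-2\epsilon A-4}{x^2-4}$. So the cover $s\mapsto\alpha_1s^e+\alpha_2s^{-e}+\alpha_3$ always shares the branch points $-2\epsilon$ and $\infty$ with $t\mapsto t^d+t^{-d}$. The Remark after Lemma~\ref{endgameDegenerate} concerns the two-orbit case $C\neq\pm A$. Once $C=\epsilon A$, the $x$- and $y$-coordinates behave exactly as $z$ did there, so rerouting through another coordinate does not avoid the problem.

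The consequence is concrete. Since $D_1$ acts by $t\mapsto\xi^2t$, each orbit is a coset of $\langle\xi^2\rangle$, so $e$ is always even. For your targets of order $\frac{p\pm1}{2}$, where you write $y=t^{2d}+t^{-2d}$, the sieve curve is therefore geometrically reducible:
\[
\bigl(\sqrt{\alpha_1}\,s^{e/2}+\epsilon\sqrt{\alpha_2}\,s^{-e/2}\bigr)^2=\bigl(t^{d}+\epsilon\,t^{-d}\bigr)^2,\qquad \sqrt{\alpha_1}\sqrt{\alpha_2}=\lambda_x .
\]
Weil's bound cannot be applied to this curve as a whole. You must split it into the two components $\sqrt{\alpha_1}s^{e/2}+\epsilon\sqrt{\alpha_2}s^{-e/2}=\pm(t^d+\epsilon t^{-d})$. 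Then you must show each component is defined over $\mathbb{F}_p$; in the elliptic case this is after passing to the $g_n,h_n$ model and rescaling by $\sqrt{\omega}$. If Frobenius exchanged the two components, all $\mathbb{F}_p$-points would lie in their finite intersection and the main term would vanish. Whether $\sqrt{\alpha_1},\sqrt{\alpha_2}$ lie in $\mathbb{F}_p$ or in $\sqrt{\omega}\,\mathbb{F}_p$ is decided by $\chi_p(y+2\epsilon)$. This is why the paper splits into the cases $\epsilon=\pm1$, $\chi_p(y+2\epsilon)=\pm1$ (Lemmas~\ref{lemma:fourOrbitsEndgameCase1}--\ref{lemma:fourOrbitsEndgameCase4}) and writes these factorizations explicitly.

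For your odd-$d$ targets, irreducibility can still be obtained, but not from disjoint branch loci. The paper always passes hyperbolic$\leftrightarrow$elliptic, which forces $\gcd(e,d)\mid 2$, and then combines $\sigma_\infty$ with the Chinese remainder theorem (Lemma~\ref{lemma:FourPieceCoprimeMonodromy}). The $\sigma_\infty^2$ trick from the $\mathcal{S}_2$ cage lemma does not transfer directly, because here $\sigma_\infty$ acts on the $h$-sheets by two $e$-cycles rather than trivially. If you keep the new coordinate of the same type as $x$ and sieve over all odd $d\mid N$, you need a separate argument covering $\gcd(e,d)>1$.
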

\begin{proof}
Starting from a point $(x, y, z)$ with (say) $x$ of order at least $p^{1/2 + \delta},$ we seek to connect to a point whose $y$-coordinate has maximal order. Iterating $V_3\circ V_2$ connects it to points whose $y$-coordinates are of the form

$$\alpha_1t^e + \alpha_2 t^{-e} + \alpha_3$$

where $e$ is even and depends on $x = \xi + \xi^{-1}$ via 

$$e = \frac{(p\pm 1)\operatorname{gcd}(2, \text{ord}(\xi))}{\text{ord}(\xi)}.$$

If $x$ is elliptic, we shall connect to a hyperbolic value of $y$, and if $x$ is hyperbolic, we shall connect to an elliptic value of $y$; if we can do this for any such point, then repeating the process a second time will let us connect elliptic $x$ to elliptic $z$ and hyperbolic $x$ to hyperbolic $z$. As in the endgame for the nondegenerate case we shall work by inclusion/exclusion after counting solutions to 

\begin{equation}\label{eq:fourOrbitsDegenEndgame}
    \alpha_1t^{e} + \alpha_2t^{-e} + \alpha_3 = s^{d}  + s^{-d},
\end{equation}

and our demand that we pass from a hyperbolic point to an elliptic point or from an elliptic point to a hyperbolic point allows us to ensure that $(e, d) = 1$ or $2$. This allows us to control the connected components of the associated curve via the method of monodromy, as follows.

Letting $g(s) = s^d + s^{-d}$ and $h(t) = \alpha_1t^e + \alpha_2 t^{-e} + \alpha_3$, the curve is the fiber product $\mathbb{P}_{t}^1\times_{\mathbb{P}_y^1}\mathbb{P}_{s}^1$ with the covers $\mathbb{P}_{t}^1\rightarrow \mathbb{P}_{y}^1$ and $\mathbb{P}_{s}^1\rightarrow \mathbb{P}_{y}^1$ given by
\[\begin{split}
g: \mathbb{P}_{s}^1&\rightarrow \mathbb{P}_{y}^1\; \;\;\;\;\;\;g(s) - y = 0\\
h: \mathbb{P}_{t}^1&\rightarrow \mathbb{P}_{y}^1\; \;\;\;\;\;\; h(t) - y = 0.
\end{split}\]

As in \cite{BGS}, Lemma 12, the branch points for $g$ are $\{-2, 2, \infty\}$ with branch cycles given by
\begin{equation}
\begin{cases} \sigma_{-2} = (12)(34)\dots(2d-1\, 2d)\\
\sigma_2 = (1\,2d)(23)\dots(2d-2\, 2d-1)\\
\sigma_\infty = (135\dots2d-1)(246\dots2d),\end{cases}
\end{equation}
while the branch points for $h$ are $\{-2\sqrt{\alpha_1\alpha_2} + \alpha_3, 2\sqrt{\alpha_1\alpha_2} + \alpha_3, \infty\}$ with branch cycles given by
\begin{equation}
\begin{cases} \sigma_{-2\sqrt{\alpha_1\alpha_2} + \alpha_3} = (12)(34)\dots(2e-1\, 2e)\\
\sigma_{2\sqrt{\alpha_1\alpha_2}+\alpha_3} = (1\,2e)(23)\dots(2e-2\, 2e-1)\\
\sigma_\infty = (135\dots2e-1)(246\dots2e).\end{cases}
\end{equation}
As we have $\alpha_1\alpha_2 = \frac{f_1(x)}{(x^2 - 4)^2}$, the branch points of $h$ away from infinity may be written more simply as
$$\pm 2\frac{x^2 - \frac{A}{2}x - (\epsilon A + 4)}{x^2 - 4} + \frac{-2A - \epsilon Ax}{x^2 - 4} = \frac{\pm 2x^2 \mp Ax - \epsilon Ax \mp 2\epsilon A - 2A \mp 8} {x^2 - 4},$$
where $\epsilon$ is chosen so $C = \epsilon A$. This simplifies to give branch points at $-2\epsilon$ and at
$$2\epsilon\frac{x^2 - Ax - 2\epsilon A - 4}{x^2-4}.$$
In particular, both $g$ and $h$ ramify at $\infty$ and at $-2\epsilon,$ and our analysis of the monodromy groups must be modified. We split the analysis into a few lemmas.
\begin{lemma}\label{lemma:FourPieceCoprimeMonodromy}
    Suppose that $(e, d) = 1.$ Then, after excluding a bounded set of values of $x$, (\ref{eq:fourOrbitsDegenEndgame}) describes a geometrically irreducible curve.
\end{lemma}
\begin{proof}
    We have that $\mathbb{Z}/(ed) \cong\mathbb Z/(e)\times\mathbb Z/(d)$. It follows that repeated application of $\sigma_\infty$ can take any pair of sheets of $g$ and $h$ to any other pair whose indices have the same parities. As $2\epsilon$ is a ramification point for $g$ and not for $h$, and 
$$2\epsilon\frac{x^2 - Ax - 2\epsilon A - 4}{x^2-4}$$
is a ramification point for $h$ but not for $g$, we may adjust the parity of the index of a sheet of $g$ or of $h$ independently. It thus follows that the monodromy group acts transitively. 
\end{proof}
This argument may be adapted to the setting where $(e, d) > 1.$
\begin{lemma} \label{lemma:FourPieceNonCoprimeMonodromy}
    Suppose that $(e, d) >1.$ Then, after excluding a bounded set of values of $x$, (\ref{eq:fourOrbitsDegenEndgame}) describes a geometrically reducible curve, and over $\overline{\mathbb{F}}_p,$ the curve has exactly $(e, d)$ irreducible components. 
\end{lemma}
\begin{proof}
    The associated cover has $4ed$ sheets, each given by a pair of indices $(i, j)$ with $1\leq i\leq 2e$ and $1\leq j \leq 2d.$ We can decompose this collection into four subcollections stable under $\sigma_\infty$ by restricting the parities of each of $i$ and $j$, and then the cyclic group generated by $\sigma_\infty$ will decompose each of those subcollections into exactly $(e, d)$ orbits. Acting by $\sigma_{\pm2}$ and $\sigma_{2\epsilon\frac{x^2 - Ax - 2\epsilon A - 4}{x^2-4}}$ joins each orbit in each subcollection to exactly one orbit in another subcollection, and this is consistent with the decomposition of each subcollection into $(e, d)$ orbits, as $\sigma_\infty$ is a composition of $\sigma_2,$ $\sigma_{-2},$ and $\sigma_{2\epsilon\frac{x^2 - Ax - 2\epsilon A - 4}{x^2-4}}$.
\end{proof} 

As in Section 7, those points where irreducibility fails can be connected to the cage by way of points where irreducibility does not fail via the endgame. As in the two-orbit case, we need to understand the relationship between the Legendre symbol of $y+2\epsilon$ and its order. As $\epsilon = -1$ is possible, the relationship is a little bit more complicated, as spelled out in the following lemma:

\begin{lemma}\label{lemma:FourPieceSignOrderDictionary}
    Suppose that $y$ is hyperbolic, with $y = \eta + \eta^{-1},$ $\eta\in\mathbb{F}_p^\times.$ Then $y+2\epsilon$ is a quadratic residue if and only if $\eta$ is as well.

    Suppose that $y$ is elliptic, with $y = \eta + \eta^{-1},$ $\eta\in\mu_{p+1}.$ If $\epsilon=1,$ then $y+2$ is a quadratic residue if and only if $\eta$ is a square in $\mu_{p+1},$ but if $\epsilon = -1$ then $y-2$ is a quadratic residue if and only if $\eta$ is not a square in $\mu_{p+1}.$
\end{lemma}
\begin{proof}
    In the hyperbolic case, the result follows as in the two-orbit case studied above from the identity

$$y + 2\epsilon = \eta + 2\epsilon + \eta^{-1} = \eta^{-1}(\eta + \epsilon)^2.$$

However, in the elliptic case, the relevant identity becomes

 $$(y+2\epsilon)^\frac{p-1}{2} = \frac{(\eta + \epsilon)^p}{\eta^\frac{p-1}{2}(\eta + \epsilon)} = \frac{\epsilon + \eta^{-1}}{\eta^\frac{p-1}{2}(\eta + \epsilon)} = \frac{\epsilon\eta + 1}{\eta^\frac{p+1}{2}(\eta + \epsilon)} = \epsilon\eta^{-\frac{p+1}{2}},$$
 from which the result clearly follows.
\end{proof}

We are now ready to resume our proof of Proposition \ref{proposition:FourPieceModifiedEndgame}. Our argument splits into a total of eight cases, by whether (1) $\epsilon = \pm1,$ (2) the value of $\chi_p(y+2\epsilon)$ for our starting point $(x, y, z)$, and (3) whether $x$ is hyperbolic or elliptic. The third level of this decomposition is relatively harmless, essentially amounting to our standard modification of the $\mathbb{F}_p$-parameterizations of the curves arising in the endgame, but the first two levels require slightly more adjustments; we therefore break our argument into a total of four lemmas.

\begin{lemma}\label{lemma:fourOrbitsEndgameCase1}
    Suppose that
\[
A=B=+C,
\]
and that $(A,B,C,D)$ is not equivalent to the Cayley parameters. Suppose $(x, y, z)\in\mathcal{S}_{A, B, C, D}^*(p)$ has $\chi_p(y+2) = -1$ and $x$ of order at least $p^{\frac{1}{2} + \delta}$. Then, for all sufficiently large primes $p$, $(x, y, z)$ is connected via edges from $\Gamma$ to a point whose $y$-coordinate has order $p\pm 1$.
\end{lemma}
\begin{proof}
As our starting point $(x, y, z)$ has $y + 2$ a quadratic nonresidue, for any point $(x', y', z')$ in the orbit, by Lemma \ref{lemma:FourPieceSignOrderDictionary}, writing $y' = \eta + \eta^{-1},$ we have that $\eta$ is a nonsquare in $\mathbb{F}_p$ (if $y'$ is hyperbolic) or in $\mu_{p+1}$ (if $y'$ is elliptic). We thus only need to include in our sieving counts for points on curves isomorphic over $\overline{\mathbb{F}}_p$ to

$$\alpha_1t^{e} + \alpha_2t^{-e} + \alpha_3 = s^{d}  + s^{-d}$$

where $d$ is odd, so that $(e, d) = 1$, and the curves are irreducible by Lemma
\ref{lemma:FourPieceCoprimeMonodromy}. If $x$ is hyperbolic, then we seek a point with $y'$ elliptic of maximal order. In order to apply the Weil bound we must re-write our equation as in (\ref{EqForConnectingToElliptic}); after so doing Weil and inclusion/exclusion allow us to connect to a point whose $y$-coordinate has order $p + 1.$ If $x$ is elliptic, then we seek a point with $y'$ hyperbolic of maximal order. The system is once again geometrically irreducible by the same monodromy argument, or equivalently by Lemma \ref{lemma:FourPieceCoprimeMonodromy} after the standard elliptic reparameterization as in the elliptic case of Section \ref{EndgameSection}.
\end{proof}

\begin{lemma}\label{lemma:fourOrbitsEndgameCase2}
    Suppose that
\[
A=B=+C,
\]
and that $(A,B,C,D)$ is not equivalent to the Cayley parameters. Suppose $(x, y, z)\in\mathcal{S}_{A, B, C, D}^*(p)$ has $\chi_p(y+2) = 1$ and $x$ of order at least $p^{\frac{1}{2} + \delta}$. Then, for all sufficiently large primes $p$, $(x, y, z)$ is connected via edges from $\Gamma$ to a point whose $y$-coordinate has order $\frac{p\pm 1}{2}$.
\end{lemma}
\begin{proof}
    By Lemma \ref{lemma:FourPieceSignOrderDictionary}, our assumption that \(y+2\) is a quadratic residue implies that any point $(x', y', z')$ in the orbit has $y'$ of order dividing $\frac{p\pm 1}{2}.$ In this particular case, the relevant curve is reducible over $\mathbb{F}_p,$ not just $\overline{\mathbb{F}}_p$. Indeed, if $x$ is hyperbolic, the curve is described over $\mathbb{F}_{p^2}$ by 

$$\alpha_1t^{e} + \alpha_2t^{-e} + \alpha_3 = s^{d}  + s^{-d}$$

which we rewrite as

$$\alpha_1t^{e}  +2\sqrt{\alpha_1\alpha_2} + \alpha_2t^{-e} = s^{d} + 2 + s^{-d}$$

using

$-2 = -2\sqrt{\alpha_1\alpha_2} + \alpha_3.$ Now, we have that 

$$y + 2 = (\sqrt{\alpha_1} + \sqrt{\alpha_2})^2;$$

as $y + 2$ is a quadratic residue we find that $\sqrt{\alpha_1} + \sqrt{\alpha_2}\in\mathbb{F}_p;$ as $\sqrt{\alpha_1\alpha_2}\in\mathbb{F}_p$ and $\alpha_2\in\mathbb{F}_p$ as well we find the same to be true of $\sqrt{\alpha_1}$ and $\sqrt{\alpha_2}$ by the identity

$$\sqrt{\alpha_2} = \frac{\sqrt{\alpha_1\alpha_2}}{\sqrt{\alpha_1} + \sqrt{\alpha_2}} + \frac{\alpha_2}{\sqrt{\alpha_1} + \sqrt{\alpha_2}}.$$

We thereby obtain the identity

$$(\sqrt{\alpha_1}t^{e/2} + \sqrt{\alpha_2}t^{-e/2})^2 = (s^{d/2} +  s^{-d/2})^2.$$

By Lemma \ref{lemma:FourPieceNonCoprimeMonodromy}, these two factors are the two geometrically irreducible components of the original transition curve:

$$\sqrt{\alpha_1}t^{e/2} + \sqrt{\alpha_2}t^{-e/2} = \pm(s^{d/2} + s^{-d/2}).$$

As we seek to connect to an elliptic point, we shall have to slightly adjust the shape of our equation so that it is defined over $\mathbb{F}_p$, rather than over $\mathbb{F}_{p^2}$, in order to apply the Weil bound; this can be handled somewhat similarly to what we did in (\ref{EqForConnectingToElliptic}) in the two-orbit case, but requires slightly more modification due to our factorization. The equations take the form

\[\begin{split}
    &\sqrt{\alpha_1}t^{e/2} + \sqrt{\alpha_2}t^{-e/2} = \pm2g_{d/2}(\xi, \eta)\\
    &\xi^2 - \omega\eta^2 = 1.
\end{split}\]

We now have twice as many points of order dividing $\frac{p+1}{2};$ as this is uniform over (even) divisors of $p+1$ we still obtain via sieving a point whose $y$-coordinate has order $\frac{p+1}{2}$, which is maximal in the component $\mathcal{S}_i$ given that $y+2$ is a quadratic residue. 

If instead $x$ is elliptic, we seek to move to a hyperbolic point of order $\frac{p-1}{2}.$ We start from the factorization over \(\overline{\mathbb F}_p\); by Lemma \ref{lemma:FourPieceNonCoprimeMonodromy}, its two factors are the geometrically irreducible components:

$$\sqrt{\alpha_1}t^{e/2} + \sqrt{\alpha_2}t^{-e/2} = \pm(s^{d/2} +  s^{-d/2}).$$

Those points of interest on each curve can again be described by a curve over $\mathbb{F}_p$ as in Section \ref{EndgameSection}, via equations

\[\begin{split}
    &\xi^2 - \omega \eta^2 = 1\\
    &u = \pm(2ag_{e/2}(\xi, \eta) + 2b\omega h_{e/2}(\xi, \eta))\\
    &u= s^{d/2} +  s^{-d/2},
\end{split}\]

where $u$ represents a fixed choice of $\sqrt{y + 2}$ and $a$ and $b$ are fixed and satisfy
$$a^2 - \omega b^2 =  \frac{x^2 - \frac{A}{2}x -  (A + 4)}{x^2 - 4};$$
$$y = 2a^2 + 2\omega b^2 + \alpha_3.$$

As before, we can sieve and obtain a point of order $\frac{p-1}{2},$ which is maximal in the $\mathcal{S}_i$ in which we're working.
\end{proof}
 
\begin{lemma} \label{lemma:fourOrbitsEndgameCase3}
    Suppose that
\[
A=B=-C,
\]
and that $(A,B,C,D)$ is not equivalent to the Cayley parameters. Suppose $(x, y, z)\in\mathcal{S}_{A, B, C, D}^*(p)$ has $\chi_p(y-2) = -1$ and $x$ of order at least $p^{\frac{1}{2} + \delta}$. Then, for all sufficiently large primes $p$, $(x, y, z)$ is connected via edges from $\Gamma$ to a point whose $y$-coordinate has order $\frac{p+1}{2}$ (if $x$ is hyperbolic) or $p-1$ (if $x$ is elliptic).
\end{lemma}
\begin{proof}
    Since our starting point $(x, y, z)$ has $y - 2$ a quadratic nonresidue, Lemma \ref{lemma:FourPieceConicSignComponents} implies that for any point $(x', y', z')$ in the orbit, if we represent \(y'=\eta+\eta^{-1}\), then \(\eta\) lies in the nonsquare coset of \(\mathbb F_p^\times\) if \(y'\) is hyperbolic, and in the square coset of \(\mu_{p+1}\) if \(y'\) is elliptic. The nature of the curves whose point-counts go into our sieving now depends on whether or not we seek to pass to a hyperbolic or an elliptic point, and thus on exactly where we start. 
 
 If $x$ is elliptic, then we seek to move to a hyperbolic point $y'$. In this case we only need to include in our sieving counts for points on curves isomorphic over $\overline{\mathbb{F}}_p$ to

$$\alpha_1t^{e} + \alpha_2t^{-e} + \alpha_3 = s^{d}  + s^{-d}$$

where $d$ is odd, so that $(e, d) = 1$, and the curves are irreducible by Lemma
\ref{lemma:FourPieceCoprimeMonodromy}. As our starting point has $x$ elliptic, we must re-write our system as one defined over $\mathbb{F}_p$ instead of over $\mathbb{F}_{p^2};$ this can be done simply by following the elliptic case of Section \ref{EndgameSection}.

If instead $x$ is hyperbolic, then we seek a point with $y'$ elliptic of order $\frac{p+1}{2}$. The relevant curves may be described over $\mathbb{F}_{p^2}$ by 

$$\alpha_1t^{e} + \alpha_2t^{-e} + \alpha_3 = s^{d}  + s^{-d}$$

which we rewrite as

$$\alpha_1t^{e}  - 2\sqrt{\alpha_1\alpha_2} + \alpha_2t^{-e} = s^{d} - 2 + s^{-d}$$

using

$2 = 2\sqrt{\alpha_1\alpha_2} + \alpha_3.$ Now, we have that 

$$y - 2 = (\sqrt{\alpha_1} - \sqrt{\alpha_2})^2;$$

as $y - 2$ is a quadratic nonresidue we find that $\sqrt{\alpha_1} - \sqrt{\alpha_2}\in\mathbb{F}_{p^2}\setminus\mathbb{F}_p;$ as $\sqrt{\alpha_1\alpha_2}\in\mathbb{F}_p$ and $\alpha_2\in\mathbb{F}_p$ as well we find the same to be true of $\sqrt{\alpha_1}$ and $\sqrt{\alpha_2}$ by the identity

$$\sqrt{\alpha_2} = \frac{\sqrt{\alpha_1\alpha_2}}{\sqrt{\alpha_1} - \sqrt{\alpha_2}} - \frac{\alpha_2}{\sqrt{\alpha_1} - \sqrt{\alpha_2}}.$$

We also note by these identities that each of $\sqrt{\alpha_1}, \sqrt{\alpha_2},$ and $\sqrt{\alpha_1} - \sqrt{\alpha_2}$ have their squares in $\mathbb{F}_p$, so fixing $\omega$ to be an arbitrary quadratic nonresidue, we can represent each as a multiple of $\sqrt\omega$ by an element of $\mathbb{F}_p$. We thereby obtain the identity

$$(\sqrt{\alpha_1}t^{e/2} - \sqrt{\alpha_2}t^{-e/2})^2 = (s^{d/2} -  s^{-d/2})^2.$$

By Lemma \ref{lemma:FourPieceNonCoprimeMonodromy}, these two factors are the two geometrically irreducible components:

$$\sqrt{\alpha_1}t^{e/2} - \sqrt{\alpha_2}t^{-e/2} = \pm(s^{d/2} - s^{-d/2}),$$

or equivalently

$$\frac{1}{\sqrt\omega}(\sqrt{\alpha_1}t^{e/2} - \sqrt{\alpha_2}t^{-e/2}) = \pm\frac{1}{\sqrt\omega}(s^{d/2} -  s^{-d/2}).$$

Once again, in order to apply the Weil bound, we have to adjust the shape of our equation so that it and its relevant solutions are defined over $\mathbb{F}_p$, not $\mathbb{F}_{p^2}.$ The adjusted system takes the form

\[\begin{split}
&\xi^2 - \omega\eta^2 = 1\\
    &\sqrt\frac{\alpha_1}{\omega}t^{e/2} - \sqrt{\frac{\alpha_2}{\omega}}t^{-e/2} = \pm2h_{d/2}(\xi, \eta).
\end{split}\]

We now have twice as many points of order dividing $\frac{p+1}{2};$ as this is uniform over (even) divisors of $p+1$ we still obtain via sieving a point whose $y$-coordinate has order $\frac{p+1}{2}$, which is maximal in the component $\mathcal{S}_i$ given that $y-2$ is a quadratic nonresidue.
\end{proof}

\begin{lemma}\label{lemma:fourOrbitsEndgameCase4}
Suppose that
\[
A=B=-C,
\]
and that $(A,B,C,D)$ is not equivalent to the Cayley parameters. Suppose $(x, y, z)\in\mathcal{S}_{A, B, C, D}^*(p)$ has $\chi_p(y-2) = 1$ and $x$ of order at least $p^{\frac{1}{2} + \delta}$. Then, for all sufficiently large primes $p$, $(x, y, z)$ is connected via edges from $\Gamma$ to a point whose $y$-coordinate has order $\frac{p-1}{2}$ (if $x$ is elliptic) or $p+1$ (if $x$ is hyperbolic).
\end{lemma}
\begin{proof}
    Since our starting point has $y - 2$ a quadratic residue, Lemma \ref{lemma:FourPieceSignOrderDictionary} implies that for any point $(x', y', z')$, writing \(y'=\eta+\eta^{-1}\), we have that \(\eta\in\mathbb F_p^\times\) is a quadratic residue if \(y'\) is hyperbolic, while in the elliptic case \(\eta\) lies in the nonsquare coset in \(\mu_{p+1}\). If $x$ is hyperbolic, then we seek to connect to an elliptic $y'$. The relevant curves are isomorphic over $\overline{\mathbb{F}}_p$ to

$$\alpha_1t^{e} + \alpha_2t^{-e} + \alpha_3 = s^{d}  + s^{-d}$$

where $d$ is odd, so that $(e, d) = 1$, and the curves are irreducible by Lemma
\ref{lemma:FourPieceCoprimeMonodromy}. As we seek a point with $y'$ elliptic, in order to apply the Weil bound we must re-write our equation as in (\ref{EqForConnectingToElliptic}); after so doing Weil and inclusion/exclusion allow us to connect to a point whose $y$-coordinate has order $p + 1.$

If, however, our starting point is elliptic, we start from the factorization
over \(\overline{\mathbb F}_p\); by Lemma
\ref{lemma:FourPieceNonCoprimeMonodromy}, its two factors are the geometrically
irreducible components:

$$\sqrt{\alpha_1}t^{e/2} - \sqrt{\alpha_2}t^{-e/2} = \pm(s^{d/2} -  s^{-d/2}).$$

Those points of interest on each curve can again be described by a curve over $\mathbb{F}_p$ as in Section \ref{EndgameSection}, via equations

\[\begin{split}
    &\xi^2 - \omega \eta^2 = 1\\
    &u = \pm(2ag_{e/2}(\xi, \eta) + 2b\omega h_{e/2}(\xi, \eta))\\
    &u= s^{d/2} -  s^{-d/2},
\end{split}\]

where $u$ represents a choice of $\sqrt{y - 2}$ and $a$ and $b$ are fixed and satisfy
$$a^2 - \omega b^2 =  -\frac{x^2 - \frac{A}{2}x + (A - 4)}{x^2 - 4}$$
$$y = 2a^2 + 2\omega b^2 + \alpha_3.$$

As before, we can sieve and obtain a point of order $\frac{p-1}{2},$ which is maximal in the $\mathcal{S}_i$ in which we're working.
\end{proof}

As we have now shown by way of Lemmas \ref{lemma:fourOrbitsEndgameCase1}, \ref{lemma:fourOrbitsEndgameCase2}, \ref{lemma:fourOrbitsEndgameCase3}, and \ref{lemma:fourOrbitsEndgameCase4}, no matter the values of $\epsilon$ and $\chi_p(y +2\epsilon)$, we can connect a hyperbolic point of sufficiently large order to an elliptic point of order $p+1$ or $\frac{p+1}{2}$ and an elliptic point of sufficiently large order to a hyperbolic point of order $p-1$ or $\frac{p-1}{2}$. As mentioned at the start of the proof of this lemma, applying this argument a second time allows us to connect an arbitrary point of sufficiently large order to a point of order $p\pm 1$ or $\frac{p\pm 1}{2}$, which we may freely choose to be hyperbolic or elliptic. 
\end{proof}
Finally, we record the cage connectivity step. Unlike in the two-orbit case, this step is relatively straightforward: the elaborate invariants allow us to coordinate between the components of conic sections of relative maximal order.
\begin{lemma}[Cage connectivity in the four-piece case]
\label{lemma:FourPieceCageConnectivity}
Suppose that
\[
A=B=\epsilon C,\qquad \epsilon\in\{\pm1\},
\]
and that $(A,B,C,D)$ is not equivalent to the Cayley parameters. Let
$i\in\{1,2,3,4\}$. For all sufficiently large primes $p$ in the density-one set under consideration, the cage inside $\mathcal{S}_i$ is connected.

More precisely, suppose that $(x,y,z)$ and $(x',y',z')$ lie in $\mathcal{S}_i$,
with $x$ and $y'$ of maximal allowable order in $\mathcal{S}_i$. Then, except
possibly for a bounded number of exceptional pairs $(x,y')$, there is a value
$z''$ of maximal allowable order such that
\[
C_3(z'')\cap C_1(x)\cap \mathcal{S}_i\neq\emptyset
\]
and
\[
C_3(z'')\cap C_2(y')\cap \mathcal{S}_i\neq\emptyset.
\]
Furthermore, these two intersections lie in the same
$\langle V_1,V_2\rangle$-orbit inside
$C_3(z'')\cap\mathcal{S}_i$.
\end{lemma}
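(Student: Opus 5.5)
The plan is to mimic the proof of Proposition \ref{cageConnectivity}, as adapted in Lemma \ref{lemma:TwoPieceS2CageConnectivity}, now working inside a single $\mathcal{S}_i$. Fix $x$ and $y'$ of maximal allowable order in $\mathcal{S}_i$. The points of $C_1(x)\cap\mathcal{S}_i$ form a single $\langle V_2,V_3\rangle$-orbit by Lemma \ref{lemma:FourPieceConicSignComponents}, and similarly for $C_2(y')\cap\mathcal{S}_i$. So it suffices to find $z''$ of maximal allowable order satisfying three conditions: some point of $C_1(x)$ has third coordinate $z''$; some point of $C_2(y')$ has third coordinate $z''$; and the sign of $z''-2\epsilon$ (equivalently, which coset the parameter $\zeta$ of $z''=\zeta+\zeta^{-1}$ lies in, via Lemma \ref{lemma:FourPieceSignOrderDictionary} with the roles of the variables permuted) matches the invariant defining $\mathcal{S}_i$. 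As in Subsection \ref{cageConnectivitySubsect}, I would encode these conditions with the following system:
\[
z=t^\ell+t^{-\ell},\qquad z^2+A_1z-B_1v^2=C_1,\qquad z^2+A_2z-B_2w^2=C_2,
\]
where $A_1,B_1,C_1$ depend on $x$ and $A_2,B_2,C_2$ depend on $y'$. The condition $f_3(z)$ nonresidue is dropped, since $f_3$ is a perfect square. The sign invariant is imposed by restricting $\ell$ to the appropriate parity class, exactly as odd $\ell$ was used for $\mathcal{S}_2$. In the elliptic case I would instead use the reparameterization (\ref{EqForConnectingToElliptic}).

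Irreducibility would come from monodromy. After excluding $O(1)$ values of $x$, $O(1)$ values of $y'$, and $O(1)$ pairs, the two double covers ramify at points distinct from $\pm2$ and from each other. The argument here is the one in Proposition \ref{cageConnectivity}. The only degeneracy condition met there was $A=\mp C$ together with $4D+C^2=\mp8B+16$, and this now genuinely holds. So I need to redo that check: the ramification point at $\pm2$ only occurs for the sign corresponding to the parabolic conic of double fixed points. I would then verify that one of $\pm2$ remains unramified for each double cover. If instead both covers ramify at $-2\epsilon$, then $\sigma_{-2\epsilon}$ acts on all three covers simultaneously. In that case I would adapt the argument of Lemma \ref{lemma:FourPieceNonCoprimeMonodromy}: combine powers of $\sigma_\infty$ (or $\sigma_\infty^2$ when $\ell$ is even) with the independent ramification points to get transitivity. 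If the curve turns out to split into two components, as in Lemma \ref{lemma:fourOrbitsEndgameCase2}, I would factor it explicitly over $\mathbb{F}_p$ and count points on one component.

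Next comes the Weil bound plus Möbius sieving over the allowed divisors, as in Lemma \ref{lemma:TwoPieceS2CageConnectivity}. This gives $\gg\varphi(p\pm1)-O_\epsilon(p^{1/2+\epsilon})>0$ admissible values $z''$. For such $z''$, the point of $C_3(z'')$ reached from $C_1(x)$ lies in $\mathcal{S}_i$, because it shares a $\Gamma$-orbit with $C_1(x)\cap\mathcal{S}_i$; the same holds for the point reached from $C_2(y')$.

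The final claim is that both intersection points lie in the same $\langle V_1,V_2\rangle$-orbit. This follows because, when $z''$ has maximal allowable order, $C_3(z'')$ meets $\mathcal{S}_i$ in exactly one $\langle V_1,V_2\rangle$-orbit. That in turn holds by Lemma \ref{lemma:FourPieceConicSignComponents}: the conic's two components lie in different invariant sets, and at maximal allowable order $D_3$ is transitive on each component. Finally, the excluded $x$, $y'$ and pairs are rerouted through Proposition \ref{proposition:FourPieceModifiedEndgame}. I expect the main obstacle to be the monodromy/irreducibility step, because of the forced ramification coincidences at $\pm2$ in this degenerate setting.
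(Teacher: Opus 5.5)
Your proposal is correct and is essentially the paper's own argument. The shared steps are:
\begin{itemize}
\item transitivity of $\langle V_1,V_2\rangle$ on $C_3(z'')\cap\mathcal{S}_i$ for $z''$ of maximal allowable order (odd, in the halved case);
\item the sign invariants forcing both intersections into $\mathcal{S}_i$, and hence into that single orbit;
\item the Weil bound plus sieving on the three-equation system of Subsection~\ref{cageConnectivitySubsect}, with the $f_3$ condition dropped.
\end{itemize}

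Two corrections are needed.

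First, the $z$-invariant in this normalization is $\chi_p(z''+2)$, not $\chi_p(z''-2\epsilon)$. For elliptic $z''$ with $\epsilon=1$ these have opposite signs, so using the latter would pick the wrong coset of $\mu_{p+1}$.

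Second, your hedged case is the one that actually occurs, and at $z=-2$ rather than $-2\epsilon$: both double covers are ramified at $z=-2$ for every $x$ and $y'$. The reason is that the quadratic in $z$ defining each cover has discriminant $16f_1(x)$ (resp.\ $16f_2(y')$), which is a perfect square, and one of its roots is identically $-2$. Irreducibility still follows at once from the independent sheet swaps at the other branch points $2(x^2-Ax-2\epsilon A-4)/(x^2-4)$ and $2(y'^2-Ay'-2\epsilon A-4)/(y'^2-4)$, so the curve does not split.
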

\begin{proof}
    By Lemma \ref{lemma:FourPieceConicSignComponents}, together with the parameterizations of Lemmas \ref{hyperbolics} and \ref{elliptics}, if $z''$ is chosen to be of order $p\pm 1$ or $\frac{p\pm 1}{2}$, choosing the sign in the latter case so the order is odd, then $\langle V_1, V_2\rangle$ acts transitively on $C_3(z'')\cap\mathcal{S}_i$. By the defining identities for the sets \(\mathcal S_i\) in Theorem
\ref{QuadraticObstructions}, if $\chi_p$ denotes the mod $p$ Legendre symbol, then $\chi_p(z'' + 2)$ and $\chi_p(x + 2\epsilon)$ determine $\chi_p(y'' + 2\epsilon)$ for any $y''$ appearing as a $y$-coordinate of a point in $C_3(z'')\cap\mathcal{S}_i,$ and similarly $\chi_p(z'' + 2)$ and $\chi_p(y' + 2\epsilon)$ determine $\chi_p(x'' + 2\epsilon)$ for any $x''$ appearing as an $x$-coordinate of a point in $C_3(z'')\cap\mathcal{S}_i.$ It follows that if

$$C_3(z'')\cap C_1(x)\neq \emptyset\text{ and}$$
$$C_3(z'')\cap C_2(y')\neq\emptyset$$
then $C_3(z'')$ intersects $C_1(x)$ and $C_2(y')$ in the same $\langle V_1, V_2\rangle$ orbit. Thus, to establish cage connectivity, we merely need to establish irreducibility of the curve cut out by the system

\[\begin{split}
    &z = t^\ell + t^{-\ell}\\
    &z^2 + A_1 z - B_1 v^2 = C_1\\
    &z^2 + A_2 z - B_2w^2 = C_2\\
    \end{split}\]
where $A_1, A_2, B_1, B_2, C_1,$ and $C_2$ are as in Subsection \ref{cageConnectivitySubsect}, or a suitable modification if we must connect via an elliptic $z''.$ But this follows straightforwardly from the polynomial-system construction in Subsection \ref{cageConnectivitySubsect}, at least for most pairs $(x, y').$
\end{proof}

We have now essentially completed the proof of Theorem \ref{DegenerateParameterOrbits}. For completeness, we assemble the lemmas:
\begin{proof}[Proof of Theorem \ref{DegenerateParameterOrbits}]
    By equivalence of parameters, we may assume that
\[
A=B,\qquad 4D+A^2=8C+16.
\]
We exclude the Cayley parameters by hypothesis.

First suppose that \(C\neq \pm A\). By Lemma \ref{lemma:DegenerateInheritedReductions}, the opening and middlegame results from Sections \ref{openingSection} and \ref{middlegameSection} apply inside each \(\mathcal S_i\), and hence connect any point in \(\mathcal S_i\) to a point of \(\mathcal S_i\) with order at least \(p^{\frac12+\delta}\). By Lemma \ref{endgameDegenerate} such points connect to points whose $x$- or $y$-orders are maximal. This reduces the problem to proving connectivity of the corresponding cages inside
\(\mathcal{S}_1\) and \(\mathcal{S}_2\). 
Lemma \ref{lemma:TwoPieceS2CageConnectivity} proves cage connectivity inside \(\mathcal{S}_2\). If \(A\neq0\), Lemma \ref{lemma:TwoPieceS1CageConnectivityNonzeroA} proves cage connectivity inside \(\mathcal{S}_1\), while if \(A=0\), Lemma \ref{lemma:TwoPieceS1CageConnectivityAZero} proves the same conclusion. Thus \(\Gamma\) acts transitively on each of \(\mathcal{S}_1\) and \(\mathcal{S}_2\).

Now suppose that \(C=\pm A\). As in the \(C\neq\pm A\) case, by Lemma \ref{lemma:DegenerateInheritedReductions}, the opening and middlegame results from Sections \ref{openingSection} and \ref{middlegameSection} apply inside each \(\mathcal S_i\), connecting any point in \(\mathcal S_i\) to a point of order at least \(p^{\frac12+\delta}\). Proposition \ref{proposition:FourPieceModifiedEndgame} gives the required modified endgame inside each \(\mathcal{S}_i\), connecting any point of order at least $p^{\frac{1}{2} + \delta}$ to a point whose chosen coordinate has maximal allowable order in \(\mathcal S_i\). Lemma \ref{lemma:FourPieceCageConnectivity} then proves cage connectivity inside each \(\mathcal{S}_i\), from which it follows that \(\Gamma\) acts transitively on each of \(\mathcal{S}_1,\mathcal{S}_2,\mathcal{S}_3,\) and \(\mathcal{S}_4\).
\end{proof}

Together with Theorem \ref{QuadraticObstructions}, we have shown that there are either exactly two or four large orbits, and described when each occurs in terms of the parameters $A, B, C,$ and $D$.

\section{Acknowledgments}
The author is deeply grateful to his advisor Alex Gamburd for introducing him to the problem, guiding him through the related literature, and for numerous helpful conversations. This paper would not exist without Alex Gamburd's mentorship. The author would also like to thank Andrew Obus for helpful conversations, especially related to some of the irreducibility proofs in the endgame; Yosef Berman, for pushing him to use computer explorations early and often and for assisting him in doing so; and Ajmain Yamin and Brian Kingsbury for additional help with the computer-assisted portions of this project. The author completed this work while partially supported by a Graduate Center Fellowship at the CUNY Graduate Center.

The AI model ChatGPT was used to produce TikZ code for the Markoff tree (Figure \ref{MarkoffTree}), as well as to produce and edit TikZ code for Figures \ref{ExceptionalFiniteOrbit1}, \ref{ExceptionalFiniteOrbit2}, and \ref{ExceptionalFiniteOrbit3} from hand-drawn diagrams produced by the author. In all cases, the author verified manually that the diagrams rendered by the TikZ code were correct. The AI models ChatGPT and Claude were also used for review and editing of this paper, including for proofreading, checking arguments and exposition, and suggesting ways to lemmatize pre-existing arguments which had been written in a single, overly long block. In all cases, the author either manually corrected issues flagged by the AI models, or manually copied in edits to the language suggested by the AI models; no changes were made to the manuscript without the author's direct oversight.

\appendix
\section{The Polynomial \texorpdfstring{$\Delta$}{Delta}}
The polynomial $\Delta$ discussed in the body of the article is given by
\[\begin{split}
\Delta(A, B, C, D) &= 4A^3B^3C^3-27A^4B^4-6A^4B^2C^2-6A^2B^4C^2-27A^4C^4-6A^2B^2C^4\\
&-27B^4C^4+18A^3B^3CD+18A^3BC^3D+18AB^3C^3D+A^2B^2C^2D^2\\
&+192A^5BC+24A^3B^3C+192AB^5C+24A^3BC^3+24AB^3C^3+192ABC^5\\
&-144A^4B^2D-144A^2B^4D-144A^4C^2D-88A^2B^2C^2D-144B^4C^2D\\
&-144A^2C^4D-144B^2C^4D+80A^3BCD^2+80AB^3CD^2+80ABC^3D^2\\
&+4A^2B^2D^3+4A^2C^2D^3+4B^2C^2D^3+256A^6+192A^4B^2+192A^2B^4+256B^6\\
&+192A^4C^2-1328A^2B^2C^2+192B^4C^2+192A^2C^4+192B^2C^4+256C^6\\
&+1024A^3BCD+1024AB^3CD+1024ABC^3D-128A^4D^2-784A^2B^2D^2\\
&-128B^4D^2-784A^2C^2D^2-784B^2C^2D^2-128C^4D^2+352ABCD^3\\
&+16A^2D^4+16B^2D^4+16C^2D^4-3328A^3BC-3328AB^3C-3328ABC^3\\
&+2048A^4D+1984A^2B^2D+2048B^4D+1984A^2C^2D+1984B^2C^2D+2048C^4D\\
&+1408ABCD^2-768A^2D^3-768B^2D^3-768C^2D^3+64D^5-2048A^4-3840A^2B^2\\
&-2048B^4-3840A^2C^2-3840B^2C^2-2048C^4-11776ABCD+5632A^2D^2\\
&+5632B^2D^2+5632C^2D^2-1024D^4+2048ABC-12288A^2D-12288B^2D\\
&-12288C^2D+6144D^3+4096A^2+4096B^2+4096C^2-16384D^2+16384D.
\end{split}\]
\section{Macaulay2 Code}
In this section, we provide code for all of the computations done in this paper using Macaulay2 \cite{Macaulay2}. First, to compute $\Delta$ as in Lemma \ref{ZariskiClose}, run:

\begin{lstlisting}
S=QQ[a_1,a_2,a_3]
R=QQ[A,B,C,D]
phi=map(S,R,{-2*a_1-a_2*a_3,-2*a_2-a_1*a_3,
        -2*a_3-a_1*a_2,-2*a_1*a_2*a_3-a_1^2-a_2^2-a_3^2})
I=ker phi
\end{lstlisting}

To prove the equalities $\Delta_1 = \Delta_2 = \Delta_3$ in Lemma \ref{DeltaUniversality}, we run the following code:

    \begin{lstlisting}
R = ZZ[x, y, z, A, B, C, D]
f1 = x^4 - A*x^3 - (D+4)*x^2 + (4*A + B*C)*x + (4*D + B^2 + C^2)
f2 = y^4 - B*y^3 - (D+4)*y^2 + (4*B + A*C)*y + (4*D + A^2 + C^2)
f3 = z^4 - C*z^3 - (D+4)*z^2 + (4*C + A*B)*z + (4*D + A^2 + B^2)
Delta1 = discriminant(f1, x)
Delta2 = discriminant(f2, y)
Delta3 = discriminant(f3, z)
Delta1 == Delta2
Delta1 == Delta3
\end{lstlisting}

To check the final equality $\Delta_1 = \Delta$ in Lemma \ref{DeltaUniversality}, run the following code:
\begin{lstlisting}
S=QQ[a_1,a_2,a_3]
R=QQ[A,B,C,D]
T = R[x]
phi=map(S,R,{-2*a_1-a_2*a_3,-2*a_2-a_1*a_3,
        -2*a_3-a_1*a_2,-2*a_1*a_2*a_3-a_1^2-a_2^2-a_3^2})
I=ker phi
f = x^4 - A*x^3 - (D+4)*x^2 + (4*A + B*C)*x + (4*D + B^2 + C^2)
Delta = discriminant(f, x)
ideal(Delta) == I
\end{lstlisting}

To check the divisibility asserted in Proposition 4.6, run the following code and notice the first term in the product expansion:
\begin{lstlisting}
R = ZZ[A, B, C, D, X]
S = ZZ[a, c, d, x]
f = X^4 - A*X^3 - (D+4)*X^2 + (4*A + B*C)*X + (4*D + B^2 + C^2)
Delta = discriminant(f, X)
phi = map(S, R, {a, a, c, d, x})
factor(phi(Delta))
\end{lstlisting}

Finally, to check the congruence asserted in the course of the proof of Theorem \ref{QuadraticObstructions}, run the following code and note the program returns ``0.''
\begin{lstlisting}
R = ZZ[x, y, z, A, C]
f = (A*x - 2*x^2 - 2*x*z + 2*C - 4*x - 4*z)^2
g = (x*y + 2*x + 2*y + C - A + 4)*(x^2*z + 2*x*z - x*y + A*x + 2*x - 2*y + C + A + 4)
h = 4*x*y*z + 4*A*x + 4*A*y + 4*C*z - A^2 + 8*C + 16 - 4*x^2 - 4*y^2 - 4*z^2
4*g-f - (x^2 + 4*x + 4)*h    
\end{lstlisting}

\end{document}